\theoremstyle{plain}
\newtheorem{theorem}{Theorem}[section]
\newtheorem{lem}[theorem]{Lemma}
\newtheorem{cor}[theorem]{Corollary}
\newtheorem{prop}[theorem]{Proposition}
\theoremstyle{definition}
\newtheorem{df}[theorem]{Definition}
\newtheorem{rem}[theorem]{Remark}
\newtheorem*{claim*}{Claim}
\def\R{{\mathbb R}}
\def\N{{\mathbb N}}
\def\Z{{\mathbb Z}}
\def\D{{\mathbb D}}
\def\T{{\mathbb T}}
\def\d{{\mathcal D}}
\def\C{{\mathcal C}}
\def\K{{\mathcal K}}
\def\O{{\mathcal O}}
\def\A{{\mathcal A}}
\def\M{{\mathcal M}}
\def\H{{\mathcal H}}
\def\WY{\mathcal{WY}}
\def\eps{\varepsilon}
\def\phi{\varphi}
\def\mf{\mathfrak}
\def\mfc{\mathfrak{C}}
\def\mfk{\mathfrak{K}}
\def\Int{\mathop\mathrm{Int}}
\def\Cl{\mathop\mathrm{Cl}}
\def\length{\mathop\mathrm{length}}
\newcommand{\diam}{\operatorname{diam}}
\newcommand{\invlim}{\displaystyle\lim_{\longleftarrow}}
\renewcommand{\setminus}{\smallsetminus}
\def\nin{\notin}
\renewcommand{\hat}{\widehat}
\renewcommand{\tilde}{\widetilde}
\def\bar{\overline}
\newcommand{\la}{\overleftarrow}
\newcommand{\ra}{\overrightarrow}
\newcommand{\h}{\hspace{-.1cm}}
\newcommand{\tl}{\triangleleft}
\def\rpinf{+^{\hspace{-.1cm}\infty}}
\def\lpinf{\hspace{.2cm}+^{\hspace{-.5cm}\infty}\hspace{.2cm}}
\begin{document}
	
	\title[The pruning front conjecture, folding patterns and a classification]{The pruning front conjecture, folding patterns and classification of H\'enon maps in the presence of strange attractors}
	
	\author{Jan P. Boro\'nski}\thanks{JB: Supported by the National Science Centre, Poland (NCN), grant no. 2019/34/E/ST1/00237: ``Topological and 
		Dynamical Properties in Parameterized Families of Non-Hyperbolic Attractors: the inverse limit approach''.}
	\author{Sonja \v{S}timac}\thanks{S\v{S}: Supported in part by the Croatian Science Foundation grant IP-2022-10-9820 GLODS, and by 
		 the program Excellence Initiative – Research University at the Jagiellonian University in Kraków. A part of this work was done while S\v{S} was a Guest Professor of the College of Natural Sciences at Seoul National University, and another part while she was a Jagiellonian Scholar at the Faculty of Mathematics and Computer Science at the Jagiellonian University in Kraków. The hospitality of the Department of Mathematical Sciences, Seoul National University, and the Faculty of Mathematics and Computer Science, Jagiellonian University in Kraków, is gratefully acknowledged.}\thanks{JB and S\v{S} are grateful to Banff International Research Station, and its sponsors, for holding the workshop ``Dynamics of H\'enon maps: real, complex and beyond'' in April of 2023, and providing a very productive environment for our work.}
	
	
	\maketitle
	
	\begin{abstract}
		We study the topological dynamics of the H\'enon maps. For a parameter set generalizing the Benedicks-Carleson parameters (the Wang-Young parameter set) we obtain the following:
		\begin{enumerate} 
			\item The pruning front conjecture (due to Cvitanovi\'c);
			\item A kneading theory (realizing a conjecture by Benedicks and Carleson); 
			\item A classification: two H\'enon maps are conjugate on their strange attractors if and only if their sets of kneading sequences coincide, if and only if their folding patterns coincide. 
		\end{enumerate}
		The folding pattern is a single sequence of $0$s and $1$s, which allows to distinguish two nonconjugate H\'enon attractors in finitely many steps. The classification result relies on further development of the authors' recent inverse limit description of H\'enon attractors in terms of densely branching trees.
	\end{abstract}	
	
	{\it 2020 Mathematics Subject Classification:} 37D45, 37B10, 37E30
	
	{\it Key words and phrases:} H\'enon attractor, folding pattern, kneading theory, pruning front, inverse limits 
	
	\baselineskip=18pt
	
	\section{Introduction}\label{sec:intro}
	
	\subsection{H\'enon attractors} 
	In 1976 M.\ H\'enon introduced the two-parameter family of smooth diffeomorphisms on the plane $F_{a,b} : \R^2 \to \R^2$, $F_{a,b}(x,y) = (1 + y - ax^2, bx)$, to show that numerical experiments for $a = 1.4$, $b = 0.3$ indicate the existence of a strange attractor, \cite{H}. Since then, this family, known today as the H\'enon family of maps, has been the subject of intense research, both theoretical and numerical. However, its dynamics is still far from being completely understood. The H\'enon attractor is the prototypical strange attractor and it is one of the most studied examples of dynamical systems exhibiting chaotic behavior. Understanding the H\'enon-type strange attractors is a central problem of nonlinear dynamics because such attractors are believed to be generic for low-dimensional dynamical systems. The first proof of the existence of the H\'enon strange attractors was given by M.\ Benedicks and L.\ Carleson in 1991, \cite{BC}. They proved that if $b > 0$ is small enough, then for a positive measure set of values $a$ near $a^* = 2$, the corresponding H\'enon map $F_{a,b}$ exhibits a strange attractor. In the introduction of this remarkable paper, they also stated four problems that they believed were possible to solve by further development of their ideas. The fourth problem, the only one unsolved until today, is the following one: 
	
	\noindent
	{\bf Problem} \cite[p.\ 74, Problem (d)]{BC} {\it Develop a partial theory of kneading sequences for the H\'enon maps.} 
	
	After the first breakthrough, several generalizations of the Benedicks-Carleson result appeared. In 1993, L.\ Mora and M.\ Viana generalized the result to the so-called H\'enon-like families of maps, \cite{MV}. In 2001, Q.\ Wang and L.-S.\ Young in \cite{WY} gave simple conditions for strongly dissipative maps that guarantee the existence of strange attractors. They developed a dynamical picture for the attractors in this class, including the geometry of fractal critical sets, nonuniform hyperbolic behavior, symbolic coding of orbits, formulas for topological entropy, and many statistical properties associated with chaos. Their results hold for the H\'enon family of maps for a positive measure set of parameters $(a, b)$, arbitrarily near $(a^*, 0)$, where $a^* \in [1.5, 2]$ is such that $q_{a^*}(x) = 1 - a^*x^2$ is a Misiurewicz map and $b$ can be both, positive and negative. We call this set of parameters the \emph{Wang-Young parameter set} and denote it by $\WY$. Note that the Benedicks-Carleson results in \cite{BC} is a version of the case for $a^* = 2$ and $b > 0$. There is a vast literature on the H\'enon family and strange attractors, as well as their generalizations, see e.g. \cite{BenedicksYoung93}, \cite{BenedicksYoung00}, \cite{BenedicksViana}, \cite{BenedicksViana2}, \cite{dCLM}, \cite{HLM}, and \cite{WYAnn}, including some of the most recent advances \cite{BMP}, \cite{BP}, \cite{BergerJEMS19},\cite{BergerA19},\cite{CP}, \cite{CPNotices}, \cite{CroPujTre}, and \cite{HMT}.
	
	\subsection{Kneading theory in dimension 1}
	A very powerful tool in one-dimensional dynamics is the Milnor-Thurston kneading theory, \cite{MT}. Its central object for unimodal maps on an interval is the kneading sequence. This symbol sequence is defined as the itinerary of the critical value, and it is an invariant of the topological conjugacy classes of the unimodal maps with negative Schwartzian derivative and without periodic attractors. The kneading sequence completely characterizes the set of all possible itineraries of such unimodal maps. It also has a key role in various studies, like in proving the continuity and monotonicity of the topological entropy for the quadratic family (and the universality of the family).
	
	\subsection{Pruning front conjecture}	
	In 1988 in \cite{CGP}, P.\ Cvitanovi\'c, G.H.\ Gunaatne, and I.\ Procaccia proposed a symbolic dynamics model for what they called H\'enon-type maps, that is, for maps on the plane that fold the plane back into itself exactly once, and they especially addressed the case of the H\'enon and Lozi maps. They stated that every allowed orbit has a unique binary label and that a binary tree that contains all allowed orbits is not complete, since disallowed orbits are `pruned' from it. They called a boundary that separates the allowed and the forbidden orbits a `pruning front'. They write: 
	
	``A pruning front arises from the fact that the two-dimensional attractor is not a single curve, but is multisheeted. There is no single `maximal point' as in the one-dimensional case; each sheet defines a locally highest allowed itinerary. The pruning front is computed therefore by determining the `primary' sequence of homoclinic tangencies. The homoclinic tangencies are points where the unstable manifold and the stable manifold are tangent. `Primary' tangencies are those tangencies that lie closest to the $y$ axis [...] We conjecture that the pruning front specifies the allowed symbolic dynamics (the union of all periodic points) fully; there are no orbits that are pruned out by other mechanisms. All the other disallowed regions of the symbol plane are obtained by backward and forward iterations of the primary pruned patch [...]''
	
	This conjecture is known as the pruning front conjecture and has received a lot of attention in the last 35 years (see for example \cite{dC}, \cite{dCH2}, \cite{DN}, \cite{Me}). Note that at the time when the aforementioned paper was published, a rigorous mathematical proof of the existence of the H\'enon attractors did not exist. The pruning front conjecture as introduced in \cite{CGP} is described intuitively and relies on numerical evidence to validate the models. A version of its precise mathematical statement (for the H\'enon maps) is given in 2002 by A.\ de Carvalho and T.\ Hall in \cite{dCH}, where they suggested a possible approach to a proof of the conjecture. This approach is based on the so-called braid type conjecture, which states that the periodic orbits of H\'enon maps are of the same topological type as the periodic orbits of the horseshoe. 
	
	\subsection{Statement of results}
	
	The aim of this paper is as follows:
	\begin{itemize}
		\item To prove the pruning front conjecture;
		\item To develop the kneading theory for the H\'enon maps within the Wang-Young set of parameters $\WY$;
		\item To classify (up to topological conjugacy) the H\'enon maps on their strange attractors within $\WY$ (in terms of the kneading theory).
	\end{itemize} 
	
	\subsubsection{Motivation for the approach and the construction of a critical locus.}		Our approach to the pruning front conjecture is motivated in part by the piecewise affine case. For the Lozi family, the first proof of the conjecture was given by Y.\ Ishii in 1997, \cite{I}. His approach is adapted to the piecewise affine nature of the Lozi family and is quite different from the one in \cite{dCH}, and also from the one adopted here. Our approach is motivated by the kneading theory for the Lozi maps developed in 2016 by M.\ Misiurewicz and the second author of this paper, \cite{MS}. In 2018 the same authors generalized the kneading theory to the Lozi-like maps, \cite{MS2}. Although in \cite{MS} the authors work only with the Lozi maps, they avoid explicit use of the formulas defining those maps and take the geometric approach. That allows us to follow the main ideas in \cite{MS} to prove the pruning front conjecture for the H\'enon maps and to develop the kneading theory for them in this paper. However, since there are some major differences between the families of the H\'enon and Lozi maps, we have to develop some new methods and tools to address these differences. 
	
	One of the main distinctions in developing symbolic dynamics for the H\'enon and Lozi maps is the following: The idea of using the partition into the left and right half-planes as the base for the symbolic dynamics for the Lozi maps is natural and has been used for a long time. On the other hand, for the H\'enon maps, as P.\ Cvitanovi\'c noticed in \cite{CGP}, ``a partition of the plane by the $y$-axis does not determine the periodic points uniquely. Numerically it appears that a line joining the homoclinic tangencies does in fact partition the plane as needed. [...] we conjecture that this line partitions space so that the unstable periodic points are uniquely described by symbolic sequences.'' 
	
	The first question is the existence of a such line that contains the mentioned homoclinic tangencies, that is, the set of all critical points, that we denote by $\C$. Note that in \cite{BC} the authors write:
	``One might for example believe (although this is doubtful), that $\C$ is located on a smooth curve cutting through the [unstable manifold of the fixed point] $W^u$ [...]. We can however only prove this for a $C^{l/2}$-curve and this is based on the fact that $W^u$ never intersects itself.'' Since we could not find any such proof, for completeness we provide in Section \ref{sec:cl} a proof of the existence of an arc $\K$ which contains the critical set and call it the \emph{critical locus}. 
	
	In Section \ref{sec:coa}, we use $\K$ to partition a forward invariant disc $D$, that contains the H\'enon attractor, into the left and right regions as the base for the symbolic dynamics for the H\'enon maps. Note that in \cite[Theorem 1.6]{WY}, it is proved that the critical set partitions the attractor into finitely many disjoint sets (two for the H\'enon maps), and that there exists a subshift of a full shift on finitely many symbols (two for the H\'enon maps) which provides coding of orbits on the attractor that is given by the partition. In \cite[Corollary 1.1]{WY}, it is mentioned that kneading sequences are the itineraries of images of the critical points. However, any relationship between kneading sequences and the itineraries of the other points is not studied. Our method in Section \ref{sec:coa} gives an alternative proof of the existence of coding.
	
	\subsubsection{Turning points and the kneading set}
	The question now is what should be the pruning front and/or what should replace the kneading sequence of unimodal maps? In our approach, the set of images of the basic critical points $F(\mfc)$ has a key role. We denote by $\mfc$ the set of all critical points that lie on $W^u$, $\mfc = \C \cap W^u$, and for simplicity, we write $F = F_{a,b}$, assuming $(a,b) \in \WY$. We call the points of $\mfc$ the \emph{basic critical points}, the points of $F(\mfc)$ the \emph{turning points}, and the itineraries of the turning points we call the \emph{kneading sequences}. By $\mfk_F$ we denote the set of all kneading sequences and we call it the {\it kneading set}. Note that $\mfk_F$ is a countable infinite set. Every kneading sequence, denoted by $\bar k$, has the following form: $$\bar k = \lpinf w \pm \cdot \ra k_{\h 0},$$ where $\lpinf = \dots + + +$, $w = w_0 \dots w_m$, for some $m \in \N_0$, $\ra k_{\h 0} = k_0 k_1 k_2 \dots$, $w_0 = -$, $k_0 = +$, $w_i, k_j \in \{ -, + \}$ for $i = 1, \dots m$ and $j \in \N$, for $\pm$ we can substitute any of $+$ and $-$, and the dot shows where the 0th coordinate is. In Section \ref{sec:pfc} we prove the pruning front conjecture by proving that $\mfk_F$ characterizes all itineraries. The proof is given in two steps, by two theorems.
	
	First, we prove that $\mfk_F$ characterizes all itineraries of points of the unstable manifold $W^u$ of the fixed point $X$ (what we call $W^u$-admissibility). Itineraries of all points of $W^u$ start with $\lpinf = \dots +++$. Moreover, $W^u$ is invariant for $F$, so the set of all $W^u$-admissible sequences is invariant for the shift map $\sigma$. This means that apart from the sequence that consists of all $+$s (which is  $W^u$-admissible, because it is the itinerary of $X$), we only need a way of checking $W^u$-admissibility of sequences of the form $\lpinf \cdot p_0p_1p_2 \dots = \lpinf \cdot \ra p_{\h 0}$, such that $p_0=-$. This tool is given by the following theorem:
	
	\begin{theorem}\label{admis}
		A sequence $\lpinf \cdot \ra p_{\h 0}$, such that $p_0 = -$, is $W^u$-admissible if and only if for every kneading sequence $\lpinf w \pm \cdot\ra k_{\h 0}$, such that $w = p_0p_1 \dots p_m$ for some $m$, we have $\sigma^{m+2}(\ra p_{\h 0})\preceq \ra k_{\h 0}$, where $\preceq$ is the parity-lexicographical ordering.
	\end{theorem}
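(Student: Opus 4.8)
The plan is to follow the one-dimensional template from the Milnor–Thurston / Lozi theory, adapting it to the geometry of the critical locus $\K$ and the left–right partition of the invariant disc $D$ constructed in Section \ref{sec:coa}. Recall that a point $z \in W^u$ has itinerary $\lpinf \cdot \ra p_{\h 0}$ precisely when $F^n(z)$ lies in the region labelled $p_n$ for all $n \ge 0$; the negative tail is all $+$ because $W^u$ emanates from the fixed point $X$. The central observation is that $W^u$, being a one-manifold, inherits a linear order on each of its arcs, and the basic critical points $\mfc = \C \cap W^u$ are exactly the points where this arc is tangent to the critical locus, i.e.\ the points where the ``fold'' occurs. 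So along $W^u$ the map $F$ behaves, piece by piece between consecutive basic critical points, like a monotone (orientation-preserving or -reversing) interval map, and the turning points $F(\mfc)$ play the role of the critical value. This is what makes the parity-lexicographical order $\preceq$ the natural comparison: crossing a fold reverses orientation, which is recorded by the parity of the number of $-$'s read so far.

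First I would set up the order structure: fix a parametrization of $W^u$ (or of its relevant arcs accumulating on the attractor) and show that $z \mapsto \bar i(z)$, the itinerary map from $W^u$ to symbol sequences, is monotone with respect to $\preceq$ on each monotonicity domain, with jumps exactly at the basic critical points, and that it is ``almost injective'' in the sense made precise in Section \ref{sec:coa} (two points with the same itinerary are not separated by the partition at any time). The necessity direction of the theorem is then the relatively soft half: if $\lpinf \cdot \ra p_{\h 0}$ is realized by some $z \in W^u$ with $p_0 = -$, and $\bar k = \lpinf w \pm \cdot \ra k_{\h 0}$ is a kneading sequence with $w = p_0 \cdots p_m$, then $F^{m+1}(z)$ lies on the same side of (or on) the fold as the relevant basic critical point whose image has itinerary $\bar k$; pushing forward one more step and comparing positions on $W^u$ using the monotonicity of $\bar i$ gives $\sigma^{m+2}(\ra p_{\h 0}) \preceq \ra k_{\h 0}$. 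The parity bookkeeping here is exactly the standard unimodal argument, localized to the arc of $W^u$ between the appropriate turning point and the endpoint, and one must be careful that the inequality is the correct (non-strict) one when $z$ itself maps onto $\mfc$.

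The sufficiency direction is where the real work lies, and it is the step I expect to be the main obstacle. Given an abstract sequence $\lpinf \cdot \ra p_{\h 0}$ satisfying all the stated inequalities against every matching kneading sequence, one must produce an actual point of $W^u$ realizing it. The natural strategy is a nested-intersection / shrinking-cylinder argument: define $\hat C_n$ to be the set of points of (a suitable compact piece of) $W^u$ whose itinerary agrees with $p_0 \cdots p_n$ in the first $n+1$ coordinates, show each $\hat C_n$ is a nonempty closed sub-arc (or finite union of arcs), that $\hat C_{n+1} \subseteq \hat C_n$, and that diameters shrink to $0$ by the nonuniform hyperbolicity / contraction-on-$W^u$ estimates available in $\WY$; the intersection point is the desired $z$. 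The nonemptiness of $\hat C_{n+1}$ given $\hat C_n$ is precisely where the admissibility inequalities must be invoked: if $\hat C_n$ ran up against a fold of $W^u$ (a basic critical point) in a way that forbade continuing with the symbol $p_{n+1}$, then the position of that fold's image on $W^u$ — encoded by the corresponding kneading sequence $\bar k$ — would violate $\sigma^{m+2}(\ra p_{\h 0}) \preceq \ra k_{\h 0}$ for the appropriate $m$. Making this contrapositive rigorous requires a clean dictionary between ``the cylinder $\hat C_n$ contains a basic critical point in its interior/boundary'' and ``$p_0 \cdots p_n$ is an initial segment of some $w$ appearing in $\mfk_F$'', together with the monotonicity of $\bar i$ to translate the geometric ``which side of the fold'' statement into the combinatorial $\preceq$-comparison. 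The delicate points are: (i) the attractor's multisheeted structure means several arcs of $W^u$ may carry the same initial symbol word, so $\hat C_n$ is a union and one must track folds on all its components; (ii) one needs the density of basic critical points (or at least enough of them) along $W^u$, which is where the description of $\C$ and $\K$ from Section \ref{sec:cl} and the Wang–Young geometry enters; and (iii) the limiting point $z$ obtained must be shown to lie on $W^u$ itself and not merely in its closure, which should follow from working with $\bar W^u$ and the invariance/closedness properties established earlier. Once nonemptiness and shrinking are in hand, $z := \bigcap_n \hat C_n$ has itinerary $\lpinf \cdot \ra p_{\h 0}$, completing the proof.
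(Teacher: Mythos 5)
Your necessity argument is essentially the paper's: push the realized point $P$ forward so that $F^{m+2}(P)$ and the turning point $z^1$ with kneading word $w = p_0\cdots p_m$ lie in the same leaf of $W^u$, and apply the leaf-comparison Lemma~\ref{lem:plek}. Your sufficiency outline also has the right high-level shape (keep the cylinders $\hat C_n$ of points of $W^u$ matching $p_0\cdots p_n$ nonempty, invoking the kneading inequalities at the place where one could become empty; the paper works with the basic arcs $F^n(\hat C_n)=I_{p_0\cdots p_n}$ rather than the $\hat C_n$, which is the same thing). Two things, however, do not survive a closer look.

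The more serious one is that you locate the obstruction at a \emph{fold} --- a basic critical point --- and say ``the fold's image'' carries the offending kneading sequence. This is backwards. If $F^n(\hat C_n)=I_{p_0\cdots p_n}$ contains a basic pre-critical point in its interior, then $F^{n+1}(\hat C_n)$ \emph{does} fold across $\K$, and then both continuations $p_{n+1}=+$ and $p_{n+1}=-$ are available, so nothing is obstructed. The refinement can only fail when there is \emph{no} fold: then $F^{n+1}(\hat C_n)$ is a single basic arc $J=I_{p_0\cdots p_n\hat p_{n+1}}$ lying entirely on one side of $\K$, both endpoints of $J$ are post-critical points (if one were a critical point the other continuation would exist), so $J$ is a \emph{leaf}, and the endpoint of $J$ nearer $\K$ is a post-turning point $F^i(z^1)$ for some turning point $z^1$ and some $i\ge 1$. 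The kneading sequence that the hypothesis is tested against is the one of this pulled-back $z^1$, with the index $m=n-i-1$, not that of a critical point adjacent to $\hat C_n$; identifying this $z^1$ and $m$ correctly is precisely the leaf/arc-code calculus of Section~\ref{sec:pfc}, which is the ``clean dictionary'' you rightly flagged as the missing ingredient but did not supply. As written, your plan would have you searching for an offending critical point where none exists. The lesser issue is that the shrinking-diameter step and the appeal to hyperbolicity estimates are unnecessary and would be awkward to justify on a nonuniformly hyperbolic attractor: each $\hat C_n$ is a closed subset of the single compact arc $I_-=[z_0^2,z_0]^u$ (a finite intersection of $F^{-i}(D^{p_i})$ with closed half-discs), so the nested intersection is nonempty by compactness alone, and any point of the intersection has itinerary $\lpinf\cdot\ra p_{\h 0}$ by the short combinatorial observation that a point has at most two itineraries, differing in exactly one coordinate. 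The multisheetedness worry also does not arise here: every sequence under consideration has left tail $\lpinf$ and $p_0=-$, so $\hat C_n\subset I_-$ and, by uniqueness of arc-codes, $F^n(\hat C_n)$ is a single basic arc whenever it is nonempty. The paper's proof is entirely combinatorial and uses no contraction estimate.
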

	Now that we know which sequences are  $W^u$-admissible, and since the symbolic space is equipped with the product topology, the following theorem holds.
	\begin{theorem}\label{esadmis}
		A sequence $\bar p = \dots p_{-2} p_{-1} \cdot p_0 p_1 \dots$ is admissible if and only if for every positive integer $n$ there is a $W^u$-admissible sequence $\bar q =  \dots q_{-2} q_{-1} \cdot q_0 q_1 \dots$ such that $p_{-n} \dots p_n = q_{-n} \dots q_n$. 
	\end{theorem}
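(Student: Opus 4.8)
This is essentially a soft, topological statement: it asserts that a bi‑infinite symbol sequence is realized on the attractor $\Lambda$ exactly when each of its finite central blocks is shadowed by a finite orbit segment lying in $W^u$. The two structural inputs are that the attractor is the closure of the unstable manifold, $\Lambda=\overline{W^u}$, and that $\Lambda$ is compact and $F$‑invariant, so that coding is defined along full orbits. The plan is to prove the two implications separately, both by an approximation argument in the product topology on $\{-,+\}^{\Z}$, exactly as hinted just before the statement: once Theorem~\ref{admis} tells us which sequences are $W^u$‑admissible, the passage to all admissible sequences is a compactness argument.

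For the ``if'' implication, assume that for every $n$ there is a $W^u$‑admissible sequence agreeing with $\bar p$ on $\{-n,\dots,n\}$. Such a sequence is the itinerary of some point $w_n\in W^u$, so $F^j(w_n)$ lies in the region coded by $p_j$ for all $|j|\le n$; after a small perturbation along $W^u$ (legitimate by the analysis in Section~\ref{sec:coa}) we may assume the orbit segment $F^{-n}(w_n),\dots,F^n(w_n)$ misses $\K$, so these containments are in the open regions. Since $W^u\subseteq\Lambda$ and $\Lambda$ is compact, pass to a subsequence $w_{n_k}\to z_*\in\Lambda$. For each fixed $j$, choosing $k$ with $n_k\ge|j|$ and using continuity of $F^j$, the point $F^j(z_*)$ lies in the closure of the region coded by $p_j$; as $j$ is arbitrary, $z_*$ realizes $\bar p$, hence $\bar p$ is admissible.

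For the ``only if'' implication, let $\bar p$ be the itinerary of $z\in\Lambda$ and fix $n$. Since $\Lambda=\overline{W^u}$ there are points of $W^u$ arbitrarily close to $z$; if the orbit segment $F^{-n}(z),\dots,F^n(z)$ avoids $\K$, then by continuity any $w\in W^u$ sufficiently close to $z$ has $F^j(w)$ in the same open region as $F^j(z)$ for all $|j|\le n$, so the itinerary of $w$, which is $W^u$‑admissible, agrees with $\bar p$ on $\{-n,\dots,n\}$, as required.

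The delicate point in both directions — and the one I expect to be the main obstacle — is the behaviour of orbit segments meeting the critical locus $\K$, where the symbol is not determined by the point alone. Handling it uses the description of the partition of $D$ by $\K$ and of how $\Lambda$ accumulates along $\K$ obtained in Section~\ref{sec:coa}: this is what guarantees that a limit point whose orbit touches $\K$ still realizes the prescribed sequence under the coding convention adopted there, and that in the ``only if'' direction the approximating $W^u$‑points can be chosen on the side of $\K$ prescribed by $\bar p$ — if necessary after first moving $z$ to a nearby point of $\Lambda$ whose relevant orbit segment avoids the countable set $\bigcup_{j}F^{-j}(\K)$. With this in place the two implications combine, through the product topology, to give the theorem.
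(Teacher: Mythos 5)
Your overall strategy is the same as the paper's: recognize that the theorem is exactly the statement $\Sigma_F=\Cl\Sigma_{W^u}$ read in the product topology, with the ``if'' direction coming from compactness of $\Lambda_F$ and closedness of $D^l,D^r$, and the ``only if'' direction from density of $W^u$ in $\Lambda_F$. The paper compresses this to one sentence after having established the chain Lemma~\ref{approx} $\Rightarrow$ Lemma~\ref{essential} $\Rightarrow$ Corollary~4.5 ($\Sigma_F=\Cl\Sigma_{W^u}$); you are essentially re-proving that chain in place. Your ``if'' direction is correct as written (the perturbation to avoid $\K$ is unnecessary: $D^l$ and $D^r$ are already closed, so $F^j(w_{n_k})\in D^{p_j}$ for large $k$ forces $F^j(z_*)\in D^{p_j}$ directly), so $\bar p$ is an itinerary of $z_*$.

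The gap is in the ``only if'' direction, precisely at the point you flag. If $F^j(z)\in\K$ for some $|j|\le n$, the coding $p_j$ is one of two admissible choices, and you must produce $w\in W^u$ near $z$ with $F^j(w)$ strictly on the $p_j$-side of $\K$ and matching $\bar p$ at the other coordinates. Your proposed fix---first replace $z$ by a nearby $z'\in\Lambda$ whose orbit segment avoids $\bigcup_j F^{-j}(\K)$---does not by itself work: such a $z'$ has a unique itinerary, and there is no guarantee it equals $p_j$ at the offending coordinate; $z'$ could lie on the wrong side of $F^{-j}(\K)$. What is actually needed, and what the paper uses (implicitly in the derivation of Corollary~4.5, and explicitly in the proof of Proposition~\ref{lem:dif1}), is that the basic critical points $\mfc$ are dense in $\C$: given $F^j(z)\in\C$, take basic critical points $z_{m_i}\to F^j(z)$; each $z_{m_i}\in W^u$ carries \emph{both} itineraries (sign $-$ and sign $+$ at the critical coordinate), so $F^{-j}(z_{m_i})\in W^u$ gives approximants with the prescribed value $p_j$ at coordinate $j$, and agreement with $\bar p$ at all other coordinates in $\{-n,\dots,n\}$ follows by continuity once $i$ is large (using that $F^k(z)\notin\K$ for $k\ne j$, by \cite[Theorem~1.1(2)(iii)]{WY}). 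Adding this step would complete your argument and align it with the paper's.
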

	Note that the kneading set $\mfk_F$ is indeed the pruning front and that Theorems \ref{admis} and \ref{esadmis} prove that all the other disallowed regions of the symbol plane are obtained by backward and forward iterations of the primary pruned region, which is the statement of the pruning front conjecture in \cite{CGP}. At the end of Section \ref{sec:pfc} we prove that the equality of two kneading sets implies topological conjugacy of the corresponding H\'enon maps.
	\begin{theorem}\label{oneway}
		Within $\WY$, the H\'enon maps $F_1$ and $F_2$ are topologically conjugate on their strange attractors if their sets of kneading sequences coincide; i.e., $\mfk_{F_1} = \mfk_{F_2}$.
	\end{theorem}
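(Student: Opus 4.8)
\emph{Proof proposal.} The plan is to recover each strange attractor, together with its dynamics, from its kneading data, and then to identify the two. Write $\Lambda_F$ for the strange attractor of $F$, and let $\Sigma$ be the space of bi-infinite sequences $\dots p_{-1}\cdot p_0p_1\dots$ over $\{-,+\}$ with the shift $\sigma$. From Section~\ref{sec:coa} we have the coding attached to the partition of the forward invariant disc by the critical locus $\K$ into a left and a right part: every $z\in\Lambda_F$ has an itinerary $\pi_F(z)\in\Sigma$, single valued except on the countable grand orbit $\O_F:=\bigcup_{n\in\Z}F^n(\mfc)$ of the basic critical points, where exactly one coordinate is ambiguous because the corresponding iterate of $z$ lies on $\K$ (that this is the only ambiguity is forced by the definiteness of $w$ and $\ra k_{\h 0}$ in the kneading sequences). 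Let $\Sigma_F\subseteq\Sigma$ be the set of admissible sequences, which by Theorem~\ref{esadmis} is the closure in $\Sigma$ of the $W^u$-admissible ones, i.e. of $\pi_F(W^u)$. Provided the coding is \emph{faithful} — distinct points of $\Lambda_F$ have disjoint sets of itineraries, equivalently the closed cylinders cut out by an admissible sequence shrink to one point — the decoding $\Psi_F\colon\Sigma_F\to\Lambda_F$, sending a sequence to that point, is a continuous surjection with $F\circ\Psi_F=\Psi_F\circ\sigma$, and its only non-singleton fibres are the pairs $\{\,\lpinf w+\cdot\ra k_{\h 0},\ \lpinf w-\cdot\ra k_{\h 0}\,\}$ — the two itineraries of the turning point $F(c)$, the ambiguity at coordinate $-1$ since $F^{-1}(F(c))=c\in\K$ — together with all their shifts. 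Hence $\Psi_F$ descends to a conjugacy $\Phi_F\colon(\Sigma_F/\!\sim_F,\sigma)\xrightarrow{\,\cong\,}(\Lambda_F,F)$, where $\sim_F$ is the smallest $\sigma$-invariant equivalence relation gluing $\lpinf w+\cdot\ra k_{\h 0}$ to $\lpinf w-\cdot\ra k_{\h 0}$ for each kneading sequence $\lpinf w\pm\cdot\ra k_{\h 0}\in\mfk_F$. (Without the quotient: $\Phi_{F_2}\circ\Phi_{F_1}^{-1}$ below is simply the map carrying a point of one attractor to the point of the other sharing its itinerary.)

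So let $F_1,F_2\in\WY$ with $\mfk_{F_1}=\mfk_{F_2}$. By Theorem~\ref{admis} — whose criterion for $W^u$-admissibility is stated entirely through the parity-lexicographical comparisons $\sigma^{m+2}(\ra p_{\h 0})\preceq\ra k_{\h 0}$ against the kneading sequences — together with the fact that the all-$+$ sequence is always $W^u$-admissible (it is the itinerary of $X$), the set of $W^u$-admissible sequences depends only on $\mfk_{F_i}$; hence $F_1$ and $F_2$ have the same $W^u$-admissible sequences, and then by Theorem~\ref{esadmis} the same admissible sets, $\Sigma_{F_1}=\Sigma_{F_2}=:\Sigma_\infty$. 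Since $\sim_{F_i}$ was described purely in terms of $\mfk_{F_i}$, also $\sim_{F_1}=\sim_{F_2}=:\sim$. Therefore $(\Sigma_\infty/\!\sim,\sigma)$ is one fixed system, and $h:=\Phi_{F_2}\circ\Phi_{F_1}^{-1}\colon\Lambda_{F_1}\to\Lambda_{F_2}$ is a homeomorphism; it is equivariant because each $\Phi_{F_i}$ intertwines $\sigma$ with $F_i$, so $h\circ F_1=\Phi_{F_2}\circ\sigma\circ\Phi_{F_1}^{-1}=F_2\circ h$. This is the asserted conjugacy on the strange attractors.

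The whole weight of the argument is on faithfulness of the coding (well-definedness and continuity of $\Psi_F$ come along with it), and I expect this to be the genuine obstacle. Faithfulness is exactly the assertion that $\K$ is a generating partition for $F|_{\Lambda_F}$: any two distinct points of $\Lambda_F$ are separated across $\K$ by some iterate. The transverse direction is controlled by strong dissipativity — $|\det DF|=|b|$ is tiny — so the essential case is two points on a common local unstable manifold; forward iteration expands the sub-arc of $W^u$ between them, and one must upgrade this growth to the statement that the two \emph{endpoints} lie on opposite sides of $\K$, or of one of its forward images, at some time. This is where the density in $W^u$ of the crossings with the critical locus is used — precisely the geometry behind the pruning front — together with the non-uniform hyperbolicity on $\Lambda_F$ from the Wang--Young estimates and the local description of $\K$ near $\C$ from Section~\ref{sec:cl}. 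The only remaining point needing care is combinatorial: one checks that the admissible sequences that Theorem~\ref{esadmis} adjoins to $\pi_F(\Lambda_F)$ but which are not genuine itineraries are exactly the $\lpinf w\pm\cdot\ra k_{\h 0}$ and their shifts, so that the fibres of $\Psi_F$ are as claimed and $\Phi_F$ is a homeomorphism of compact metric spaces.
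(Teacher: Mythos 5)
Your framework is the same as the paper's: decode $\Sigma_F$ to $\Lambda_F$ via $\iota_F$, pass to a quotient by the fiber relation, and show that this quotient depends only on $\mfk_F$. But the claim that drives your conclusion---that the only non-singleton fibres of $\Psi_F$ are the shifted kneading pairs $\sigma^n\bigl\{\lpinf w+\cdot\ra k_{\h 0},\ \lpinf w-\cdot\ra k_{\h 0}\bigr\}$---is false. The critical set $\C$ is a Cantor set, while $\mfc=\C\cap W^u$ is only countable, so $\C\setminus\mfc$ is uncountable; a point $P\in\Lambda_F$ with $P^j\in\C\setminus\mfc$ for some $j$ also has exactly two itineraries (differing only at coordinate $j$), and neither of them is a shift of a kneading sequence (their left tails are not eventually $\lpinf$). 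Hence the relation $\sim_F$ you propose---the smallest $\sigma$-invariant equivalence relation gluing kneading pairs---is strictly smaller than the fibre relation of $\iota_F$, $\Psi_F$ does not descend to a bijection on $\Sigma_F/\!\sim_F$, and the map $\Phi_{F_2}\circ\Phi_{F_1}^{-1}$ is not well defined without more work. Your closing remark (``one checks that the admissible sequences that Theorem~\ref{esadmis} adjoins to $\pi_F(\Lambda_F)$ but which are not genuine itineraries are exactly the $\lpinf w\pm\cdot\ra k_{\h 0}$\dots'') confuses the issue: $\Sigma_F$ consists entirely of genuine itineraries; nothing is adjoined.

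What is actually needed, and what the paper supplies in Proposition~\ref{lem:dif1}, is the statement that the identifications at $\C\setminus\mfc$ are \emph{also} determined by $\mfk_F$: given $P\in\C_1\setminus\mfc_1$ with itineraries $\bar p,\bar q$, choose basic critical points $z_{n_i}\to P$; their pairs of itineraries (which are shifts by one of kneading pairs) converge to $\bar p$ and $\bar q$; since $\mfk_1=\mfk_2$, those pairs are collapsed by $\iota_2$ to critical points $z'_{n_i}\in\C_2$; and continuity of $\iota_2$ (Lemma~\ref{lem:continuous}) forces $\iota_2(\bar p)=\lim z'_{n_i}=\iota_2(\bar q)$. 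In other words, the correct fibre relation is the topological closure of yours in $\Sigma_F\times\Sigma_F$, and the fact that this closure (rather than the algebraically generated relation) is the right object, and is still determined by $\mfk_F$, is the real content of the theorem. The paper then defines $\sim_F$ directly by $\bar p\sim_F\bar q\iff\iota_F(\bar p)=\iota_F(\bar q)$ and uses Proposition~\ref{lem:dif1} to see this does not depend on which of $F_1,F_2$ is used. You should replace the fibre characterization and the ``smallest equivalence relation'' construction by this closure-and-approximation argument; the rest of your proposal (Theorems~\ref{admis} and \ref{esadmis} giving $\Sigma_{F_1}=\Sigma_{F_2}$, equivariance of the resulting homeomorphism) is correct and matches the paper.
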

	
	The H\'enon maps are maps on the plane that fold the plane back into itself exactly once, and as such, they could be considered as a two-dimensional analogue of the unimodal maps of an interval. To characterize itineraries of all points for a unimodal map we need only one sequence of two symbols -  the kneading sequence - and as we have already mentioned, this sequence is an invariant of the topological conjugacy classes of the unimodal maps with negative Schwartzian derivative and without periodic attractors. The question is: Is there such a good sequence of two symbols for the H\'enon  maps?
	
	\subsubsection{The folding pattern and pruned trees}
	In Section \ref{sec:fp} we introduce a new notion for the H\'enon maps, where the whole information about the symbolic system is maximally compressed. This new notion we call the \emph{folding pattern}. It is a bi-infinite sequence of two symbols and, as we prove in this paper, it is an invariant of the topological conjugacy classes of the H\'enon maps within the Wang-Young parameter set, and it characterizes the set of itineraries of such H\'enon maps; see Theorem \ref{equivalent}. The same holds also for the Lozi maps within the Misiurewicz set of parameters (the folding pattern is introduced in \cite{MS} for the Lozi maps). The folding patterns are introduced for the first time in \cite{S} by the second author, for the classification of the inverse limit spaces of tent maps with finite critical orbit. They also played a key role in the proof of the Ingram conjecture (topological classification of the inverse limit spaces of tent maps) by M.\ Barge, H.\ Bruin, and the second author, \cite{BBS}.  
	
	In addition, in Section \ref{sec:fp} we construct pruned trees for the H\'enon maps, mentioned in \cite{CGP}, and prove that they are equivalent to the sets of kneading sequences and to the folding patterns, that is, given one of them, we can recover the other two, see Theorem \ref{equivalent-fp-pt}.
	
	\subsubsection{The classification of H\'enon attractors}    
	In Section \ref{sec:cla} we classify (up to topological conjugacy) the H\'enon maps on their strange attractors within Wang-Young's set of parameters.
	
	Our proof exploits the notion of mild dissipation introduced by Crovisier and Pujals \cite{CP} in  2017, where they showed that for any $a \in (1,2)$ and  $b \in (-\frac{1}{4},0) \cup (0,\frac{1}{4})$ the H\'enon map $F_{a,b}$ is mildly dissipative on the surface $\D = \{ (x, y) : |x| < 1/2 + 1/a, |y| < 1/2 - a/4 \}$ and has a 1-dimensional structure in the following sense: using 1-dimensional pieces of stable manifolds of $\mu$-almost every point in the surface of dissipation, for any ergodic measure $\mu$ not supported on a hyperbolic sink, a reduced 1-dimensional dynamics is obtained as a continuous non-invertible map acting on a tree. Our classification result relies on an extension of that result from our recent work in \cite{BS}, where we showed that for each orientation reversing H\'enon map $F$ there exists a densely branching tree $\T$, such that $F$ on its strange attractor is conjugate to the shift homeomorphism on the inverse limit of $\T$. In the current paper, we first argue that the same result holds for $b < 0$, and for that case, we slightly modify the invariant disc $D$ to include the other fixed point of $F$, denoted by $Y$, to the boundary of $D$. Then we isolate two special sets $E_X$ and $B_X$, which are subsets of the set of all endpoints and all branch points of $\T$, respectively. We show that the aforementioned conjugacy determines: (1) For $b > 0$, a bijection between the elements of $E_X \cup B_X$ and the orbits of basic critical points; (2) For $b < 0$, a bijection between the elements of $E_X \cup B_X$ and the orbits of quasi-critical points, that are the points of $\K \cap W^u_Y$. That, together with Theorem \ref{oneway} and the results from Section \ref{sec:fp}, leads to the classification result in terms of the kneading sequences and the folding pattern.
	
	\begin{theorem}\label{thm:iff}
		Let $F_i : D_i \to D_i$, $i = 1,2$, be two H\'enon maps with parameters in $\WY$. If $F_1$ and $F_2$ are topologically conjugate, then their sets of kneading sequences coincide, and their folding patterns coincide. Conversely, $F_1$ and $F_2$ are topologically conjugate on their strange attractors if their sets of kneading sequences coincide, or equivalently if their folding patterns coincide.  
	\end{theorem}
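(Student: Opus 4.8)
The converse direction requires no new ingredients. By the results of Section~\ref{sec:fp} (see Theorem~\ref{equivalent}), for two H\'enon maps in $\WY$ the folding patterns coincide if and only if the sets of kneading sequences coincide, and by Theorem~\ref{oneway} the latter already implies that the two maps are topologically conjugate on their strange attractors; this yields both the ``if'' and the ``or equivalently'' parts of the statement. So the plan is to prove the forward implication --- that topological conjugacy forces $\mfk_{F_1} = \mfk_{F_2}$ --- after which the coincidence of folding patterns follows again from Theorem~\ref{equivalent}. Write $\Lambda_i$ for the strange attractor of $F_i$ and $\mfc_i$ for its set of basic critical points; any conjugacy between $F_1$ and $F_2$ restricts to a homeomorphism $h \colon \Lambda_1 \to \Lambda_2$ with $h \circ F_1 = F_2 \circ h$, and this is what I would start from.

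I would push everything through the inverse limit model. By \cite{BS} for $b > 0$, and by the extension to $b < 0$ carried out in Section~\ref{sec:cla} (where the invariant disc $D$ is modified so that the second fixed point $Y$ lies on $\partial D$), for each $i$ there is a densely branching tree $\T_i$ and a homeomorphism $\Phi_i \colon \Lambda_i \to \invlim \T_i$ conjugating $F_i$ to the shift homeomorphism $\hat\sigma_i$ on $\invlim\T_i$. Then $\hat h := \Phi_2 \circ h \circ \Phi_1^{-1}$ is a homeomorphism of $\invlim\T_1$ onto $\invlim\T_2$ conjugating $\hat\sigma_1$ to $\hat\sigma_2$. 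Since the sets of endpoints and of branch points of a continuum are topological invariants, $\hat h$ sends endpoints to endpoints and branch points to branch points. The fixed point $X$ is dynamically distinguished inside $\Lambda_i$ --- it is the unique fixed point of $F_i$ in $\Lambda_i$, whereas $Y$ is either not in $\Lambda_i$ or, after the modification, sits on $\partial D$ --- so $\hat h$ takes the point of $\invlim\T_1$ over $X$ to the point of $\invlim\T_2$ over $X$. The special sets $E_X$ and $B_X$ of Section~\ref{sec:cla} are defined purely in terms of the topology of $\invlim\T_i$, the shift $\hat\sigma_i$, and the location of $X$, so $\hat h\bigl(E_X^{(1)} \cup B_X^{(1)}\bigr) = E_X^{(2)} \cup B_X^{(2)}$.

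Next I would transfer this to the critical orbits and to the symbolic coding. By the bijections established in Section~\ref{sec:cla} between $E_X \cup B_X$ and the $F_i$-orbits of the basic critical points (for $b > 0$), respectively of the quasi-critical points of $\K \cap W^u_Y$ (for $b < 0$), and since $\hat h$ intertwines $\hat\sigma_1$ and $\hat\sigma_2$, the map $\hat h$ induces an orbit-preserving bijection from the $F_1$-orbit of $\mfc_1$ onto the $F_2$-orbit of $\mfc_2$. It then remains to see that $\hat h$ respects the symbolic coding. The partition of $\Lambda_i$ into left and right regions by the critical locus $\K$ that is used to define itineraries corresponds, under $\Phi_i$, to a topological feature of $\invlim\T_i$ --- which side of the relevant branch/turning structure a point lies on --- so $\hat h$ carries the coding partition of $\invlim\T_1$ to that of $\invlim\T_2$ up to at most a global swap of the two symbols; the arrangement of $E_X \cup B_X$ within the trees reflects the parity-lexicographical order on the kneading sequences and is preserved by $\hat h$, and the normalization $w_0 = -$, $k_0 = +$ built into the definition of a kneading sequence excludes the swap. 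Hence $\hat h$ intertwines the itinerary maps, so the set of itineraries of the turning points $F_1(\mfc_1)$ equals that of the turning points $F_2(\mfc_2)$, i.e., $\mfk_{F_1} = \mfk_{F_2}$, and the coincidence of folding patterns follows from Theorem~\ref{equivalent}.

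The hard part is the input imported from \cite{BS} and Section~\ref{sec:cla}: one needs to know that $E_X$, $B_X$, and the coding partition are \emph{intrinsic} topological--dynamical features of $\invlim\T_i$, so that every conjugacy is forced to preserve them, and that the only residual ambiguity in recovering the coding from the topology of $\invlim\T_i$ is the global symbol swap, which the normalization of kneading sequences then removes. Granted this structure theory of densely branching trees, the deduction above is essentially formal.
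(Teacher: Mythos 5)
Your handling of the converse direction is correct and matches the paper: $\mfk_{F_1}=\mfk_{F_2}$ implies conjugacy on the attractors by Theorem~\ref{oneway}, and the equivalence with folding patterns is Theorem~\ref{equivalent}.

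The forward direction, however, has a genuine gap. You set up the conjugacy $\hat h = \Phi_2 \circ h \circ \Phi_1^{-1}$ of the inverse limits $\invlim\T_i$ and then argue that $E_X$, $B_X$ and the coding partition are ``intrinsic topological--dynamical features of $\invlim\T_i$,'' so that $\hat h$ must carry $E_{X}^{(1)}\cup B_{X}^{(1)}$ onto $E_{X}^{(2)}\cup B_{X}^{(2)}$. There are two problems with this. First, $E_X$ and $B_X$ are by definition subsets of the tree $\T_i$, not of $\invlim\T_i$; a homeomorphism of inverse limits does not in general induce any map of the underlying trees or bonding maps, so the assertion that $\hat h$ ``sends endpoints to endpoints and branch points to branch points'' cannot even be applied to $E_X$, $B_X$ without first producing a conjugacy $h\colon\T_1\to\T_2$. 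Second, the paper does \emph{not} prove that $E_X$, $B_X$ and the coding partition are intrinsic invariants of the inverse-limit system; their definitions refer explicitly to $\partial^u D$, $F^{-n}(\K)$ and the stems arising from $\Omega_n\cap D$, which are geometric data in the disc. What the paper actually proves is weaker and more direct: Proposition~\ref{prop:con} takes the given conjugacy $H\colon D_1\to D_2$ and uses the fact that $H$ sends stable-manifold pieces in $\Gamma_1$ to pieces in $\Gamma_2$ (after the normalization $H(\gamma_{X_1})=\gamma_{X_2}$ from Lemma~\ref{lem:gamma}) to construct the tree conjugacy $h$ directly, along with the key extra information that $h$ preserves stems and their levels. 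This stem-preservation is precisely what makes Theorem~\ref{thm:tp} work; it is not a consequence of abstract topological invariance, and your argument has no substitute for it. Your plan also elides the $b<0$ case, where one must pass from quasi-turning orbits to basic critical orbits (Corollary~\ref{cor:cp}), and the final step of going from ``orbit-preserving bijection on turning points'' to ``equality of kneading sets'' is asserted via a vague parity/global-swap heuristic, whereas Corollary~\ref{iff} requires a careful minimality argument tracking the first place where a post-turning point could land on the wrong side of the critical locus.

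In short: you correctly identified the ingredients ($\invlim\T$, $E_X$, $B_X$, the bijection with turning orbits), but the load-bearing step --- producing a tree conjugacy that preserves stems and $E_X\cup B_X$ --- is neither formal nor a consequence of topological invariance of the inverse limit. It is obtained in the paper from the two-dimensional conjugacy $H$ acting on $W^s$, and that mechanism is missing from your argument.
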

	
	\subsubsection{Generalizations to the piecewise affine case}
	In Section \ref{sec:lm} we discuss how the same approach can be used to classify the Lozi maps for the Misiurewicz set of parameters, by proving that two such Lozi maps are conjugate if and only if their sets of kneading sequences coincide, if and only if their folding patterns coincide. 
	
	\smallskip
	{\bf Acknowledgments.} We are grateful to Michael Benedicks and Liviana Palmisano for making us aware of the work of Wang and Young and pointing out the necessity of proving the existence of the critical locus $\K$, as well as helpful feedback during the first author's visit to KTH Stockholm in December of 2022. We thank Sylvain Crovisier whose suggestion to look at the notion of mild dissipation in \cite{CP} started our work on H\'enon maps in \cite{BS}. We thank P. Boyland, A. de Carvalho, and T. Hall for their comments during Simons Semester in Warsaw ``Topological, smooth and holomorphic dynamics, ergodic theory, fractals'' in the Spring of 2023. We are also thankful to Marco Martens for his feedback on folding patterns and inverse limits during his visit in Krak\'ow in May of 2023.

	\section{Preliminaries}\label{sec:prelim}
	
	\subsection{Geometry of critical regions and critical set: a recap}\label{ss:prelim1}
	
	In 2001, Wang and Young proved in \cite{WY}, among many results, a few theorems that we use in our construction of the arc $\K$, which contains the set of critical points $\C$, and which is given in the next section. Therefore, for completeness and for readers' convenience, we include these results here.
	\begin{theorem}[{\cite[Corollary 1.3 \& Appendix A]{WY}}]\label{thm:WY}
		Let $q_{a}$ be a quadratic map, $q_{a}(x) = 1 - ax^2$. Let $I$, $I_0$ be closed intervals such that $q_a(I) \subset \Int I_0 \subset I_0 \subset \Int I$, and let $J_1$, $J_2$ be the two components of $I \setminus I_0$. Let $b_0 << |J_1|, |J_2|$. For every $a^* \in [1.5, 2]$ for which $q_{a^*}$ is a Misiurewicz map the following holds:
		\begin{enumerate}[$(I)$]
			\item \cite[Theorem 1.8 (i)]{WY} There exist $\kappa > 0$ and a rectangle $[a_0, a_1] \times (0, b_0]$ arbitrarily near $(a^*, 0)$ such that for each $(a, b) \in [a_0, a_1] \times (0, b_0]$, $F_{a,b}$ maps $R = I \times [-\kappa b, \kappa b]$ strictly into $I_0 \times [-\kappa b, \kappa b]$, definig an attractor $\Lambda_{F_{a,b}} = \bigcap_{n \in \N_0} F_{a,b}^nR$. (To ensure that $q_a(I) \subset \Int I$ for some $I$ in the case $a^* = 2$, consider $a$ slightly less than $2$.)
			
			Let now replace $\partial I \times [-\kappa b, \kappa b]$ by two curves $\omega_1$ and $\omega_2$ so that each $\omega_i \subset J_i \times [-\kappa b, \kappa b]$ joins the top and bottom boundaries of $R$, and lies on the stable manifold of a periodic orbit. Let $R_0$ be the subregion of $R$ bounded by $\omega_1$ and $\omega_2$. Let $\partial_h R_0$ denote the horizontal boundary of $R_0$. Let $R_k = F_{a,b}^k(R_0)$ and $\partial_h R_k = F_{a,b}^k(\partial_h R_0)$. Note that the left and right boundaries shrink exponentially as we iterate.  
			
			The constants $\kappa, \alpha, \delta, c > 0$, $0 < \rho < 1$, below are system constants, and $b << \alpha, \delta, \rho, e^{-c}$ for all $(a, b) \in [a_0, a_1] \times (0, b_0]$.
			
			\item \cite[Theorem 1.8 (ii)]{WY} There is a positive measure set $\WY \subset [a_0, a_1] \times (0, b_0]$ such that for all $(a, b) \in \WY$ and for $F_{a,b}|_{R_0}$ the following holds:
			\begin{enumerate}[$(1)$]
				\item \cite[Theorem 1.1 (1)]{WY} There is a Cantor set $\C \subset \Lambda_{F_{a,b}}$ called the \emph{critical set} given by $\C = \bigcap_{k \in \N_0} \C^{(k)}$, where the $\C^{(k)}$ are a decreasing sequence of neighborhoods of $\C$ called critical regions. More precisely,
				\begin{enumerate}[$(i)$]
					\item $\C^{(0)} = \{ (x, y) \in R_0 : d(x, c^*) < \delta \}$, where $c^*$ is the critical point of $q_{a^*}$.
					\item $\C^{(k)}$ has a finite number of components called $Q^{(k)}$ each one of which is diffeomorphic to a rectangle. The boundary of $Q^{(k)}$ is made up of two $C^2(b)$ segments of $\partial_h R_k$ connected by two vertical lines. The horizontal boundaries are $\approx \min (2\delta, \rho^k)$ in length, and the Hausdorff distance between them is $\O (b^{k/2})$.
					\item $\C^{(k)}$ is related to $\C^{(k-1)}$ as follows: $Q^{(k-1)} \cap R_k$ has at most two components, each one of which lies between two $C^2(b)$ segments of $\partial_h R_k$ that stretch across $Q^{(k-1)}$. Each component of $Q^{(k-1)} \cap R_k$ contains exactly one component of $\C^{(k)}$.
				\end{enumerate}
				\item \cite[Theorem 1.1 (2)]{WY} On each horizontal boundary $\eta$ of each component $Q^{(k)}$ of $\C^{(k)}$, $k \in \N_0$, there is a unique point $z$ characterized by the following two properties:
				\begin{enumerate}[$(i)$]
					\item $\| DF_{a,b}^j(z)({0 \atop 1}) \| \ge \kappa^{-1}e^{cj}$ for all $j \in \N$.
					\item If $\tau$ is a unit tangent vector to $\eta$ at $z$, then $\| DF_{a,b}^n(z) \tau \| < (\kappa b)^n$ for all $n \in \N$.
				\end{enumerate}
				Let $\H$ be the set of all these points, and let $d_{\C}(\cdot)$ be the notion of ``distance to the critical set'' defined below. Then $z \in \H$ also satisfies
				\begin{enumerate}
					\item[(iii)] $d_{\C}(z^j) \ge \kappa^{-1}e^{-\alpha j}$ for all $j \in \N$, where $z^j = F_{a,b}^j(z)$.
				\end{enumerate}
				Finally, since the critical set $\C$ is the accumulation set of $\H$, properties $(i)$ and $(iii)$ of $\H$ are passed on to $\C$.
			\end{enumerate}
			The set $\WY$ has the property that for all sufficiently small $b$, the set $\{ a : (a,b) \in \WY \}$ has positive 1-dimensional Lebesgue measure.  
			These results are valid for both $b > 0$ and $b < 0$.
		\end{enumerate}
	\end{theorem}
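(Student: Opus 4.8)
The plan is, in essence, one of careful transcription: Theorem~\ref{thm:WY} collects results established by Wang and Young, so the proof consists of matching each item to its source in \cite{WY} and checking that the standing hypotheses here --- $a^*\in[1.5,2]$ with $q_{a^*}$ Misiurewicz, and $b$ small relative to the system constants $\kappa,\alpha,\delta,c,\rho$ --- are exactly those under which their theorems apply. First I would recall the one-dimensional data: fix $I$, $I_0$ and the two gaps $J_1,J_2$ of $I\setminus I_0$ adapted to $q_{a^*}$, then thicken to the rectangle $R=I\times[-\kappa b,\kappa b]$. Item~$(I)$ is then \cite[Theorem 1.8(i)]{WY}: the trapping $F_{a,b}(R)\subset\Int(I_0\times[-\kappa b,\kappa b])$ is a perturbative estimate off the $b=0$ dynamics of $q_a$, and the attractor is defined as the nested intersection $\Lambda_{F_{a,b}}=\bigcap_{n}F_{a,b}^nR$. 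Replacing the vertical sides of $R$ by arcs $\omega_1,\omega_2$ lying on stable manifolds of periodic orbits makes $R_0$ genuinely forward quasi-invariant, and the exponential shrinking of the left and right boundaries of $R_k$ under iteration is strong dissipativity, $b\ll 1$.

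Next, item~$(II)(1)$ --- the Cantor critical set $\C=\bigcap_k\C^{(k)}$ together with the combinatorics of the critical regions $Q^{(k)}$ --- is \cite[Theorem 1.1(1)]{WY}, proved by the inductive ``capture'' scheme: $\C^{(0)}$ is a $\delta$-neighbourhood of the one-dimensional critical point $c^*$; given $\C^{(k-1)}$, one examines $Q^{(k-1)}\cap R_k$ for $R_k=F_{a,b}^k(R_0)$, uses the $C^2(b)$ control on $\partial_h R_k$ together with the single fold of the H\'enon map to see that this intersection has at most two components, each squeezed between two $C^2(b)$ segments of $\partial_h R_k$ stretching across $Q^{(k-1)}$, and then places exactly one component of $\C^{(k)}$ inside each of them; the Hausdorff distance $\O(b^{k/2})$ between the two horizontal boundaries of a $Q^{(k)}$ reflects the $b^{1/2}$-type contraction of the stable foliation. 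Item~$(II)(2)$ --- existence and uniqueness on each horizontal boundary $\eta$ of $Q^{(k)}$ of the point $z$ with the strong vertical expansion $(i)$, the exponential contraction $(ii)$ of a unit tangent to $\eta$, and the induced bound $(iii)$ $d_{\C}(z^j)\ge\kappa^{-1}e^{-\alpha j}$ --- is \cite[Theorem 1.1(2)]{WY}: uniqueness holds because the ``criticality'' condition cuts out a single point transversally along $\eta$, and $(i)$ and $(iii)$ pass to $\C$ itself by taking limits along $\H$, which by construction accumulates on $\C$. The positive-measure statement for $\WY$, and the positive one-dimensional measure of the slices $\{a:(a,b)\in\WY\}$ for all small $b$, are \cite[Theorem 1.8(ii)]{WY}.

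The only point that is genuinely mathematical rather than bibliographic is the last line, that ``these results are valid for both $b>0$ and $b<0$,'' and this is where I expect the sole (and minor) obstacle, since the rectangle version in item~$(I)$ is stated only for $b\in(0,b_0]$. For $b<0$ the map $F_{a,b}$ reverses orientation, so the one-fold geometry and the sign conventions entering the choice of $R_0$ and of the arcs $\omega_i$ must be adjusted. I would handle this either by invoking that the inductive scheme of \cite{WY} is insensitive to the sign of $b$ and goes through verbatim for $|b|$ small of either sign, or, failing that, by observing that $F_{a,b}^2$ is again a strongly dissipative H\'enon-like map, now orientation preserving, to which the orientation-preserving version applies, and then transferring the critical-region structure back to $F_{a,b}$ one iterate at a time. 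Beyond that, the argument is the routine check that all system constants can be chosen uniformly over the relevant parameter rectangle.
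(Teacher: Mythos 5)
Your proposal is essentially the paper's own approach: Theorem~\ref{thm:WY} is a verbatim restatement of results from \cite{WY}, and the paper provides no proof of its own beyond the citations in the theorem header and in each enumerated item, which is exactly the bibliographic matching you carry out. The one place you work harder than necessary is the final sentence about $b>0$ versus $b<0$: Wang and Young already establish their results for parameters $(a,b)$ with $b$ of either sign (this is part of what the reference to \cite[Appendix A]{WY} is doing), so no new argument --- neither the ``inductive scheme is sign-insensitive'' route nor the $F_{a,b}^2$ workaround --- is required here; it is bibliographic as well.
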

	\noindent
	{\bf Distance to the critical set.} For $z \in R_0$, $d_{\C}(z)$ is defined as follows: For $z \in \C^{(0)} \setminus \C$, let $k$ be the largest number with $z \in \C^{(k)}$, and let $d_{\C}(z)$ be the horizontal distance between $z$ and the midpoint of the component of $\C^{(k)}$ containing $z$. For $z \nin \C^{(0)}$, use  $\C^{(0)}$.
	
	\begin{rem}
		It is assumed that periodic orbits, such that the curves $\omega_1$, $\omega_2$ above lie on their stable manifolds, stay outside of $\C^{(0)}$ (see \cite[Appendix A]{WY}).
		
		A curve in $R_0$ is called a $C^2(b)$-curve if the slopes of its tangent vectors are $\O(b)$ and its curvature is everywhere $\O(b)$ (see \cite[Subsection 1.2.]{WY}).
	\end{rem}

	From now on we denote by $F$ a H\'enon map with a pair of parameters in $\WY$, and by $\Lambda_F$  the strange attractor of $F$. Recall that $F$ has two fixed points: the one that lies in the strange attractor $\Lambda_F$ we denote by $X$, and the other one, that does not lie in $\Lambda_F$, we denote by $Y$. The unstable and stable manifolds of $X$ we denote by $W^u$ and $W^s$ respectively, and in general, the unstable and stable manifolds of a point $P$, we denote by $W^u_P$ and $W^s_P$ respectively, if they exist. It is convenient to distinguish two cases, depending on whether the map $F$ is orientation preserving, $b < 0$, or orientation reversing, $b > 0$ (the latter case is somewhat better known, see for example \cite{BC} and \cite{BenedicksYoung93}).
	
	Let $D$ be a closed disc. For $b > 0$, $D$ is defined (and denoted) as in \cite[Lemma 4.4, Figure 2]{BC}, or as in \cite[Proposition 4.1, Figure 5]{MV}, see Figure \ref{fig:D}. The boundary of $D$ consists of an arc of $W^u$, denoted by $\partial^u D$, and a straight line segment, denoted by $l$, so $\partial D = \partial^u D \cup l$. For $b < 0$, $D$ is defined (and denoted) as in \cite[Proposition 4.1, Figure 4]{MV}. The boundary of $D$ consists of an arc of $W^u_Y$, also denoted by $\partial^u D$, and a straight line segment, denoted by $l$, so again $\partial D = \partial^u D \cup l$, see Figure \ref{fig:D2}. The disc $D$ is forward invariant, $F(D) \subset D$, it contains a fixed point $X$ and $\Cl W^u \subset D$, \cite[Proposition 4.1]{MV}. Also $\Lambda_F = \Cl W^u \subset D$; that is proved in \cite[Proposition 4.1]{MV} for $b > 0$, and follows by \cite[Proposition 4.1]{MV} and \cite[(TB) on p.\ 432]{BenedicksViana} for $b < 0$. 
	
	\begin{figure}[h]
		\resizebox{1\textwidth}{!}{
			\begin{tikzpicture}
				
				\fill[green!5!white] (-6.65,3.43)--(-4.74,-3.72) -- (-6.65,3.43) .. controls (10.75,0.9) and (10.75,-0.7) .. (-4.74,-3.75) -- cycle;
				
				\tikzstyle{every node}=[draw, circle, fill=white, minimum size=2pt, inner sep=0pt]
				\tikzstyle{dot}=[circle, fill=white, minimum size=0pt, inner sep=0pt, outer sep=-1pt]
				\node (n1) at (5*4/3,0) {};
				\node (n2) at (-6.65,5*2/3)  {};
				\node (n3) at (-4.74,-3.72)  {};
				\node (n4) at (5*5/9,0)  {};
				\node (n5) at (-4.59,-1.76) {};
				\node (n6) at (-3.98,-2.1)  {};
				\node (n7) at (-1.61,2.08)  {};
				\node (n8) at (5*61/51,0)  {};
				\node (n9) at (5*65/75,0)  {};
				
				\node (n10) at (5*4/9,1.87)  {};
				
				\node (n21) at (0.04,2.32)  {};
				\node (n22) at (-0.05,1.06)  {};
				\node (n23) at (0.05,-0.74)  {};
				\node (n24) at (-0.05,-1.28)  {};
				\node (n25) at (0.05,-2.08)  {};
				\node (n26) at (0,-2.74)  {};
				
				\node (n27) at (0,1.87)  {};
				\node (n28) at (-0.01,1.46)  {};
				
				\draw (-6.65,5*2/3)--(-4.74,-3.72);
				
				\draw (3,1.7) .. controls (7.7,0.7) and  (7.7,-0.6).. (4,-1.7);	
				\draw (3,1.7) .. controls (-9,4.25) and (-9,3.7) .. (-1.5,1.5);
				\draw (0,-2.1) .. controls (-6.9,-4.1) and (-6.9,-4.6) .. (4,-1.7);
				\draw (2.8,1.2) .. controls (7.44,0) and (7.44,-0.1) .. (0,-2.1);
				\draw (2.8,1.2) .. controls (-3,2.7) and (-3,2.2) .. (2.3,0.8);
				\draw (2.3,0.8) .. controls (5.63,-0.2) and (5.63,0.1) .. (-5*19/24+0.5,-5*5/12);
				\draw (-1.5,1.5) .. controls (4.55,-0.4) and (4.55,0.3) .. (-5*5/6+0.5,-5*1/3+0.15);
				\draw (-5*5/6+0.5,-5*1/3+0.15) .. controls (-5*5/6-0.7,-5*1/3-0.08) and (-5*5/6-0.7,-5*1/3-0.18) .. (-5*5/6+0.2,-5*1/3-0.05);
				\draw (-5*19/24+0.2,-5*5/12+0.1) .. controls (-5*19/24-0.15,-5*5/12) and (-5*19/24-0.15,-5*5/12-0.1) .. (-5*19/24+0.5,-5*5/12);
				
				\node[dot, draw=none, label=above: $z^1_0$] at (6.9,0) {};
				\node[dot, draw=none, label=above: $z_0$] at (0,2.8) {};
				\node[dot, draw=none, label=above: $z^2_0$] at (-6.8,5*2/3) {};
				\node[dot, draw=none, label=above: $z_1$] at (0,1.05) {};
				\node[dot, draw=none, label=above: $z^1_{-1}$] at (5*5/9,0) {};
				\node[dot, draw=none, label=above: $z_2$] at (0,-0.2) {};
				\node[dot, draw=none, label=above: $z^4_0$] at (-4.7,-1.05) {};
				\node[dot, draw=none, label=above: $z^5_0$] at (-5*19/24,-5*5/12) {};
				\node[dot, draw=none, label=above: $z_{-5}$] at (0,-1.2) {};
				\node[dot, draw=none, label=above: $z_{-3}$] at (0,2.4) {};
				\node[dot, draw=none, label=above: $z_{-4}$] at (0,1.6) {};
				\node[dot, draw=none, label=above: $z^1_2$] at (5*65/75,0) {};
				\node[dot, draw=none, label=above: $z^2_{-1}$] at (-2,2.57) {};
				\node[dot, draw=none, label=above: $z^1_1$] at (6.1,0) {};
				\node[dot, draw=none, label=above: $z^3_0$] at (-4.8,-3.7) {};
				\node[dot, draw=none, label=above: $z_{-1}$] at (0,-2.7) {};
				\node[dot, draw=none, label=above: $z_{-2}$] at (0,-1.53) {};
				\node[dot, draw=none, label=above: $X$] at (2.25,2.5) {};
				\node[dot, draw=none, label=above: $l$] at (-6.1,0.5) {};
				\node[dot, draw=none, label=above: $W^u$] at (5,2) {};
				\node[dot, draw=none, label=above: $D$] at (-3,0.5) {};
				\node[dot, draw=none, label=above: $\partial^uD$] at (5,-1.3) {};
				
			\end{tikzpicture}
		}
		\caption{Disc $D$ for $b > 0$ and several basic critical and post-critical points.}
		\label{fig:D}
	\end{figure}
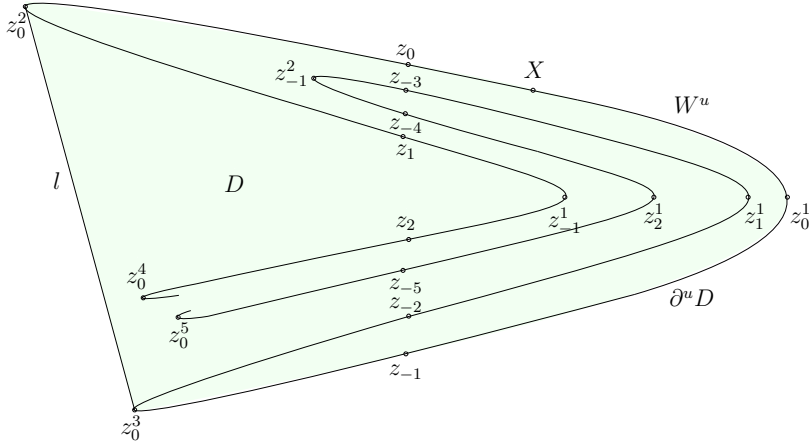
	
	In this paper, $\N$ denotes the set of all positive integers and $\N_0$ is the set of all nonnegative integers. An arc is always a homeomorphic copy of $[0, 1]$, that is, an arc does not intersect itself. For two points $Q_1$, $Q_2$, we denote by $\bar{Q_1Q_2}$ a straight line segment with the endpoints $Q_1$ and $Q_2$, and by  $[Q_1, Q_2]$ an arc with the endpoints $Q_1$ and $Q_2$. For $Q_1, Q_2 \in W^u$ we denote by $[Q_1, Q_2]^u \subset W^u$ the arc in $W^u$ that connects $Q_1$ and $Q_2$, and in general, for $Q_1, Q_2 \in W^u_P$ we denote by $[Q_1, Q_2]^u_P \subset W^u_P$ the arc in $W^u_P$ that connects $Q_1$ and $Q_2$. Analogously, $[Q_1, Q_2]^s \subset W^s$ and $[Q_1, Q_2]^s_P \subset W^s_P$. If $A = [P, Q]$ is an arc, we denote $\mathring{A} = (P, Q) = [P, Q] \setminus \{ P, Q \}$, and analogously $[P, Q)$ and $(P, Q]$, and sometimes we also call them arcs. For a point $P$, we let $P^i = F^i(P)$ for $i \in \Z$, and thus $P = P^0$. By $d(P, Q)$ we denote the (Euclidean) distance between points $P$ and $Q$ and by $\length [P, Q]$ the length of the arc $[P, Q]$. For a set $U$, by $\Int U$, $\Cl U$, $\partial U$ we denote the interior, closure and boundary of $U$ respectively.
	
	\begin{figure}[h]
		
		\resizebox{1\textwidth}{!}{
			\begin{tikzpicture}
				
				\hspace{1.5cm}
				
				\fill[green!5!white] (-6,6.75)--(0,6)--(0,6) .. controls (15,4) and  (15,-2.8).. (0,-6)--(0,-6) .. controls (-6,-7) and  (-6,-5.75).. (-6,-5.2)-- cycle;		
				
				\tikzstyle{every node}=[draw, circle, fill=white, minimum size=2pt, inner sep=0pt]
				\tikzstyle{dot}=[circle, fill=white, minimum size=0pt, inner sep=0pt, outer sep=-1pt]
				
				\node[label=right: $z_2$] at (0.05,3.81)  {};
				\node[label=right: $z_3$] at (-0.01,3.52)  {};
				\node[label=right: $z_{6}$] at (0.04,2.72)  {};
				\node[label=right: $z_{-1}$] at (0.04,2.32)  {};
				\node[label=right: $z_{-2}$] at (0,1.55)  {};
				\node[label=right: $z_{7}$] at (0,1.18)  {};					
				
				\node[label=right: $z_{-3}$] at (0.05,-1.91)  {};
				\node[label=right: $z_0$] at (0,-2.8)  {};
				\node[label=right: $z_5$] at (0.05,-3.34)  {};
				\node[label=right: $z_4$] at (-0.05,-4.29)  {};
				\node[label=right: $z_1$] at (0,-4.77)  {};
				\node[label=right: $z_{-4}$] at (0,-5.57)  {};							
				
				\node[label=right: $X$] at (1.41,-2.4)  {}; 
				\node[label=below: $Y$] at (-7,6.88)  {}; 		
				
				\node[label=left: $z'_0$] at (0,6)  {};	
				\node[label=left: $z'_8$] at (0,4.05)  {};	
				\node[label=left: $z'_7$] at (0,3.3)  {};
				\node[label=left: $z'_4$] at (0,2.9)  {};
				\node[label=left: $z'_{11}$] at (0,2.17)  {};	
				\node[label=left: $z'_{12}$] at (0,1.7)  {};	
				\node[label=left: $z'_3$] at (0,0.9)  {};
				\node[label=left: $z'_2$] at (0,-1.25)  {};
				\node[label=left: $z'_{13}$] at (0,-2.1)  {};	
				\node[label=left: $z'_{10}$] at (0,-2.6)  {};	
				\node[label=left: $z'_5$] at (0,-3.6)  {};	
				\node[label=left: $z'_6$] at (0,-4.1)  {};	
				\node[label=left: $z'_9$] at (0,-5)  {};	
				\node[label=left: $z'_{14}$] at (0,-5.4)  {};	
				\node[label=left: $z'_1$] at (0,-6)  {};	
				
				\node[dot, draw=none, label=above: $l$] at (-6.3,0) {};
				\node[dot, draw=none, label=above: $W^u_Y$] at (7,5.2) {};
				\node[dot, draw=none, label=below right: $W^u$] at (2,-3.2) {};
				\node[dot, draw=none, label=above: $D$] at (-3,0) {};
				\node[dot, draw=none, label=above: $\partial^uD$] at (7,-3.9) {};
				
				\draw[OliveGreen,thick] (2,4.5) .. controls (1,4.7) and (1,4.3) .. (2,4.1);								
				
				\draw[OliveGreen,thick] (0,4.05) .. controls (-1.2,4.2) and (-1.2,3.7) .. (0,3.3);	
				\draw[blue] (2.45,3.3) .. controls (-1.5,4.3) and (-1.5,3.8) .. (2.4,2.8);	
				
				\draw[OliveGreen,thick] (0,0.9) .. controls (-4.8,2.1) and (-4.8,3.8) .. (0,2.9);	
				\draw[blue] (2.3,2.23) .. controls (-4.8,4) and (-4.8,2.1) .. (2,0.7);
				\draw[blue] (2.2,1.87) .. controls (-4,3.3) and (-4,2.4) .. (2.1,1.05);
				\draw[OliveGreen,thick] (0,2.17) .. controls (-2.4,2.6) and (-2.4,2.35) .. (0,1.7);

				\draw[OliveGreen,thick] (0,-1.25) .. controls (-8,-3.9) and (-8,-7.7) .. (0,-6);	
				\draw[blue] (0.72,-5.4) .. controls (-7.5,-7.4) and (-7.5,-3.9) .. (1.63,-1.45);
				\draw[OliveGreen,thick] (0,-2.1) .. controls (-6.8,-4.4) and (-6.8,-6.8) .. (0,-5.4);
				\draw[blue] (1,-4.5) .. controls (-6,-6.5) and (-6,-4.4) .. (1.4,-2.4);
				\draw[OliveGreen,thick] (0,-2.6) .. controls (-6,-4.5) and (-6,-6.4) .. (0,-5);
				\draw[blue] (1,-4) .. controls (-4.5,-5.6) and (-4.5,-4.5) .. (1.3,-3);		
				\draw[OliveGreen,thick] (0,-3.6) .. controls (-3.5,-4.6) and (-3.5,-5) .. (0,-4.1);

				\draw[OliveGreen,thick] (0,0.9) .. controls (4,-0.1) and (4,-0.1) .. (0,-1.25);
				\draw[blue] (2,0.7) .. controls (4.2,0.1) and (4.2,-0.1) .. (1.69,-1);
				\draw[blue] (2.1,1.05) .. controls (5.2,0.3) and (5.2,-0.3) .. (1.63,-1.45);
				\draw[OliveGreen,thick] (0,1.7) .. controls (6.3,0.3) and (6.3,-0.3) .. (0,-2.1);
				\draw[blue] (2.2,1.87) .. controls (7.5,0.5) and  (7.5,-0.5).. (1.4,-2.4);
				\draw[OliveGreen,thick] (0,2.17) .. controls (7.7,0.5) and  (7.7,-0.3).. (0,-2.6);
				\draw[blue] (2.3,2.23) .. controls (9,0.5) and  (9,-0.5).. (1.3,-3);
				\draw[OliveGreen,thick] (0,2.9) .. controls (10,0.8) and  (10,-0.7).. (0,-3.6);	
				\draw[OliveGreen,thick] (0,3.3) .. controls (11,0.4) and  (11,-0.8).. (0,-4.1);	
				\draw[blue] (2.4,2.8) .. controls (10.8,0.5) and  (10.8,-1).. (1,-4);
				\draw[blue] (2.45,3.3) .. controls (12,0.9) and  (12.4,-1.2).. (1,-4.5);
				\draw[OliveGreen,thick] (0,4.05) .. controls (13.2,1.32) and  (13.2,-1.4).. (0,-5);
				\draw[OliveGreen,thick] (2,4.1) .. controls (13.5,1.6) and  (13.5,-1.9).. (0,-5.4);
				\draw[OliveGreen,thick] (0,6) .. controls (15,4) and  (15,-2.8).. (0,-6);
				
				\draw[OliveGreen,thick](-8,7)--(0,6);	
				\draw[OliveGreen,thick](2,4.5)--(4,4.05);	 
				
				\draw[blue](1.69,-1)--(0.7,-1.3);
				\draw[blue](0.72,-5.4)--(1.8,-5.1);								
				
				\draw (-6,6.75)--(-6,-5.2);
				
			\end{tikzpicture}
			
		}	
		\vspace{-0.5cm}
		\caption{Disc $D$ for $b < 0$ and several basic critical and quasi-critical points.}
		\label{fig:D2}
		
	\end{figure}
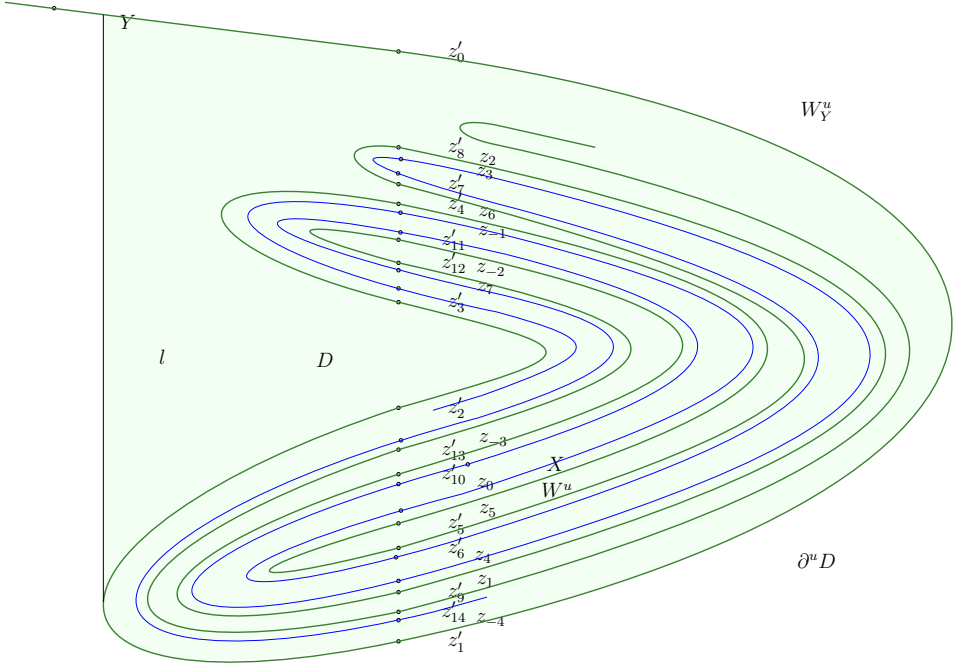
	
	Recall that $\mfc = \C \cap W^u$ and we call the points of $\mfc$ the basic critical points. The set of basic critical points is countable and we index them in the following way: Let $z_0$ be the first basic critical point on $W^u$ on the left of the fixed point $X$ (and $X$ is between $z_0$ and $z_0^1$). Let $\phi : \mathbb{R} \to W^u$ be a parametrization of $W^u$ such that $\phi(0) = X$ and $z_0 \in \phi((-\infty, 0))$. Denote $W^{u-} = \phi((-\infty, 0])$ and $W^{u+} = \phi([0, \infty ))$. Now we index all the other basic critical points such that $z_k \in W^{u-}$ and $z_{-k} \in W^{u+}$ for all $k \in \N$, and $z_i$ and $z_j$ are consecutive (there are no other basic critical points in $W^u$ between them) if and only if $|i - j| = 1$ for all $i, j \in \Z$. The points in $\bigcup_{n \in \N} F^n(\C)$ are called the post-critical points, in $\mfc^+ = \bigcup_{n \in \N} F^n(\mfc)$ the basic post-critical points, and in $\mfc^- = \bigcup_{n \in \N} F^{-n}(\mfc)$ the basic pre-critical points. 
	There is a natural order $\le$ on $W^u$ given by $$P < Q \textrm{ if and only if }\phi^{-1}(P) < \phi^{-1}(Q).$$ In particular, $z_0 < X$. In general, on any arc $[Q_1, Q_2]$ we can put an ordering $\le$ defined as follows: If $P_1, P_2 \in [Q_1, Q_2]$, then $P_1 \le P_2$ if and only if $\length[Q_1, P_1] \le \length[Q_1, P_2]$.

	\section{The critical locus}\label{sec:cl}
	
	In this section, we show that there exists an arc $\K$ such that $\Lambda_F \cap \K = \C$.

	{\bf (1) Modification of $R_0$ for the orientation preserving case.} Let $b < 0$. Let $R$ be as in \cite[Theorem 1.8 (i)]{WY} (see also Theorem \ref{thm:WY} (I)). The region $R_0$ is defined in \cite[Theorem 1.8 (i)]{WY} as a subregion of $R$ bounded by two curves $\omega_1$ and $\omega_2$. We modify slightly $R_0$, while still keeping all its properties as in \cite[Theorems 1.8 and 1.1]{WY} ($R_0$ will be similar to the disc $D$, only with an additional requirement on the line $l$): Let $\omega_1$ be as in \cite[Appendix A.1.]{WY}, that is, it joins the top and bottom boundary of $R$ and lies on the stable manifold of a periodic orbit. Let additionally $\omega_1 \cap W^u_Y$ consist of at least two and at most three points, $P_1, P_2$ (and $P_3) \in \omega_1$, $P_1$ above $P_2$ (above $P_3$). It is possible to choose such $\omega_1$ since the stable manifolds of the periodic points are dense in $D$, and for every point $Q_1 \in W^u_Y$ there exist $\eps > 0$ and a point $Q_2 \in W^u_Y$, $\length [Q_1, Q_2]^u_Y = \eps$, such that the $\eps$-neighborhood of $[Y, Q_1]^u$ does not intersect the attractor and the part of $W^u_Y$ that starts at $Q_2$ and accumulates on the attractor. Let $R_0$ be a subregion of $R$ bounded by the arcs $[P_1, P_2] \subset \omega_1$ and $[P_1, P_2]^u_Y \subset W^u_Y$. We let $\partial_h R_0 = [P_1, P_2]^u_Y$ and call it the horizontal boundary of $R_0$. Let also $R_k = F^k(R_0)$ and $\partial_h R_k = F^k(\partial_h R_0)$. Obviously, $R_0$ is forward invariant and $\Lambda_F = \bigcap_{k \in \N_0} R_k$.
	
	With this modification we obtain that each horizontal boundary $\eta$ of each component $Q^{(k)}$ of $\C^{(k)}$, $k \in \N_0$ (see Theorem \ref{thm:WY} (II) (2)), lies in $W^u_Y$, $\eta \subset W^u_Y$, and consequently $\H \subset W^u_Y$, where $\H$ is as in Theorem \ref{thm:WY} (II)  (2). We call the points of $\H$ the \emph{quasi-critical points}. The set of quasi-critical points is countable infinite and we index them in the following way: Let $z'_0$ be the first quasi-critical point on $W^u_Y$ on the right of the fixed point $Y$. Let $\psi : \mathbb{R} \to W^u_Y$ be a parametrization of $W^u_Y$ such that $\psi(0) = Y$ and $z'_0 \in \phi((0, \infty))$. Denote $W^{u+} = \psi([0, \infty))$ and note that $\H \subset W^{u+}_Y$. Now we index all the other quasi-critical points $z'_k \in \H$, $k \in \N_0$, such that $z'_i$ and $z'_j$ are consecutive (there are no other quasi-critical points in $W^u_Y$ between them) if and only if $|i - j| = 1$ for all $i, j \in \N_0$. The points in $\H^+ = \bigcup_{n \in \N} F^n(\H)$ are called the quasi-post-critical points, and in $\H^- = \bigcup_{n \in \N} F^{-n}(\H)$ the quasi-pre-critical points.
	
	For convenience, we distinguish the upper and lower horizontal boundary of $R_k$, $k \in \N_0$: $\partial_h R_0 = [P_1, {z'_0}^1]^u_Y \cup [{z'_0}^1, P_2]^u_Y$, where $[P_1, {z'_0}^1]^u_Y$ is the upper horizontal boundary of $R_0$ and $[{z'_0}^1, P_2]^u_Y$ is the lower horizontal boundary of $R_0$, and analogously for $\partial_h R_k = F^k(\partial_h R_0)$.
	
	{\bf (2) Open sets $U_k$.} Let $b > 0$. For every $k \in \N$, $k \ge 4$, let $Y_k \in W^u_Y$ be the point such that $\alpha_k = [Y, Y_k]^u_Y$ is the longest arc with the property that for every $P \in \mathring\alpha_k$, $d(P, z_0^k) > d(Y_k, z_0^k)$. Note that $Y_k \in D$ and $\alpha_k \subset \alpha_{k+1}$. For every $k \ge 4$, let us denote by $\beta_k$ the arc with endpoints $z_0^k$ and $l_k \in l$ that is ``parallel'' with $\alpha_k$; i.e. for every $P \in \beta_k$, $d(P, \alpha_k) = d(z_0^k, Y_k)$. Let $l_2, l_3$ be the endpoints of $l$, where $l_2$ lies in the second quadrant, and $l_3$ lies in the third quadrant.
	
	For $b < 0$, for every $k \ge 3$ we construct the points $l_k \in l$ and the arcs $\beta_k$ similarly, although not identically, since in this case points in  $W^u$ are not accessible from the complement of the attractor. Let $l_1, l_2$ be the endpoints of $l$, where $l_1$ lies in the second quadrant, and $l_2$ lies in the third quadrant. We pick $l_3\in \bar{l_1l_2}$ and let $\beta_3$ be an arc with endpoints $z_0^3$ and $l_3$. We continue inductively, having defined $\beta_k$ and $l_k$, we choose $l_{k+1}\in\bar{l_1l_k}$ and let $\beta_{k+1}$ be an arc connecting $l_{k+1}$ and $z_0^{k+1}$, such that $\beta_{k+1}\cap \beta_{k-i}=\emptyset$ for all $i=0,...,k-3$, and $\beta_{k+1}\cap W^u_Y=\{z_0^{k+1}\}$ (one can make the arcs $\beta_k$ ``parallel'' to the appropriate initial segments of $W^{u+}_Y$), see Figure \ref{fig:connectors2}. 
	
	\begin{figure}[h]
		
		\resizebox{1.2\textwidth}{!}{
			\begin{tikzpicture}
				
				\fill[teal!10!white] 
				(-6,3.1)--(-3.58,2.8)--(-3.58,2.8) .. controls (-3.4,1.68) and (-1,1.2) .. (0,0.9)--(0,0.9) .. controls (4,-0.1) and (4,-0.1) .. (0,-1.25)--(0,-1.25) .. controls (-2.2,-2.1) and (-5.5,-3.15) .. (-6,-5.2)--(-6,-5.2)--(-6,3.1)-- cycle;	
				
				\fill[blue!5!white]	
				(-6,3.1)--(-3.58,2.8)--(-3.58,2.8) .. controls (-3.3,3.3) and (-1,3.2) .. (0,2.9)--(0,2.9) .. controls (10,0.8) and  (10,-0.7).. (0,-3.6)--(0,-3.6) .. controls (-3.5,-4.6) and (-3.5,-5) .. (0,-4.1)--(0,-4.1) .. controls (11,-0.8) and  (11,0.4).. (0,3.3)--(0,3.3) .. controls (-0.3,3.35) and (-0.8,3.5) .. (-0.9,3.9)--(-0.9,3.9)--(-6,4.6)-- cycle;	
				
				\fill[pink!30!white]
				(-6,5.48)--(1.23,4.48)--(1.23,4.48) .. controls (1.3,4.4) and (1,4.3) .. (2,4.1)--(2,4.1) .. controls (13.5,1.6) and  (13.5,-1.9).. (0,-5.4)--(0,-5.4) .. controls (-6.8,-6.8) and (-6.8,-4.4) .. (0,-2.1)--(0,-2.1) .. controls (6.3,-0.3) and (6.3,0.3) .. (0,1.7)--(0,1.7) .. controls (-2.4,2.35) and (-2.4,2.6) .. (0,2.17)--(0,2.17) .. controls (7.7,0.5) and  (7.7,-0.3).. (0,-2.6)--(0,-2.6) .. controls (-6,-4.5) and (-6,-6.4) .. (0,-5)--(0,-5) .. controls (13.2,-1.4) and  (13.2,1.32).. (0,4.05)--(0,4.05) .. controls (-0.3,4.1) and (-0.7,4.18) .. (-0.9,3.9)--(-0.9,3.9)--(-6,4.6)--cycle;				
				
				\tikzstyle{every node}=[draw, circle, fill=white, minimum size=2pt, inner sep=0pt]
				\tikzstyle{dot}=[circle, fill=white, minimum size=0pt, inner sep=0pt, outer sep=-1pt]
				
				\node[label=right: $z_2$] at (0.05,3.81)  {};
				\node[label=right: $z_3$] at (-0.01,3.52)  {};
				\node[label=right: $z_{6}$] at (0.04,2.72)  {};
				\node[label=right: $z_{-1}$] at (0.04,2.32)  {};
				\node[label=right: $z_{-2}$] at (0,1.55)  {};
				\node[label=right: $z_{7}$] at (0,1.18)  {};					
				
				\node[label=right: $z_{-3}$] at (0.05,-1.91)  {};
				\node[label=right: $z_0$] at (0,-2.8)  {};
				\node[label=right: $z_5$] at (0.05,-3.34)  {};
				\node[label=right: $z_4$] at (-0.05,-4.29)  {};
				\node[label=right: $z_1$] at (0,-4.77)  {};
				\node[label=right: $z_{-4}$] at (0,-5.57)  {};					
				
				\node[label=right: $X$] at (1.41,-2.4)  {}; 
				\node[label=below: $Y$] at (-7,6.88)  {}; 		
				\node[label=left: $z'_0$] at (0,6)  {};	
				\node[label=left: $z'_8$] at (0,4.05)  {};	
				\node[label=left: $z'_7$] at (0,3.3)  {};
				\node[label=left: $z'_4$] at (0,2.9)  {};
				\node[label=left: $z'_{11}$] at (0,2.17)  {};	
				\node[label=left: $z'_{12}$] at (0,1.7)  {};	
				\node[label=left: $z'_3$] at (0,0.9)  {};
				\node[label=left: $z'_2$] at (0,-1.25)  {};
				\node[label=left: $z'_{13}$] at (0,-2.1)  {};	
				\node[label=left: $z'_{10}$] at (0,-2.6)  {};	
				\node[label=left: $z'_5$] at (0,-3.6)  {};	
				\node[label=left: $z'_6$] at (0,-4.1)  {};	
				\node[label=left: $z'_9$] at (0,-5)  {};	
				\node[label=left: $z'_{14}$] at (0,-5.4)  {};	
				\node[label=left: $z'_1$] at (0,-6)  {};	
				
				\node[label=below left: $l_1$] at (-6,6.75)  {};
				\node[label=left: $l_2$] at (-6,-5.2)  {};
				\node[label=below left: $l_3$] at (-6,3.1)  {};
				\node[label=below left: $l_4$] at (-6,4.6)  {};
				\node[label=below left: $l_5$] at (-6,5.48)  {};
				\node[label=right: $z^{'2}_0$] at (-5.98,-5.4)  {};
				\node[label=below left: $z^{'3}_0$] at (-3.58,2.8)  {};
				\node[label=below left: $z^{'4}_0$] at (-0.9,3.9)  {};
				\node[label=below left: $z^{'5}_0$] at (1.26,4.48)  {};	
				
				\node[label=right: $z^{'1}_0$] at (11.25,0.5)  {};
				\node[label=left: $z^{'1}_1$] at (3,-0.1)  {};
				\node[label=right: $z^{'2}_1$] at (-2.62,-4.56)  {};
				\node[label=left: $z^{'3}_1$] at (-1.79,2.35)  {};	
				
				\node[dot, draw=none, label=above: $l$] at (-6.3,0) {};
				\node[dot, draw=none, label=above: $W^u_Y$] at (7,5.2) {};
				\node[dot, draw=none, label=below right: $W^u$] at (2,-3.2) {};
				\node[dot, draw=none, label=above: $U_1$] at (-3,0) {};
				\node[dot, draw=none, label=above: $U_2$] at (7.83,0) {};
				\node[dot, draw=none, label=above: $U_3$] at (10.1,0) {};
				\node[dot, draw=none, label=above: $\partial^uD$] at (7,-3.9) {};
				
				\draw[TealBlue,thick] (0,6)--(0,4.05);
				\draw[TealBlue,thick] (0,3.3)--(0,2.9);
				\draw[TealBlue,thick] (0,2.17)--(0,1.7);
				\draw[TealBlue,thick] (0,0.9)--(0,-1.25);
				\draw[TealBlue,thick] (0,-2.1)--(0,-2.6);
				\draw[TealBlue,thick] (0,-3.6)--(0,-4.1);
				\draw[TealBlue,thick] (0,-5)--(0,-5.4);
				
				\draw[OliveGreen,thick] (2,4.5) .. controls (1,4.7) and (1,4.3) .. (2,4.1);				
				
				\draw[OliveGreen,thick] (0,4.05) .. controls (-1.2,4.2) and (-1.2,3.7) .. (0,3.3);	
				\draw[blue] (2.45,3.3) .. controls (-1.5,4.3) and (-1.5,3.8) .. (2.4,2.8);	
				
				\draw[OliveGreen,thick] (0,0.9) .. controls (-4.8,2.1) and (-4.8,3.8) .. (0,2.9);	
				\draw[blue] (2.3,2.23) .. controls (-4.8,4) and (-4.8,2.1) .. (2,0.7);
				\draw[blue] (2.2,1.87) .. controls (-4,3.3) and (-4,2.4) .. (2.1,1.05);
				\draw[OliveGreen,thick] (0,2.17) .. controls (-2.4,2.6) and (-2.4,2.35) .. (0,1.7);		
				
				\draw[OliveGreen,thick] (0,-1.25) .. controls (-8,-3.9) and (-8,-7.7) .. (0,-6);	
				\draw[blue] (0.72,-5.4) .. controls (-7.5,-7.4) and (-7.5,-3.9) .. (1.63,-1.45);
				\draw[OliveGreen,thick] (0,-2.1) .. controls (-6.8,-4.4) and (-6.8,-6.8) .. (0,-5.4);
				\draw[blue] (1,-4.5) .. controls (-6,-6.5) and (-6,-4.4) .. (1.4,-2.4);
				\draw[OliveGreen,thick] (0,-2.6) .. controls (-6,-4.5) and (-6,-6.4) .. (0,-5);
				\draw[blue] (1,-4) .. controls (-4.5,-5.6) and (-4.5,-4.5) .. (1.3,-3);		
				\draw[OliveGreen,thick] (0,-3.6) .. controls (-3.5,-4.6) and (-3.5,-5) .. (0,-4.1);
				
				\draw[OliveGreen,thick] (0,0.9) .. controls (4,-0.1) and (4,-0.1) .. (0,-1.25);
				\draw[blue] (2,0.7) .. controls (4.2,0.1) and (4.2,-0.1) .. (1.69,-1);
				\draw[blue] (2.1,1.05) .. controls (5.2,0.3) and (5.2,-0.3) .. (1.63,-1.45);
				\draw[OliveGreen,thick] (0,1.7) .. controls (6.3,0.3) and (6.3,-0.3) .. (0,-2.1);
				\draw[blue] (2.2,1.87) .. controls (7.5,0.5) and  (7.5,-0.5).. (1.4,-2.4);
				\draw[OliveGreen,thick] (0,2.17) .. controls (7.7,0.5) and  (7.7,-0.3).. (0,-2.6);
				\draw[blue] (2.3,2.23) .. controls (9,0.5) and  (9,-0.5).. (1.3,-3);
				\draw[OliveGreen,thick] (0,2.9) .. controls (10,0.8) and  (10,-0.7).. (0,-3.6);	
				\draw[OliveGreen,thick] (0,3.3) .. controls (11,0.4) and  (11,-0.8).. (0,-4.1);	
				\draw[blue] (2.4,2.8) .. controls (10.8,0.5) and  (10.8,-1).. (1,-4);
				\draw[blue] (2.45,3.3) .. controls (12,0.9) and  (12.4,-1.2).. (1,-4.5);
				\draw[OliveGreen,thick] (0,4.05) .. controls (13.2,1.32) and  (13.2,-1.4).. (0,-5);
				\draw[OliveGreen,thick] (2,4.1) .. controls (13.5,1.6) and  (13.5,-1.9).. (0,-5.4);
				\draw[OliveGreen,thick] (0,6) .. controls (15,4) and  (15,-2.8).. (0,-6);
				
				\draw[OliveGreen,thick](-8,7)--(0,6);	
				\draw[OliveGreen,thick](2,4.5)--(4,4.05);	 
				
				\draw[blue](1.69,-1)--(0.7,-1.3);
				\draw[blue](0.72,-5.4)--(1.8,-5.1);
				
				\draw (-6,6.75)--(-6,-5.2);
				
				\draw (-6,5.48)--(1.23,4.48);
				\draw (-6,4.6)--(-0.9,3.9);
				\draw (-6,3.1)--(-3.58,2.8);
				
			\end{tikzpicture}
			
		}	
		
		\caption{For $b < 0$ connectors of the critical locus $\K$ are in teal.}
		\label{fig:connectors2}
		
	\end{figure}
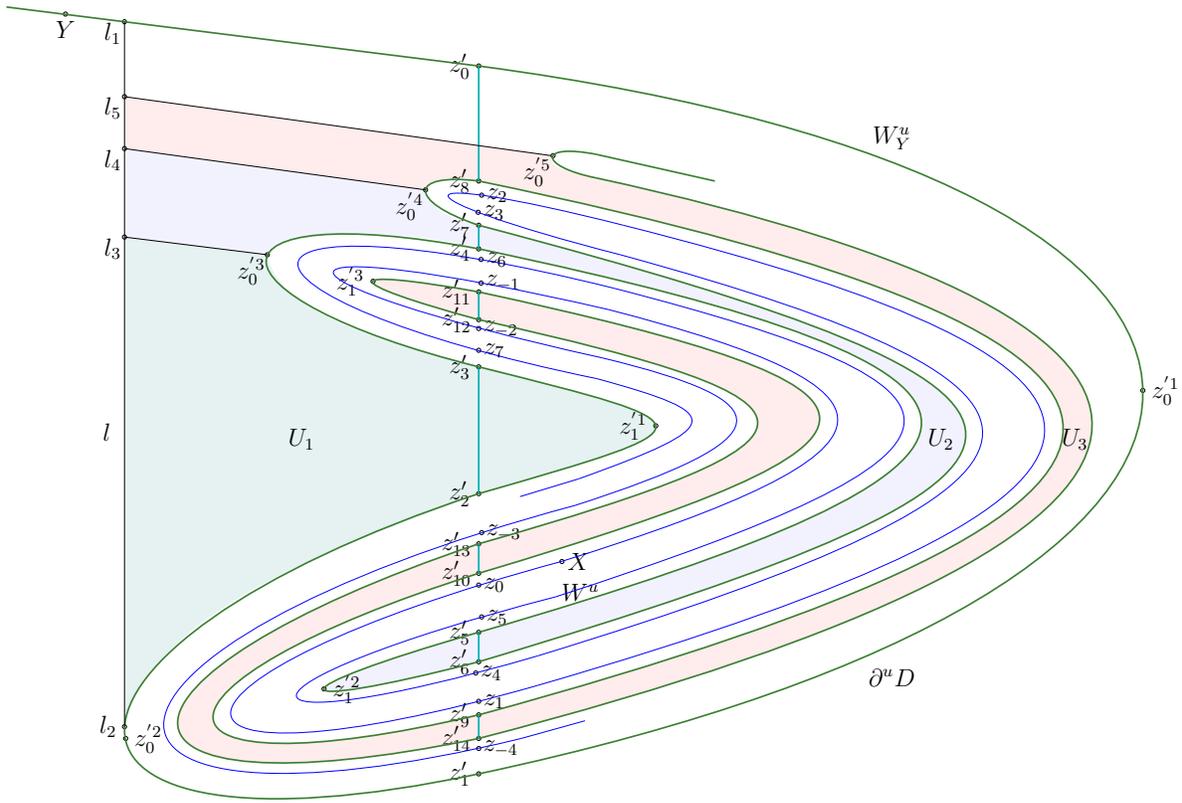
	
	We define the open regions $U_k$, $k \in \N$, as follows:
	
	Case 1. Let $b > 0$. The region $U_1$ is bounded by the arcs $\partial^u U_1 = [l_2, z_0^4]^u$, $\beta_4$ and $\bar{l_2l_4}$, $U_2$ is bounded by $\partial^u U_2 = [l_3, z_0^5]^u$, $\beta_5$ and $\bar{l_3l_5}$, and for every $k \in \N$, $k \ge 3$, $U_k$ is bounded by the arcs $\partial^u U_k = [z_0^{k+1}, z_0^{k+3}]^u$, $\beta_{k+1}$, $\beta_{k+3}$ and $\bar{l_{k+1}l_{k+3}}$. 
	
	Case 2. Let $b < 0$. The region $U_1$ is bounded by the arcs $\partial^u U_1 = [l_2, {z'_0}^3]^u_Y$, $\beta_3$ and $\bar{l_2l_3}$, for every $k \in \N$, $k \ge 2$, $U_k$ is bounded by the arcs $\partial^u U_k = [{z'_0}^{k+1}, {z'_0}^{k+2}]^u_Y$, $\beta_{k+1}$, $\beta_{k+2}$ and $\bar{l_{k+1}l_{k+2}}$. 
	
	It is easy to see that the open sets $U_k$, $k \in \N$, satisfy the following properties:
	\begin{enumerate}[(i)]
		\item $F(\partial^u U_k) = \partial^u U_{k+1}$ for every $k \ge 3$ if $b > 0$, and for every $k \ge 2$ if $b < 0$.
		\item For $b > 0$, $\partial^u U_k \cap \partial^u U_{k+2} = \{ z_0^{k+3} \}$ and $\partial^u U_i \cap \partial^u U_j = \emptyset$ if $|i - j| \notin\{0, 2\}$.\\
		For $b < 0$, $\partial^u U_k \cap \partial^u U_{k+1} = \{ {z'_0}^{k+2} \}$ and $\partial^u U_i \cap \partial^u U_j = \emptyset$ if $|i - j| \notin\{0, 1\}$.	
		\item $\partial^u D \cup \bigcup_{i \in \N} \partial^u U_i = W^u$ if $b > 0$, and $\partial^u D \cup \bigcup_{i \in \N} \partial^u U_i = W^{u+}_Y\setminus [Y,l_1)^u_Y$ if $b < 0$.
		\item $U_i \cap U_j = \emptyset$ for $i \ne j$,
		\item $U_i \cap (\Lambda_F \cup W^u_Y) = \emptyset$ for every $i \in \N$.
		\item $U_0 = \bigcup_{i \in \N} U_i$ is an open set such that $\Cl U_0 = D$.
	\end{enumerate} 
	
	\begin{figure}[h!]
		
		\resizebox{1\textwidth}{!}{
			\hspace*{-1.8cm}
			\begin{tikzpicture}
				
				\fill[teal!10!white]
				(-6.65,5*2/3)--(-5.25,-1.9) -- (-5.25,-1.9)--(-4.59,-1.76) -- (-4.59,-1.76) -- (-5*5/6+0.5,-5*1/3+0.15) -- (-1.5,1.5) .. controls (4.55,-0.4) and (4.55,0.2) .. (-5*5/6+0.5,-5*1/3+0.15) --(-1.5,1.5) .. controls (-6,5*2/3-0.5) .. (-6.65,5*2/3) -- cycle;	
				\fill[blue!5!white]
				(2.8,1.2) .. controls (-6.7,3.6) and (-6.7,3.2) .. (2.3,0.9) -- cycle;	 
				\fill[blue!5!white] 
				(0,-2.1) .. controls (7.3,-0.1) and (7.3,0) .. (2.8,1.2) -- (2.3,0.9) .. controls (6,-0.1) and (6,-0.2) .. (-5*19/24+0.5,-2.5) -- cycle;				 	
				\fill[blue!5!white] 
				(-4.75,-3.71).. controls (-4.65,-3.5)..(0,-2.1)--(-5*19/24+0.5,-2.5)--(-4.97,-2.85) -- cycle;
				\fill[pink!30!white]	
				(3,1.55) .. controls (7.5,0.5) and  (7.5,-0.4).. (4,-1.5) -- (4,-1.23) .. controls (7.1,-0.3) and (7.1,0.4) .. (3,1.33) -- cycle;					 
				\fill[pink!30!white]
				(4,-1.23) .. controls (-4.26,-3.4) and (-4.0,-3.7) .. (4,-1.5) -- cycle;					 
				\fill[pink!30!white]
				(3,1.33) .. controls (-6.7,3.8) and (-6.7,3.1) .. (-1.5,1.75) -- (-1.5,1.6) .. controls (-8,3.5) and (-8,4) .. (3,1.55) -- cycle;			
				\fill[pink!30!white]
				(-5*19/24+1,-1.9) .. controls (6,0.2) and (6,-0.3) ..(-1.5,1.75) -- (-1.5,1.6) .. controls (5.0,-0.3) and (5.0,0.1) .. (-5*5/6+0.2,-5*1/3-0.05) -- cycle;					 
				\fill[pink!30!white]  (-5*5/6+0.2,-5*1/3-0.05)..controls (-5*5/6-0.02,-5*1/3-0.05)..(-5.25,-1.9) -- (-5.1,-2.43) -- (-5*19/24+1,-1.9) -- cycle;
				
				\tikzstyle{every node}=[draw, circle, fill=white, minimum size=2pt, inner sep=0pt]
				\tikzstyle{dot}=[circle, fill=white, minimum size=0pt, inner sep=0pt, outer sep=-1pt]
				\node (n1) at (5*4/3,0) {};
				\node (n2) at (-6.65,5*2/3)  {};
				\node (n3) at (-4.75,-3.71)  {};
				\node (n4) at (5*5/9+0.01,0)  {};
				\node (n5) at (-4.59,-1.76) {};
				\node (n6) at (-3.32,-2.03)  {};
				\node (n7) at (-4.38,2.81)  {};
				\node (n8) at (5*61/51-0.1,0)  {};
				\node (n9) at (5*65/75+0.26,0)  {};
				
				\node (n10) at (5*4/9,1.87)  {};
				
				\node (n21) at (0.04,2.32)  {};
				\node (n22) at (-0.05,1.06)  {};
				\node (n23) at (0.05,-0.74)  {};
				\node (n24) at (-0.05,-1.24)  {};
				\node (n25) at (0.05,-2.08)  {};
				\node (n26) at (0,-2.74)  {};
				\node (n27) at (-2.16,-3.02)  {};
				
				\node (n30) at (0.04,1.88) {};
				\node (n31) at (-0.05,1.52)  {};
				\node (n32) at (0.04,-0.92)  {};
				\node (n33) at (-0.05,-1.67)  {};
				\node (n34) at (0.04,-2.32)  {};
				\node (n35) at (0,-2.57)  {};
				\node (n36) at (0.04,2.2)  {};
				\node (n37) at (0,2.07)  {};
				\node (n38) at (0.01,1.35)  {};
				\node (n39) at (0,1.17)  {};
				
				\node (n40) at (-3.96,-2.59)  {};
				
				\node (n41) at (-6,-2.9)  {};
				
				\node (n42) at (-4.39,-2.5)  {};
				\node (n43) at (-4.01,-2.41)  {};
				\node (n44) at (-3.27,-2.23)  {};

				\draw[TealBlue,thick](0.04,1.88)--(-0.05,1.52);
				\draw[TealBlue,thick](-0.05,1.06)--(0.05,-0.74);
				\draw[TealBlue,thick](0.05,-0.91)--(-0.05,-1.25);
				\draw[TealBlue,thick](-0.05,-1.67)--(0.05,-2.08);
				\draw[TealBlue,thick](0.05,-2.31)--(0,-2.57);
				\draw[TealBlue,thick](n36)--(n37);
				\draw[TealBlue,thick](n38)--(n39);
				
				\draw[OliveGreen,thick](-6.5,-3.02)--(-0.5,-1.55);
				\draw[Gray](-6.65,5*2/3)--(-4.75,-3.71);
				\draw[Gray](-5.25,-1.9)--(-4.59,-1.76);			\draw[Gray](-5.1,-2.43)--(-3.32,-2.03);
				\draw[Gray](-4.97,-2.85)--(-3.96,-2.59);
				
				\draw (3,1.7) .. controls (7.7,0.7) and  (7.7,-0.6).. (4,-1.7);	
				\draw (3,1.7) .. controls (-9,4.25) and (-9,3.7) .. (-1.5,1.5);
				
				\draw (0,-2.1) .. controls (-6.9,-4.1) and (-6.9,-4.6) .. (4,-1.7);
				\draw (2.8,1.2) .. controls (7.3,0) and (7.3,-0.1) .. (0,-2.1);
				\draw (2.8,1.2) .. controls (-6.7,3.6) and (-6.7,3.2) .. (2.3,0.9);
				\draw (-1.5,1.5) .. controls (4.55,-0.4) and (4.55,0.3) .. (-5*5/6+0.5,-5*1/3+0.15);
				\draw (-5*5/6+0.5,-5*1/3+0.15) .. controls (-5*5/6-0.7,-5*1/3-0.08) and (-5*5/6-0.7,-5*1/3-0.18) .. (-5*5/6+0.2,-5*1/3-0.05);
				\draw (-5*19/24+1,-5*5/12+0.18) .. controls (-5*19/24+0.5,-5*5/12+0.05) and (-5*19/24+0.5,-5*5/12) .. (-5*19/24+1,-5*5/12+0.08);
				
				\draw (2.3,0.9) .. controls (6,-0.1) and (6,-0.2) .. (-5*19/24+0.5,-2.5);
				\draw (-1.5,1.75) .. controls (6,-0.3) and (6,0.2) .. (-5*19/24+1,-1.9);
				\draw (-1.5,1.6) .. controls (5.0,-0.3) and (5.0,0.1) .. (-5*5/6+0.2,-5*1/3-0.05);
				\draw (-5*19/24+0.3,-5*5/12+0.05-0.41) .. controls (-5*19/24-0.15,-5*5/12-0.5) and (-5*19/24-0.15,-5*5/12-0.1-0.5) .. (-5*19/24+0.5,-5*5/12-0.41);
				
				\draw (3,1.55) .. controls (-8,4.0) and (-8,3.5) .. (-1.5,1.6);
				\draw (3,1.33) .. controls (-6.7,3.8) and (-6.7,3.1) .. (-1.5,1.75);
				\draw (3,1.55) .. controls (7.5,0.5) and  (7.5,-0.4).. (4,-1.5);	
				\draw (3,1.33) .. controls (7.1,0.3) and  (7.1,-0.2).. (4,-1.25);	
				\draw (4,-1.25) .. controls (-4.23,-3.43) and (-4.23,-3.7) .. (4,-1.5);	
				
				\node[dot, draw=none, label=above: $z^1_0$] at (6.9,0) {};
				\node[dot, draw=none, label=above: $z_0$] at (0,2.8) {};
				\node[dot, draw=none, label=above: $z^2_0$] at (-6.8,5*2/3) {};
				\node[dot, draw=none, label=above: $z_1$] at (0,1.05) {};
				\node[dot, draw=none, label=above: $z^1_{-1}$] at (5*5/9,0) {};
				\node[dot, draw=none, label=above: $z_2$] at (0,-0.2) {};
				\node[dot, draw=none, label=above: $z^4_0$] at (-4.7,-1.05) {};
				\node[dot, draw=none, label=above: $z^5_0$] at (-4,-2.55) {};
				\node[dot, draw=none, label=above: $z^6_0$] at (-3.6,-1.7) {};
				\node[dot, draw=none, label=above: $z_{-3}$] at (0,2.4) {};
				\node[dot, draw=none, label=above: $z_{-4}$] at (0,1.7) {};
				\node[dot, draw=none, label=above: $z_{-5}$] at (0,-1.2) {};
				\node[dot, draw=none, label=above: $z^1_2$] at (5*65/75,0) {};
				\node[dot, draw=none, label=above: $z^2_{-1}$] at (-3.8,3.2) {};
				\node[dot, draw=none, label=above: $z^1_1$] at (6,0) {};
				\node[dot, draw=none, label=above: $z^3_0$] at (-4.8,-3.7) {};
				\node[dot, draw=none, label=above: $z^3_{-1}$] at(-2.45,-2.68) {};
				\node[dot, draw=none, label=above: $z_{-1}$] at (0,-2.7) {};
				\node[dot, draw=none, label=above: $z_{-2}$] at (0,-1.6) {};
				\node[dot, draw=none, label=above: $X$] at (2.25,2.5) {};
				\node[dot, draw=none, label=above: $Y$] at (-6,-3) {};
				\node[dot, draw=none, label=above: $W^u_Y$] at (-6.5,-2.2) {};
				\node[dot, draw=none, label=above: $l$] at (-6,0) {};
				\node[dot, draw=none, label=above: $W^u$] at (5,2) {};
				\node[dot, draw=none, label=above: $D$] at (5,-1.5) {};
				\node[dot, draw=none, label=above: $U_1$] at (-2,0.3) {};
				\node[dot, draw=none, label=above: $U_3$] at (3.4,0.3) {};
				\node[dot, draw=none, label=above: $U_2$] at (5.2,0.3) {};
				
			\end{tikzpicture}
			
		}
		\caption{For $b > 0$ connectors of the critical locus $\K$ are in teal.}
		\label{fig:connectors}
	\end{figure}
	
	{\bf (3) Regions $\hat Q^{(k)}$ for the orientation reversing case.} Let $k \in \N$ and $Q^{(k)}$ be a component of a critical region $C^{(k)}$ from \cite[Theorem 1.1]{WY}. Recall, every $Q^{(k)}$ is diffeomorphic to a rectangle and the boundary of $Q^{(k)}$ is made up of two segments of $\partial_h R_k$ connected by two vertical lines, and $\Lambda_F \subset R_k$. By the choice of $\delta$, $\rho$ and $\alpha$, $Q^{(k)}$ does not contain any basic post-turning point of level less than $k + k_0$, for some large $k_0$ (see \cite[Subsection 3.2 \& Inductive Assumption (IA2) in Subsection 3.3]{WY}). Here, $k_0 > 6$ is fine. Also, $Q^{(k-1)} \cap R_k$ has at most two components, each one of which lies between two segments of $\partial_h R_k$ that stretch across $Q^{(k-1)}$. Each component of $Q^{(k-1)} \cap R_k$ contains exactly one component of $\C^{(k)}$. Without loss of generality we consider only $Q^{(k)}$ with $\Lambda_F \cap Q^{(k)} \ne \emptyset$.
	
	Let us now slightly modify regions $Q^{(k)}$. We define $\hat Q^{(k)} \subset Q^{(k)}$ such that the vertical boundary of $Q^{(k)}$ and $\hat Q^{(k)}$ coincides, and the horizontal boundary of $\hat Q^{(k)}$ consists of two segments of $W^u$ such that $\hat Q^{(k)} \cap \Lambda_F = Q^{(k)} \cap \Lambda_F$. We denote this horizontal boundary by $\partial^u \hat Q^{(k)}$. 
	
	Let us prove that $\hat Q^{(k)}$ is well defined. 
	
	(i) For $k = 0$, $Q^{(0)} = \C^{(0)}$, the closest component (in the Hausdorff metric) of $Q^{(0)} \cap \Lambda_F$ to the upper horizontal boundary of $Q^{(0)}$ is the arc $\theta_0 = Q^{(0)} \cap [z_0^2, z_0^1]^u$, and the closest component of $Q^{(0)} \cap \Lambda_F$ to the lower horizontal boundary of $Q^{(0)}$ is the arc $\hat \theta_0 = Q^{(0)} \cap [z_0^3, z_0^1]^u$. Therefore, if we let $\partial^u \hat Q^{(0)} = \theta_0 \cup \hat \theta_0$, then $\hat Q^{(0)} \cap \Lambda_F = Q^{(0)} \cap \Lambda_F$ as required. Note that $z_0 \in \theta_0$ and $z_{-1} \in \hat \theta_0$. 
	
	(ii) Let $k \in \N$. If $Q^{(k-1)}$ contains two components of $\C^{(k)}$ and $Q^{(k)}$ is any of them, then the two components of the horizontal boundary of $Q^{(k)}$ are segments of different components of the horizontal boundary of $R_k$. There are two cases to consider.
	
	(a) First, let us suppose that both arcs, $[z_0^{k+2}, z_0^{k+1}]^u$ and $[z_0^{k+3}, z_0^{k+1}]^u$, stretch across $Q^{(k)}$. Let $\partial^u \hat Q^{(k)} \subseteq \theta_k \cup \hat \theta_k$, where $\theta_k  = Q^{(k-1)} \cap [z_0^{k+2}, z_0^{k+1}]^u$ and $\hat \theta_k = Q^{(k-1)} \cap [z_0^{k+3}, z_0^{k+1}]^u$. Then $\hat Q^{(k)} \cap \Lambda_F = Q^{(k)} \cap \Lambda_F$ as required. Note that each of $\theta_k, \hat \theta_k$ contains a basic critical point. Since $[z_0^{k+2}, z_0^{k+1}]^u = [z_0^{k+2}, z_0^{k}]^u \cup [z_0^{k}, z_0^{k+1}]^u$, the basic critical point in $\theta_k$ is visited in the previous step when we considered the components $Q^{(k-1)}$ of $\C^{(k-1)}$. The basic critical point in $\hat \theta_k$ is not visited in any previous step.
	
	(b) Second, let us suppose that the arcs $[z_0^{k+2}, z_0^{k+1}]^u$ and $\beta_{k+3}$ stretch across $Q^{(k)}$, but the arc $[z_0^{k+3}, z_0^{k+1}]^u$ does not intersect $Q^{(k)}$ at all (if it intersected $Q^{(k)}$, it would have stretched across $Q^{(k)}$, and this is the case (a)). Let $\theta_k  = Q^{(k-1)} \cap [z_0^{k+2}, z_0^{k+1}]^u$, $\hat \theta_k = Q^{(k-1)} \cap [z_0^{k+2}, z_0^{k+4}]^u$ and $\partial^u \hat Q^{(k)} \subseteq \theta_k \cup \hat \theta_k$. Then $\hat Q^{(k)} \cap \Lambda_F = Q^{(k)} \cap \Lambda_F$ as required. Note that both $\theta_k$ and $\hat \theta_k$ contain a basic critical point, and the basic critical point in $\hat \theta_k$ is not visited in any previous step.
	
	(iii) Now let us assume that $Q^{(k-1)}$ contains only one component of $\C^{(k)}$ and denote it by $Q^{(k)}$. 
	
	(a) If the two components of the horizontal boundary of $Q^{(k)}$ are segments of distinct components of the horizontal boundary of $R_k$, then both arcs, $[z_0^{k+2}, z_0^{k+1}]^u$, $[z_0^{k+3}, z_0^{k+1}]^u$, stretch across $Q^{(k)}$, so let $\theta_k  = Q^{(k-1)} \cap [z_0^{k+2}, z_0^{k+1}]^u$, $\hat \theta_k = Q^{(k-1)} \cap [z_0^{k+3}, z_0^{k+1}]^u$ and $\partial^u \hat Q^{(k)} \subseteq \theta_k \cup \hat \theta_k$. Then $\hat Q^{(k)} \cap \Lambda_F = Q^{(k)} \cap \Lambda_F$ as required. Since $Q^{(k)}$ is the only component of $\C^{(k)}$ contained in $Q^{(k-1)}$, we have that $\beta_{k+3}$ stretches across $Q^{(k-1)}$ and hence $\partial^u \hat Q^{(k)} \subseteq \partial^u \hat Q^{(k-1)}$.
	
	(b) If both components of the horizontal boundary of $Q^{(k)}$ are segments of the same component of the horizontal boundary of $R_k$, then $Q^{(k-1)} \cap [z_0^{k+2}, z_0^{k+1}]^u$ contains two arc-components, denote them by $\theta_k, \hat \theta_k$, and $Q^{(k-1)} \cap [z_0^{k+3}, z_0^{k+1}]^u = \emptyset$. If we let $\partial^u \hat Q^{(k)} \subseteq \theta_k \cup \hat \theta_k$, then $\hat Q^{(k)} \cap \Lambda_F = Q^{(k)} \cap \Lambda_F$ as required. Note that $[z_0^{k+2}, z_0^{k+1}]^u = [z_0^{k+2}, z_0^{k}]^u \cup [z_0^{k}, z_0^{k+1}]^u$ and again $\partial^u \hat Q^{(k)} \subseteq \partial^u \hat Q^{(k-1)}$. 
	
	Since we have exhausted all cases, this completes the proof that $\hat Q^{(k)}$ is well defined.
	
	Recall that in the orientation preserving case, by modifying $R_0$ we obtained that each horizontal boundary $\eta$ of each component $Q^{(k)}$ of $\C^{(k)}$, $k \in \N_0$, lies in $W^u_Y$, $\eta \subset W^u_Y$, and it contains a quasi-critical point $z'$.
	
	{\bf (4) An order on $\C$.} For every pair of points $u, v \in \C$, $u \ne v$, we define that $u$ is \emph{above} $v$, or equivalently, that $v$ is \emph{below} $u$, in the following way: Let $k \in \N$ be such that $u$ and $v$ lie in different components of $\C^{(k)}$, that is, $u \in Q^{(k)}, v \in Q'^{(k)}$ and $Q^{(k)} \ne Q'^{(k)}$. Then $u$ is \emph{above} $v$ if $Q^{(k)}$ is above $Q'^{(k)}$. 
	
	Let us show that this relation is well-defined. Since every component $Q^{(k-1)}$ of $\C^{(k-1)}$ contains at most two components $Q^{(k)}$ of $\C^{(k)}$, and each one of them is contained in a component of $Q^{(k-1)} \cap R_k$ that stretches across $Q^{(k-1)}$, we know which one is above the other one (or equivalently which one is below the other one). Also, if $Q^{(k-1)}$ is above $Q'^{(k-1)}$, then every $Q^{(k)} \subset Q^{(k-1)}$ is above every $Q'^{(k)} \subset Q'^{(k-1)}$. Therefore, for every two different components $Q^{(k)}, Q'^{(k)}$ of $\C^{(k)}$ we know which one is above (or below) the other one, and the above-defined order on $\C$ is well defined.
	
	Note that in the orientation preserving case, we can define in the same way the same order on $\C \cup \H$.
	
	\begin{theorem}
		There exists an arc $\K$ such that $\Lambda_F \cap \K = \C$. If $b < 0$ then additionally $W^u_Y \cap \K = \H$. The endpoints of $\K$ are $z_0$ and $z_{-1}$ for $b > 0$, and $z'_0$ and $z'_1$ for $b < 0$.
	\end{theorem}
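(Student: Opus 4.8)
\emph{Proof plan.} The plan is to build $\K$ from the critical regions, the modified regions $\hat Q^{(k)}$, and the connectors $U_k$, as the Hausdorff limit of approximating arcs, and then to identify the limit as an arc. The components $Q^{(k)}$ of $\C^{(k)}$ meeting $\Lambda_F$ form a tree under inclusion: the root is $\C^{(0)}$, and the children of $Q^{(k)}$ are the one or two components of $\C^{(k+1)}$ inside it, linearly ordered by the above/below order. From this tree, together with $W^u$ (for $b>0$) resp.\ $W^u_Y$ (for $b<0$), one reads off the skeleton of $\K$: inside each $\hat Q^{(k)}$ (resp.\ each $Q^{(k)}$), $\K$ runs from the basic critical point on $\theta_k$ (resp.\ the quasi-critical point on the upper horizontal boundary) through the one or two children in the above/below order to the basic critical point on $\hat\theta_k$ (resp.\ the quasi-critical point on the lower horizontal boundary); consecutive children, and where necessary the extreme children and the two endpoints, are joined by short arcs routed through the gaps inside $\hat Q^{(k)}$, which lie in $D\setminus\Lambda_F$ because $\hat Q^{(k)}\cap\Lambda_F=Q^{(k)}\cap\Lambda_F$. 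The regions $U_k$ supply the remaining connectors, routed inside $U_k$ and hence, by property~(v), off $\Lambda_F\cup W^u_Y$; by properties~(i) and (iii) the arcs $\partial^u U_k$ are dynamically consistent and exhaust $W^u$ (resp.\ the relevant part of $W^u_Y$), which lets $\K$ detour around the post-critical points on $W^u$; these lie in $\Lambda_F\setminus\C$ and so must be avoided.

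\textbf{The approximating arcs.} For each $n$ let $\K_n$ be the arc obtained by truncating the skeleton at depth $n$ and inserting, inside each deepest region $\hat Q^{(n)}$ (resp.\ $Q^{(n)}$), a trivial arc spanning it from its designated top to its designated bottom and meeting $\Lambda_F$ only at those two points. Using the disjointness and consistency properties --- $U_i\cap U_j=\emptyset$ for $i\ne j$, the $\beta_k$ pairwise disjoint and the $\partial^u U_k$ meeting only as in property~(ii), distinct level-$k$ components being disjoint, and the fact that each critical (resp.\ quasi-critical) point is freshly encountered at only one level of the construction (part~(3) for $b>0$, part~(1) for $b<0$) --- one checks that every $\K_n$ is an embedded arc with endpoints $z_0,z_{-1}$ (resp.\ $z'_0,z'_1$), and that passing from $\K_n$ to $\K_{n+1}$ only refines $\K_n$ inside the pairwise disjoint regions $\hat Q^{(n)}$ (resp.\ $Q^{(n)}$) and leaves it unchanged elsewhere. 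By Theorem~\ref{thm:WY} the horizontal extent of $Q^{(n)}$ is $\approx\min(2\delta,\rho^n)$ and its two horizontal boundaries are $\O(b^{n/2})$-close, so $\diam Q^{(n)}\to0$ and the refinements occur in sets of vanishing diameter; hence $\K_n$ converges in the Hausdorff metric to a set $\K$, which, being a Hausdorff limit of continua, is a continuum.

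\textbf{$\K$ is an arc, and $\K\cap\Lambda_F=\C$.} By construction $\K$ is the union of $\C$ with countably many inserted connector arcs, each contained in some $U_k$ or in a gap of some $\hat Q^{(k)}$ (resp.\ $Q^{(k)}$), hence in $D\setminus\Lambda_F$; so $\K\cap\Lambda_F\subseteq\C$. Conversely each $x\in\C$ lies in a nested chain $Q^{(0)}\supset Q^{(1)}\supset\cdots$ with $\{x\}=\bigcap_kQ^{(k)}$, the construction visits every $Q^{(k)}$, and $\diam Q^{(k)}\to0$, so $x\in\Cl\K=\K$; thus $\K\cap\Lambda_F=\C$. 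To see that $\K$ is an arc, note that every point of $\K$ other than $z_0$ and $z_{-1}$ (resp.\ $z'_0$ and $z'_1$) is a cut point splitting $\K$ into exactly two pieces --- for a point in the interior of an inserted arc this is immediate, and for a point $x\in\C$ it holds because the inserted arcs accumulate on $x$ from exactly the two sides determined by the above/below order, of which $z_0,z_{-1}$ are the extremes --- and a metric continuum with exactly two non-cut points is an arc. For $b<0$ the same analysis with $W^u_Y$ in place of $\Lambda_F$ and $\H$ in place of $\C$ gives $W^u_Y\cap\K=\H$, using that after the modification of $R_0$ each horizontal boundary of each $Q^{(k)}$ lies on $W^u_Y$ and carries a single point of $\H$, that $\H\cap\Lambda_F=\emptyset$, and that the inserted arcs meet $W^u_Y$ only at the prescribed points of $\H$.

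\textbf{Main obstacle.} The crux is showing that the limit is a genuine embedded arc --- neither a dendrite nor a self-crossing curve --- which forces one to combine \emph{all} of the disjointness and consistency properties of the $U_k$, the $\beta_k$, and the $\hat Q^{(k)}$ with the bookkeeping (parts~(1) and (3) above) of which critical and quasi-critical points occur at each level, together with the estimate $\diam Q^{(n)}\to0$, so as to rule out at every stage and in the limit any point being visited twice or any branch being created. Once this is in hand, computing $\K\cap\Lambda_F$ --- and, for $b<0$, $\K\cap W^u_Y$ --- and pinning down the endpoints is routine.
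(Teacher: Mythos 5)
Your proof is correct and follows essentially the same approach as the paper: both build $\K$ from the modified regions $\hat Q^{(k)}$ and connectors routed through the $U_k$, use the above/below order on $\C$ (resp.\ $\C\cup\H$) to organize the construction, and certify arc-hood by showing that removing any point other than the two designated endpoints disconnects $\K$. The paper realizes the limit as a nested intersection of connected compacta and makes the order explicit (a linear order $\tl$ whose rays are relatively open), while you phrase the limit as a Hausdorff limit of approximating arcs and invoke the ``exactly two non-cut points'' characterization; these are equivalent packagings of the same argument.
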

	\begin{proof}		
		We construct $\K$ inductively.
		
		Case 1. For $b > 0$ in the $k$th step we consider all the basic critical points of $\partial^u U_k$ and connect certain pairs of them with arcs, that we call \emph{connectors}, as follows:
		
		Let $k = 1$. Recall that $Q^{(0)} = \C^{(0)}$, $[z_0^2, z_0^3]^u = [z_0^2, z_0^1]^u \cup [z_0^1, z_0^3]^u$ and $\partial^u U_1 = [l_2, z_{-1}^1]^u \cup [z_{-1}^1, z_0^4]^u \subset [z_0^2, z_{-1}^1]^u \cup [z_{-1}^1, z_0^4]^u$. Since $[z_0^2, z_0^1]^u \cap Q^{(0)} \ne \emptyset$, $[z_0^1, z_0^3]^u \cap Q^{(0)} \ne \emptyset$, $[z_0^2, z_{-1}^1]^u \cap Q^{(0)} = \hat \theta' \ne \emptyset$ and $[z_{-1}^1, z_0^4]^u \cap Q^{(0)} = \hat \theta \ne \emptyset$, we have that $Q^{(0)}$ contains two components of $\C^{(1)}$. Also, $\mfc \cap \partial^u U_1 = \{ z_1, z_2 \}$ and $z_1 \in \hat \theta$, $z_2 \in \hat \theta'$. Since the subset of $Q^{(0)}$ bounded by $\hat \theta, \hat \theta'$ and the vertical boundary of $Q^{(0)}$ is a subset of $U_1$, and $U_1 \cap \Lambda_F = \emptyset$, there exists an arc $[z_1, z_2] \subset Q^{(0)}$ such that $(z_1, z_2) \cap \Lambda_F = \emptyset$. The arc $\zeta_1 = [z_1, z_2]$ is a connector.
		
		Let $k \in \N$, $k \ge 2$. If $Q^{(k-1)}$ contains two components of $\C^{(k)}$, $Q^{(k)}$ and $Q'^{(k)}$, then by (3) (ii) (a) \& (b) $\partial^u U_k \cap Q^{(k-1)}$ consists of two arc-components, $\hat \theta_k$ and $\hat \theta'_k$, and $U_k \cap Q^{(k-1)}$ is bounded by $\hat  \theta_k$, $\hat \theta'_k$ and vertical boundary of $Q^{(k-1)}$. Every arc-component of $\partial^u U_k \cap Q^{(k-1)}$ contains a basic critical point, $z_i \in \hat \theta_k$, $z_j \in \hat \theta'_k$. Since $U_k \cap \Lambda_F = \emptyset$, we connect them with an arc $[z_i, z_j] \subset Q^{(k-1)}$ such that $[z_i, z_j] \cap \hat \theta_k = \{ z_i \}$ and $[z_i, z_j] \cap \hat \theta'_k = \{ z_j \}$. The exact choice of such an arc $[z_i, z_j]$ is irrelevant, since $\diam \left(Q^{(k-1)}\right)\rightarrow_{k\to\infty}0$. Note that $(z_i, z_j) \cap \Lambda_F = \emptyset$. The arc $\zeta_i = [z_i, z_j]$ is a connector, see Figure \ref{fig:connectors}.
		
		Case 2. For $b < 0$ in the $k$th step we first consider all the quasi-critical points of $\partial^u U_k$ and connect certain pairs with arcs, that we also call \emph{connectors}. Similarly as in the first case, for $k = 1$, recall that $Q^{(0)} = \C^{(0)}$, $\partial^u U_1 = [l_2, {z'_1}^1]^u_Y \cup [{z'_1}^1, {z'_0}^3]^u_Y \subset [{z'_0}^2, {z'_1}^1]^u_Y \cup [{z'_1}^1, {z'_0}^3]^u_Y$. Since $[{z'_0}^2, {z'_1}^1]^u_Y \cap Q^{(0)} = \eta' \ne \emptyset$ and $[{z'_1}^1, {z'_0}^3]^u_Y \cap Q^{(0)} = \eta \ne \emptyset$, but ${z'_1}^1\notin Q^{(0)}$, we have that $Q^{(0)}$ contains two components of $\C^{(1)}$. Also, $\H \cap \partial^u U_1 = \{ z'_2, z'_3 \}$ and $z'_2 \in \eta'$, $z'_3 \in \eta$. Since the subset of $Q^{(0)}$ bounded by $\eta, \eta'$ and the vertical boundary of $Q^{(0)}$ is a subset of $U_1$, and $U_1 \cap \Lambda_F = \emptyset$, there exists an arc $[z'_2, z'_3] \subset Q^{(0)}$ such that $[z'_2, z'_3] \cap \Lambda_F = \emptyset$. The arc $\zeta_2 = [z'_2, z'_3]$ is a connector. 
		
		Again, similarly as in the first case, for $k \ge 2$ if $Q^{(k-1)}$ contains two components $Q^{(k)}$ and $Q'^{(k)}$ of $\C^{(k)}$, then $\partial^u U_k \cap Q^{(k-1)}$ consists of two arc-components, $\eta_k$ and $\eta'_k$, and $U_k \cap Q^{(k-1)}$ is bounded by $\eta_k$, $\eta'_k$ and vertical boundary of $Q^{(k-1)}$. Every arc-component of $\partial^u U_k \cap Q^{(k-1)}$ contains a quasi-critical point, say $z'_i \in \eta_k$, $z'_j \in \eta'_k$. Since $U_k \cap \Lambda_F = \emptyset$, we connect them with an arc $[z'_i, z'_j] \subset Q^{(k-1)}$ such that $[z'_i, z'_j] \cap \eta_k = \{ z'_i \}$ and $[z'_i, z'_j] \cap \eta'_k = \{ z'_j \}$. Note that $[z'_i, z'_j] \cap \Lambda_F = \emptyset$. The arc $\zeta_i = [z'_i, z'_j]$ is a connector. 
		
		Additionally, for every $z_i' $ there exist $k_i \in \N$ and $Q^{(k_i-1)}$ such that $Q^{(k_i-1)} \cap \beta_{k_i + 1} = \emptyset$ and $Q^{(k_i-1)} \cap \beta_{k_i + 2} \ne \emptyset$. In this case $Q^{(k_i-1)}$ contains only one component of $\C^{(k)}$, say $Q^{(k)}$, and an arc of $W^u_Y$ that contains the quasi-critical point $z'_i$. Since $\partial^u U_{k_i} \cap Q^{(k_i-1)}$ consists of only one arc-component, say $\eta'_{k_i}$, which contains a quasi-critical point $z'_j$, let $\eta_{k_i}$ be the arc of $W^u_Y \cap Q^{(k_i-1)}$ that contains the quasi-critical point $z'_i$. Note that $Q^{(k_i-1)} \cap \beta_{k_i + 2} \ne \emptyset$ implies $Q^{(k_i-1)} \cap \beta_{k_i + m} \ne \emptyset$ for every $m \ge 2$. Therefore, for the region $\hat U^{(k_i)}$ bounded by $\eta_{k_i}$, $\eta'_{k_i}$ and the vertical boundary of $Q^{(k_i-1)}$ we have $\hat U^{(k_i)} \cap \Lambda_F = \emptyset$. We connect $z'_i$ and $z'_j$ with an arc $[z'_i, z'_j] \subset Q^{(k-1)}$ such that $[z'_i, z'_j] \cap \eta_{k_i} = \{ z'_i \}$ and $[z'_i, z'_j] \cap \eta'_{k_i} = \{ z'_j \}$. Note that $[z'_i, z'_j] \cap \Lambda_F = \emptyset$. The exact choice of such an arc $[z'_i, z'_j]$ is irrelevant, since $\diam \left(Q^{(k-1)} \right) \rightarrow_{k\to\infty}0$. The arc $\zeta'_{k_i} = [z'_i, z'_j]$ is a connector, see Figure \ref{fig:connectors2}.
		
		For convenience, to unify notation, let $\hat Q^{(k)} = Q^{(k)}$, for $k \in \N$ and $b < 0$, and let $\zeta'_{k_i} = \emptyset$ for $k_i$, $i \in \N_0$, and $b > 0$.
		
		Now we are ready to define $$\K = \bigcap_{k \in \N} \left(\bigcup_{i=1}^{n_k}\hat Q^{(k)}_i \cup \bigcup_{i=1}^{n_k-1} \zeta^k_i \cup \bigcup_{k_i \le k} \zeta'_{k_i} \right),$$ 
		where $\{ \hat Q^{(k)}_i : i = 1, \ldots , n_k \}$ are the components of $\C^{(k)}$, and $\zeta^k_i$ is a connector joining $\hat Q^{(k)}_i$ with $\hat Q^{(k)}_{i+1}$. Note that $\diam(\hat Q^{(k)}_i) \longrightarrow_{k\to\infty} 0$ and $\K = \C \cup\bigcup_{i \in \Z \setminus \{ 0, -1 \}} \zeta_i$ for $b > 0$, and  $\K = \C \cup\bigcup_{i=2}^\infty \zeta_i \cup \bigcup_{i=0}^\infty \zeta'_{k_i}$ for $b < 0$.
		
		It is easy to see that all the connectors satisfy the following property:
		\begin{enumerate}[($*$)]
			\item For $b > 0$ ($b < 0$ respectively), after $k$th step every basic critical (quasi-critical, respectively) point of $\bigcup_{i=1}^{k} \partial^u U_i$ is connected with one other basic critical (quasi-critical, respectively) point by a connector. Note that by (3) (iii) (a) \& (b), if some component $Q^{(k-1)}$ of $\C^{(k-1)}$ contains only one component $Q^{(k)}$ of $\C^{(k)}$, then the basic critical (quasi-critical, respectively) points in $\partial^u \hat Q^{(k)}$ have already been visited and connected in the previous step. 
			
			For $b > 0$, for every $z_i \in \mfc$, $i \in \Z \setminus \{ 0, 1 \}$, there exists only one connector $\zeta$ with $z_i \in \zeta$, since by (2) (iii), we visit every basic critical point except $z_0$ and $z_{-1}$, and by (2) (ii), we visit every one of them only once. 
			
			For $b < 0$, for every quasi-critical point $z'_i \in \H$, $i \ge 2$, there exist two connectors $\zeta$ and $\zeta'$ with $\{z'_i\} = \zeta \cap \zeta'$, and each of them connects $z'_i$ with a different quasi-critical point. Each of the quasi-critical points $z'_0$ and $z'_1$ has only one connector that contains it, $z'_0 \in \zeta'_{k_0}$ and $z'_1 \in \zeta'_{k_1}$.
		\end{enumerate}  
		
		It is easy to see that $\K$ is a connected and compact set since it is the intersection of a descending family of connected and compact sets. To see that $\K$ is an arc, it is enough to show that $\K\setminus\{z\}$ is disconnected, for any $z \in \K \setminus \{ z_0, z_{-1} \}$ for $b > 0$, and for any $z \in \K \setminus \{ z'_0, z'_1 \}$ for $b < 0$. To that end we define an order $\tl$ on $\K$: If $z, z' \in \C \cup \H$, we define that $z \tl z'$ if $z$ is above $z'$ (if $b > 0$, then $\H = \emptyset$). Let $[z_i, z_j], [z_n, z_m]$ be two different connectors and $z_i \tl z_j$. If $P, Q \in [z_i, z_j]$, then $P \tl Q$ if $\length([z_i, P]) < \length([z_i, Q])$, where $[z_i, P], [z_i, Q] \subset [z_i, z_j]$. If $P \in [z_i, z_j)$ and $Q \in (z_n, z_m]$, then $P \tl Q$ if $z_j \tl z_n$ or $z_j = z_n$.  Note that by $(*)$ the order is well defined. Given $z \in \K \setminus \{ z_0, z_{-1} \}$ for $b > 0$, or $z \in \K \setminus \{ z'_0, z'_1 \}$ for $b < 0$, it is easy to see that the sets $\{z' \in \K : z \tl z'\}$ and $\{z' \in \K : z' \tl z\}$ are open in $\K$ in the topology inherited from the plane, and consequently $\K\setminus\{z\}$ is disconnected.
	\end{proof}
	
	Let $\gamma_X$ denote a unique (connected) component of $D \cap W^s$ that contains $X$, $X \in \gamma_X$. It has two endpoints. For $b > 0$, one is $X$, and we denote the other one by $E$, $\{ X , E \} = \partial^u D \cap \gamma_X$. Let $\Omega$ be the closed region bounded by two arcs: $\partial^u \Omega = [X, E^{-1}]^u \subset W^u$ and $\partial^s \Omega = [X, E^{-1}]^s \subset W^s$, see Figure \ref{fig:omega}. Note that $z_0 \in [X, E^{-1}]^u$ and $\K \subset \Omega$.
	
	For $b < 0$, let $E$ denote an endpoint of $\gamma_X$ that is closer to $Y$ on $W^u_Y$. Then the other one is $E^1$, $\gamma_X = [E, E^1]^s$. Let again $\Omega$ be the closed region bounded by two arcs: $\partial^u \Omega = [E, E^{-1}]^u \subset W^u_Y$ and $\partial^s \Omega = [E, E^{-1}]^s \subset W^s$, see Figure \ref{fig:omega2}. Note that $z'_0 \in [X, E^{-1}]^u$ and $\K \subset \Omega$.
	
	In what follows, we will work with the inverse images of the critical locus, $F^{-n}(\K)$, $n \in \N_0$. Since $\mathring \zeta_i$ are chosen with very mild restrictions, we do not have a lot of control over their preimages, but we have what we need. First, we are interested only in points in the attractor $\Lambda_F$ and $\mathring \zeta_i \cap \Lambda_F = \emptyset$. Second, for every $i$, there is $k \in \N$, such that $\mathring \zeta_i \subset U_k \cap \Omega$. Therefore, for every $n \in \N$ we have $F^{-n}(\K) \subset F^{-n}(\Omega)$ and moreover, $n < k$ implies $F^{-n}(\mathring \zeta_i) \subset F^{-n}(U_k \cap \Omega) \subset U_{k-n} \cap F^{-n}(\Omega)$, what is needed.

	\section{Coding of orbits on the attractor}\label{sec:coa}
	
	As we have already mentioned in the Introduction, the existence of coding for the H\'enon maps $F$ within the parameter set $\WY$ is proved in \cite[Theorem 1.6]{WY}. In this section, we provide an alternative proof. Recall that for a point $P \in D$ we let $P^j = F^j(P)$ for any $j \in \Z$.
	
	The critical locus $\K$ divides $D$ into two components. We denote by $D^l$ the one that lies to the left of $\K$, and by $D^r$ the one that lies to the right of $\K$. Both of them contain the critical locus. Also $F(\K)$ divides $F(D)$ into two other components, say $D^\uparrow \subset F(D)$ that lies above $F(\K)$, and $D^\downarrow \subset F(D)$ that lies below $F(\K)$. In \cite{BenedicksViana} it is proved that $W^s \cap D$ is dense in $D$. Since $\K \subset \Omega \cap D$, this implies that the set $\bigcup_{i \in \N_0}(F^{-i}(\K) \cap D)$ is dense in $D$.
	
	We code the points of $\Lambda_F$ in the following way. To a point $P \in \Lambda_F$ we assign a bi-infinite sequence $\bar p = \dots p_{-2} \, p_{-1} \cdot \, p_0 \, p_1 \, p_2 \dots$ such that
	$$
	p_n = \begin{cases} -, & \textrm{if  } F^n(P) \in D^l,\\
		+, & \textrm{if  } F^n(P) \in D^r.
	\end{cases} 
	$$
	The dot shows where the 0th coordinate is. 
	
	A bi-infinite symbol sequence $\bar q = \dots q_{-2} \, q_{-1} \cdot \, q_0 \, q_1 \, q_2 \dots$ is called \emph{admissible} if there is a point $Q \in \Lambda_F$ such that $\bar q$ is assigned to $Q$. This sequence is called an \emph{itinerary} of $Q$. Since both components that we use for coding, $D^l$ and $D^r$, contain the critical locus, some points of $\Lambda_F$ have more than one itinerary. We denote the set of all admissible sequences by $\Sigma_F$. It is a metrizable topological space with the usual product topology. Since $D^l$ and $D^r$ (with the boundary) intersected with $\Lambda_F$, are compact, the space $\Sigma_F$ is compact. 
	
	\begin{lem}\label{lem:unique}
		For every $\bar p \in \Sigma_F$ there exists only one point $P \in \Lambda_F$ with that itinerary.
	\end{lem}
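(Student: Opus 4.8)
The plan is to argue by contradiction, using that the itinerary of a point of $\Lambda_F$ records, for each $n\in\Z$, on which side of $\K$ its $n$-th iterate lies, that $\K\cap\Lambda_F=\C$, and that $\K$ crosses the unstable lamination of $\Lambda_F$ transversally at the points of $\C$. So suppose $P,P'\in\Lambda_F$ are distinct and both carry the itinerary $\bar p=\dots p_{-1}\cdot p_0p_1\dots$. For $j\in\Z$ put $\mathcal D_j:=D^l$ if $p_j=-$ and $\mathcal D_j:=D^r$ if $p_j=+$, and set
$$Z:=\Lambda_F\cap\bigcap_{n\in\N_0}\ \bigcap_{|j|\le n}F^{-j}(\mathcal D_j).$$
Then $Z=\{x\in\Lambda_F:\bar p\text{ is an itinerary of }x\}$, so $P,P'\in Z$ and $F^n(Z)\subseteq\mathcal D_n$ for every $n\in\Z$; the lemma is equivalent to the statement that $Z$ is a single point. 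First I would check that $Z$ is a subcontinuum of $\Lambda_F$: it is a nested intersection of the compact sets $\Lambda_F\cap\bigcap_{|j|\le n}F^{-j}(\mathcal D_j)$, each of which is connected because $\K$ separates $D$ and each of $\Lambda_F\cap D^l$, $\Lambda_F\cap D^r$ is connected.

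Next I would show $Z$ lies on a single local unstable manifold. Since $W^s\cap D$ is dense in $D$, so is $\bigcup_{i\in\N_0}(F^{-i}(\K)\cap D)$; and for each $i$ the part of $F^{-i}(\K)$ lying near $\Lambda_F$ is $F^{-i}(\C)$ — recall that the connectors $\mathring\zeta_j$ miss $\Lambda_F$, that $\K\subset\Omega$, and the preimage control recorded at the end of Section~\ref{sec:cl} — which is transverse to the unstable lamination of $\Lambda_F$. Hence any two points of $\Lambda_F$ on distinct unstable leaves are separated by some $F^{-i}(\K)$ and therefore receive different $i$-th symbols; so no two points of $Z$ lie on distinct unstable leaves. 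Together with connectedness this forces $Z$ to be a nondegenerate arc $\alpha$ contained in one unstable leaf, with $P$ and $P'$ among its points.

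Now $F^n(\alpha)\subseteq F^n(Z)\subseteq\mathcal D_n$ for every $n\ge0$. Since $\K$ meets each unstable leaf transversally exactly in the points of $\C$, an arc in a leaf that contains a point of $\C$ in its relative interior cannot lie in the closure $\mathcal D_n$ of a single component of $D\setminus\K$; therefore $F^n(\mathring\alpha)\cap\C=\emptyset$ for all $n\ge0$. In other words $\alpha$ is a \emph{wandering arc}: its forward orbit never turns at the critical set, so $F$ acts without folding on each $F^n(\alpha)$, and the nonuniform expansion along unstable leaves (away from the folds) applies all along the orbit of $\alpha$. It then remains to exclude such arcs, and here I would run a binding-period argument in the spirit of Benedicks--Carleson using the Wang--Young estimates of Theorem~\ref{thm:WY}: the bounds $\|DF^j(z)({0\atop1})\|\ge\kappa^{-1}e^{cj}$ and $d_{\C}(z^j)\ge\kappa^{-1}e^{-\alpha j}$, valid on $\C$ and passed to its accumulation set $\Lambda_F$, control how closely and how often the (distortion-controlled, unfolded) iterates $F^n(\alpha)$ can shadow $\C$, yielding net exponential growth of the iterates of $\alpha$ and hence a contradiction with $\alpha\subset\Lambda_F$ being a genuine arc. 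Consequently $\alpha$ cannot exist, $Z$ is a single point, and $P=P'$.

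I expect this last step — the exclusion of wandering arcs — to be the main obstacle: it is the two-dimensional counterpart of the absence of wandering intervals for unimodal maps with negative Schwarzian derivative, and in the present non-uniformly hyperbolic setting it genuinely requires the binding-period machinery rather than a soft topological argument. By contrast, the topological ingredients — connectedness of $Z$, its confinement to a single unstable leaf via the density of $\bigcup_iF^{-i}(\K)\cap D$, and the transversality of $\K$ to the lamination along $\C$ — should be comparatively routine, though some care is needed when $P$ or $P'$ fails to lie on $W^u$ itself and one works with a general unstable leaf of $\Lambda_F$.
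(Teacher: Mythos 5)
Your route is genuinely different from the paper's. The paper proves this in a few lines by invoking the inverse-limit machinery of \cite{BS}: for distinct $P,Q\in\Lambda_F$ one finds $k$ so that $P^{-k}$ and $Q^{-k}$ lie in distinct geometric realizations $\alpha,\alpha'\in\mathcal{A}$ (the continua realizing points of the tree $\T$), and then a component $s^n$ of $F^{-n}(\Omega)\cap D$ separating $\alpha$ from $\alpha'$ forces a component of $F^{-n}(\K)$ between them, so $P^{n-k}$ and $Q^{n-k}$ have different codings. In other words, the hard separation work is already packaged in \cite{BS}, and the lemma is harvested essentially for free.

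Your proposal instead tries to rebuild the separation from scratch, and there are two concrete gaps. First, the connectedness of $Z$: you claim each cylinder set $\Lambda_F\cap\bigcap_{|j|\le n}F^{-j}(\mathcal D_j)$ is connected because $\Lambda_F\cap D^l$ and $\Lambda_F\cap D^r$ are connected. Neither statement is justified. Since $\Lambda_F\cap\K=\C$ is a Cantor set, $\Lambda_F\cap D^l$ is a union of pieces of unstable leaves, each touching $\K$ only in a totally disconnected set, and it is far from clear that this is a continuum; and even granting it were, a finite intersection $\Lambda_F\cap D^{\sigma_0}\cap F^{-1}(D^{\sigma_1})\cap\cdots$ is not an intersection of a nested family at that stage, and the partition $\{D^l,D^r\}$ is precisely not Markov in the presence of pruning, so intersections of these pieces are not connected in general. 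Without connectedness of $Z$ you cannot conclude it is an arc in a single unstable leaf. Second, and more seriously, the "exclusion of wandering arcs" is not a proof but a plan: you cite the Wang--Young estimates and gesture at a binding-period argument, but you do not carry it out, and you yourself flag it as the main obstacle. This is the two-dimensional analogue of non-existence of wandering intervals and is a substantial piece of analysis; invoking it without details leaves the lemma unproved. The paper sidesteps both issues by relying on the already-established tree/inverse-limit structure of \cite{BS}, where exactly this kind of separation by preimages of $\K$ is what drives the construction of the tree $\T$ and its stems.
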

\begin{proof}		
	Let $P, Q \in \Lambda_F$, $P \ne Q$. It was shown in \cite[Proof of Theorem 1.1]{BS} that there exists a $k$ and distinct $\alpha, \alpha' \in \mathcal{A}$ such that $P^{-k}\in \alpha$ and $Q^{-k}\in\alpha'$. By \cite[Proofs of Lemma 4.2 and Theorem 1.4]{BS} there exists an $n>0$ and $s^n$, a component of $F^{-n}(\Omega)\cap D$, such that $s^n$ separates $\alpha$ and $\alpha'$ in $D$. Since $\partial^s s^n$ is separated by a component of $F^{-n}(\K)$ we get that $P^{n-k}$ and $Q^{n-k}$ have different codings and in turn $P$ and $Q$ different itineraries, which completes the proof.
\end{proof}
	
	By Lemma~\ref{lem:unique} and the definition of $\Sigma_F$, the map $\iota : \Sigma_F \to \Lambda_F$ such that $\bar p \in \Sigma_F$ is an itinerary of $\iota(\bar p) \in \Lambda_F$ is well defined and is a surjection. Clearly, $F \circ \iota = \iota \circ \sigma$, where $\sigma : \Sigma_F \to \Sigma_F$ is the shift homeomorphism.
	
	\begin{lem}\label{lem:continuous}
		The map $\iota$ is continuous.
	\end{lem}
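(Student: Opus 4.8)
The plan is to deduce continuity from compactness of $\Lambda_F$ together with the uniqueness statement of Lemma~\ref{lem:unique}, rather than by any quantitative estimate. Since $\Sigma_F$ is a closed subset of the countable product $\{-,+\}^{\Z}$, it is metrizable, so it suffices to prove sequential continuity: whenever $\bar p^{(m)} \to \bar p$ in $\Sigma_F$, one has $\iota(\bar p^{(m)}) \to \iota(\bar p)$ in $\Lambda_F$. Because $\Lambda_F$ is compact, I would reduce this further to the claim that every subsequential limit of $\big(\iota(\bar p^{(m)})\big)_m$ equals $\iota(\bar p)$.

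So suppose $\iota(\bar p^{(m_j)}) \to P^*$ for some $P^* \in \Lambda_F$, and fix $n \in \Z$. Convergence in the product topology forces the $n$-th coordinate of $\bar p^{(m_j)}$ to equal $p_n$ for all large $j$; hence $F^n\big(\iota(\bar p^{(m_j)})\big)$ lies in $D^l \cap \Lambda_F$ if $p_n = -$, and in $D^r \cap \Lambda_F$ if $p_n = +$. Both sets are closed (compact, even, as observed just before the statement of the lemma), and $F^n$ is continuous, so letting $j \to \infty$ gives $F^n(P^*) \in D^l \cap \Lambda_F$ when $p_n = -$ and $F^n(P^*) \in D^r \cap \Lambda_F$ when $p_n = +$. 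As $n$ was arbitrary, this is exactly the statement that $\bar p$ is an itinerary of $P^*$. By Lemma~\ref{lem:unique} there is only one point of $\Lambda_F$ with itinerary $\bar p$, namely $\iota(\bar p)$; therefore $P^* = \iota(\bar p)$, which completes the argument.

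The one delicate point — and the reason the proof is phrased through Lemma~\ref{lem:unique} rather than directly — is the overlap of $D^l$ and $D^r$ along $\K$: a point of the critical locus can carry several itineraries, so the limiting argument only certifies that $\bar p$ is \emph{one} itinerary of $P^*$, not that $P^*$ is \emph{the} point coded by $\bar p$. It is the uniqueness of the \emph{point} (Lemma~\ref{lem:unique}), not of the itinerary, that closes the gap. Beyond this observation, the argument needs nothing more than the closedness of $D^l \cap \Lambda_F$ and $D^r \cap \Lambda_F$ and the continuity of the iterates of $F$.
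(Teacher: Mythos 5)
Your proof is correct and rests on the same two pillars as the paper's argument: compactness of $\Lambda_F$ (to pass to a convergent subsequence) and Lemma~\ref{lem:unique} (to identify the subsequential limit). The only difference is one of presentation: the paper assumes $P_n \to Q \neq P$ and splits into cases (either $P^j$ and $Q^j$ always share a closed disc, yielding a common itinerary, or at some coordinate they lie in opposite open discs, contradicting $\bar p^n \to \bar p$), whereas you argue affirmatively that $\bar p$ is an itinerary of \emph{every} subsequential limit $P^*$ by pushing the closed conditions $F^n(\iota(\bar p^{(m_j)})) \in D^l \cap \Lambda_F$ (resp.\ $D^r \cap \Lambda_F$) through the limit. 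Your version avoids the case split and is slightly cleaner, but it is not a genuinely different route; in particular, you are right that the key subtlety is exactly the one you flag --- points on $\K$ carry multiple itineraries, so what is unique is the \emph{point} coded by a given sequence, not the sequence coding a given point, and that is precisely what Lemma~\ref{lem:unique} supplies.
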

	\begin{proof}
		Let $\bar p \in \Sigma_F$ and let $(\bar p^n)_{n=1}^\infty$ be a sequence of elements of $\Sigma_F$ which converges to $\bar p$. Set $P =\iota(\bar p)$ and $P_n = \iota(\bar p^n)$. We will prove that $(P_n)_{n=1}^\infty$ converges to $P$.
		
		Let us suppose by contradiction, that $(P_n)_{n=1}^\infty$ does not converge to $P$. Since $\Lambda_F$ is compact, there exists a subsequence of the sequence $(P_n)_{n=1}^\infty$ convergent to some $Q \ne P$. Without loss of generality, we assume that this subsequence is the original sequence $(P_n)_{n=1}^\infty$.
		
		If for every $j \in \Z$ both $P^j$ and $Q^j$ belong to the same closed disc $D^l$ or $D^r$, then there is $\bar q \in \Sigma_F$ which is an itinerary of both $P$ and $Q$. This contradicts Lemma~\ref{lem:unique}. Therefore, there is $j\in \Z$ such that $P^j$ belongs to the one of the open discs $\Int_D D^l = D^l \setminus \K$ or $\Int_D D^r$, and $Q^j$ belongs to the other one. Since the points $P_n$ converge to $Q$, for all $n$ large enough the points $P_n^j$ and $P^j$ belong to the different open discs $\Int_D D^l$ and $\Int_D D^r$. This means that for all $n$ large enough we have $p^n_j\ne p_j$. Therefore, the sequence $(\bar p^n)_{n \in \N}$ cannot converge to $\bar p$, a contradiction. This completes the proof.
	\end{proof}
	
	For a sequence $\bar p = (p_i)_{i \in \Z} \in \Sigma_F$ and $n \in \Z$, we call the left-infinite sequence $\la p_{\h n} = \dots p_{n-2} \, p_{n-1} \, p_n$ a \emph{left tail} of $\bar p$ and the right-infinite sequence $\ra p_{\h n} = p_n \, p_{n+1} \, p_{n+2} \dots$ a \emph{right tail} of $\bar p$. We call a finite sequence $w = w_1 \dots w_k$ a \emph{word} and denote its length by $|w|$, $|w| = k$. We denote an infinite to the right (respectively, left) sequence of $+$s by $\rpinf$ (respectively, $\lpinf$).
	
	By \cite[Theorem 1.1 (2) (iii)]{WY}, for every $z \in \C$, we have $z^n \nin \C$ for every $n \in \N$, and hence $z^i \nin \C$ for every $i \in \Z \setminus \{ 0 \}$. This implies that every point $P \in \Lambda_F$ has at most two itineraries, and if $P$ has two itineraries, then they differ at one coordinate, say $k$, and $P^k$ is a critical point. In such a case, for simplicity, we write both itineraries of $P$ as one sequence $\bar p = (p_i)_{i \in \Z}$ such that its $k$th coordinate is $p_k = \pm$.
	
	Let us now consider itineraries of the points of $W^u$. Let $\Sigma_{W^u}$ denote the set of all itineraries of all points of $W^u$. The itinerary of the fixed point $X$ (which is in $D^r$) is $\bar x = \lpinf \cdot \rpinf$. Since $W^u$ is the unstable manifold of $X$, for every point $P \in W^u$ and its itinerary $\bar p$, there is $n \in \Z$ such that $\la p_{\h n} = \lpinf$. Therefore, if the orbit of $P$ intersects the critical set $\C$, there exists a unique integer $k > n$ such that $P^k \in \C$. Also $P^{k+1} \in F(\C)$ lies in $D^r$. By definition, the set $\Sigma_{W^u}$ is $\sigma$-invariant. If a sequence is an itinerary of a point of $W^u$, we call it \emph{$W^u$-admissible}.
	
	\begin{lem}\label{approx}
		Assume that $\bar p \in \Sigma_{W^u}$ and $n \in \N$. Then there is $\bar q \in \Sigma_{W^u}$ such that $p_{-n},\dots p_n = q_{-n}\dots q_n$ and $\bar q$ is the only itinerary of $\iota(\bar q)$.
	\end{lem}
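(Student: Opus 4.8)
The plan is to reduce the lemma to a counting argument on $W^u$. First I would recall two facts from the run-up to the statement: by Theorem \ref{thm:WY}(II)(2)(iii) no point of $\C$ ever returns to $\C$, so a point $R\in W^u$ fails to have a unique itinerary only when its orbit meets $\C$, and then it meets $\C$ at exactly one time; and since $\mfc$ is countable and $W^u$ is $F$-invariant, the set of $R\in W^u$ whose orbit ever meets $\C$ equals $\bigcup_{j\in\Z}F^{-j}(\mfc)$, hence is countable. So it suffices to produce a \emph{non-degenerate} subarc $A\subseteq W^u$ every point of which has an itinerary agreeing with $\bar p$ on the coordinates $-n,\dots,n$: being uncountable, such an $A$ contains a point $R$ whose full orbit avoids $\C$; then $R$ has a unique itinerary $\bar q$, which lies in $\Sigma_{W^u}$ since $R\in W^u$, agrees with $\bar p$ on $\{-n,\dots,n\}$ since $R\in A$, and is the only itinerary of $\iota(\bar q)=R$.

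Next I would construct $A$. Put $P=\iota(\bar p)\in W^u$, write $D^-=D^l$ and $D^+=D^r$ (closed sets with $D^-\cup D^+=D$ and $D^-\cap D^+=\K$), and set $Z=W^u\cap\bigcup_{j=-n}^{n}F^{-j}(\K)$. Because $\K$ is an arc (by the previous theorem) it is compact, so each $F^{-j}(\K)$ is closed; and $F^{-j}(\K)\cap W^u=F^{-j}(\K\cap W^u)=F^{-j}(\mfc)$, since $\K\cap\Lambda_F=\C$ forces $\K\cap W^u=\mfc$. Thus $Z$ is a finite union of closed countable subsets of $W^u$, so it is closed and nowhere dense, and $W^u\setminus Z$ is open and dense. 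For any component $I$ of $W^u\setminus Z$ and any $j\in\{-n,\dots,n\}$, the connected set $F^{j}(I)$ avoids $\K$, hence lies in one of the two components $D^l\setminus\K$, $D^r\setminus\K$ of $D\setminus\K$, so every point of $I$ has the same $j$th itinerary symbol; thus $I$ realizes a well-defined word $w(I)\in\{-,+\}^{2n+1}$. If $P\notin Z$, I take $A$ to be the component of $P$, for which $w(A)=p_{-n}\cdots p_n$. If $P\in Z$, let $k$ be the unique index in $\{-n,\dots,n\}$ with $P^k=z\in\mfc$; then $P^j\notin\K$ for $j\ne k$, so a small enough subarc $J\subseteq W^u$ around $P$ satisfies $F^{j}(J)\subseteq D^{p_j}\setminus\K$ for all $j\ne k$, while $F^{k}(J)$ is a subarc of $W^u$ around $z$ meeting $\K$ only in $z$ (as $z$ is isolated in $\mfc=\K\cap W^u$). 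Since $\K$ cuts through $W^u$ at $z$, the two components of $F^{k}(J)\setminus\{z\}$ lie on opposite sides of $\K$, so exactly one component $J'$ of $J\setminus\{P\}$ satisfies $F^{k}(J')\subseteq D^{p_k}\setminus\K$, and I take $A=\Cl J'$. Either way $A$ is a non-degenerate subarc of $W^u$ with $F^{j}(A)\subseteq D^{p_j}$ for all $j\in\{-n,\dots,n\}$, so every point of $A$ has an itinerary agreeing with $\bar p$ on $\{-n,\dots,n\}$, as required.

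The step I expect to be the main obstacle is the case $P\in Z$, precisely the claim that $W^u$ genuinely cuts through $\K$ at a basic critical point $z$, so that both symbols occur on non-degenerate pieces of $W^u$ arbitrarily near $z$. For a $z$ interior to the arc $\K$ this should be read off the construction in Section \ref{sec:cl}: the two germs of $\K$ at $z$ --- one running along a connector into a complementary gap $U_m$, the other through $\C$ inside the nested regions $\hat Q^{(k)}$ --- sit on the two sides of the local $W^u$-arc through $z$. For the two endpoints $z_0,z_{-1}$ of $\K$ (resp.\ $z'_0,z'_1$ when $b<0$) it needs a separate short check, using that $\K$ terminates there and that $z_0,z_{-1}\in\partial^u\Omega\subseteq D$. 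Finally, the orientation-preserving case $b<0$ is covered by the same argument: $W^u$ is still the unstable manifold of $X$, contained in $\Lambda_F=\Cl W^u$, and again meets $\K$ exactly in $\mfc=\C\cap W^u$, so nothing in the proof changes.
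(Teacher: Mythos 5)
Your proposal is correct and follows essentially the same route as the paper's proof: both produce a non-degenerate subarc of $W^u$ near $P$ on which the itinerary coordinates indexed by $-n,\dots,n$ are forced to equal $p_{-n}\cdots p_n$, and then use countability of $\bigcup_{k\in\Z}F^{-k}(\mfc)$ to select a point of that arc whose full orbit misses $\K$ and hence has a unique itinerary. The ``main obstacle'' you flag --- that $W^u$ genuinely crosses $\K$ at a basic critical point --- is invoked by the paper at the same moment, equally implicitly, when it asserts that the arc $J_j$ around $P^j$ has one endpoint in $D^l$ and the other in $D^r$; since this is already built into the partition of $W^u$ into basic arcs (adjacent basic arcs meeting at a basic critical point have arc-codes differing in their last symbol), your argument incurs no additional gap.
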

	\begin{proof}
		Let $n \in \N$, $\bar p \in \Sigma_{W^u}$ and $\iota(\bar p) = P \in W^u$. If $P^i \nin \K$ for every $i \in \Z$, then $\bar p$ is the only itinerary of $P$ and the claim holds.
		
		Let us suppose that there exists $j \in \Z$ such that $P^j \in \K$. By \cite[Theorem 1.1 (2) (iii)]{WY}, $P^i \nin \K$ for every $i \in \Z$, $i \ne j$. Therefore, for every $i \in \{ -n, \dots , n \}$, $i \ne j$, there exists $\eps_i > 0$ such that for the (closed) arc $J_i = [L_i, R_i]^u \subset W^u$ with $d(L_i, P^i) = d(R_i, P^i) = \eps_i$ we have $J_i \cap \K = \emptyset$. Let $J_j = \bigcap_{i=-n, \, i \ne j}^n F^{j-i}(J_i)$. Then $J_j = [L_j, R_j]^u \subset W^u$ is a (closed non-degenerate) arc with a boundary point $L_j$ in $D^l$ and the other boundary point $R_j$ in $D^r$, $P^j \in (L_j, R_j)$, and for every point $Q \in J_j$, $Q \ne P^j$, the points $Q^i \in F^i(J_j) \subset J_{j+i}$ and $P^{j+i}$ lie in the same disc, $D^l$ or $D^r$, for all $i \in \{ -n-j, \dots , n-j \}$, $i \ne 0$. This implies that for $p_j = +$ (respectively $p_j = -$) and for every $\bar q$ such that $\iota(\bar q) = Q \in (P^j, R_j]$ (respectively $\iota(\bar q) = Q \in [L_j, P^j)$), we have $q_i = p_i$ for all $i \in \{ -n, \dots , n \}$. 
		
		Since the unstable manifold $W^u$ intersects the critical locus $\K$ only at countably many points, the set of points $R \in W^u$ such that $R^k$ belongs to $\K$ for some $k \in \Z$ is also countable. Thus, there are points $Q \in J = F^{-j}(J_j) \subset J_0$, on both sides of $P = P^0$, such that for every $k \in \Z$ the point $Q^k$ does not belong to $\K$. Such $Q$ has only one itinerary and by construction $p_{-n},\dots p_n = q_{-n}\dots q_n$. This completes the proof.
	\end{proof}
	
	The space $\Cl \Sigma_{W^u}$, as  the closure of a $\sigma$-invariant space, is also $\sigma$-invariant. We want to show that the sequences of $\Cl \Sigma_{W^u}$ suffice for the symbolic description of dynamics on $\Lambda_F$. We know that this is true for $\Sigma_{W^u}$, that is, we know that $\iota(\Sigma_{W^u}) = W^u$. We also want to show that the sequences of $\Cl \Sigma_{W^u}$ are essential, that is, we cannot remove any of them from our symbolic system. 
	
	\begin{lem}\label{essential}
		$\iota(\Cl \Sigma_{W^u}) = \Lambda_F$ and $\Cl \Sigma_{W^u}$ is the minimal set with this property, that is, each compact subset $\Sigma'$ of $\Sigma_F$ such that $\iota(\Sigma') = \Lambda_F$, contains $\Cl \Sigma_{W^u}$.
	\end{lem}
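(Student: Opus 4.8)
The plan is to establish the two assertions separately: first the equality $\iota(\Cl \Sigma_{W^u}) = \Lambda_F$, and then the minimality of $\Cl \Sigma_{W^u}$ among compact sets mapping onto $\Lambda_F$.

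For the equality I would argue by a short compactness argument. Since $\Sigma_F$ is compact and $\Cl \Sigma_{W^u}$ is a closed subset of it, $\Cl \Sigma_{W^u}$ is compact; hence, by Lemma~\ref{lem:continuous}, its image $\iota(\Cl \Sigma_{W^u})$ is a compact, and therefore closed, subset of $\Lambda_F$. This set contains $\iota(\Sigma_{W^u}) = W^u$, so it contains $\Cl W^u = \Lambda_F$. The reverse inclusion $\iota(\Cl \Sigma_{W^u}) \subseteq \iota(\Sigma_F) = \Lambda_F$ is immediate, which gives the equality.

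For the minimality, let $\Sigma'$ be any compact subset of $\Sigma_F$ with $\iota(\Sigma') = \Lambda_F$. As $\Sigma'$ is closed, it suffices to show $\Sigma_{W^u} \subseteq \Sigma'$, for then $\Cl \Sigma_{W^u} \subseteq \Sigma'$. Fix $\bar p \in \Sigma_{W^u}$. For each $n \in \N$, Lemma~\ref{approx} supplies $\bar q^n \in \Sigma_{W^u}$ with $q^n_{-n} \dots q^n_n = p_{-n} \dots p_n$ that is the \emph{only} itinerary of the point $Q_n := \iota(\bar q^n) \in W^u$. Since $Q_n \in W^u \subseteq \Lambda_F = \iota(\Sigma')$, there is some sequence in $\Sigma'$ which is an itinerary of $Q_n$; uniqueness of the itinerary of $Q_n$ forces that sequence to be $\bar q^n$ itself, so $\bar q^n \in \Sigma'$. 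Because $\bar q^n$ and $\bar p$ agree on coordinates $-n$ through $n$, the sequence $(\bar q^n)_{n\in\N}$ converges to $\bar p$ in the product topology; as $\Sigma'$ is closed, $\bar p \in \Sigma'$. Hence $\Sigma_{W^u} \subseteq \Sigma'$, completing the proof.

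The argument is essentially formal once Lemma~\ref{approx} is available; that lemma — itself resting on the density of $W^s$ in $D$ and on $W^u$ meeting $\K$ in only countably many points — is where the real work lies. The only delicate point here is the promotion of ``$\Sigma'$ contains \emph{some} itinerary of $Q_n$'' to ``$\Sigma'$ contains $\bar q^n$'', which is exactly what the uniqueness clause of Lemma~\ref{approx} is designed to supply; I do not foresee any further obstacle.
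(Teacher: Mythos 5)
Your proof is correct and follows the same route as the paper's: you use compactness and continuity of $\iota$ plus density of $W^u$ in $\Lambda_F$ for the first claim, and for minimality you invoke Lemma~\ref{approx} to produce itineraries of points of $W^u$ that are unique (hence forced into $\Sigma'$) and dense in $\Sigma_{W^u}$, then use closedness of $\Sigma'$. You have merely unpacked the paper's one-line appeal to density into an explicit convergence argument; the content is identical.
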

	\begin{proof}
		Since the set $\Cl \Sigma_{W^u}$ is compact, so is $\iota(\Cl \Sigma_{W^u})$. Also, $\iota(\Cl \Sigma_{W^u})$ contains $\iota(\Sigma_{W^u}) = W^u$, which is dense in $\Lambda_F$, so $\iota(\Cl \Sigma_{W^u})$ is equal to $\Lambda_F$.
		
		Now suppose that $\Sigma' \subseteq \Sigma_F$ is a compact set such that $\iota(\Sigma') = \Lambda_F$. The itineraries of all points of $W^u$ with unique itineraries clearly belong to $\Sigma'$. By Lemma~\ref{approx}, the set of those itineraries is dense in $\Sigma_{W^u}$. Since $\Sigma'$ is closed, we get $\Sigma_{W^u} \subset \Sigma'$, and then $\Cl \Sigma_{W^u} \subseteq \Sigma'$.
	\end{proof}
	By the definition of $\Sigma_F$ and since for every $z \in \C$ we have $z^i \nin \C$ for every $i \in \Z \setminus \{ 0 \}$, the following corollary holds.
	\begin{cor}
		$\Sigma_F = \Cl \Sigma_{W^u}$.
	\end{cor}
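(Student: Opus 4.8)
The plan is to establish the two inclusions separately. The inclusion $\Cl\Sigma_{W^u}\subseteq\Sigma_F$ is immediate, since $\Sigma_{W^u}\subseteq\Sigma_F$ and $\Sigma_F$ is compact, hence closed. For the reverse inclusion I would take $\bar p\in\Sigma_F$, set $P=\iota(\bar p)$, and use \cite[Theorem 1.1 (2) (iii)]{WY} to note that at most one $m\in\Z$ has $P^m\in\C$; for every other $i$ we then have $P^i\notin\K$, so $P^i$ lies in the open set $\Int_D D^{p_i}$. Since $\Sigma_{W^u}$, $\Cl\Sigma_{W^u}$ and $\Sigma_F$ are $\sigma$-invariant and $\sigma$ is a homeomorphism of $\Sigma_F$, after replacing $\bar p$ by $\sigma^m(\bar p)$ I may assume either that $P^i\notin\C$ for all $i$, or that $P=z\in\C$.

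In the first case I would pick, using $\Lambda_F=\Cl W^u$, points $Q_k\in W^u$ with $Q_k\to P$; for a fixed $n$ and all large $k$, continuity of $F^i$ forces $F^i(Q_k)\in\Int_D D^{p_i}$ for all $|i|\le n$, so $Q_k$ carries an itinerary in $\Sigma_{W^u}$ that agrees with $\bar p$ on coordinates $-n,\dots,n$; a diagonal choice over $n$ then yields a sequence in $\Sigma_{W^u}$ converging to $\bar p$, so $\bar p\in\Cl\Sigma_{W^u}$.

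The case $P=z\in\C$ is the crux. Here $\bar p$ is one of the two itineraries of $z$, distinguished by the value of $p_0$, i.e.\ by a choice of side $D^\epsilon\in\{D^l,D^r\}$ of $\K$. Since $z^i\in\Int_D D^{p_i}$ for all $i\ne0$, it will suffice to produce points $Q\in W^u\cap\Int_D D^\epsilon$ arbitrarily close to $z$: for such a $Q$ close enough to $z$ one also has $F^i(Q)\in\Int_D D^{p_i}$ for $0<|i|\le n$, hence $Q$ carries an itinerary in $\Sigma_{W^u}$ agreeing with $\bar p$ on $-n,\dots,n$, and the diagonal argument again closes the case. To obtain this two-sided approximation I would return to the construction of Section~\ref{sec:cl}: $z$ lies in a nested sequence of components $\hat Q^{(k)}$ of the critical regions with $\diam\hat Q^{(k)}\to0$, and each $\partial^u\hat Q^{(k)}$ is an arc (or two arcs) of $W^u$ stretching across $\hat Q^{(k)}$ from one vertical side to the other; since $\K$ separates $\hat Q^{(k)}$ into its portion in $D^l$ and its portion in $D^r$, while $W^u\cap\K=\mfc$ is discrete along $W^u$, such an arc meets both $\Int_D D^l$ and $\Int_D D^r$ in nondegenerate subarcs, and these lie within $\diam\hat Q^{(k)}$ of $z$. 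Letting $k\to\infty$ gives $z\in\Cl(W^u\cap\Int_D D^l)\cap\Cl(W^u\cap\Int_D D^r)$, as needed. The orientation-preserving case $b<0$ is analogous, with $W^u_Y$, $\H$ and $\hat Q^{(k)}=Q^{(k)}$ replacing $W^u$, $\mfc$ and $\hat Q^{(k)}$, using that $\Lambda_F\cap\hat Q^{(k)}$ contains arcs of $W^u$ that stretch across $\hat Q^{(k)}$.

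I expect this last two-sided approximation to be the only real obstacle: it is the one point where the fine geometry of the critical regions is needed, rather than mere density of $W^u$ in $\Lambda_F$ together with soft arguments ($\sigma$-invariance, compactness, continuity of $\iota$, and diagonalization). Once it is in place, the corollary follows.
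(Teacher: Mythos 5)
Your proof is correct and follows essentially the same route as the paper. The inclusion $\Cl\Sigma_{W^u}\subseteq\Sigma_F$ you get from compactness of $\Sigma_F$; the paper obtains it from the minimality clause of Lemma~\ref{essential} (which needs compactness of $\Sigma_F$, i.e.\ ``by the definition of $\Sigma_F$''). For the reverse inclusion, the paper's one-line justification leans on Lemma~\ref{essential} (surjectivity of $\iota$ on $\Cl\Sigma_{W^u}$) together with the fact from \cite[Theorem 1.1(2)(iii)]{WY} that an orbit meets $\C$ at most once, which forces any admissible sequence to differ from some element of $\Cl\Sigma_{W^u}$ in at most one coordinate; the remaining step --- that the ``other'' itinerary of a critical-orbit point is also a limit of $W^u$-itineraries --- is left implicit, and is precisely the point you isolate as the crux. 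You supply a direct, self-contained approximation (reduce via $\sigma$-invariance to $P\in\C$, then use the arcs $\partial^u\hat Q^{(k)}$ of $W^u$ stretching across the shrinking critical components $\hat Q^{(k)}$ to produce nearby $W^u$-points strictly on either side of $\K$), which is the same geometry the paper exploits elsewhere (compare the approximation of $P\in\K$ by basic critical points in the proof of Proposition~\ref{lem:dif1}). One small point: you don't actually need $\mfc$ to be discrete along $W^u$ --- countability already guarantees that the subarcs of $\partial^u\hat Q^{(k)}\setminus\K$ on each side of $z$ are nonempty --- but the claim is true and your argument works as written.
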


	\section{The pruning front conjecture}\label{sec:pfc}
	
	Recall that the set of the basic critical points is $\mfc = \C \cap W^u$, and the points in $\mfc^+ = \bigcup_{n \in \N} F^n(\mfc)$ are called the basic post-critical points. We call the points in $\mfc \cup \mfc^+$ the \emph{basic points}.
	
	We partition $W^u$ into basic arcs. A \emph{basic arc} is an arc in $W^u$ whose endpoints are two consecutive basic points. The basic arc $[z_0, z_0^1]^u$ contains the fixed point and for every point $P \in [z_0, z_0^1]^u$ we have $\la p_{\h 0} = \lpinf$. Its consecutive basic arc in $W^{u-}$ is $[z_0^2, z_0]^u$ and for every point $P \in [z_0^2, z_0]^u$ we have $\la p_{\h 0} = \lpinf-$. Note that $F([z_0^2, z_0]^u) = [z_0^1, z_0^3]^u$. In general, $z_0^3$ might lie in $D^r$, and in that case, $[z_0^1, z_0^3]^u$ would be a basic arc. However, for the Wang-Young set of parameters that we are considering in this paper, a lot of the initial iterations of $z_0^1$ lie in $D^l$. Therefore, $[z_0^1, z_0^3]^u$ contains the critical point $z_{-1}$, the consecutive basic arc of $[z_0, z_0^1]^u$ in $W^{u+}$ is $[z_0^1, z_{-1}]^u$ and for every point $P \in [z_0^1, z_{-1}]^u$ we have $\la p_{\h 0} = \lpinf-+$. Also, for the next consecutive arc $[z_{-1}, z_0^3]^u$ and for every $P \in [z_{-1}, z_0^3]^u$ we have $\la p_{\h 0} = \lpinf--$. In general, if $J$ is a basic arc, there exists a word $w = w_{-n+1} \dots w_{-1}w_0$ such that for every point $P \in J$ we have $\la p_{\h 0} = \lpinf w_{-n+1} \dots w_{-1} w_0 = \lpinf w$, $w_{-n+1} = -$, and any two different basic arcs have different related words. This allows us to code the basic arcs with the related words, $J = I_w$, and we call the word $w$ the \emph{arc-code} of the basic arc $I_w$. In particular, the arc-code of $[z_0, z_0^1]^u$ is $\emptyset$, so $[z_0, z_0^1]^u = I_\emptyset$. Observe that if $I_w$ and $I_{w'}$ are two consecutive basic arcs, then $\lpinf w$ and $\lpinf w'$ differ in only one coordinate. Also, for every $m \in \N$, all basic arcs of $F^{m-1}([z_0^2, z_0]^u)$ have arc-codes of length $m$. Moreover, if $m$ is even, $F^{m-1}([z_0^2, z_0]^u) \subset W^{u+}$, and if $m$ is odd, $F^{m-1}([z_0^2, z_0]^u) \subset W^{u-}$, see Figure \ref{fig:FP}.
	
	Let us recall the standard parity-lexicographical ordering $\preceq$ on a set of (one-sided) sequences of $+$s and $-$s: First, $- \prec +$. Let $\ra p = p_0 p_1 \dots p_n \dots$ and $\ra q = q_0 q_1 \dots q_n \dots$ be two different sequences, or $\ra p = p_0 p_1 \dots p_n$ and $\ra q = q_0 q_1 \dots q_n$ be two different finite words of the same length. Let $m \in \N_0$ be the smallest non-negative integer such that $p_m \ne q_m$ (in the case of finite words $m \le n$). Then $\ra p \prec \ra q$ if and only if the number of $+$s in $p_0 \dots p_{m-1}$ is even and $p_m \prec q_m$, or the number of $+$s in $p_0 \dots p_{m-1}$ is odd and $q_m \prec p_m$. Here, if $m = 0$, then $p_0 \dots p_{m-1}$ is the empty word. Also, if $p_m = \pm$, or $q_m = \pm$, then by convention $- \prec \pm \prec +$.
	
	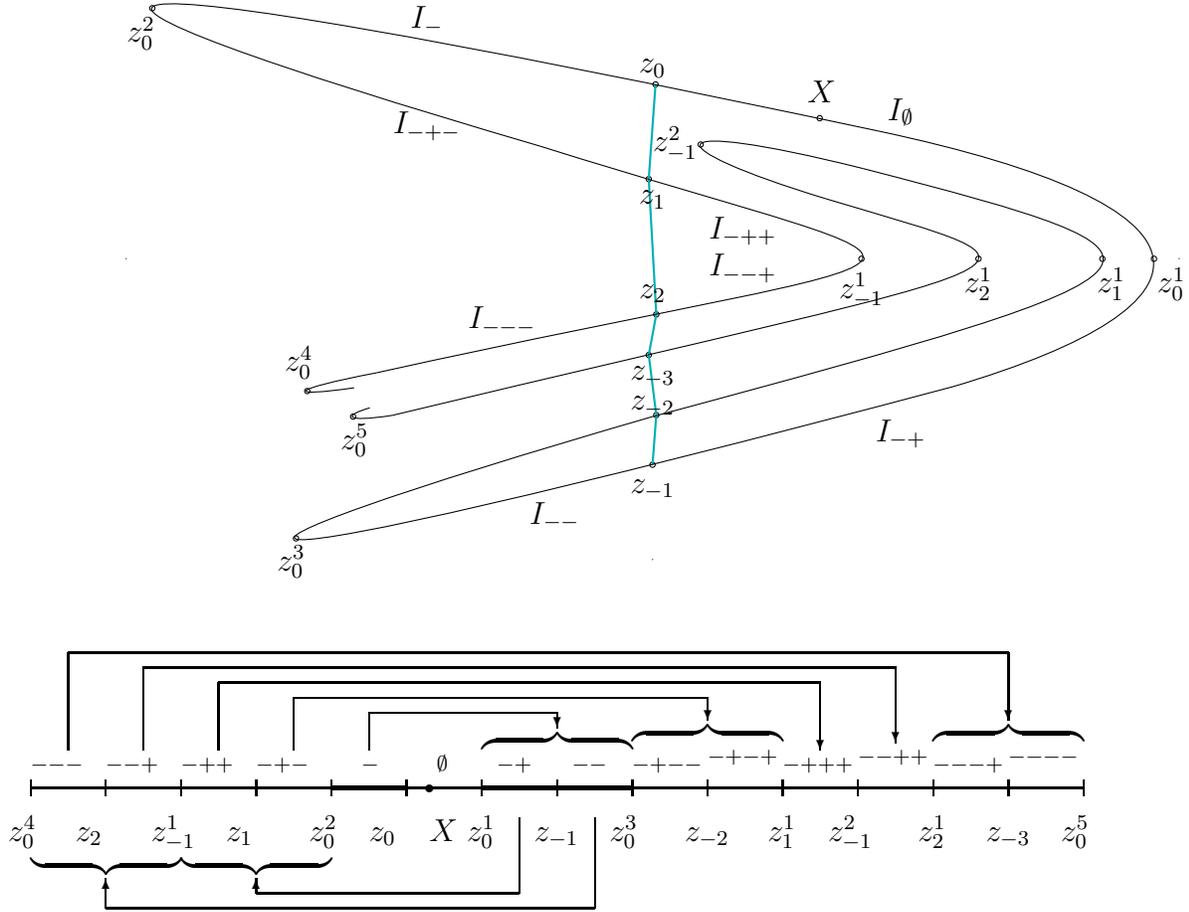
\begin{figure}[h!]
		\begin{adjustbox}{center}
			\begin{tikzpicture}
				\tikzstyle{every node}=[draw, circle, fill=white, minimum size=2pt, inner sep=0pt]
				\tikzstyle{dot}=[circle, fill=white, minimum size=0pt, inner sep=0pt, outer sep=-1pt]
				\node (n1) at (5*4/3,0) {};
				\node (n2) at (-6.65,5*2/3)  {};
				\node (n3) at (-4.74,-3.72)  {};
				\node (n4) at (5*5/9,0)  {};
				\node (n5) at (-4.59,-1.76) {};
				\node (n6) at (-3.98,-2.1)  {};
				\node (n7) at (0.64,1.52)  {};
				\node (n8) at (5*61/51,0)  {};
				\node (n9) at (5*65/75,0)  {};
				
				\node (n10) at (5*4/9,1.87)  {};
				
				\node (n21) at (0.04,2.32)  {};
				\node (n22) at (-0.05,1.06)  {};
				\node (n23) at (0.05,-0.74)  {};
				\node (n24) at (-0.05,-1.28)  {};
				\node (n25) at (0.05,-2.08)  {};
				\node (n26) at (0,-2.74)  {};
				
				\tikzstyle{every node}=[draw, circle, fill=white, minimum size=0.1pt, inner sep=0pt]
				\node (n16) at (-7,0) {};
				\node (n17) at (7,0)  {};
				\node (n18) at (0,4)  {};
				\node (n19) at (0,-4)  {};
				
				\draw[TealBlue,thick](0.04,2.32)--(-0.05,1.06);
				\draw[TealBlue,thick](-0.05,1.06)--(0.05,-0.74);
				\draw[TealBlue,thick](0.05,-0.74)--(-0.05,-1.28);
				\draw[TealBlue,thick](-0.05,-1.28)--(0.05,-2.08);
				\draw[TealBlue,thick](0.05,-2.08)--(0,-2.74);
				
				\draw (3,1.7) .. controls (7.7,0.7) and  (7.7,-0.6).. (4,-1.7);	
				\draw (3,1.7) .. controls (-9,4.25) and (-9,3.7) .. (-1.5,1.5);
				\draw (0,-2.1) .. controls (-6.9,-4.1) and (-6.9,-4.6) .. (4,-1.7);
				\draw (2.8,1.2) .. controls (7.44,0) and (7.44,-0.1) .. (0,-2.1);
				\draw (2.8,1.2) .. controls (0,1.9) and (0,1.5) .. (2.3,0.8);
				\draw (2.3,0.8) .. controls (5.63,-0.2) and (5.63,0.1) .. (-5*19/24+0.5,-5*5/12);
				\draw (-1.5,1.5) .. controls (4.55,-0.4) and (4.55,0.3) .. (-5*5/6+0.5,-5*1/3+0.15);
				\draw (-5*5/6+0.5,-5*1/3+0.15) .. controls (-5*5/6-0.7,-5*1/3-0.08) and (-5*5/6-0.7,-5*1/3-0.18) .. (-5*5/6+0.2,-5*1/3-0.05);
				\draw (-5*19/24+0.2,-5*5/12+0.1) .. controls (-5*19/24-0.15,-5*5/12) and (-5*19/24-0.15,-5*5/12-0.1) .. (-5*19/24+0.5,-5*5/12);
				
				\node[dot, draw=none, label=above: $z^1_0$] at (6.9,0) {};
				\node[dot, draw=none, label=above: $z_0$] at (0,2.8) {};
				\node[dot, draw=none, label=above: $z^2_0$] at (-6.8,5*2/3) {};
				\node[dot, draw=none, label=above: $z_1$] at (0,1.05) {};
				\node[dot, draw=none, label=above: $z^1_{-1}$] at (5*5/9,0) {};
				\node[dot, draw=none, label=above: $z_2$] at (0,-0.2) {};
				\node[dot, draw=none, label=above: $z^4_0$] at (-4.7,-1.05) {};
				\node[dot, draw=none, label=above: $z^5_0$] at (-5*19/24,-5*5/12) {};
				\node[dot, draw=none, label=above: $z_{-3}$] at (0,-1.2) {};
				\node[dot, draw=none, label=above: $z^1_2$] at (5*65/75,0) {};
				\node[dot, draw=none, label=above: $z^2_{-1}$] at (0.3,1.95) {};
				\node[dot, draw=none, label=above: $z^1_1$] at (6.1,0) {};
				\node[dot, draw=none, label=above: $z^3_0$] at (-4.8,-3.7) {};
				\node[dot, draw=none, label=above: $z_{-1}$] at (0,-2.7) {};
				\node[dot, draw=none, label=above: $z_{-2}$] at (0,-1.6) {};
				\node[dot, draw=none, label=above: $X$] at (2.25,2.5) {};
				\node[dot, draw=none, label=above: $I_{-}$] at (-3,3.5) {};
				\node[dot, draw=none, label=above: $I_{\emptyset}$] at (3.3,2.25) {};
				\node[dot, draw=none, label=above: $I_{-+-}$] at (-3,2.3) {};
				\node[dot, draw=none, label=above: $I_{-++}$] at (1.2,0.9) {};
				\node[dot, draw=none, label=above: $I_{--+}$] at (1.2,0.4) {};
				\node[dot, draw=none, label=above: $I_{---}$] at (-2,-0.3) {};
				\node[dot, draw=none, label=above: $I_{-+}$] at (3.3,-1.9) {};
				\node[dot, draw=none, label=above: $I_{--}$] at (-1.3,-3) {};
			\end{tikzpicture}
		\end{adjustbox}
		
		\unitlength=10mm
		
		\begin{picture}(20,4)(3,4.4)
			
			\put(8.3,6){\circle*{0.1}}
			\put(8.3,5.3){$X$}
			
			\put(3,6){\line(1,0){14}}
			
			\put(3,5.9){\line(0,1){0.2}}\put(2.7,5.3){$z_0^4$}\put(3,6.3){$_{---}$}
			\put(4,5.9){\line(0,1){0.2}}\put(3.6,5.3){$z_{2}$}\put(4,6.3){$_{--+}$}
			\put(5,5.9){\line(0,1){0.2}}\put(4.6,5.3){$z^1_{-1}$}\put(5,6.3){$_{-++}$}
			\put(6,5.9){\line(0,1){0.2}}\put(5.6,5.3){$z_{1}$}\put(6,6.3){$_{-+-}$}
			\put(7,5.9){\line(0,1){0.2}}\put(6.7,5.3){$z^2_0$}\put(7.4,6.3){$_{-}$}
			\put(8,5.9){\line(0,1){0.2}}\put(7.5,5.3){$z_0$}\put(8.4,6.3){$_{\emptyset}$}
			\put(9,5.9){\line(0,1){0.2}}\put(8.8,5.3){$z^1_0$}\put(9.2,6.3){$_{-+}$}
			\put(10,5.9){\line(0,1){0.2}}\put(9.7,5.3){$z_{-1}$}\put(10.2,6.3){$_{--}$}
			\put(11,5.9){\line(0,1){0.2}}\put(10.7,5.3){$z_0^3$}\put(11,6.3){$_{-+--}$}
			\put(12,5.9){\line(0,1){0.2}}\put(11.7,5.3){$z_{-2}$}\put(12,6.4){$_{-+-+}$}
			\put(13,5.9){\line(0,1){0.2}}\put(12.8,5.3){$z^1_1$}\put(13,6.3){$_{-+++}$}
			\put(14,5.9){\line(0,1){0.2}}\put(13.6,5.3){$z_{-1}^2$}\put(14,6.4){$_{--++}$}
			\put(15,5.9){\line(0,1){0.2}}\put(14.8,5.3){$z^1_2$}\put(15,6.3){$_{---+}$}
			\put(16,5.9){\line(0,1){0.2}}\put(15.7,5.3){$z_{-3}$}\put(16,6.4){$_{----}$}
			\put(17,5.9){\line(0,1){0.2}}\put(16.7,5.3){$z_0^5$}
			
			\put(9,6.5){$\overbrace{\qquad \qquad \ \ \, }$}
			\put(3,5.1){$\underbrace{\qquad \qquad \ \ \, }$}
			\put(5,5.1){$\underbrace{\qquad \qquad \ \  \,}$}
			\put(11,6.6){$\overbrace{\qquad \qquad \ \ \, }$}
			\put(15,6.6){$\overbrace{\qquad \qquad \ \ \, }$}
			\put(7.5,6.5){\line(0,1){0.5}}
			\put(7.5,7){\line(1,0){2.5}}
			\put(10,7){\vector(0,-1){0.2}}
			
			\put(9.5,5.6){\line(0,-1){1}}
			\put(9.5,4.6){\line(-1,0){3.5}}
			\put(6,4.6){\vector(0,1){0.2}}
			
			\put(10.5,5.6){\line(0,-1){1.2}}
			\put(10.5,4.4){\line(-1,0){6.5}}
			\put(4,4.4){\vector(0,1){0.4}}
			
			\put(6.5,6.5){\line(0,1){0.7}}
			\put(6.5,7.2){\line(1,0){5.5}}
			\put(12,7.2){\vector(0,-1){0.3}}
			
			\put(5.5,6.5){\line(0,1){0.9}}
			\put(5.5,7.4){\line(1,0){8}}
			\put(13.5,7.4){\vector(0,-1){0.9}}
			
			\put(4.5,6.5){\line(0,1){1.1}}
			\put(4.5,7.6){\line(1,0){10}}
			\put(14.5,7.6){\vector(0,-1){1}}
			
			\put(3.5,6.5){\line(0,1){1.3}}
			\put(3.5,7.8){\line(1,0){12.5}}
			\put(16,7.8){\vector(0,-1){0.9}}
			
			\thicklines
			
			\put(7,5.99){\line(1,0){1}}
			\put(7,6.01){\line(1,0){1}}
			\put(9,5.99){\line(1,0){2}}
			\put(9,6.01){\line(1,0){2}}
			
		\end{picture}
		
		\caption{Several basic points, basic arcs, and arc-codes. The critical locus $\K$ is teal.}
		\label{fig:FP}
	\end{figure}
	
	\begin{lem}\label{lem:arc-codes}
		Let $u$, $v$ be two different arc-codes, and let $I_u$, $I_v$ be the corresponding basic arcs. If $u$ and $v$ have different lengths, but $|u|$ and $|v|$ have the same parity, then $|u| >|v|$ if and only if the basic arc $I_u$ is farther from $X$ then the basic arc $I_v$ $(d(I_u, X) > d(I_v, X))$. If $u$ and $v$ have the same length, then $u \prec v$ if and only if $d(I_u, X) > d(I_v, X)$.
	\end{lem}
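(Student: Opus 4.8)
The plan is to argue by induction on the level, i.e. on the lengths of the arc-codes, tracking how the basic arcs are nested inside the critical regions $Q^{(k)}$ and how the parity-lexicographic order on arc-codes is forced by the dynamics. First I would set up the base cases explicitly from the discussion preceding Lemma~\ref{lem:arc-codes}: the arc $I_\emptyset=[z_0,z_0^1]^u$ is the one containing $X$; its two neighbours are $I_-=[z_0^2,z_0]^u$ (on the $W^{u-}$ side) and $I_{-+}=[z_0^1,z_{-1}]^u$, $I_{--}=[z_{-1},z_0^3]^u$ (on the $W^{u+}$ side). Reading off from Figure~\ref{fig:FP}, the order along $W^u$ from left to right is $\dots, I_{---}, I_{--+}, I_{-++}, I_{-+-}, I_-, I_\emptyset, I_{-+}, I_{--}, I_{-+--},\dots$, and one checks directly that among arc-codes of a fixed small length the left-to-right spatial order is exactly the reverse of $\preceq$ (so $u\prec v \iff I_u$ is farther right $\iff d(I_u,X)>d(I_v,X)$, once one notes $X\in I_\emptyset$ so ``farther from $X$'' means ``farther along $W^u$ in either direction''), and that increasing the length by $2$ within a fixed parity class moves one strictly farther from $X$.

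The inductive step has two halves. For the \emph{same-length} comparison: if $|u|=|v|=m$, then $I_u$ and $I_v$ are both basic arcs of $F^{m-1}([z_0^2,z_0]^u)$, hence both lie in the same $W^{u\pm}$ component (determined by the parity of $m$), and applying $F^{-(m-1)}$ carries them, order-preservingly or order-reversingly in a controlled way, back to sub-arcs of $[z_0^2,z_0]^u$. The key bookkeeping point is that if $I_u$ and $I_v$ are \emph{consecutive} basic arcs then $\lpinf w$ and $\lpinf w'$ differ in exactly one coordinate (stated in the excerpt), and the symbol that changes is the image-coordinate determined by which side of the critical locus $\K$ the relevant iterate falls on; since $F$ folds across $\K$, passing a basic (critical) point reverses orientation, which is precisely the parity flip built into $\preceq$. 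So a local comparison of consecutive arcs matches $\preceq$, and transitivity of both orders upgrades this to all pairs of equal length. For the \emph{different-length, same-parity} comparison: I would show that the basic arc $I_v$ of level $|v|$ strictly contains (in the sense of nested critical regions $Q^{(k)}$, using part (3) of Section~\ref{sec:cl}) a sub-collection that, after one more even number of steps, resolves into the level-$|u|$ arcs, and that all of these sit strictly farther from $X$ than $I_v$ did, because each application of $F$ (equivalently each descent $Q^{(k-1)}\supset Q^{(k)}$) pushes the critical-region components off toward the ends of $W^u$; this is where the estimate $\diam(\hat Q^{(k)}_i)\to 0$ and the geometry of $\partial^u\hat Q^{(k)}\subseteq\partial^u\hat Q^{(k-1)}$ from case (3)(iii) do the work.

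The main obstacle I anticipate is the second half — getting the monotonicity ``longer same-parity arc-code $\Rightarrow$ farther from $X$'' to be genuinely \emph{strict} and uniform, rather than merely ``not closer''. The subtlety is that a level-$(k-1)$ region $Q^{(k-1)}$ may contain only one level-$k$ component, so passing from level $k-1$ to level $k$ need not move anything; one must track \emph{along which iterates} a genuine fold (two components) occurs, and use that between any two distinct parity-matched lengths at least one honest fold separates them. I would handle this by phrasing the induction in terms of the arc-code words directly: appending to an arc-code always lengthens the itinerary-prefix $\lpinf w$, and the first place where two such prefixes of different length but equal parity disagree is governed by a critical point, so the parity-lexicographic comparison of the \emph{prefixes} is forced — and then one transfers this back to the spatial ordering of $I_u, I_v$ via the already-established same-length case applied at the common refined level. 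The rest is routine once the nesting picture from Section~\ref{sec:cl} and Figure~\ref{fig:FP} is in hand.
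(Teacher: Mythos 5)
Your proposal takes a genuinely different route from the paper, and the difficulty you flag in the different-length case is a real gap in the approach as written, so this is worth addressing concretely.

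The paper's proof is much shorter and entirely symbolic. It pulls both basic arcs back \emph{to the central arc} $I_\emptyset=[z_0,z_0^1]^u$ (not to $[z_0^2,z_0]^u$): choose $n$ with the same parity as $|u|$ and $|v|$, large enough that $F^{-n}(I_u), F^{-n}(I_v)\subset I_\emptyset$, and pick representatives $P\in F^{-n}(I_u)$, $Q\in F^{-n}(I_v)$. Because $P\in I_\emptyset$, the left tail $\la p_{\h 0}$ is $\lpinf$, and one computes directly that $\ra p_{\h 0}$ consists of exactly $n-|u|+1$ leading $+$s followed by the word $u$ (and likewise for $Q$ with $|v|$ and $v$). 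The parity hypothesis makes both $+$-blocks odd in length. In the different-length case the two right tails first disagree at the end of the shorter $+$-block, and the odd parity of the common prefix immediately gives $\ra q_{\h 0}\prec\ra p_{\h 0}$; in the same-length case the $+$-blocks coincide and the comparison reduces to $u\prec v$ versus $v\prec u$ in the same way. The order correspondence between points of $W^u$ and their itineraries (Lemmas~\ref{lem:o1}, \ref{lem:o2}) then transports this directly to the spatial order of $P$ and $Q$, hence of $I_u$ and $I_v$ relative to $X$. Note that the machinery of Section~\ref{sec:cl} — the critical regions $Q^{(k)}$, the nesting $\hat Q^{(k)}\subset\hat Q^{(k-1)}$, and the diameter decay — plays no role at all.

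This is exactly where your proposal diverges and where it runs into trouble. You handle the same-length case by pulling back one step less far (to $[z_0^2,z_0]^u$) and then reasoning about consecutive arcs and transitivity, which is more roundabout but essentially sound. For the different-length case, however, you reach for the geometry of $Q^{(k)}$ and $\partial^u\hat Q^{(k)}\subseteq\partial^u\hat Q^{(k-1)}$, and you correctly observe that passing from level $k-1$ to level $k$ may not produce any new fold, so strict monotonicity is not automatic from nesting alone. Your proposed repair — ``phrasing the induction in terms of the arc-code words directly'' and comparing prefixes governed by a critical point, then transferring back via the same-length case ``at the common refined level'' — is really a half-step toward the paper's pull-back argument, but left at this level of vagueness it does not close the gap: you would still need to explain why, at the first disagreement between the two $\lpinf$-prefixed itineraries, the parity count comes out right, and that is precisely the computation the paper does cleanly by choosing $n$ of the correct parity and counting the leading $+$s. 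Replacing the geometric nesting argument by that single pull-back computation both fills your gap and eliminates the need for anything from Section~\ref{sec:cl}.
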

	\begin{proof}
		If $u$ and $v$ have different lengths, but $|u|$ and $|v|$ have the same parity, then we take $n$ of the same parity as $|u|$ and $|v|$ and such that $F^{-n}(I_u)$ and $F^{-n}(I_v)$ are contained in $[z_0, z^1_0]^u$. Choose $P \in F^{-n}(I_u)$ and $Q \in F^{-n}(I_v)$. Compare $\ra p_{\h 0}$ with $\ra q_{\h 0}$. By the parity assumptions, they both start with the odd number of $+$s. If $|u| >|v|$ then $\ra q_{\h 0}$ starts with more $+$s, so $\ra q_{\h 0} \prec \ra p_{\h 0}$, and therefore $Q < P$. This means that $I_{v}$ is closer to $X$ than $I_{u}$.
		
		If $u$ and $v$ have the same length, we make the same construction. Then $u \prec v$ is equivalent to $\ra q_{\h 0} \prec \ra p_{\h 0}$ (remember of the odd number of $+$s in front), and, as before, $I_{v}$ is closer to $X$ than $I_{u}$.
	\end{proof}
	
	We also partition $W^u$ into leaves. A leaf is an arc in $W^u$ whose endpoints are two consecutive post-critical points. Let $\ell^0$ be the leaf which contains the fixed point. We index leaves such that $\ell^i$ and $\ell^j$ are adjacent if and only if $|i - j| = 1$ for all $i, j \in \Z$, and that $\ell^k \subset W^{u+}$ and $\ell^{-k} \subset W^{u-}$ for all $k \in \N$. 
	
	For any two different points $P, Q \in \ell^{2i}$, $i \in \Z$, we say that $P$ is \emph{on the left of} $Q$ if $P < Q$. For any two different points $P, Q \in \ell^{2i+1}$, $i \in \Z$, we say that $P$ is \emph{on the left of} $Q$ if $Q < P$. In any case, if $P$ is on the left of $Q$, we also say that $Q$ is \emph{on the right of} $P$. 
	
	This definition is motivated by the fact that one endpoint of  $\ell^0$, $z_0^1$, lies in $D^r$, and the other one, $z_0^2$, lies in $D^l$, so we can say that $z_0^2$ is on the left of $z_0^1$, and we have $z_0^2 < z_0^1$. Also, any two adjacent leaves $\ell^j$ and $\ell^{j+1}$ have a common boundary point and it is on the same side, left or right, of the other two boundary points. But this common boundary point is between the other two boundary points, so it is less than one of them and greater than the other one. This fact also motivates the next definition.
	
	Let $P, Q, R, T \in W^u$ be points such that $P, Q \in \ell^i$ and $R, T \in \ell^j$, for some $i, j \in \Z$ (not necessarily different). We say that the points $P, Q$ have the \emph{opposite orientation than} the points $R, T$, respectively if $P$ is on the left of $Q$ and $R$ is on the right of $T$, or vice versa. We say that the points $P, Q$ have the \emph{same orientation as} the points $R, T$, respectively if $P$ is on the same side of $Q$ as $R$ with respect to $T$. 
	
	Let us define the \emph{generalized parity-lexicographical order} on the set $\Sigma_{W^u}$ in the following way. 
	
	\begin{df}\label{df:gplo}
		Let $\bar p, \bar q \in \Sigma_{W^u}$, $\bar p \ne \bar q$. Let $n \in \N$ be a positive integer such that $\la p_{\h -n} = \la q_{\h -n} = \lpinf$. Then $\bar p \prec \bar q$ if either
		\begin{enumerate}[(1)]
			\item $n$ is even and $\ra q_{\h -n+1} \prec \ra p_{\h -n+1}$, or
			\item $n$ is odd and $\ra p_{\h -n+1} \prec \ra q_{\h -n+1}$.
		\end{enumerate}
	\end{df}
	
	By the definition of the parity-lexicographical order and since $F$ reverses orientation on $W^u$, this order is well defined (it does not depend on the choice of $n$).
	
	\begin{lem}\label{lem:o1}
		Let $P, Q \in \ell^0$ and let $\bar p$, $\bar q$, respectively, be their itineraries. Then $P < Q$ if and only if $\bar p \prec \bar q$.
	\end{lem}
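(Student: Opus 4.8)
The plan is to first reduce the statement to a single ``central'' basic arc and then prove it there by induction.

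\emph{Reduction.} Since $P,Q\in\ell^0\subset W^u$, the backward iterates $P^{-n},Q^{-n}$ tend to the fixed point $X$, which lies in the interior of the basic arc $I_\emptyset=[z_0,z_0^1]^u$; hence for some \emph{even} $n$ both $P^{-n}$ and $Q^{-n}$ lie in $I_\emptyset$, and then $\la p_{\h -n}=\la q_{\h -n}=\lpinf$, since the arc-code of $I_\emptyset$ is $\emptyset$. As $F$ reverses orientation on $W^u$ and $n$ is even, $F^{-n}$ preserves $<$, so $P<Q\iff P^{-n}<Q^{-n}$; and using $n$ as the witness in Definition~\ref{df:gplo}(1) and stripping the single leading $+$ from $\ra p_{\h -n}=+\,\ra p_{\h -n+1}$ and $\ra q_{\h -n}=+\,\ra q_{\h -n+1}$ (which flips the parity once) yields $\bar p\prec\bar q\iff\ra p_{\h -n}\prec\ra q_{\h -n}$. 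It therefore suffices to prove, for all $R,R'\in I_\emptyset$ with itineraries $\bar r,\bar r'$,
\begin{equation}
R<R'\iff\ra r_{\h 0}\prec\ra r'_{\h 0}.\tag{$\star$}
\end{equation}

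\emph{The induction.} To prove $(\star)$ I would induct on the first index $m$ at which $\ra r_{\h 0}$ and $\ra r'_{\h 0}$ disagree. The relevant geometry is that $I_\emptyset\cap\K=\{z_0\}$; that the sub-arc $(z_0,z_0^1]^u$ lies entirely in $D^r$ (it contains $X$), so $r_0=+$ unless $R=z_0$; and that $F$ carries $I_\emptyset$ orientation-reversingly onto the leaf $\ell^0=I_-\cup I_\emptyset$, where the basic arcs $I_-=[z_0^2,z_0]^u$ (coded $-$ at coordinate $0$) and $I_\emptyset$ (coded $+$) meet at the critical point $z_0\in\K$; thus $r_1=(Fr)_0$ records which of the two arcs contains $F(R)$. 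The base case $m=0$ forces one of the two points to be $z_0$ coded by $-$, and $(\star)$ is immediate from $z_0=\min I_\emptyset$ and $-\prec+$. For $m=1$, the images $F(R),F(R')$ fall on opposite sides of $z_0$, which fixes the order of $F(R),F(R')$; combining this with the orientation reversal of $F$ and the fact that $\ra r_{\h 0},\ra r'_{\h 0}$ agree only in coordinate $0$ (a single $+$, hence one parity flip) gives $(\star)$. For $m\ge 2$ with $r_1=r'_1=+$, both images lie again in $I_\emptyset$, now with first disagreement index $m-1$, so the inductive hypothesis applies to $F(R),F(R')$; pushing it through the orientation reversal of $F$ together with the single parity flip coming from the common leading $+$ of $\ra r_{\h 0},\ra r'_{\h 0}$ recovers $(\star)$.

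\emph{The main obstacle.} The delicate case is $m\ge 2$ with $r_1=r'_1=-$: then $F(R),F(R')$ land in the non-central arc $I_-$, and the induction does not apply directly. Here I would iterate further --- $F(I_-)$ is the adjacent odd leaf $\ell^1=[z_0^1,z_0^3]^u$, and in general the pair passes through a succession of leaves before its disagreement index drops to a base case --- and show that each such forward excursion contributes exactly the parity change forced by the number of $+$'s read so far. This reduces to verifying that $F$ sends the relation ``on the left of'' (defined with the reversal of $<$ on odd leaves) to ``on the left of'', and that the parity convention in $\preceq$ is precisely the bookkeeping device that matches this across each leaf crossing. Establishing that compatibility --- rather than any single estimate --- is where the real work of the proof lies; once it is available, the reduction above upgrades $(\star)$ to the statement of the lemma.
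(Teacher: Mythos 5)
Your reduction to $I_\emptyset$ by backward iteration is sound, and your treatment of the cases $m=0$, $m=1$, and $m\ge 2$ with $r_1=+$ is essentially correct. But the proposal contains a genuine gap, and you flag it yourself: the case $m\ge 2$ with $r_1=-$, where $F(R)$ and $F(R')$ leave $I_\emptyset$, is not argued --- it is only described. You say one should ``iterate further'' through $F(I_-)=\ell^1$ and the subsequent leaves, and that the parity convention is the right bookkeeping, but you do not establish this. Once the orbit enters $\ell^1$ it may pass through arbitrarily many leaves before the disagreement index drops to a base case, and your induction on the first disagreement index gives no control over what happens during that excursion. The ``compatibility'' you defer to the last paragraph is precisely the content of the lemma, so as written the proposal stops short of a proof.

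The paper's proof avoids this induction entirely and is a direct forward argument. It tracks the pair $(P^i,Q^i)$ and uses that $F$ carries $D^r$ onto $D^\uparrow$ (so passing through $D^r$ flips the left/right orientation of the pair on the current leaf, in the sense defined just before the lemma) and $D^l$ onto $D^\downarrow$ (so passing through $D^l$ preserves it). Hence the orientation of $(P^n,Q^n)$ relative to $(P,Q)$ is governed by the parity of the number of $+$'s among $p_0,\dots,p_{n-1}$, and at the first disagreement $P^n$ and $Q^n$ fall on opposite sides of $\K$; combining the two facts yields exactly the parity-lexicographic comparison. That forward-counting argument is the compatibility statement you postponed, and once it is in hand the backward reduction to $I_\emptyset$ is unnecessary. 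Supplying it, rather than invoking it, is what is missing from your sketch.
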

	\begin{proof}
		From $P, Q \in \ell^0$, it follows $\la p_{\h -1} = \la q_{\h -1} = \lpinf$. Also, $P < Q$ if and only if $P$ is to the left of $Q$. 
		
		Let $n \in \N_0$ be the smallest integer such that $p_n \ne q_n$. If $n = 0$, then $P < Q$ if and only if $P$ lies in $D^l$ and $Q$ lies in $D^r$, which is equivalent to $p_0 = -$, $q_0 = +$, and hence $\bar p \prec \bar q$.
		Let $n \in \N$. Since $F$ maps $D^r$ to $D^\uparrow$, if  $p_i = q_i = +$, for some $i \in \{ 0, \dots , n-1 \}$, then $P^{i+1}$ and $Q^{i+1}$ have the opposite orientation than $P^i$ and $Q^i$. Also, since $F$ maps $D^l$ to $D^\downarrow$, if $p_i = q_i = -$, for some $i \in \{ 0, \dots , n-1 \}$, then $P^{i+1}$ and $Q^{i+1}$ have the same orientation as $P^i$ and $Q^i$. Therefore, $P^n, Q^n$ have the same orientation as $P, Q$, respectively, if and only if $p_0 \dots p_{n-1}$ is even. In that case, $P < Q$ if and only if $P^n$ lies in $D^l$ and $Q^n$ lies in $D^r$, which is equivalent to $p_n = -$ and $q_n = +$, and hence $\bar p \prec \bar q$. If $p_0 \dots p_{n-1}$ is odd, $P < Q$ if and only if $P^n$ lies in $D^r$ and $Q^n$ lies in $D^l$, which is equivalent to $p_n = +$ and $q_n = -$, and again $\bar p \prec \bar q$.
	\end{proof}
	
	From Definition \ref{df:gplo} and Lemma~\ref{lem:o1} we get immediately the following result.
	
	\begin{lem}\label{lem:o2}
		Let $P, Q \in W^u$ be two different points and let $\bar p, \bar q$, respectively, be their itineraries. Then $P < Q$ if and only if $\bar p \prec \bar q$.
	\end{lem}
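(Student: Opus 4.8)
The plan is to reduce the general statement to Lemma~\ref{lem:o1}, which already handles the case $P, Q \in \ell^0$, by pushing an arbitrary pair of points in $W^u$ into $\ell^0$ under a suitable iterate of $F^{-1}$ and keeping track of how the ordering behaves. First I would fix two distinct points $P, Q \in W^u$ with itineraries $\bar p, \bar q$. Since both lie on $W^u$, there is $n \in \N$ such that $\la p_{\h -n} = \la q_{\h -n} = \lpinf$; enlarging $n$ if necessary, I may assume that $P^{-n}$ and $Q^{-n}$ both lie in the basic arc $\ell^0 \supset [z_0, z_0^1]^u$ (this is possible because the backward iterates of any point of $W^u$ eventually enter and stay near the fixed point $X$, inside $\ell^0$, as used already in the proof of Lemma~\ref{approx} and in the definition of arc-codes). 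Applying Lemma~\ref{lem:o1} to the points $P^{-n}, Q^{-n} \in \ell^0$, I get that $P^{-n} < Q^{-n}$ if and only if $\sigma^{-n}(\bar p) \prec \sigma^{-n}(\bar q)$ in the ordinary sense of Lemma~\ref{lem:o1}, i.e.\ comparing the right tails $\ra p_{\h -n}$ and $\ra q_{\h -n}$ with the standard parity-lexicographical rule after the common prefix $\lpinf$.

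Next I would transfer this back to $P, Q$ using the two facts that drive everything: (i) $F$ reverses orientation on $W^u$, so applying $F$ once flips the order of any two points lying on a common leaf, and hence $P^{-n} < Q^{-n}$ iff $P < Q$ when $n$ is even, and $P^{-n} < Q^{-n}$ iff $Q < P$ when $n$ is odd; and (ii) on the symbolic side, the generalized parity-lexicographical order of Definition~\ref{df:gplo} is defined precisely so that $\bar p \prec \bar q$ means ``$\ra p_{\h -n+1} \prec \ra q_{\h -n+1}$ if $n$ is odd'' and ``$\ra q_{\h -n+1} \prec \ra p_{\h -n+1}$ if $n$ is even.'' Comparing (i) and (ii) against the conclusion of Lemma~\ref{lem:o1} applied at level $-n$, the two parity sign-flips are the same on both sides, so they cancel and yield $P < Q \iff \bar p \prec \bar q$. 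The remark immediately preceding the lemma (that $\prec$ on $\Sigma_{W^u}$ is well defined independently of $n$, exactly because $F$ reverses orientation) guarantees the answer does not depend on the choice of $n$, so the argument is consistent.

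The one point that needs a little care — and which I expect to be the main (though modest) obstacle — is the bookkeeping of orientations when $P^{-n}$ and $Q^{-n}$ do not lie on the same leaf, or when an intermediate iterate $P^{-j}, Q^{-j}$ crosses the critical locus $\K$ between them. But this is handled exactly as in Lemma~\ref{lem:o1}: whether a forward step preserves or reverses the relative orientation of a pair of points on $W^u$ is governed by whether the relevant iterate lies in $D^r$ (a step into $D^\uparrow$, flipping orientation) or in $D^l$ (a step into $D^\downarrow$, preserving it), and the cumulative effect over the $n$ steps from level $-n$ to level $0$ is recorded by the parity of the number of $+$'s read off along the way — which is precisely the data encoded in Definition~\ref{df:gplo}. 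Since Lemma~\ref{lem:o1} already packaged this computation for the base case at $\ell^0$, here one only needs the single global parity coming from $F$ being orientation-reversing on $W^u$, plus the shift-equivariance $F\circ\iota = \iota\circ\sigma$. Hence the result follows immediately from Definition~\ref{df:gplo} and Lemma~\ref{lem:o1}, as claimed.
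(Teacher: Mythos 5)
Your argument is correct and is exactly the intended one: the paper gives no proof beyond remarking that the statement follows immediately from Definition \ref{df:gplo} and Lemma \ref{lem:o1}, which is precisely what your first two paragraphs unpack (pull $P^{-n},Q^{-n}$ into $\ell^0$, apply Lemma \ref{lem:o1}, and match the parity flip of $F|_{W^u}$ against the parity flip built into Definition \ref{df:gplo}). The worry in your last paragraph is misplaced, though: since $F|_{W^u}$ is an order-reversing homeomorphism for the global order $<$, the equivalence $P^{-n}<Q^{-n}\iff P<Q$ for $n$ even (and its reversal for $n$ odd) holds outright, with no leaf-by-leaf or $D^l$-versus-$D^r$ tracking of intermediate iterates needed --- that bookkeeping is already absorbed into the proof of Lemma \ref{lem:o1} and does not reappear here.
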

	
	Recall, we call the points in $F(\mfc) = \{ z^1_i : i \in \Z \}$ the \emph{turning points}. They play a special role in what follows. Their itineraries we call \emph{kneading sequences} and denote them $\bar k^i$. More precisely, for each $i \in \Z$ the itinerary $\bar k^i$ of the $i$th turning point $z^1_i$ is a kneading sequence. Let $$\mathfrak{K}_F = \{ \bar k^i : i \in \Z \}$$ be the set of all kneading sequences of $F$. Similarly, as for interval maps, $\mathfrak{K}_F$ contains the information about many dynamical properties of $F$. We call $\mathfrak{K}_F$ the \emph{kneading set} of $F$.
	
	Strictly speaking, a turning point $z^1_i$ has two itineraries. They are of the form $$\lpinf w^i \pm \cdot \ra k^i_{\h 0},$$ where $w^i$ is the arc-code of the basic arc containing $z_i^{-1}$. Here for $\pm$ you can substitute any of $+$ and $-$. Therefore we can think of this kneading sequence as a pair $(w^i, \ra k^i_{\h  0})$.
	
	While $\mathfrak{K}_F$ is only a set, by Lemma \ref{lem:arc-codes}, we can recover the order in it by looking at the arc-code parts of the kneading sequences. Moreover, $\bar k^0$ is the only kneading sequence with the arc-code part empty. Thus, given an element $\bar k$ of $\mfk_F$ we can determine $i$ such that $\bar k=\bar k^i$.

	\begin{lem}\label{lem:plek}
		Let $z_m^1$ be a turning point, $\ell^j \subset W^u$ a leaf that contains $z_m^1$, and $P \in \ell^j$ any point. Then $\ra p_{\h 0} \preceq \ra k^m_{\h 0}$.
	\end{lem}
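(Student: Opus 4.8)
The plan is to transport the statement back, along $F^{-1}$, to the leaf $\ell^0$ that contains the fixed point, where it is elementary, and to keep track of the finitely many sign changes that occur on the way. I begin with two structural remarks about leaves. First, every leaf is the image under $F$ of a basic arc: writing $\ell^j=[a,b]^u$ with $a,b$ consecutive post-critical points, the points $F^{-1}(a),F^{-1}(b)$ are consecutive basic points (a basic point strictly between them would be carried by $F$ to a post-critical point strictly between $a$ and $b$), so $I_w:=[F^{-1}(a),F^{-1}(b)]^u$ is a basic arc, with some arc-code $w$, and $F(I_w)=\ell^j$. Since the points of a basic arc share their left tail, the points of $\ell^j$ all share the left tail $\la p_{\h -1}=\lpinf w$; and because $z_m^1\in\ell^j$ with $(z_m^1)^{-1}=z_m\in\C$, the itinerary of $z_m^1$ is ambiguous exactly at coordinate $-1$, so $\la k^m_{\h -1}=\lpinf w$ as well, the ambiguous symbol at coordinate $-1$ matching whatever symbol the other points of $\ell^j$ carry there. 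Second, $z_m^1=F(z_m)$ with $z_m$ the critical endpoint of $I_w$, so $z_m^1$ is an endpoint of the arc $\ell^j$; in particular every $P\in\ell^j$ lies on one and the same side of $z_m^1$ in the order on $W^u$.

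For the base case $\ell^0$ one has $F^{-1}(\ell^0)=[z_0,z_0^1]^u\subsetneq\ell^0$, hence $F^{-n}(\ell^0)\subseteq[z_0,z_0^1]^u$ for every $n\ge1$, and therefore $\la p_{\h -1}=\lpinf$ for every $P\in\ell^0$. Taking $n=1$ in Definition~\ref{df:gplo} and invoking Lemma~\ref{lem:o1}, we get that for $A,B\in\ell^0$ one has $A<B$ if and only if $\ra a_{\h 0}\prec\ra b_{\h 0}$. Since $z_0^1$ is the $\le$-largest point of $\ell^0$ and is the only turning point that $\ell^0$ contains, the lemma holds for $\ell^0$.

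For a general leaf $\ell^j$ and $P\in\ell^j$, choose $n$ large. Since $X$ is a saddle and $W^u=W^u_X$, the arc $F^{-n}(\ell^j)$ then lies inside $[z_0,z_0^1]^u$, so $P^{-n}$ and $z_m^{1-n}$ lie in $\ell^0$, with $z_m^{1-n}$ an endpoint of the arc $F^{-n}(\ell^j)$ and hence $\le$-extremal among all such $P^{-n}$. The base case produces a comparison, in the parity-lexicographical order, between $\ra{(p^{-n})}_{\h 0}$ and $\ra{(z_m^{1-n})}_{\h 0}$. These one-sided sequences agree on their first $n$ entries — that block is $w$ preceded by $+$'s, read off coordinates $-n,\dots,-1$, the entry corresponding to coordinate $-1$ of $\bar k^m$ being the ambiguous one and so matching — while from entry $n$ onward they are exactly $\ra p_{\h 0}$ and $\ra k^m_{\h 0}$. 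The usual parity bookkeeping then turns the comparison of the long sequences into a comparison of $\ra p_{\h 0}$ with $\ra k^m_{\h 0}$, the direction being reversed precisely when the common block contains an odd number of $+$'s; and since $F$ reverses orientation on $W^u$, whether $z_m^{1-n}$ is the $\le$-largest or the $\le$-smallest of the $P^{-n}$ is governed by the parity of $n$ together with which endpoint of $\ell^j$ the point $z_m^1$ is. Putting the two parities side by side, the whole argument — and, crucially, its independence of the (large, otherwise arbitrary) choice of $n$ — reduces to the single identity
\[
z_m^1\ \text{is the larger endpoint of}\ \ell^j\ \text{in the order on}\ W^u\quad\Longleftrightarrow\quad w\ \text{has an even number of minus signs.}
\]

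This identity is the heart of the matter, and the step I expect to be the main obstacle. Equivalently, since $F$ reverses orientation on $W^u$, it says that the critical endpoint $z_m$ of $I_w$ is the $\le$-smallest endpoint of $I_w$ if and only if $w$ has an even number of minus signs. I would prove it by induction on $|w|$. The base case $w=\emptyset$ is the statement that $z_0$ is the $\le$-smallest point of $[z_0,z_0^1]^u$, used above. For the inductive step one uses that $I_w$ is one of the basic arcs making up a leaf $\ell^i$ whose associated word is $w$ with its last symbol deleted, so its length is $|w|-1$; that this last symbol of $w$ records whether $I_w$ lies in $D^l$ or in $D^r$; and that $F$ carries $D^l$ below, and $D^r$ above, $F(\K)$. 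Comparing the position of $z_m^1=F(z_m)$ inside $\ell^j=F(I_w)$ with the position, inside $\ell^i$, of the endpoint of $I_w$ at $z_m$, one carries the parity statement from length $|w|-1$ to length $|w|$. No idea beyond the parity arithmetic already present in the proofs of Lemma~\ref{lem:arc-codes} and Lemma~\ref{lem:o1} enters here, but the bookkeeping must be done with some care.
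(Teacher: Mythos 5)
Your reduction to $\ell^0$ by pulling back $n$ iterates, combined with the transfer through the parity bookkeeping, is a coherent alternative strategy. It differs from the paper's, which runs forward in time: the paper takes as its geometric input that $z_m^1$ is on the right (in the leaf-parity convention introduced just before Definition~\ref{df:gplo}) of every other point of $\ell^j$, then, for $P$ in the same basic arc as $z_m^1$, finds the first index $n$ with $p_n\ne k^m_n$ and tracks how the number of $+$'s among $p_0\dots p_{n-1}$ flips orientation, landing on $\ra p_{\h 0}\prec\ra k^m_{\h 0}$. Your base case for $\ell^0$ is correct, and the reduction of the comparison to the identity you isolate is correct; what you call the heart of the matter is precisely the paper's assertion that the turning point is the rightmost point of its leaf, translated via $F^{-1}$ and the observation that the parity of minus signs in the arc-code of $F^{-1}(\ell^j)$ tracks the parity of the leaf index $j$.

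Where I see a genuine gap is in the inductive proof you sketch for that identity. You induct on $|w|$ with a hypothesis about the critical endpoint $z_m$ of a basic arc $I_v$ with $|v|=|w|-1$, and in the step you pass to the leaf $\ell^i=F(I_{w'})$ with $w'$ equal to $w$ less its last symbol. But $I_{w'}$ need not have a critical endpoint: its endpoints are the $F^{-1}$-images of the two post-critical endpoints of $\ell^i$, and those are critical only when the corresponding endpoint of $\ell^i$ is itself a turning point, which may fail. The step you describe --- comparing the position of $z_m$ inside $\ell^i$ with the position of $z_m^1$ inside $\ell^j$ --- is therefore not an application of the stated hypothesis at length $|w|-1$; the point $z_m$ is an interior critical point of $\ell^i$, not an endpoint of $I_{w'}$. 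To close this, you would need to strengthen the hypothesis to something that makes sense for every leaf (e.g., a statement about which of the two basic arcs of a leaf is the $D^r$ one, relative to $\le$), or, more simply, adopt the paper's angle: using that each post-critical boundary point of a leaf lies in the interior of one half-plane $D^l$ or $D^r$, one shows that adjacent leaves carry opposite $D^l/D^r$ orientations with respect to $\le$; combined with the base case on $\ell^0$ and with $z_m^1\in F(\mfc)\subset D^r$, this gives directly that $z_m^1$ sits at the $D^r$-extreme of $\ell^j$, which is ``on the right'' in the convention --- the fact the paper invokes in its first sentence of the proof.
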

	\begin{proof}
		First, note that $z_m^1$ lies in $D^r$ and it is a boundary point of $\ell^j$. Moreover, it is on the right of any other point $P \in \ell^j$. If $P$ lies in $D^l$, then $p_0 = -$, $k_0^m = +$ and hence $\ra p_{\h 0} \prec \ra k_{\h 0}^m$.
		
		Let us suppose that $P$ lies in $D^r$ and $P \ne z_m^1$. Then $P$ and $z_m^1$ lie in the same basic arc $I_w$ for some finite word $w$, and $\la p_{\h 0} = \la k^m_{\h 0} = \lpinf w$. Therefore, there exists the smallest $n \in \N$ such that $p_n \ne k^m_n$. If $p_0 \dots p_{n-1}$ is even, then $P^n$ is on the left of $z_m^{n+1}$ and hence $P^n$ lies in $D^l$, so $p_n = -$ and $k^m_n = +$. If $p_0 \dots p_{n-1}$ is odd, then $P^n$ is on the right of $z_m^{n+1}$ and hence $P^n$ lies in $D^r$, so $p_n = +$ and $k^m_n = -$. In any case $\ra p_{\h 0} \prec \ra k^m_{\h 0}$. 
		
		If $P = z_m^1$, the statement follows obviously.
	\end{proof}
	
	In the same way, one can prove a more general statement that for any two points $P, Q$ that lie on the same leaf, $P$ is on the left of $Q$ if and only if $\ra p_{\h 0} \prec \ra q_{\h 0}$.
	
	In order to mimic the kneading theory for unimodal interval maps, we would like to have a straightforward characterization of all admissible sequences by the kneading set.
	
	First, we characterize $W^u$-admissible sequences by the kneading set. Recall that itineraries of all points of $W^u$ start with $\lpinf$. The next thing that simplifies our task is that $W^u$ is invariant for $F$, so the set of all $W^u$-admissible sequences is invariant for $\sigma$. This means that apart from the sequence $\lpinf\cdot\rpinf$ (which is  $W^u$-admissible, because it is the itinerary of $X$), we only need a tool for checking $W^u$-admissibility of sequences of the form $\lpinf \cdot p_0p_1p_2 \dots = \lpinf \cdot \ra p_{\h 0}$, such that $p_0=-$.
	
	\begin{proof}[Proof of Theorem \ref{admis}]
		Let $\bar p = \lpinf \cdot \ra p_{\h 0}$ and $p_0 = -$. Let us assume that $\bar p$ is $W^u$-admissible. Then there is a point $P \in W^u$ such that $\iota(\bar p) = P$. Let $z^1$ be a turnig point whose itinerary $\bar k = \lpinf w \pm \cdot \ra k_{\h 0}$ satisfies $w = p_0 \dots p_m$ for some $m$. Then  $F^{m+2}(P)$ and $z^1$ lie in the same leaf which contains the basic arc $I_{p_0 \dots p_{m+2}}$ and by Lemma \ref{lem:plek} $\sigma^{m+2} (\ra p_{\h 0}) \preceq \ra k_{\h 0}$.
		
		Let us suppose now that a sequence $\bar p = \lpinf \cdot \ra p_{\h 0}$ with $p_0 = -$ satisfies that for every kneading sequence $\bar k = \lpinf w \pm \cdot \ra k_{\h 0}$ such that $w = p_0 \dots p_m$ for some $m$, we have $\sigma^{m+2} (\ra p_{\h 0}) \preceq \ra k_{\h 0}$. We want to prove that $\bar p$ is $W^u$-admissible. 
		
		Let us assume, by contradiction, that $\bar p$ is not $W^u$-admissible. Since $\la p_{\h 1} = \lpinf \cdot -$ is a $W^u$-admissible left tail, that is, there are $W^u$-admissible sequences $\bar q$ such that $\la q_{\h 1} = \la p_{\h 1}$, there exists $n \in \N$ such that $\la p_{\h n} = \lpinf \cdot p_0 \dots p_n$ is $W^u$-admissible and $\la p_{\h n+1} = \lpinf \cdot p_0 \dots p_np_{n+1}$ is not $W^u$-admissible.
		
		Let us denote by $\widehat p_{n+1}$ the sign opposite to $p_{n+1}$. The left tail $\lpinf \cdot p_0 \dots p_n \widehat p_{n+1}$ is $W^u$-admissible and $J = I_{p_0 \dots  p_n \widehat p_{n+1}}$ is a basic arc. Since $\lpinf \cdot p_0 \dots p_n p_{n+1}$ is not $W^u$-admissible, the basic arc $I_{p_0 \dots  p_n p_{n+1}}$ does not exist, so $J$ is a leaf and its endpoints are turning or post-turning points.
		
		Consider the endpoint which is closer to the critical curve. It is of the form $F^i(z^1)$, where $z^1$ is a turning point and $i \in \N$. The kneading sequence of $z^1$ is $\lpinf w \pm \cdot \ra k_{\h 0}$, with $w = p_0 \dots p_m$, where $m = n-i-1$. Thus, by the assumption, $\sigma^{n-i+1}(\ra p_{\h 0}) \preceq \ra k_{\h 0}$.
		
		The point $z^1$ is a turning point, so it is the right endpoint of $F^{-i}(J)$. If the number of $+$s among $p_{n-i+1}, p_{n-i+2}, \dots, p_n$ is even, then $F^i(z^1)$ is the right endpoint of $J$, so $J$ is in the left half-plane. This means that $\widehat p_{n+1} = -$, so $p_{n+1} = +$. Both sequences $\sigma^{n-i+1}(\ra p_{\h 0})$ and $\ra k_{\h 0}$ start with $p_{n-i+1} p_{n-i+2} \dots p_n$. Then in $\sigma^{n-i+1}(\ra p_{\h 0})$ we have $p_{n+1} = +$, while in $\ra k_{\h 0}$ we have $\widehat p_{n+1}=-$. But this means that $\ra k_{\h 0} \prec \sigma^{n-i+1}(\ra p_{\h 0})$, a contradiction.
		
		Similarly, if the number of $+$s among $p_{n-i+1}, p_{n-i+2}, \dots, p_n$ is odd, then $F^i(z^1)$ is the left endpoint of $J$, so $J$ is in the right half-plane. This means that $\widehat p_{n+1} = +$, so $p_{n+1} = -$. Both sequences $\sigma^{n-i+1}(\ra p_{\h 0})$ and $\ra k_{\h 0}$ start with $p_{n-i+1} p_{n-i+2} \dots p_n$. Then in $\sigma^{n-i+1}(\ra p_{\h 0})$ we have $p_{n+1} = -$, while in $\ra k_{\h 0}$ we have $\widehat p_{n+1} = +$. But this means that $\ra k_{\h 0} \prec \sigma^{n-i+1}(\ra p_{\h 0})$, a contradiction.
		
		In both cases, we got a contradiction, so $\lpinf \cdot \ra p_{\h 0}$ is $W^u$-admissible.
	\end{proof}
	
	Now, that we know which sequences are  $W^u$-admissible, and since the symbolic space is equipped with the product topology, Theorem \ref{esadmis} holds.
	
	As already stated in the Introduction, Theorems \ref{admis} and \ref{esadmis} prove the pruning front conjecture. Indeed, the kneading set $\mfk_F$ is the pruning front and by the mentioned theorems we have that all the other disallowed regions of the symbol plane are obtained by backward and forward iterations of the primary pruned region, which is the statement of the pruning front conjecture in \cite{CGP}.
	
	
	Let us consider two H\'enon maps, $F_1$ and $F_2$. Let $\Lambda_i$ denote the attractor of $F_i$, $i = 1, 2$. Analogously, let all 'items' related to the map $F_i$ be denoted by the index $i \in \{ 1, 2 \}$, unless stated otherwise. Note that by Theorems \ref{admis} and \ref{esadmis}, $\mfk_1 = \mfk_2$ implies $\Sigma_{W^u_1} = \Sigma_{W^u_2}$ and $\Sigma_{F_1} = \Sigma_{F_2}$. We want to prove that for any two itineraries $\bar p, \bar q \in \Sigma_{F_1}$, if $\iota_1 (\bar p) = \iota_1 (\bar q)$, then $\iota_2 (\bar p) = \iota_2 (\bar q)$. 
	
	\begin{prop}\label{lem:dif1}
		Let $F_1$ and $F_2$ be two H\'enon maps such that $\mfk_1 = \mfk_2$. Let $\bar p, \bar q \in \Sigma_{F_1}$ be two different elements such that $\iota_1(\bar p) = \iota_1(\bar q)$. Then $\iota_2(\bar p) = \iota_2(\bar q)$.
	\end{prop}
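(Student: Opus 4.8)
The plan is to reduce to the case of a single critical point of $F_1$ and then to approximate that point by \emph{basic} critical points, whose pair of itineraries can be transported from $F_1$ to $F_2$ because it is determined by the common kneading set. First I would use the fact recorded just before the statement: a point of $\Lambda_F$ has at most two itineraries, and if it has two they differ at a single coordinate, namely the one whose image lies in $\C$. Hence $\bar p$ and $\bar q$ agree except at one coordinate $k$, and $P:=\iota_1(\bar p)$ satisfies $P^k\in\C_1$. Replacing $\bar p,\bar q$ by $\sigma^k\bar p,\sigma^k\bar q$ and using $\iota_2\circ\sigma=F_2\circ\iota_2$ together with injectivity of the diffeomorphism $F_2$, it suffices to treat $k=0$; so I assume $\bar p,\bar q$ differ only at coordinate $0$, with $p_0=-$, $q_0=+$, and $P=\iota_1(\bar p)=\iota_1(\bar q)\in\C_1$. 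Throughout I use that $\mfk_1=\mfk_2$ forces $\Sigma_{W^u_1}=\Sigma_{W^u_2}$ and $\Sigma_{F_1}=\Sigma_{F_2}=:\Sigma$ by Theorems~\ref{admis} and~\ref{esadmis}, and that $\Sigma$ is compact.

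Next I would write $P=\lim_k z_{j(k)}$ with $z_{j(k)}\in\mfc_1$ basic critical points; this uses the geometric fact that $\C_1=\Cl(\mfc_1)$. Each $z_{j(k)}$ lies on $\K_1\cap\Lambda_1$, so it has exactly two itineraries, which together make up $\sigma^{-1}(\bar k^{j(k)})$ with the ambiguous symbol at coordinate $0$; call them $\bar a^{(k)}$ (with $0$th symbol $-$) and $\bar b^{(k)}$ (with $0$th symbol $+$), agreeing off coordinate $0$. Now I invoke $\mfk_1=\mfk_2$ in the form noted after the definition of the kneading set: a kneading sequence determines its own index (through the length/parity order of its arc-code part, Lemma~\ref{lem:arc-codes}, with $\bar k^0$ the unique one with empty arc-code), and this recovery is purely combinatorial. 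Consequently the $j(k)$-th turning point of $F_2$ has the same kneading sequence $\bar k^{j(k)}\in\mfk_1=\mfk_2$, so $\bar a^{(k)},\bar b^{(k)}$ are precisely the two itineraries of the $j(k)$-th basic critical point of $F_2$; in particular they lie in $\Sigma$ and $\iota_2(\bar a^{(k)})=\iota_2(\bar b^{(k)})=:\tilde z_{j(k)}\in\Lambda_2$.

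Finally, by compactness of $\Sigma$ I pass to a subsequence along which $\bar a^{(k)}\to\bar a^{*}$ and $\bar b^{(k)}\to\bar b^{*}$ in $\Sigma$. Then $a^{*}_0=-$, $b^{*}_0=+$, and $\bar a^{*},\bar b^{*}$ agree off coordinate $0$, so they are distinct. Continuity of $\iota_1$ (Lemma~\ref{lem:continuous}) gives $\iota_1(\bar a^{*})=P=\iota_1(\bar b^{*})$, both equal to $\lim_k z_{j(k)}$; since $P$ has only the two itineraries $\bar p,\bar q$, comparing the $0$th symbol forces $\bar a^{*}=\bar p$ and $\bar b^{*}=\bar q$. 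Continuity of $\iota_2$ then yields
\[
\iota_2(\bar p)=\iota_2(\bar a^{*})=\lim_k\iota_2(\bar a^{(k)})=\lim_k\tilde z_{j(k)}=\lim_k\iota_2(\bar b^{(k)})=\iota_2(\bar b^{*})=\iota_2(\bar q),
\]
which is the assertion.

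The hard part will be the geometric input $\C_1=\Cl(\mfc_1)$, i.e.\ that every critical point is a limit of basic critical points. For $b>0$ this should fall out of the construction of $\K$: a point $z\in\C$ sits in a nested sequence of regions $\hat Q^{(k)}$ with $\diam\hat Q^{(k)}\to0$, each carrying basic critical points on its horizontal boundary $\partial^u\hat Q^{(k)}$, and these converge to $z$. For $b<0$ the analogous regions carry quasi-critical points instead, so there I would argue separately that $W^u$ is dense in $\Lambda_F$ and meets $\K$ exactly in $\mfc$, whence every point of $\C$ is still approached by points of $\mfc$. A secondary, purely bookkeeping, point is to check that ``$\bar k\mapsto$ its index'' is literally the same function for $F_1$ and $F_2$ — which is what makes equality of the kneading \emph{sets} enough.
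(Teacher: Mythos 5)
Your proof is correct and follows essentially the same route as the paper's: reduce to $P=\iota_1(\bar p)\in\C_1$ with the ambiguity at coordinate $0$, approximate $P$ by basic critical points of $F_1$, use $\mfk_1=\mfk_2$ to see that the two itineraries of each approximant are simultaneously the two itineraries of a single basic critical point of $F_2$, and conclude by continuity of $\iota_2$. Your passage to a convergent subsequence in the compact space $\Sigma$ and identification of the limits $\bar a^{*},\bar b^{*}$ with $\bar p,\bar q$ via the ``at most two itineraries'' fact is a slightly tidier version of the paper's construction of auxiliary sequences $\bar p^i,\bar q^i\in\Sigma_{W^u_1}$ supported on a common leaf. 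You also correctly isolate the one real geometric input, $\C_1=\Cl(\mfc_1)$, which the paper invokes without comment (``Since $P\in\K$, there exists a sequence of basic critical points $(z_{n_i})_i$ that converges to $P$''); for $b<0$ your sketch can be closed by observing that arcs of $W^u$ stretching across the shrinking critical components $Q^{(k)}$ must meet $\K$, and $W^u\cap\K=\mfc$, so density of $W^u$ in $\Lambda_F$ does suffice.
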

	\begin{proof}
		Recall that if $\bar p, \bar q$ are two different elements such that $\iota_1(\bar p) = \iota_1(\bar q)$, then $\bar p$ and $\bar q$ disagree at only one coordinate and they are itineraries of a (basic) critical point, or of some of its (pre)images, so there exists only one $j \in \Z$ with $p_j \ne q_j$ ($p_i = q_i$ for all integers $i \ne j$)
		
		If $\bar p, \bar q \in \Sigma_{W^u_1}$ , then $\mfk_1 = \mfk_2$ implies $\iota_2(\bar p) = \iota_2(\bar q)$.
		
		Let $\iota_1(\bar p) = P \in \Lambda_1 \setminus W^u_1$, and hence $\bar p, \bar q \in \Sigma_{F_1} \setminus \Sigma_{W^u_1}$.  Without loss of generality we assume that $j = 0$, $p_0 = -$, $q_0 = +$, so $P \in \C$.
		
		We want to choose two sequences $(\bar p^i)_i$, $(\bar q^i)_i$, $\bar p^i, \bar q^i \in \Sigma_{W^u_1}$ for every $i \in \N$, such that $(\bar p^i)_i$ converges to $\bar p$ and $(\bar q^i)_i$ converges to $\bar q$. Let $P_i = \iota_1(\bar p^i) \in W^u_1$ and $Q_i = \iota_1(\bar q^i) \in W^u_1$ for every $i \in \N$. Since $\iota_1$ is continuous, both sequences $(P_i)_i$ and $(Q_i)_i$ converge to $P$. Recall, $\iota_2(\bar p^i), \iota_2(\bar q^i) \in W^u_2$ for every $i \in \N$. We additionally want that our sequences $(\bar p^i)_i$, $(\bar q^i)_i$ are chosen so that we are able to show that both sequences $(\iota_2(\bar p^i))_i$ and $(\iota_2(\bar q^i))_i$ converge to the same point. We do that in a few steps. 
		
		Since $P \in \K$, there exists a sequence of basic critical points $(z_{n_i})_i$ that converges to $P$. Therefore, there are sequences $(\bar p^i)_{i=1}^\infty, (\bar q^i)_{i=1}^\infty\subset \Sigma_{W^u_1}$, such that $(\bar p^i)_i$ converges to $\bar p$, $(\bar q^i)_i$ converges to $\bar q$, $\la p^i_{\h -1} = \la q^i_{\h -1}$, $p^i_0 = -$ and $q^i_0 = +$ for every $i \in \N$. Recall, $p_0 = -$, $q_0 = +$. Also, $\la p^i_{\h -1} = \la q^i_{\h -1}$ implies that $P_i$ and $Q_i$ lie on the same leaf of $W^u_1$, and $p^i_0 = -$, $q^i_0 = +$ implies that $P_i$ is on the left of $\K$ and $Q_i$ is on the right of $\K$.
		
		Every basic critical point $z_n$ has two itineraries, $\lpinf w^n \cdot - \ra k^n_{\h 0}$ and $\lpinf w^n \cdot + \ra k^n_{\h 0}$. Let $z_{n_i}$ be a critical point between $P_i$ and $Q_i$. Since the sequence  $(d(P_i, z_{n_i}))_i$ converges to zero, and itineraries $\lpinf w^n \cdot - \ra k^n_{\h 0}$ are not isolated, there exists a subsequence of $(\lpinf w^{n_i} \cdot - \ra k^{n_i}_{\h 0})_i$ which converges to $\bar p$ and, in the same way, there exists a subsequence of $(\lpinf w^{n_i} \cdot + \ra k^{n_i}_{\h 0})_i$ which converges to $\bar q$. Since $\la p_{\h -1} = \la q_{\h -1}$ and $\ra p_{\h 1} = \ra q_{\h 1}$ we can choose these subsequences so that $\lpinf w^n \cdot - \ra k^n_{\h 0}$ is an element of one subsequence if and only if $\lpinf w^n \cdot + \ra k^n_{\h 0}$ is an element of the other subsequence.
		
		For simplicity, and without loss of generality, we assume that the sequence $(\lpinf w^{n_i} \cdot - \ra k^{n_i}_{\h 0})_i$ converges to $\bar p$ and the sequence $(\lpinf w^{n_i} \cdot + \ra k^{n_i}_{\h 0})_i$ converges to $\bar q$. Since $\mfk_1 = \mfk_2$ and $\sigma (\lpinf w^{n_i} \cdot - \ra k^{n_i}_{\h 0}), \, \sigma (\lpinf w^{n_i} \cdot + \ra k^{n_i}_{\h 0}) \in \mfk_1$ for every $i \in \N$, it follows that $\iota_2 (\lpinf w^{n_i} \cdot - \ra k^{n_i}_{\h 0}) = \iota_2 (\lpinf w^{n_i} \cdot + \ra k^{n_i}_{\h 0}) = z'_{n_i}$ for every $i \in \N$, where $z'_{n_i}$ is a critical point of $F_2$. Moreover, continuity of $\iota_2$ implies that  the sequence $(z'_{n_i})_i$ converges to $\iota_2(\bar p)$ and $\iota_2(\bar q)$, giving us the desired conclusion that $\iota_2(\bar p) = \iota_2(\bar q)$.
	\end{proof}
	
	For the H\'enon map $F$ let us define an equivalence relation $\sim_F$ on the space $\Sigma_F$ as follows: $\bar p, \bar q \in \Sigma_F$ are equivalent, $\bar p \sim_F \bar q$, if and only if $\iota_F(\bar p) = \iota_F(\bar q)$. Let $\tilde \iota_F : \Sigma_F/_{\sim_F} \to \Lambda_F$ be defined in a natural way, $\tilde \iota_F([\bar p]) = \iota_F(\bar p)$, for every $[\bar p] \in \Sigma_F/_{\sim_F}$. By Proposition \ref{lem:dif1}, $\tilde \iota_F$ is well defined and it is a conjugacy between $\tilde \sigma : \Sigma_F/_{\sim_F} \to \Sigma_F/_{\sim_F}$, where $\tilde \sigma ([\bar p]) = [\sigma \bar p]$, and $F : \Lambda_F \to \Lambda_F$, so Theorem \ref{oneway} holds.

	\section{Folding patterns and pruned trees}\label{sec:fp}
	
	\subsection{Folding pattern}
	
	In this subsection, we introduce a new notion for the H\'enon maps, a bi-infinite sequence of two symbols that is an invariant of the topological conjugacy classes of the H\'enon maps within the Wang-Young parameter set, and that characterizes the set of itineraries of such H\'enon maps.
	
	Recall, $\phi : \mathbb{R} \to W^u$ with $\phi(0) = X$ and $z_0 \in \phi((-\infty, 0))$ is a parametrization of $W^u$. We require additionally, let $\phi(\Z \setminus \{ 0 \}) = \bigcup_{i \in \N_0} F^i(\mfc)$, so $\phi(k)$ is a basic point for every $k \in \Z \setminus \{ 0 \}$, in particular $\phi(-1) = z_0$ and $\phi(1) = z_0^1$. 
	
	Let us consider the bi-infinite sequence $(\phi(k))_{k \in \Z}$. It contains all the information on how $W^u$ is folded in the plane. We will simplify this sequence while still keeping all the information that it provides: Replace each basic critical point $z_i$, $i \in \Z$, by the symbol $0$ and each basic post-critical point $z_i^j$, $i \in \Z$, $j \in \N$, by $1$. Replace also $X$ with the decimal point. We get a sequence like this:
	$$\dots 1 \, 0 \, 1 \, 0 \, 1 \, 0 \cdot  1 \, 0 \, 1 \, 0 \, 1 \, 1 \, 1 \, 0 \, 1 \dots.$$
	We call this sequence of two symbols, $0$ and $1$, the \emph{folding pattern} of $F$.
	
	Let us show that the folding pattern carries the same information as the sequence $(\phi(k))_{k \in \Z}$. We know that $F$ restricted to $W^u$ is an orientation-reversing homeomorphism that fixes $X$. Moreover, it maps the set of basic points bijectively onto the set of basic post-critical points. Thus, we know which symbol of the folding pattern is mapped to which one, see Figure~\ref{fig:fpmap}.
	\begin{figure}[h]
		\unitlength=10mm
		\begin{picture}(20,3.6)(3,4.4)
			
			\put(8.5,6){\circle*{0.1}}
			
			\put(3,6){\line(1,0){14}}
			
			\put(3,5.9){\line(0,1){0.2}}\put(2.8,5.3){$1$}
			\put(4,5.9){\line(0,1){0.2}}\put(3.8,5.3){$0$}
			\put(5,5.9){\line(0,1){0.2}}\put(4.8,5.3){$1$}
			\put(6,5.9){\line(0,1){0.2}}\put(5.8,5.3){$0$}
			\put(7,5.9){\line(0,1){0.2}}\put(6.8,5.3){$1$}
			\put(8,5.9){\line(0,1){0.2}}\put(7.8,5.3){$0$}
			\put(9,5.9){\line(0,1){0.2}}\put(8.8,5.3){$1$}
			\put(10,5.9){\line(0,1){0.2}}\put(9.8,5.3){$0$}
			\put(11,5.9){\line(0,1){0.2}}\put(10.8,5.3){$1$}
			\put(12,5.9){\line(0,1){0.2}}\put(11.8,5.3){$0$}
			\put(13,5.9){\line(0,1){0.2}}\put(12.8,5.3){$1$}
			\put(14,5.9){\line(0,1){0.2}}\put(13.8,5.3){$1$}
			\put(15,5.9){\line(0,1){0.2}}\put(14.8,5.3){$1$}
			\put(16,5.9){\line(0,1){0.2}}\put(15.8,5.3){$0$}
			\put(17,5.9){\line(0,1){0.2}}\put(16.8,5.3){$1$}
			
			\put(9,5){\line(0,-1){0.3}}
			\put(9,4.7){\line(-1,0){2}}
			\put(7,4.7){\vector(0,1){0.3}}
			
			\put(10,5){\line(0,-1){0.5}}
			\put(10,4.5){\line(-1,0){5}}
			\put(5,4.5){\vector(0,1){0.5}}
			
			\put(11,5){\line(0,-1){0.7}}
			\put(11,4.3){\line(-1,0){8}}
			\put(3,4.3){\vector(0,1){0.7}}
			
			\put(8,6.5){\line(0,1){0.3}}
			\put(8,6.8){\line(1,0){1}}
			\put(9,6.8){\vector(0,-1){0.3}}
			
			\put(7,6.5){\line(0,1){0.5}}
			\put(7,7){\line(1,0){4}}
			\put(11,7){\vector(0,-1){0.5}}
			
			\put(6,6.5){\line(0,1){0.7}}
			\put(6,7.2){\line(1,0){7}}
			\put(13,7.2){\vector(0,-1){0.7}}
			
			\put(5,6.5){\line(0,1){0.9}}
			\put(5,7.4){\line(1,0){9}}
			\put(14,7.4){\vector(0,-1){0.9}}
			
			\put(4,6.5){\line(0,1){1.1}}
			\put(4,7.6){\line(1,0){11}}
			\put(15,7.6){\vector(0,-1){1.1}}
			
			\put(3,6.5){\line(0,1){1.3}}
			\put(3,7.8){\line(1,0){14}}
			\put(17,7.8){\vector(0,-1){1.3}}
			
		\end{picture}
		
		\caption{The action of the map on the folding pattern.}
		\label{fig:fpmap}
	\end{figure}
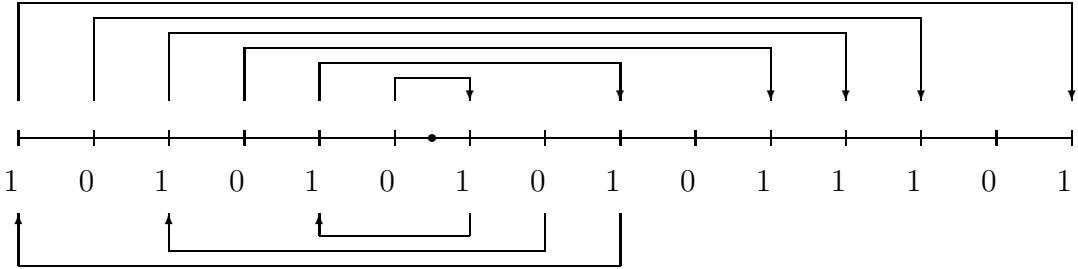
	We also know how to number the critical points (the first to the left of $X$ is $z_0$). This, plus the information about the action of the map, tells us which basic post-critical point corresponds to a given symbol $1$. Thus, we get $0$s and $1$s with subscripts and (some of them) superscripts, like in Figure \ref{fig:fpmapsigns}.
	
	Another information we can read from the folding pattern is which basic post-critical points and basic arcs are in $D^l$ or $D^r$. Namely, we know that the sign (which we use for the itineraries) changes at every symbol $0$. Thus, we can append our folding pattern with those signs and get a sequence like this:
	$$\dots -1-0+1+0-1-0+\cdot+1+0-1-0+1+1+1+0-1- \dots\ .$$
	For each symbol $1$ the signs adjacent to it from the left and right are the same, so we can say that this is the sign of this $1$. 
	
	Of course, we can put some of the additional information together, for instance, we can add to the folding pattern the map, the signs, the subscripts, and the superscripts (on the basic post-critical points), see Figure~\ref{fig:fpmapsigns}.
	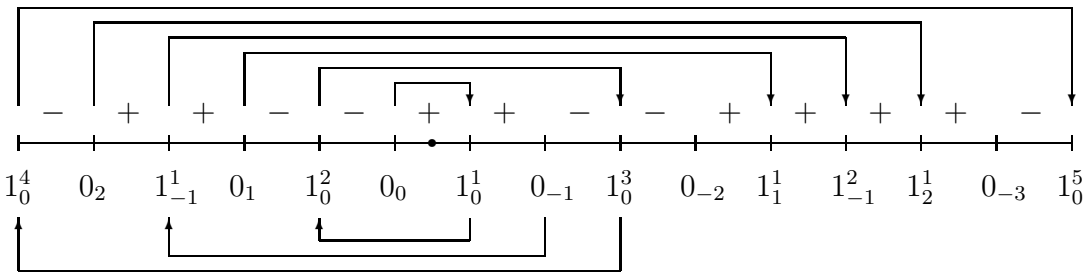
\begin{figure}[h]
		\unitlength=10mm
		\begin{picture}(20,3.6)(3,4.4)
			
			\put(8.5,6){\circle*{0.1}}
			
			\put(3,6){\line(1,0){14}}
			
			\put(3,5.9){\line(0,1){0.2}}\put(2.8,5.3){$1_0^4$}
			\put(4,5.9){\line(0,1){0.2}}\put(3.8,5.3){$0_2$}
			\put(5,5.9){\line(0,1){0.2}}\put(4.8,5.3){$1_{-1}^1$}
			\put(6,5.9){\line(0,1){0.2}}\put(5.8,5.3){$0_1$}
			\put(7,5.9){\line(0,1){0.2}}\put(6.8,5.3){$1_0^2$}
			\put(8,5.9){\line(0,1){0.2}}\put(7.8,5.3){$0_0$}
			\put(9,5.9){\line(0,1){0.2}}\put(8.8,5.3){$1_0^1$}
			\put(10,5.9){\line(0,1){0.2}}\put(9.8,5.3){$0_{-1}$}
			\put(11,5.9){\line(0,1){0.2}}\put(10.8,5.3){$1_0^3$}
			\put(12,5.9){\line(0,1){0.2}}\put(11.8,5.3){$0_{-2}$}
			\put(13,5.9){\line(0,1){0.2}}\put(12.8,5.3){$1_1^1$}
			\put(14,5.9){\line(0,1){0.2}}\put(13.8,5.3){$1_{-1}^2$}
			\put(15,5.9){\line(0,1){0.2}}\put(14.8,5.3){$1_2^1$}
			\put(16,5.9){\line(0,1){0.2}}\put(15.8,5.3){$0_{-3}$}
			\put(17,5.9){\line(0,1){0.2}}\put(16.8,5.3){$1_0^5$}
			
			\put(9,5){\line(0,-1){0.3}}
			\put(9,4.7){\line(-1,0){2}}
			\put(7,4.7){\vector(0,1){0.3}}
			
			\put(10,5){\line(0,-1){0.5}}
			\put(10,4.5){\line(-1,0){5}}
			\put(5,4.5){\vector(0,1){0.5}}
			
			\put(11,5){\line(0,-1){0.7}}
			\put(11,4.3){\line(-1,0){8}}
			\put(3,4.3){\vector(0,1){0.7}}
			
			\put(8,6.5){\line(0,1){0.3}}
			\put(8,6.8){\line(1,0){1}}
			\put(9,6.8){\vector(0,-1){0.3}}
			
			\put(7,6.5){\line(0,1){0.5}}
			\put(7,7){\line(1,0){4}}
			\put(11,7){\vector(0,-1){0.5}}
			
			\put(6,6.5){\line(0,1){0.7}}
			\put(6,7.2){\line(1,0){7}}
			\put(13,7.2){\vector(0,-1){0.7}}
			
			\put(5,6.5){\line(0,1){0.9}}
			\put(5,7.4){\line(1,0){9}}
			\put(14,7.4){\vector(0,-1){0.9}}
			
			\put(4,6.5){\line(0,1){1.1}}
			\put(4,7.6){\line(1,0){11}}
			\put(15,7.6){\vector(0,-1){1.1}}
			
			\put(3,6.5){\line(0,1){1.3}}
			\put(3,7.8){\line(1,0){14}}
			\put(17,7.8){\vector(0,-1){1.3}}
			
			\put(3.3,6.3){$-$}
			\put(4.3,6.3){$+$}
			\put(5.3,6.3){$+$}
			\put(6.3,6.3){$-$}
			\put(7.3,6.3){$-$}
			\put(8.3,6.3){$+$}
			\put(9.3,6.3){$+$}
			\put(10.3,6.3){$-$}
			\put(11.3,6.3){$-$}
			\put(12.3,6.3){$+$}
			\put(13.3,6.3){$+$}
			\put(14.3,6.3){$+$}
			\put(15.3,6.3){$+$}
			\put(16.3,6.3){$-$}
			
		\end{picture}
		
		\caption{The action of the map on the folding pattern with
			signs.}
		\label{fig:fpmapsigns}
	\end{figure}
	If we replace now in the sequence in Figure~\ref{fig:fpmapsigns} all $0$s and $1$s with $z$s and add $X$ in the place of the decimal point, we will get the sequence $(\phi(k))_{k \in \Z}$. Using $+$s and $-$s we can reconstruct $W^u$ and how it is folded in the plane, like in Figure \ref{fig:FP}.
	
	\begin{theorem}\label{equivalent}
		The set of kneading sequences and the folding pattern are equivalent, that is, given one of them, we can recover the other one.
	\end{theorem}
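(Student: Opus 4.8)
The plan is to establish the two implications separately, in each case reading one object off the other by means of the bookkeeping assembled in this subsection.

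\emph{From the folding pattern to $\mfk_F$.} From the folding pattern alone one recovers the fully decorated sequence $(\phi(k))_{k\in\Z}$: the action of $F$ on the symbols is forced by the facts that $F|_{W^u}$ is an orientation-reversing homeomorphism fixing $X$ and carries the basic points bijectively onto the basic post-critical points (the $1$'s), so that the $n$-th basic point to the left of the dot is sent to the $n$-th $1$ to its right, and symmetrically; the numbering of the $0$'s is forced ($z_0$ is the first $0$ to the left of the dot, and consecutive $0$'s carry consecutive indices); the superscripts on the $1$'s then follow by tracing forward orbits under this action; and the $+/-$ signs are determined by the rule that they flip at every $0$, normalized by $z_0^1\in D^r$. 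Next I would recover the basic arcs and their arc-codes by a backward recursion: $I_\emptyset$ is the basic arc straddling the dot, and for any other basic arc $I_w$ the set $F^{-1}(\mathring I_w)$ contains no basic point (if $Q$ were a basic point in it, then $F(Q)$ would be a basic point lying in the interior of a basic arc, which is impossible), hence it lies in the interior of a unique basic arc $I_{w'}$, and then $w=w'\,s$, where $s$ is the sign of $I_w$; the recursion terminates because $F^{-n}(I_w)$ converges to $X\in\mathring I_\emptyset$. Finally, for each $i\in\Z$ the turning point $z_i^1=F(z_i)$ is identified and its kneading sequence $\bar k^i=\lpinf w^i\pm\cdot\,\ra k^i_{\h 0}$ is read off: $\ra k^i_{\h 0}$ is the string of signs of $z_i^1,z_i^2,\dots$ (all decorated $1$'s), while $w^i$ is the arc-code of the basic arc containing $z_i^{-1}=F^{-1}(z_i)$, equivalently of the unique basic arc whose $F$-image contains $z_i$ in its interior. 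Running $i$ over $\Z$ produces $\mfk_F$.

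\emph{From $\mfk_F$ to the folding pattern.} By the discussion preceding the statement, each element of $\mfk_F$ is a pair $(w^i,\ra k^i_{\h 0})$ carrying a recoverable index, so we obtain the indexed family $(\bar k^i)_{i\in\Z}$. The itineraries of all basic points are then exactly the sequences $\sigma^m(\bar k^i)$ with $i\in\Z$ and $m\ge -1$ (for $m\ge 0$ this is the itinerary of the post-critical point $z_i^{m+1}$, for $m=-1$ that of the critical point $z_i$), and $X$ has itinerary $\lpinf\cdot\rpinf$. I would sort this collection of sequences by the generalized parity-lexicographical order $\preceq$, using the convention $-\prec\pm\prec+$ at the ambiguous coordinate; by Lemma~\ref{lem:o2} this is precisely the left-to-right order of the corresponding points along $W^u$. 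Marking the sequences with $\pm$ at the $0$th coordinate by $0$, the remaining shifts of kneading sequences by $1$, and $\lpinf\cdot\rpinf$ by the decimal point, reconstitutes the folding pattern.

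\emph{Main difficulty.} The only step requiring genuine care is the recovery of the arc-codes from the folding pattern in the first implication: one must verify that the decorated sequence really determines the $F$-action (the counting argument above), that the backward recursion $w\mapsto w'\,s$ is well-defined, and that it terminates after finitely many steps. In the converse direction the corresponding point — that the $\preceq$-order on the itineraries of the basic points faithfully reproduces their order along $W^u$ — is already contained in Lemma~\ref{lem:o2}, so that implication is essentially immediate once the itineraries have been listed. Everything else is routine bookkeeping with the decorated folding pattern.
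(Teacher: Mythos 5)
Your proof is correct. The direction from the folding pattern to $\mfk_F$ tracks the paper's argument closely: decorate the pattern with the $F$-action, signs, and sub/superscripts, single out the turning points $1^1_i$, read the right tail $\ra k^i_{\h 0}$ forward along the arrows, and recover the arc-code $w^i$ backward. (The paper phrases the backward step as chasing the arrows of the two $1$'s adjacent to $0_i$ down to the initial block $1\,0\cdot 1$, which is exactly the $F^{-1}$-recursion $w=w's$ you spell out.) The direction from $\mfk_F$ to the folding pattern is where you genuinely diverge from the paper. You list the itineraries $\sigma^m(\bar k^i)$, $m\ge -1$, $i\in\Z$, together with $\lpinf\cdot\rpinf$, sort them by $\preceq$, and invoke Lemma~\ref{lem:o2} to identify the sorted list with the $\phi$-order along $W^u$, after which the $0/1$-marking is immediate. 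The paper instead builds the folding pattern inductively over the arcs $F^n([z_0^2,z_0^1]^u)$: at each step it pushes the already-determined picture forward by $F$, uses the kneading data to assign a sign to each new $1$, and inserts a $0$ between every pair of consecutive $1$'s of opposite sign. Your global sort is slicker, outsourcing the hard work to the already-established Lemma~\ref{lem:o2}; the paper's induction is more hands-on and makes visible that a finite central segment of the pattern is determined by arc-codes of bounded length. Both approaches, and indeed the paper's own definition of the parametrization $\phi$, tacitly use that $\mfc\cup\mfc^+$ has order type $\Z$ along $W^u$, so that the sorted family of itineraries is a genuine bi-infinite sequence and not merely a totally ordered set; you may wish to flag this since your argument makes that discreteness do all the bookkeeping that the paper's induction carries out explicitly.
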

	\begin{proof} 
		Suppose we know the set of the kneading sequences and we want to recover the folding pattern. As we noticed, when we defined the kneading set, we know which kneading sequence is the itinerary of which point $z^1_n$. We proceed by induction. First, we know that in $[z_0^2, z_0^1]^u$ there are three basic points (and $X$), and that they should be marked from the left to the right $1 \, 0 \cdot 1$. We also know how they are mapped by $F$. Now suppose that we know the basic points in $F^n([z_0^2, z_0^1]^u) = [z_0^{n+2},z_0^{n+1}]^u$, how they are marked, and how they are mapped by $F$. Some of those points (on the left or on the right, depending on the parity of $n$) are not mapped to the points of this set. Then we map them to new points, remembering that $F(X) = X$ and that $F$ is a homeomorphism of $W^u$ reversing orientation. Those new points have to be marked as $1$ because $F$ maps the basic points onto the basic post-critical points. Now we use our information about the kneading sequences. They are the itineraries of the first images of the points marked $0$, and since we know the action of $F$ on the set of the basic points of $[z_0^{n+2},z_0^{n+1}]^u$, this determines the signs of all points marked $1$ in the picture that we have at this moment. We know that the signs change at each point marked $0$, so we insert such a point between every pair of $1$s with opposite signs (clearly, there cannot be two consecutive $0$s). In such a way we get the basic points in $[z_0^{n+2},z_0^{n+1}]^u$, and the information on how they are marked, and how they are mapped by $F$. This completes the induction step.
		
		Now suppose that we know the folding pattern and we want to recover the set of kneading sequences. As we observed, given a folding pattern, we can add to it the information about the signs, plus the information about the action of the map. Thus, we get $0$s and $1$s with subscripts and (some of them) superscripts. The turning points are the symbols $1$ with the superscript 1. Now for every $1$ which is a turning point, we follow the action of the map (the arrows), reading the signs of the symbols on this path. In such a way, we get the corresponding right tail $\ra k_0$ of the kneading sequence (the signs do not change at $1$s, so the sign immediately to the left and to the right of a given $1$ are the same, and moreover, this sign is the same as the sign of the component $D^l$ or $D^r$, where the corresponding post-critical point lies). Going back to recover the arc code $w$ of the kneading sequence is also simple. In the first step, we follow the arrow backward from the symbol $1$ with the superscript 1 to the corresponding symbol $0$. Every symbol $0$ is between two symbols $1$. To recover $w$ we follow the action of the inverse of the map (we follow the arrows of the adjacent $1$s backward), reading the signs of the symbols on this path, until we get to the initial part of the folding pattern $1 \, 0 \cdot 1$. 
	\end{proof}
	
	\subsection{Pruned tree}
	
	We can think of the folding pattern as a countable Markov partition for the map $F$ on  $W^u$. Thus, we can consider the corresponding Markov graph (the graph of transitions). The vertices of this graph are the basic arcs and there is an arrow from $I_u$ to $I_v$ if and only if $F(I_u)\supset I_v$. For simplicity we can index the basic arcs by integers: Let $I_0 = I_\emptyset$, $I_n \subset W^{u+}$, $I_{-n} \subset W^{u-}$, $n \in \N$, and $I_i \cap I_j \ne \emptyset$ if and only if $|i - j| \le 1$. Now, for the vertices, instead of writing the basic arcs $I_n$, we write just the corresponding index $n$ for them.
	
	From the folding pattern shown in Figure~\ref{fig:fpmap} we get the graph shown in Figure~\ref{fig:ftarcs} (of course this tree goes down and is infinite; we are showing only a part of it). This graph is almost a tree; except for $0$ and the arrows beginning at $0$, it is a sub-tree of the full binary tree, so we call it the \emph{pruned tree} of $F$.
	
	\begin{figure}[h]
		\unitlength=10mm
		\begin{picture}(20,6)(3.5,1.5)
			
			\put(10,6){$0$}
			\put(9.8,5){$-1$}
			\put(9,4){$1$}
			\put(11,4){$2$}
			\put(7.8,3){$-2$}
			\put(9.2,3){$-3$}
			\put(10.3,3){$-4$}
			\put(11.7,3){$-5$}
			\put(7,2){$3$}
			\put(8.1,2){$4$}
			\put(9.4,2){$5$}
			\put(10.6,2){$6$}
			\put(11.9,2){$7$}
			\put(13,2){$8$}
			
			\put(9.94,6.4){\line(0,1){0.4}}
			\put(9.94,6.8){\line(1,0){0.3}}
			\put(10.24,6.8){\vector(0,-1){0.4}}
			
			\put(10.1,5.8){\vector(0,-1){0.4}}
			\put(9.8,4.8){\vector(-1,-1){0.4}}
			\put(10.4,4.8){\vector(1,-1){0.4}}
			\put(8.8,3.8){\vector(-1,-1){0.4}}
			\put(11.4,3.8){\vector(1,-1){0.4}}
			\put(9.1,3.8){\vector(1,-1){0.4}}
			\put(11.1,3.8){\vector(-1,-1){0.4}}
			\put(7.8,2.8){\vector(-1,-1){0.4}}
			\put(12.4,2.8){\vector(1,-1){0.4}}
			\put(8.2,2.8){\vector(0,-1){0.4}}
			\put(9.5,2.8){\vector(0,-1){0.4}}
			\put(10.7,2.8){\vector(0,-1){0.4}}
			\put(12,2.8){\vector(0,-1){0.4}}
			
		\end{picture}
		
		\caption{A pruned tree with numbers of basic arcs.}
		\label{fig:ftarcs}
	\end{figure}
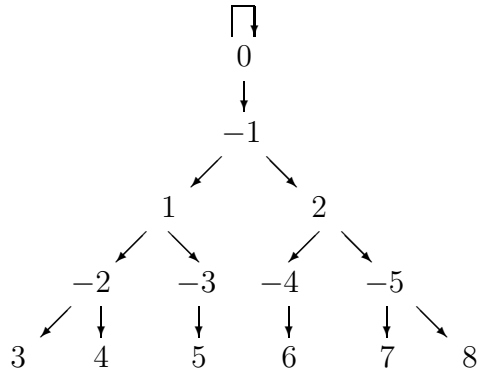
	
	This tree is in a natural way divided into \emph{levels}. The number $0$ is at level $0$, the number $-1$ is at level $1$, and in general, if the path from $-1$ to $i$ has $n$ arrows then $i$ is at level $n+1$. It is easy to see how the levels are arranged. Starting with level $1$, negative numbers are at odd levels, ordered with their moduli increasing from the left to the right. If level $n$ ends with $-i$ then level $n+2$ starts with $-(i+1)$. Positive numbers are at even levels, ordered in a similar way. Therefore, if we have the same tree without the numbers, like in Figure~\ref{fig:ftnaked}, we know where to put which number. Of course, we are talking about the tree embedded in the plane, so the order of the vertices at each level is given.
	
	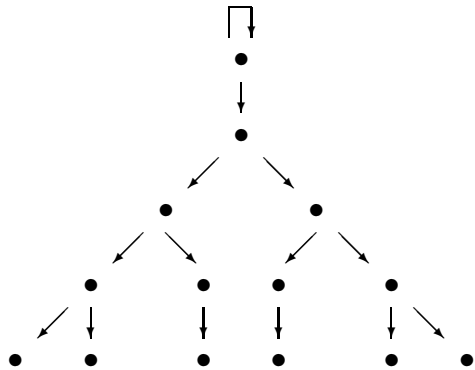
\begin{figure}[h!]
		\unitlength=10mm
		\begin{picture}(20,6)(3.5,1.5)
			
			\put(10,6){$\bullet$}
			\put(10,5){$\bullet$}
			\put(9,4){$\bullet$}
			\put(11,4){$\bullet$}
			\put(8,3){$\bullet$}
			\put(9.5,3){$\bullet$}
			\put(10.5,3){$\bullet$}
			\put(12,3){$\bullet$}
			\put(7,2){$\bullet$}
			\put(8,2){$\bullet$}
			\put(9.5,2){$\bullet$}
			\put(10.5,2){$\bullet$}
			\put(12,2){$\bullet$}
			\put(13,2){$\bullet$}
			
			\put(9.94,6.4){\line(0,1){0.4}}
			\put(9.94,6.8){\line(1,0){0.3}}
			\put(10.24,6.8){\vector(0,-1){0.4}}
			
			\put(10.1,5.8){\vector(0,-1){0.4}}
			\put(9.8,4.8){\vector(-1,-1){0.4}}
			\put(10.4,4.8){\vector(1,-1){0.4}}
			\put(8.8,3.8){\vector(-1,-1){0.4}}
			\put(11.4,3.8){\vector(1,-1){0.4}}
			\put(9.1,3.8){\vector(1,-1){0.4}}
			\put(11.1,3.8){\vector(-1,-1){0.4}}
			\put(7.8,2.8){\vector(-1,-1){0.4}}
			\put(12.4,2.8){\vector(1,-1){0.4}}
			\put(8.1,2.8){\vector(0,-1){0.4}}
			\put(9.6,2.8){\vector(0,-1){0.4}}
			\put(10.6,2.8){\vector(0,-1){0.4}}
			\put(12.1,2.8){\vector(0,-1){0.4}}
			
		\end{picture}
		
		\caption{A ``naked'' pruned tree.}
		\label{fig:ftnaked}
	\end{figure}
	
	In a similar way as for the folding pattern, we can add some information to the picture. The symbols $0$ and $1$ can be placed between the vertices of the tree. The ones that are between the last vertex of level $n$ and the first vertex of level $n+2$, will be placed to the right of the last vertex of level $n$. The only exception is $0_0$, which has to be placed to the left of the unique vertex of level 1, in order to avoid a collision with other symbols.
	
	We know which of the symbols are $0$s. By our construction, $0$s are those basic points that are in the interior of some $F(I_n)$. This means that they are exactly the ones that are between the siblings (vertices where the arrows from the common vertex end). And once we have $0$s and $1$s marked, we can recover the signs of the vertices because we know that the signs change exactly at $0$s. Then we get the pruned tree marked as in Figure \ref{fig:ftall}.
	
	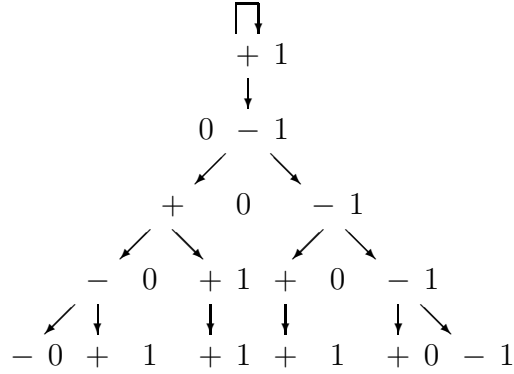
\begin{figure}[h!]
		\unitlength=10mm
		\begin{picture}(20,6)(3.5,1.5)
			
			\put(9.94,6){$+$}\put(10.44,6){$1$}
			\put(9.94,5){$-$}\put(10.44,5){$1$}\put(9.44,5){$0$}
			\put(8.94,4){$+$}\put(9.94,4){$0$}
			\put(10.94,4){$-$}\put(11.44,4){$1$}
			\put(7.94,3){$-$}\put(8.69,3){$0$}
			\put(9.44,3){$+$}\put(9.94,3){$1$}
			\put(10.44,3){$+$}\put(11.19,3){$0$}
			\put(11.94,3){$-$}\put(12.44,3){$1$}
			\put(6.94,2){$-$}\put(7.44,2){$0$}
			\put(7.94,2){$+$}\put(8.69,2){$1$}
			\put(9.44,2){$+$}\put(9.94,2){$1$}
			\put(10.44,2){$+$}\put(11.19,2){$1$}
			\put(11.94,2){$+$}\put(12.44,2){$0$}
			\put(12.94,2){$-$}\put(13.44,2){$1$}
			
			\put(9.94,6.4){\line(0,1){0.4}}
			\put(9.94,6.8){\line(1,0){0.3}}
			\put(10.24,6.8){\vector(0,-1){0.4}}
			
			\put(10.1,5.8){\vector(0,-1){0.4}}
			\put(9.8,4.8){\vector(-1,-1){0.4}}
			\put(10.4,4.8){\vector(1,-1){0.4}}
			\put(8.8,3.8){\vector(-1,-1){0.4}}
			\put(11.4,3.8){\vector(1,-1){0.4}}
			\put(9.1,3.8){\vector(1,-1){0.4}}
			\put(11.1,3.8){\vector(-1,-1){0.4}}
			\put(7.8,2.8){\vector(-1,-1){0.4}}
			\put(12.4,2.8){\vector(1,-1){0.4}}
			\put(8.1,2.8){\vector(0,-1){0.4}}
			\put(9.6,2.8){\vector(0,-1){0.4}}
			\put(10.6,2.8){\vector(0,-1){0.4}}
			\put(12.1,2.8){\vector(0,-1){0.4}}
			
		\end{picture}
		
		\caption{A pruned tree with $0$s, $1$s and signs.}
		\label{fig:ftall}
	\end{figure}
	
	Now that we have our three objects, the kneading sequences, the folding pattern, and the pruned tree, we prove that they carry
	the same information.
	
	\begin{theorem}\label{equivalent-fp-pt}
		The folding pattern and the pruned tree are equivalent, that is, given one of them, we can recover the other one.
	\end{theorem}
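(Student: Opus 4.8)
The plan is to prove the equivalence by giving, in each direction, an explicit reconstruction procedure, and to lean heavily on the bookkeeping already developed in this section. The easy direction is recovering the pruned tree from the folding pattern. As explained before Theorem \ref{equivalent}, the folding pattern already determines the full labelled sequence $(\phi(k))_{k\in\Z}$ together with the action of $F$ on the basic points, their signs, subscripts and superscripts, i.e.\ the complete combinatorial picture of how $W^u$ folds in the plane. From this the basic arcs $I_n$ are simply the segments of $W^u$ between consecutive basic points, indexed so that $I_0=I_\emptyset$, $I_n\subset W^{u+}$ and $I_{-n}\subset W^{u-}$ for $n\in\N$, with $I_i\cap I_j\neq\emptyset$ iff $|i-j|\le 1$. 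Since we know the $F$-images of the two endpoints of every $I_u$ and since $F|_{W^u}$ is an orientation-reversing homeomorphism fixing $X$, we know the arc $F(I_u)$, hence exactly which basic arcs $I_v$ it contains; these are precisely the arrows $I_u\to I_v$ of the Markov graph. Finally the level grading is read off from arc-code length: $F^{m-1}([z_0^2,z_0]^u)$ is exactly the union of the basic arcs whose arc-code has length $m$ (Figure \ref{fig:FP}), so $I_w$ sits at level $|w|$. Thus the folding pattern determines the pruned tree as a plane tree with its levels, and also (by placing $0$s between siblings and reading off signs as in Figures \ref{fig:ftarcs}--\ref{fig:ftall}) all the decoration on it.

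For the converse I would start from the bare plane tree (Figure \ref{fig:ftnaked}). The level of a vertex is intrinsic: below the unique level-$1$ vertex the graph is a subtree of the full binary tree, so the level equals one plus the distance to that vertex, and the root is the level-$0$ vertex (equivalently, the unique vertex carrying a loop). One then reinstates the integer labels from the structural rules noted before the statement --- negative indices occupy the odd levels and positive indices the even levels, along any level the moduli increase from left to right, and if level $n$ ends with index $-i$ then level $n+2$ begins with $-(i+1)$ (similarly on the positive side) --- so that, level by level and using the planar order of vertices, each vertex acquires its index $n$, i.e.\ its basic arc $I_n$; the root is labelled $0$. Next, a separating basic point between consecutive $I_n$'s is a critical point exactly when it lies in the interior of some $F(I_m)$, which in the tree is exactly the configuration of a point lying between two siblings (the two children of a common vertex); all remaining separating points get the label $1$. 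Listing the basic arcs in their $W^u$-order $\dots,I_{-2},I_{-1},I_0,I_1,I_2,\dots$, recording at each junction the corresponding symbol $0$ or $1$, and placing the decimal point inside the root $I_0$ (where the fixed point $X$ lies), we recover the folding pattern.

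The main obstacle is the second direction, and within it the assertion that the \emph{unlabelled} plane tree already pins down the labelling. This requires verifying that the level-by-level description of the Markov graph of $F|_{W^u}$ is genuinely forced: that every level is a contiguous block of indices of a single sign with moduli increasing left to right, that consecutive levels of equal parity abut as stated, and that the only anomaly is the loop at $0$. I would prove this by induction on levels, using that $F$ reverses orientation on $W^u$ and fixes $X$, that it carries the basic arcs of $F^{m-1}([z_0^2,z_0]^u)$ bijectively onto those with arc-codes of length $m+1$, and the description from Section \ref{sec:coa} (and Figure \ref{fig:FP}) of how the initial arc $[z_0^2,z_0^1]^u$ unfolds; the base case is the explicit pattern around $X$. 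The identification ``critical point $\Longleftrightarrow$ between siblings'' likewise needs a short case check against the two ways $F(I_m)$ can cover basic arcs (it contains one or two of them, and in the two-arc case they are exactly the two children of $I_m$, with the critical point of $F$ restricted to $W^u$ at their common endpoint). Once these structural facts are in place, both reconstructions are well defined and inverse to each other, which gives the theorem.
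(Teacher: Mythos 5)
Your reconstruction of the pruned tree from the folding pattern matches the paper's first direction. In the converse direction you deviate slightly: the paper goes \emph{pruned tree} $\to$ \emph{kneading set} $\to$ \emph{folding pattern}, reusing Theorem~\ref{equivalent} for the last leg, whereas you go directly from the decorated tree to the folding pattern by restoring the integer labels $I_n$ on the vertices and then reading off the symbols at the junctions in the $W^u$-order $\dots, I_{-1}, I_0, I_1, \dots$. Your route is a little more economical (it bypasses the kneading set and does not invoke Theorem~\ref{equivalent}), and it is equivalent to the paper's because both rest on the same combinatorial observations made in the text before the theorem: levels are intrinsic to the embedded plane tree, $0$s are exactly the separating basic points lying between siblings, and signs flip precisely at $0$s. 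One genuine value-add in your write-up is that you explicitly flag what the paper treats as ``easy to see'' --- namely that the \emph{unlabelled} plane tree already forces the labelling (contiguous blocks of indices of one sign per level, moduli increasing left to right, abutment of consecutive same-parity levels) --- and you sketch the level-by-level induction that makes this rigorous, using that $F|_{W^u}$ is an orientation-reversing homeomorphism fixing $X$ and that $F^{m-1}([z_0^2,z_0]^u)$ consists precisely of the arcs with arc-codes of length $m$. Both approaches are sound; yours is more self-contained in the second direction and slightly more careful about the unlabelled-to-labelled step.
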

	
	\begin{proof} 
		{\it From the folding pattern to the pruned tree.} This is described when we were defining the folding tree.
		
		{\it From the pruned tree to the kneading set.} As we observed, given a pruned tree, we can add to it the information about the signs and the positions of the $0$ and $1$ symbols. The turning points are the symbols $1$ placed directly below $0$s, and additionally, $1_0$ is the only symbol in the zeroth row. Now for every $1$ which is a turning point, we go down along the tree, reading the signs immediately to the left of the symbols (see Figure~\ref{fig:ftall}). In such a way, we get the corresponding right tail of the kneading sequence (recall, the signs do not change at $1$s). Going back (up) is even simpler since in two steps we get to a vertex and just go up the tree along the edges.
		
		{\it From the kneading set to the folding pattern.} This part is proved in Theorem \ref{equivalent}.
	\end{proof}

	\section{The classification of H\'enon attractors}\label{sec:cla}
	
	In this section, we classify (up to topological conjugacy) the H\'enon maps for the Wang-Young parameters. Our proof relies on our recent work in \cite{BS}, where we showed that for each orientation reversing H\'enon-like map $F$ there exists a densely branching tree $\T$, such that $F$ on its strange attractor is conjugate to the shift homeomorphism on the inverse limit of $\T$. Here, for completeness, we briefly recapitulate the construction for the orientation reversing case ($b > 0$), and extend it to the orientation preserving case ($b < 0$). That construction uses the existence of the critical locus $\K$, proved in Section \ref{sec:cl}. 
	
	\subsection{The construction of densely branching trees}\label{ss:prelim2}
	
	In \cite{CP} the notion of mild dissipation was used to show that for $a\in(1,2)$ and $b\in (-\frac{1}{4},0)\cup (0,\frac{1}{4})$ there exists a metric tree $\T$, and a continuous map $f : \T \to \T$ such that the H\'enon map $F$ is semi-conjugate to $f$, on a domain of dissipation $\mathbb{D}=[-\frac{1}{2}-\frac{1}{a},\frac{1}{2}+\frac{1}{a}]\times [-\frac{1}{2}+\frac{a}{4},\frac{1}{2}-\frac{a}{4}]$. In \cite{BS} this result was strengthened for the Wang-Young parameters with $b > 0$ to show that the tree $\T$ is always densely branching and $F$ on its strange attractor $\Lambda_F$ is conjugate to the shift map $\sigma : \invlim (\T, f) \to \invlim (\T, f)$. 
	
	To recall that, we start with the set $\Gamma$, determined by pieces of the stable manifold of the hyperbolic fixed point $X$. Let $$\Gamma = \{ \gamma : \gamma \textrm{ is a connected component of } D \cap W^s \}.$$ It satisfies the following properties.
	
	(A') The elements of $\Gamma$ are pairwise disjoint or coincide.
	
	(B') Every $\gamma \in \Gamma$ is the limit of arcs in $\Gamma$ and is accumulated on both sides.
	
	(C') For $\gamma \in \Gamma$ the connected components of $F^{-1}(\gamma) \cap D$ are elements of $\Gamma$.
	
	One denotes by $\Xi$ the collection of sequences $(s_n)_{n=1}^\infty$ of connected surfaces $s_n$ in $D$ bounded by a finite number of elements in $\Gamma$, such that $\Cl(s_{n+1})\subset s_n$ for each $n$. Set $(s_n)_{n=1}^{\infty}\leq (s'_n)_{n=1}^{\infty}$ if for any $k$ there is $m$ such that 
	$\Cl(s_m)\subset s'_k$. Now let $\Xi_0$ be the collection of sequences that are minimal for the relation $\leq$. One defines $\T$ as the quotient of $\Xi_0$ by the relation $\equiv$ defined by 
	$$(s_n)_{n=1}^{\infty}\equiv(s'_n)_{n=1}^\infty\textrm{ if and only if }(s_n)_{n=1}^{\infty}\leq(s'_n)_{n=1}^\infty\textrm{  and }(s'_n)_{n=1}^{\infty}\leq (s_n)_{n=1}^\infty.$$ 
	With $2^D$ standing for the hyperspace of compact subsets of $D$ we define an injection $\psi : \T \to 2^D$ by 
	$$\psi([(s_n)_{n=1}^{\infty}])=\bigcap_{n=1}^\infty s_n,$$ 
	which assigns to each equivalence class $[(s_n)_{n=1}^{\infty}]$ in $\T$ a continuum $\bigcap_{n=1}^\infty s_n$, which is a geometric realization of that class. We let $\A = \psi(\T)$ and note that $\Gamma \subset \A$, but $\A \setminus \Gamma \neq \emptyset$. We let $\tilde{\pi} : \mathcal{A} \to \T$ as the inverse of $\psi$; i.e., $\tilde{\pi} := \psi^{-1}$. There exists a continuous map $f : \T \to \T$ such that the H\'enon map restricted to its attractor $F|_{\Lambda_F}$ is conjugate to the shift map $\sigma : \invlim (\T, f) \to \invlim (\T, f)$, which is also called the natural extension of $f$. That conjugacy $\Pi : \Lambda_F \to \invlim (\T, f)$ is given by the following formula: $\Pi(P) = (\pi(P), \pi(F^{-1}(P)), \dots , \pi(F^{-n}(P)), \dots )$, where $\pi : D \to \T$ is defined by $\pi(x) = t$ if $x \in \alpha \in \mathcal{A}$ and $\tilde{\pi}(\alpha) = t$.
	
	To obtain a similar result for $b < 0$, we slightly modify the region $D$ for that case, and for convenience, we keep the same notation. The boundary of $D$ consists again of an arc of $W^u_Y$, also denoted by $\partial^u D$, and a straight line segment, denoted by $l$, so again $\partial D = \partial^u D \cup l$, but this time we require that $Y \in l \cap \partial^u D$, see Figure \ref{fig:omega2}. It should be clear from the above construction for $b > 0$, that the analogous construction holds also for $b < 0$, but since $Y \in \partial D$ for $b < 0$, the shift map $\sigma : \invlim (\T, f) \to \invlim (\T, f)$ is conjugate to the H\'enon map restricted to the union of its attractor $\Lambda_F$ and $W^{u+}_Y$, i.e., $F|_{\Lambda_F \cup W^{u+}_Y}$. Note that if we did not require that $Y \in \partial D$, then the shift map $\sigma$ would be conjugate to the H\'enon map restricted to its attractor $\Lambda_F$, as in the orientation reversing case, but here we want to include the quasi-critical points since they will be useful for the classification.

	Recall that $\gamma_X \in \Gamma$ denotes a unique element with $X \in \gamma_X$. It has two endpoints. For $b > 0$, one is $X$, and we denote the other one by $E$, $\{ X , E \} = \partial^u D \cap \gamma_X$. Also, $\Omega$ is the closed region bounded by two arcs: $\partial^u \Omega = [X, E^{-1}]^u \subset W^u$ and $\partial^s \Omega = [X, E^{-1}]^s \subset W^s$, see Figure \ref{fig:omega}. Note that $z_0 \in [X, E^{-1}]^u$ and $\K \subset \Omega$. Note also that the closed subregion of $D$ bounded by the arc $[X, E]^u \subset W^u$ and $\gamma_X$ (containing $z_0^1$) is mapped by $F^{-1}$ onto the region $\Omega$. 
	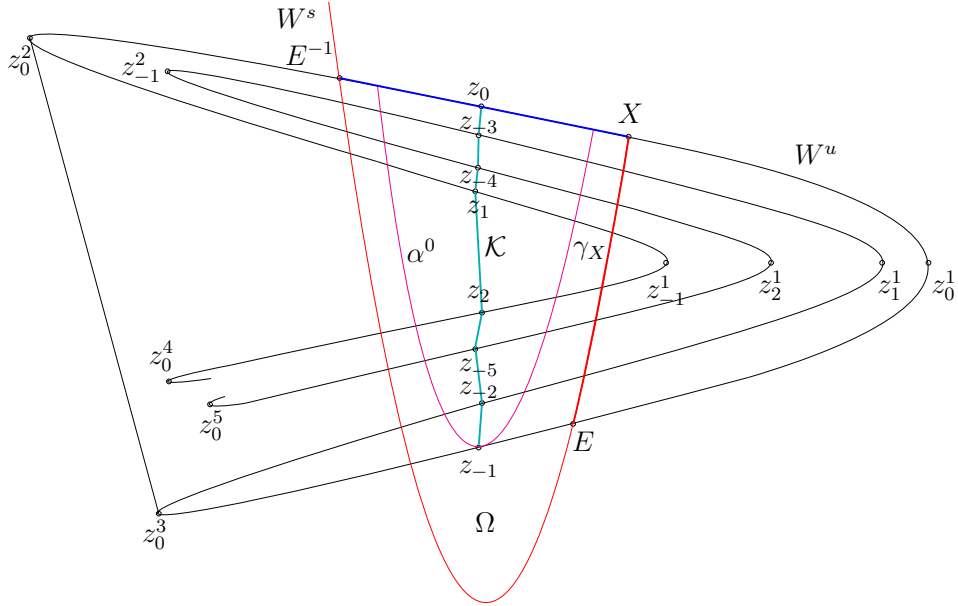
\begin{figure}[h]
		
		\begin{adjustbox}{center}
			\resizebox{1\textwidth}{!}{
				
				\begin{tikzpicture}
					
					\tikzstyle{every node}=[draw, circle, fill=white, minimum size=2pt, inner sep=0pt]
					\tikzstyle{dot}=[circle, fill=white, minimum size=0pt, inner sep=0pt, outer sep=-1pt]
					
					\node (n1) at (5*4/3,0) {};
					\node (n2) at (-6.65,5*2/3)  {};
					\node (n3) at (-4.74,-3.72)  {};
					\node (n4) at (5*5/9,0)  {};
					\node (n5) at (-4.59,-1.76) {};
					\node (n6) at (-3.98,-2.1)  {};
					\node (n7) at (-4.61,2.84)  {};
					\node (n8) at (5*61/51,0)  {};
					\node (n9) at (5*65/75,0)  {};
					
					\node (n10) at (5*4/9,1.87)  {};
					
					\node (n21) at (0.04,2.32)  {};
					\node (n22) at (-0.05,1.06)  {};
					\node (n23) at (0.05,-0.74)  {};
					\node (n24) at (-0.05,-1.28)  {};
					\node (n25) at (0.05,-2.08)  {};
					\node (n26) at (0,-2.74)  {};
					
					\node (n27) at (0,1.89)  {};
					\node (n28) at (-0.01,1.41)  {};
					
					\node (n11) at (-2.06,2.74)  {};
					\node (n12) at (1.4,-2.39)  {};
					
					\draw (-6.65,5*2/3)--(-4.74,-3.72);
					
					\draw[TealBlue,thick](-0.05,1.06)--(0.05,-0.74);
					\draw[TealBlue,thick](0.05,-0.74)--(-0.05,-1.25);
					\draw[TealBlue,thick](-0.05,-1.25)--(0.05,-2.08);
					\draw[TealBlue,thick](n25)--(n26);
					\draw[TealBlue,thick](n27)--(n28);
					\draw[TealBlue,thick](n21)--(n27);
					\draw[TealBlue,thick](n28)--(n22);
					
					\draw (3,1.7) .. controls (7.7,0.7) and  (7.7,-0.6).. (4,-1.7);	
					\draw (3,1.7) .. controls (-9,4.25) and (-9,3.7) .. (-1.5,1.5);
					\draw (0,-2.1) .. controls (-6.9,-4.1) and (-6.9,-4.6) .. (4,-1.7);
					\draw (2.8,1.2) .. controls (7.44,0) and (7.44,-0.1) .. (0,-2.1);
					\draw (2.8,1.2) .. controls (-7,3.7) and (-7,3.2) .. (2.3,0.8);
					\draw (2.3,0.8) .. controls (5.63,-0.2) and (5.63,0.1) .. (-5*19/24+0.5,-5*5/12);
					\draw (-1.5,1.5) .. controls (4.55,-0.4) and (4.55,0.3) .. (-5*5/6+0.5,-5*1/3+0.15);
					\draw (-5*5/6+0.5,-5*1/3+0.15) .. controls (-5*5/6-0.7,-5*1/3-0.08) and (-5*5/6-0.7,-5*1/3-0.18) .. (-5*5/6+0.2,-5*1/3-0.05);
					\draw (-5*19/24+0.2,-5*5/12+0.1) .. controls (-5*19/24-0.15,-5*5/12) and (-5*19/24-0.15,-5*5/12-0.1) .. (-5*19/24+0.5,-5*5/12);
					
					\draw[red] (5*4/9,1.87) .. controls (0.7,-7.7) and  (-0.7,-7.6) .. (-5*4/9,3.87);	
					\draw[red,thick] (5*4/9,1.87) .. controls (2.2,1.55) and  (1.62,-1.55) .. (1.4,-2.39);	
					
					\draw[magenta] (1.7,1.97) .. controls (0.5,-4.4) and  (-0.7,-4.4) .. (-1.5,2.64);	
					
					\draw[blue,thick] (5*4/9,1.87) -- (-2.06,2.74);
					
					\node[dot, draw=none, label=above: $z^1_0$] at (6.9,0) {};
					\node[dot, draw=none, label=above: $z_0$] at (0,2.8) {};
					\node[dot, draw=none, label=above: $z^2_0$] at (-6.8,5*2/3) {};
					\node[dot, draw=none, label=above: $z_1$] at (0,1.05) {};
					\node[dot, draw=none, label=above: $z^1_{-1}$] at (5*5/9,0) {};
					\node[dot, draw=none, label=above: $z_2$] at (0,-0.2) {};
					\node[dot, draw=none, label=above: $z^4_0$] at (-4.7,-1.05) {};
					\node[dot, draw=none, label=above: $z^5_0$] at (-5*19/24,-5*5/12) {};
					\node[dot, draw=none, label=above: $z_{-5}$] at (0,-1.2) {};
					\node[dot, draw=none, label=above: $z_{-3}$] at (0,2.4) {};
					\node[dot, draw=none, label=above: $z_{-4}$] at (0,1.6) {};
					\node[dot, draw=none, label=above: $z^1_2$] at (5*65/75,0) {};
					\node[dot, draw=none, label=above: $z^2_{-1}$] at (-5,3.3) {};
					\node[dot, draw=none, label=above: $z^1_1$] at (6.1,0) {};
					\node[dot, draw=none, label=above: $z^3_0$] at (-4.8,-3.7) {};
					\node[dot, draw=none, label=above: $z_{-1}$] at (0,-2.7) {};
					\node[dot, draw=none, label=above: $z_{-2}$] at (0,-1.53) {};
					\node[dot, draw=none, label=above: $X$] at (2.25,2.5) {};
					\node[dot, draw=none, label=above: $W^u$] at (5,2) {};
					\node[dot, draw=none, label=above: $\Omega$] at (0.1,-3.6) {};
					\node[dot, draw=none, label=above: $\K$] at (0.25,0.5) {};
					\node[dot, draw=none, label=above: $\gamma_X$] at (1.65,0.5) {};
					\node[dot, draw=none, label=above: $\alpha^0$] at (-0.85,0.5) {};
					\node[dot, draw=none, label=above: $W^s$] at (-2.7,4) {};
					\node[dot, draw=none, label=above: $E^{-1}$] at (-2.5,3.55)  {};
					\node[dot, draw=none, label=above: $E$] at (1.55,-2.39)  {};
					
				\end{tikzpicture}
			}
		\end{adjustbox}
		\vspace{-2cm}
		\caption{The region $\Omega$ for $b > 0$ with $\partial^u \Omega$ in blue and $\partial^s \Omega$ in red. The arc $\alpha^0$ is magenta. It can have various 'shapes', this figure shows only one of them. The critical locus $\K$ is teal.}
		\label{fig:omega}
	\end{figure}
	
	For $b < 0$, $E$ denotes an endpoint of $\gamma_X$ that is closer to $Y$ on $W^u_Y$, and the other one is $E^1$, $\gamma_X = [E, E^1]^s$. Again, $\Omega$ is the closed region bounded by two arcs: $\partial^u \Omega = [E, E^{-1}]^u_Y \subset W^u_Y$ and $\partial^s \Omega = [E, E^{-1}]^s \subset W^s$, see Figure \ref{fig:omega2}. Note that $\K \subset \Omega$, and the closed subregion of $D$ bounded by the arc $[E, E^1]^u_Y \subset W^u_Y$ and $\gamma_X$ is mapped by $F^{-1}$ onto the region $\Omega$. 
	
	\begin{figure}[h]
		
		\resizebox{1\textwidth}{!}{
			\begin{tikzpicture}
				
				\hspace{1.5cm}			
				
				\tikzstyle{every node}=[draw, circle, fill=white, minimum size=2pt, inner sep=0pt]
				\tikzstyle{dot}=[circle, fill=white, minimum size=0pt, inner sep=0pt, outer sep=-1pt]
				
				\node[label=right: $z_2$] at (0.05,3.81)  {};
				\node[label=right: $z_3$] at (-0.01,3.52)  {};
				\node[label=right: $z_{6}$] at (0.04,2.72)  {};
				\node[label=right: $z_{-1}$] at (0.04,2.32)  {};
				\node[label=right: $z_{-2}$] at (0,1.55)  {};
				\node[label=right: $z_{7}$] at (0,1.18)  {};					
				\node[label=right: $z_{-3}$] at (0.05,-1.91)  {};
				\node[label=right: $z_0$] at (0,-2.8)  {};
				\node[label=right: $z_5$] at (0.05,-3.34)  {};
				\node[label=right: $z_4$] at (-0.05,-4.29)  {};
				\node[label=right: $z_1$] at (0,-4.77)  {};
				\node[label=right: $z_{-4}$] at (0,-5.57)  {};							
				
				\node[label=right: $X$] at (1.41,-2.4)  {}; 
				\node[label=below left: $Y$] at (-7,6.88)  {}; 		
				
				\node[label=left: $z'_0$] at (0,6)  {};	
				\node[label=left: $z'_8$] at (0,4.05)  {};	
				\node[label=left: $z'_7$] at (0,3.3)  {};
				\node[label=left: $z'_4$] at (0,2.9)  {};
				\node[label=left: $z'_{11}$] at (0,2.17)  {};	
				\node[label=left: $z'_{12}$] at (0,1.7)  {};	
				\node[label=left: $z'_3$] at (0,0.9)  {};
				\node[label=left: $z'_2$] at (0,-1.25)  {};
				\node[label=left: $z'_{13}$] at (0,-2.1)  {};	
				\node[label=left: $z'_{10}$] at (0,-2.6)  {};	
				\node[label=left: $z'_5$] at (0,-3.6)  {};	
				\node[label=left: $z'_6$] at (0,-4.1)  {};	
				\node[label=left: $z'_9$] at (0,-5)  {};	
				\node[label=left: $z'_{14}$] at (0,-5.4)  {};	
				\node[label=left: $z'_1$] at (0,-6)  {};
				
				\node[label=right: $z^{'2}_0$] at (-5.98,-5.4)  {};
				\node[label=below left: $z^{'3}_0$] at (-3.58,2.8)  {};
				\node[label=below left: $z^{'4}_0$] at (-0.9,3.9)  {};
				\node[label=below left: $z^{'5}_0$] at (1.26,4.48)  {};	
				\node[label=right: $z^{'1}_0$] at (11.25,0.5)  {};
				\node[label=left: $z^{'1}_1$] at (3,-0.1)  {};	
				\node[label=right: $z^{'2}_1$] at (-2.62,-4.56)  {};
				\node[label=left: $z^{'3}_1$] at (-1.79,2.35)  {};
				
				\node[dot, draw=none, label=left: $l$] at (-6.9,0) {};
				\node[dot, draw=none, label=above: $W^u_Y$] at (7,5.2) {};
				\node[dot, draw=none, label=below right: $W^u$] at (2,-3.2) {};
				
				\node[label=above right: $E$] at (2.76,5.55) {};
				\node[label=above left: $E^{-1}$] at (-2.95,6.38) {};
				\node[label=below right: $E^{1}$] at (0.53,-5.9) {};
				
				\node[dot, draw=none, label=left: $\gamma_X$] at (1.2,0) {};
				\node[dot, draw=none, label=left: $\K$] at (-0.6,0) {};
				
				\node[dot, draw=none, label=above: $\Omega$] at (-0.3,-6.4) {};
				\node[dot, draw=none, label=above: $\partial^uD$] at (7,-3.9) {};
				
				\draw[TealBlue,thick] (0,6)--(0,4.05)--(0.05,3.81)--(-0.01,3.52)--(0,3.3)--(0,2.9);
				\draw[TealBlue,thick] (0,3.3)--(0,2.9)--(0.04,2.72)--(0.04,2.32)--(0,2.17);
				\draw[TealBlue,thick] (0,2.17)--(0,1.7)--(0,1.55)--(0,1.18)--(0,0.9);
				\draw[TealBlue,thick] (0,0.9)--(0,-1.25)--(0.05,-1.91)--(0,-2.1);
				\draw[TealBlue,thick] (0,-2.1)--(0,-2.6)--(0,-2.8)--(0.05,-3.34)--(0,-3.6);
				\draw[TealBlue,thick] (0,-3.6)--(0,-4.1)--(-0.05,-4.29)--(0,-4.77)--(0,-5) ;
				\draw[TealBlue,thick] (0,-5)--(0,-5.4)--(0,-5.57)--(0,-6);
				
				\draw[OliveGreen,thick] (-2.95,6.38) .. controls (-1,6.15) and (1,5.87) .. (2.76,5.55);	
				
				\draw (2,4.5) .. controls (1,4.7) and (1,4.3) .. (2,4.1);								
				
				\draw (0,4.05) .. controls (-1.2,4.2) and (-1.2,3.7) .. (0,3.3);	
				\draw[blue] (2.45,3.3) .. controls (-1.5,4.3) and (-1.5,3.8) .. (2.4,2.8);	
				
				\draw (0,0.9) .. controls (-4.8,2.1) and (-4.8,3.8) .. (0,2.9);	
				\draw[blue] (2.3,2.23) .. controls (-4.8,4) and (-4.8,2.1) .. (2,0.7);
				\draw[blue] (2.2,1.87) .. controls (-4,3.3) and (-4,2.4) .. (2.1,1.05);
				\draw (0,2.17) .. controls (-2.4,2.6) and (-2.4,2.35) .. (0,1.7);	
				
				\draw (0,-1.25) .. controls (-8,-3.9) and (-8,-7.7) .. (0,-6);	
				\draw[blue] (0.72,-5.4) .. controls (-7.5,-7.4) and (-7.5,-3.9) .. (1.63,-1.45);
				\draw (0,-2.1) .. controls (-6.8,-4.4) and (-6.8,-6.8) .. (0,-5.4);
				\draw[blue] (1,-4.5) .. controls (-6,-6.5) and (-6,-4.4) .. (1.4,-2.4);
				\draw (0,-2.6) .. controls (-6,-4.5) and (-6,-6.4) .. (0,-5);
				\draw[blue] (1,-4) .. controls (-4.5,-5.6) and (-4.5,-4.5) .. (1.3,-3);		
				\draw (0,-3.6) .. controls (-3.5,-4.6) and (-3.5,-5) .. (0,-4.1);			
				
				\draw (0,0.9) .. controls (4,-0.1) and (4,-0.1) .. (0,-1.25);
				\draw[blue] (2,0.7) .. controls (4.2,0.1) and (4.2,-0.1) .. (1.69,-1);
				\draw[blue] (2.1,1.05) .. controls (5.2,0.3) and (5.2,-0.3) .. (1.63,-1.45);
				\draw (0,1.7) .. controls (6.3,0.3) and (6.3,-0.3) .. (0,-2.1);
				\draw[blue] (2.2,1.87) .. controls (7.5,0.5) and  (7.5,-0.5).. (1.4,-2.4);
				\draw (0,2.17) .. controls (7.7,0.5) and  (7.7,-0.3).. (0,-2.6);
				\draw[blue] (2.3,2.23) .. controls (9,0.5) and  (9,-0.5).. (1.3,-3);
				\draw (0,2.9) .. controls (10,0.8) and  (10,-0.7).. (0,-3.6);	
				\draw (0,3.3) .. controls (11,0.4) and  (11,-0.8).. (0,-4.1);	
				\draw[blue] (2.4,2.8) .. controls (10.8,0.5) and  (10.8,-1).. (1,-4);
				\draw[blue] (2.45,3.3) .. controls (12,0.9) and  (12.4,-1.2).. (1,-4.5);
				\draw (0,4.05) .. controls (13.2,1.32) and  (13.2,-1.4).. (0,-5);
				\draw (2,4.1) .. controls (13.5,1.6) and  (13.5,-1.9).. (0,-5.4);
				\draw (0,6) .. controls (15,4) and  (15,-2.8).. (0,-6);
				
				\draw(-8,7)--(0,6);	
				\draw(2,4.5)--(4,4.05);	 
				
				\draw[blue](1.69,-1)--(0.7,-1.3);
				\draw[blue](0.72,-5.4)--(1.8,-5.1);	
				
				\draw[red] (2.9,6.5) .. controls (0.25,-12) and  (-1.4,-12) .. (-3,7);
				\draw[red,thick] (2.76,5.55) .. controls (2.4,3) and  (1.5,-3) .. (0.53,-5.9);	
				
				\draw (-7,6.88)--(-6,-5.2);
				
				\draw[magenta] (1.7,5.72) .. controls (0.5,-10.1) and  (-0.7,-9.85) .. (-1.5,6.18);	
				
			\end{tikzpicture}
			
		}	
		
		\vspace{-2.5cm}
		\caption{Regions $D$ and $\Omega$ for $b < 0$ with $\partial^u \Omega$ in olive-green and $\partial^s \Omega$ in red. The arc $\alpha^0$ is magenta. It can have various 'shapes', this figure shows only one of them. The critical locus $\K$ is teal.}
		\label{fig:omega2}
	\end{figure}
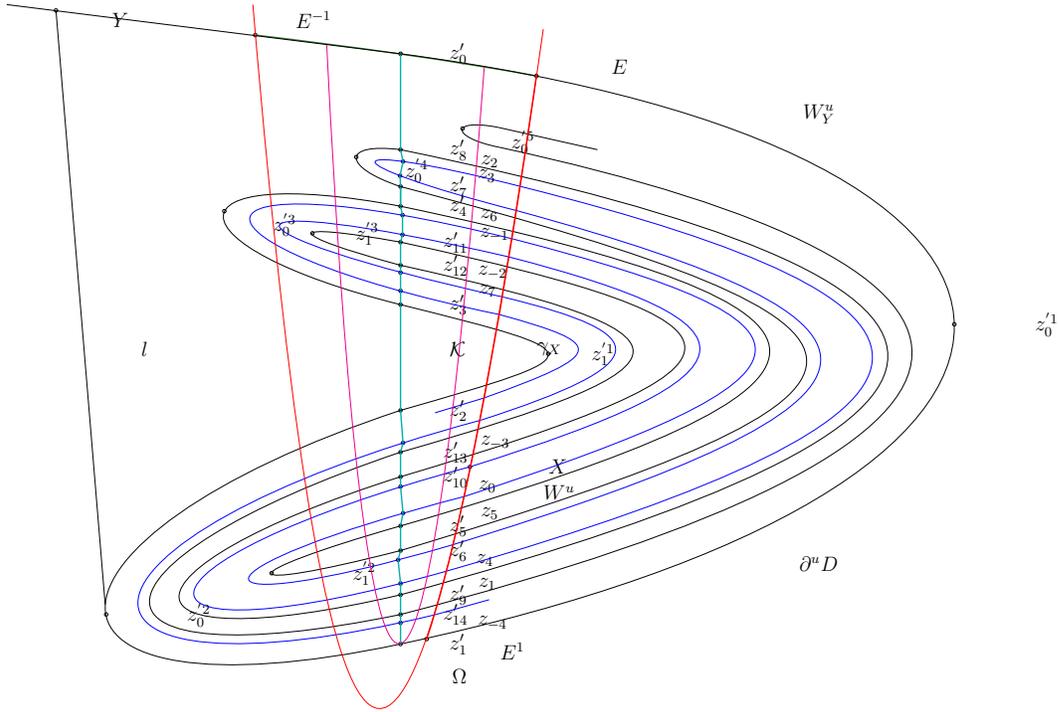

	\subsection{Stems}\label{ss:stems}
	
	In the proof of {\cite[Lemma 4.2]{BS}} the stems of $\T$ and their levels are defined inductively for $b > 0$. Below we recall that definition and parallelly 
	provide the analogous definition for $b < 0$. The only difference is that in the definition for $b > 0$ we use points from basic critical orbits, and for $b < 0$, from quasi-critical orbits.
	
	For $b > 0$, let $\A^0 := \{ \alpha \in \A : \alpha \textrm{ separates $z_0^1$ and $z_0^2$} \} \cup \{ z_0^1, z_0^2 \}$. For $b < 0$, let $\A^0 := \{ \alpha \in \A : \alpha \textrm{ separates $z'^1_0$ and $z'^2_0$} \} \cup \{ z'^1_0, z'^2_0 \}$. We call $B^0 := \pi(\A^0) \subset \T$ a {\it stem} of $\T$, and define the {\it level of the stem} $B^0$ to be zero. 
	
	Let $b > 0$ ($b < 0$, respectively). Suppose that $\alpha \in \A^0$ is such that one of its endpoints lies in $(X, z_0^1)^u$ ($(E, z'^1_0)^u_Y$, respectively), and the other one in $(E, z_0^1)^u$ ($(E', z'^1_0)^u_Y$, respectively). Then $F^{-1}(\alpha) \subset \Omega$, both points of $F^{-1}(\alpha) \cap \partial^u D$ lie in $(E^{-1}, X)^u$ ($(E^{-1}, E)^u_Y$, respectively), and so $F^{-1}(\alpha)$ separates $z_0$ ($z'_0$, respectively) and $\partial^s (\Omega \cap D)$, but $F^{-1}(\alpha)$ need not be in $\A^0$. Since $\Gamma$ is dense in $D$, there exists a unique $\alpha^0 \in \A$, such that $\alpha^0$ separates $z_0$ ($z'_0$, respectively) and $\partial^s(\Omega \cap D)$, and $\alpha^0$ is the maximal element of $\A$ with that property in the sense that every other element of $\A$ that separates $z_0$ ($z'_0$, respectively) and $\partial^s(\Omega \cap D)$ lies in the component of $D \setminus \alpha^0$ that contains $z_0$ ($z'_0$, respectively). Note that $\alpha^0 \in \A^0$, that is, $\alpha^0$ also separates two components of $\partial^s(\Omega \cap D)$, and hence $(\mathring \alpha^0) \cap \partial^u(\Omega \cap D) \ne \emptyset$. Let $b^0 := \pi(\alpha^0)$. Note that $b^0$ is a branch point. We define the level of the branch point $b^0$ to be zero.
	
	For $b > 0$, let $\A^1 := \{ \alpha \in \A : \alpha \textrm{ separates $z_0$ and $\alpha^0$} \} \cup \{ z_0, \alpha^0 \}$. For $b < 0$, let $\A^1 := \{ \alpha \in \A : \alpha \textrm{ separates $z'_0$ and $\alpha^0$} \} \cup \{ z'_0, \alpha^0 \}$. We call $B^1 := \pi(\A^1) \subset \T$ a {\it stem} of $\T$, and define the {\it level of the stem} $B^1$ to be one. Note that $b^0 = B^0 \cap B^1$. 
	
	Now we assume that we have already defined $\A^n$,	all stems of level $n$, and all branch	points of level $n-1$, and we proceed to define $\A^{n+1}$, stems of level $n+1$ and branch	points of level $n$, which depend on the components of $F^{-n}(\Omega)\cap D$.
	
	Consider the region $\Omega_n = F^{-n}(\Omega)$ and note that $\Omega_n \cap D$ consists of finitely many components $s^n_i$, $i = 1, \dots , m_n$, that is $\Omega_n \cap D = \bigcup_{i=1}^{m_n} s^n_i$, $s^n_i \cap s^n_j = \emptyset$ for $i \ne j$. In addition, $F^{-n}(\K)$ is an arc and $F^{-n}(\K) \cap \partial^u D$ consists of finitely many points. Let	$F^{-n}(\K) \cap \partial^u D = \{ Z^n_1, \dots , Z^n_{k_n}\}$, where $F^{-n}(z_0) = Z^n_1 < Z^n_2 < \cdots < Z^n_{k_n} = F^{-n}(z_{-1})$ for $b > 0$, and $F^{-n}(z'_0) = Z^n_1 < Z^n_2 < \cdots < Z^n_{k_n} = F^{-n}(z'_{1})$ for $b < 0$. Note that for every $Z^n_i$, $1 \le i \le k_n$, there exists $j \in \Z$ such that $Z^n_i = F^{-n}(z_j)$ for $b > 0$, and $Z^n_i = F^{-n}(z'_j)$ for $b < 0$, but here we need to index them in the other way and therefore we use capital letters for basic pre-critical or quasi-pre-critical points. Let $K^n_j\subset F^{-n}(\K)$ be an arc connecting $Z^n_{2j-1}$ and $Z^n_{2j}$. Note that $k_n/2 \ge m_n$, since it may happen that some $s^n_i$ contains more than one component $K^n_j$ of $F^{-n}(\K) \cap D$. 
	
	Fix $s^n_j$ for some $j \in \{ 1, \dots , m_n \}$. Recall that the stable boundary of $s^n_j$ is $\partial^s s^n_j = \partial s^n_j \cap W^s$ and the unstable boundary of $s^n_j$ is $\partial^u s^n_j = \partial s^n_j \cap \partial D$. There are finitely many $\gamma_1, \dots , \gamma_l \in \Gamma$ such that $\partial^s s^n_j =  \bigcup_{i= 1}^l \gamma_i$. The unstable boundary $\partial^u s^n_j$ has the same number of components $\delta_1, \dots , \delta_l$, $\delta_i \subset \partial D$ and $\partial^u s^n_j =  \bigcup_{i= 1}^l \delta_i$. 
	
	In order to define stems of level $n+1$ we have two cases to consider. First, let us suppose that there is a unique odd $p$, $1 \le p < k_n$, such that $Z^n_p, Z^n_{p+1} \in \partial^u s^n_j$. Since $\Gamma$ is dense in $D$, there is a unique element of $\A$, say $\alpha^n_p \in \A$, such that $\alpha^n_p$ separates $Z^n_p$ and $\partial^s s^n_j$, and $\alpha^n_p$ is the maximal element of $\A$ with that property in the sense that every other element of $\A$ that separates $Z^n_p$ and $\partial^s s^n_j$ lies in the component of $D \setminus \alpha^n_p$ that contains $Z^n_p$. Therefore, $\alpha^n_p \in \bigcup_{q=0}^n \A^q$ and $(\mathring \alpha^n_p) \cap \partial^us^n_j \ne \emptyset$. We let $\A^{n+1}_p := \{ \alpha \in \A : \alpha \textrm{ separates $Z^n_p$ and $\alpha^n_p$} \} \cup \{ Z^n_p, \alpha^n_p \}$. We call $B^{n+1}_p := \pi(\A^{n+1}_p) \subset \T$ a {\it stem} of $\T$ of {\it level} $n+1$. Let $b^n_p := \pi(\alpha^n_p)$, $b^n_p$ is a branch point and we define its {\it level} to be $n$.
	
	Second, let us suppose that $F^{-n}(\K) \cap \partial^u s^n_j = \{ Z^n_p, \dots , Z^n_{p+r} \}$ and $r > 1$. Note that $(r+1)/2 \le l$. Again, since $\Gamma$ is dense in $D$, there are elements of  $\bigcup_{q=0}^n \A^q$, say $\alpha^n_p, \alpha^n_{p+2}, \dots \alpha^n_{p+r-1} \in$  $\bigcup_{q=0}^n \A^q$, such that for every $i$, $0 \le i \le (r-1)/2$, $(\mathring \alpha^n_{p+2i}) \cap \partial^us^n_j \ne \emptyset$, and for every $k$, $0 \le k \le (r-1)/2$, $\bigcup_{i=0}^{k} \alpha^n_{p+2i}$	separates $Z^n_{p+2k}$ and $\partial^s s^n_j$.
	
	Obviously, $\pi(\alpha^n_{p+2i})$ is a branch point for every $i$. If additionally for some $i$, $\alpha^n_{p+2i}$ separates $Z^n_{p+2i}$ and $\partial^s s^n_j$, then $\pi(\alpha^n_{p+2i})$ is a branch point of {\it level} $n$. If $\alpha^n_{p+2i}$ does not separate $Z^n_{p+2i}$ and $\partial^s s^n_j$, then the branch point $\pi(\alpha^n_{p+2i})$ has level $m$ for some $m < n$. Note that $\alpha^n_p$ separates $Z^n_p$ and $\partial^s s^n_j$, and hence $\pi(\alpha^n_p)$ is a branch point of level $n$.
	
	For every $i \in \{  1, \dots , (r-2)/2 \}$ such that $\pi(\alpha^n_{p+2i})$ is a branch point of level $n$ let $\A^{n+1}_{p+2i} := \{ \alpha \in \A : \alpha \textrm{ separates $Z^n_{p+2i}$ and $\alpha^n_{p+2i}$} \} \cup \{ Z^n_{p+2i}, \alpha^n_{p+2i} \}$. We call $B^{n+1}_{p+2i} := \pi(\A^n_{p+2i}) \subset \T$ a {\it stem} of $\T$ of {\it level} $n+1$. 
	
	By the construction, between any two branch points of level $n \ge 1$ there is at least one branch point of some smaller level. Namely, the stable boundary of every $s^n_j \subset \Omega_n$ contains at least one component that belongs to $\partial^s \Omega_{n-1}$.
	
	Finally, let $\A^{n+1} = \bigcup_i \A^{n+1}_i$, where $i \in \{  1, \dots , (r-2)/2 \}$ such that $\pi(\alpha^n_{p+2i})$ is a branch point of level $n$.

	\subsection{Endpoints $E_X$ and branch points $B_X$ of $\mathbb{T}$}
	
	Recall that we call the points from the set $F(\mfc)$ the turning points. Let us call the points from $F(\H)$ the quasi-turning points. For convenience, let us denote by $S_{j}$, $j \in \Z$, the turning points for $b > 0$ and by $S_{j}$, $j \in \N_0$, the quasi-turning points for $b < 0$. Although we denote turning and quasi-turning points by the same letter, it will not make any confusion.
	
	For $b > 0$, the set of turning points $\{ S_j : j\in \mathbb{Z} \}$ is indexed in the following standard way (that follows from the indexing of the basic critical points defined in the last paragraph of Section \ref{sec:prelim}): $S_0$ is the first turning point on $W^u$ on the right of the fixed point $X$ (and $X$ is between $S_0$ and $S_0^1$). All the other turning points are indexed such that $S_k \subset W^{u+}$ and $S_{-k} \subset W^{u-}$ for all $k \in \N$, and $S_i$ and $S_j$ are consecutive (there are no other turning points in $W^u$ between them) if and only if $|i - j| = 1$ for all $i, j \in \Z$. For $b < 0$, the turning points $\{ S_j : j \in \N_0 \}$ are indexed in the following way (that follows from the indexing of the quasi-critical points defined in the second paragraph of (1) in Section \ref{sec:cl}): $S_0$ is the first quasi-turning point on $W^u_Y$ on the right of the fixed point $Y$. All the other quasi-turning points are indexed such that $S_k \subset W^{u+}_Y$ for all $k \in \N_0$, and $S_i$ and $S_j$ are consecutive (there are no other quasi-turning points in $W^u_Y$ between them) if and only if $|i - j| = 1$ for all $i, j \in \N_0$.
	
	Let us consider again the critical locus $\K$, and give it orientation opposite to the direction of the $y$-axis, and hence its initial point, say $\mf e$, has a positive $y$-coordinate, and its terminal point, say $\mf b$, has a negative $y$-coordinate. We will consider $F^{-n}(\K)$ as a directed arc, whose initial point is $F^{-n}(\mf{e})$, and terminal point is $F^{-n}(\mf{b})$. Moreover, we will consider every component of $F^{-n}(\K) \cap D$ as a directed arc, whose direction is given by the direction of $F^{-n}(\K)$, and hence every component has its initial point and its terminal point.
	
	\begin{lem}\label{lem:ebp}
		Let $x \in \T$, $x = [(s_n)_{n=1}^{\infty}]$. 
		\begin{enumerate}[(1)]
			\item
			If $\bigcap_{n=1}^\infty s_n = Z$, where $Z \in \partial^u D \cap F^{-n}(\K)$, for some $n \in \N_0$, is a point, or if $\bigcap_{n=1}^\infty s_n \in \{ S_0, S_0^1 \}$, then $x$ is an endpoint of $\T$. 
			\item
			If $\bigcap_{n=1}^\infty s_n = \alpha$, and $\alpha$ contains the terminal point of a component of $F^{-n}(\K) \cap D$ for some $n \in \N_0$, then $x$ is a branch point of $\T$.
		\end{enumerate}
	\end{lem}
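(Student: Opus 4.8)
The plan is to argue on the geometric‑realization side, where the tree structure of $\T$ becomes a statement about how continua separate the disc $D$. Recall from Subsection~\ref{ss:prelim2} that $\psi$ identifies $\T$ with the collection $\A$ of the pairwise disjoint subcontinua $\psi(x)=\bigcap_n s_n$ of $D$, that $\A$ partitions $D$, and that the induced map $\pi\colon D\to\T$, realizing $\T$ as the quotient of $D$ by the decomposition $\A$, is continuous and surjective with $\pi^{-1}(x)=\psi(x)$. Since each element of $\A$ is connected, every element other than $\psi(x)$ lies in a single component of $D\setminus\psi(x)$, so the components of $D\setminus\psi(x)$ are open, saturated, and mapped by $\pi$ bijectively onto the components of $\T\setminus\{x\}$. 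As a point of a tree is an endpoint precisely when its complement is connected and a branch point precisely when its complement has at least three components, the lemma reduces to counting the components of $D\setminus\psi(x)$.

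Part~(1) is then immediate: here $\psi(x)=\pi^{-1}(x)$ is a single point of $D$ (one of $Z\in\partial^u D\cap F^{-n}(\K)$, or $S_0$, or $S_0^1$), and removing one point from the disc leaves a connected set, so $\T\setminus\{x\}$ is connected and $x$ is an endpoint. (The hypotheses on $Z$, $S_0$, $S_0^1$ — that they are pinch points of the stable foliation lying on $\partial^u D$, resp.\ at the positions $S_0$, $S_0^1$ fixed by the constructions of Subsections~\ref{ss:prelim2} and \ref{ss:stems} — are what guarantees that a point of $\T$ with such a degenerate realization actually occurs; they are not used in the implication ``degenerate realization $\Rightarrow$ endpoint'' itself.)

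For part~(2) I must show that $D\setminus\alpha$ has at least three components, where $\alpha=\bigcap_n s_n$ contains the terminal point $P$ of a component $K$ of $F^{-n}(\K)\cap D$. Here $K$ is an arc with both endpoints on $\partial^u D$, hence separates $D$ into two pieces, and $P$ is the endpoint of $K$ reached last along the orientation inherited from $\K$. Because $\K$ meets the unstable manifold $W^u$ (for $b<0$, $W^u_Y$) transversally and this is preserved by $F^{-n}$, the arc $F^{-n}(\K)$ crosses $\partial^u D$ transversally at $P$, and near $P$ it forms the tip of a fold of $F^{-n}(\K)$ relative to the stable foliation $\Gamma$. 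The strategy is to use the density of $\Gamma$ in $D$, together with the maximality construction of the separating continua $\alpha^n_p$ in Subsection~\ref{ss:stems}, to pin down $\alpha$ as the maximal element of $\A$ separating $P$ from $\partial^s(\Omega_n\cap D)$, and then to produce, in each of the three local sectors cut out at $P$ by $\partial^u D$ and by the two sides of the fold of $F^{-n}(\K)$, a nested sequence of elements of $\A$ converging to $\alpha$ from a distinct side; this exhibits three components of $D\setminus\alpha$, equivalently three arcs of $\T$ issuing from $x$ with pairwise intersection $\{x\}$, so $x$ is a branch point. The main obstacle is this last step: over the preimages $F^{-n}(\K)$ we have only the mild control recorded at the end of Section~\ref{sec:cl} — the interiors $\mathring\zeta_i$ of the connectors miss $\Lambda_F$ and satisfy $F^{-n}(\mathring\zeta_i)\subset U_{k-n}\cap F^{-n}(\Omega)$ for $n<k$ — so the shape of a single component $K$ of $F^{-n}(\K)\cap D$ is not rigidly prescribed. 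One must show this control nonetheless suffices: it confines every $F^{-n}(\mathring\zeta_i)$ to the attractor‑free region $U_{k-n}$, so that the separation at $P$ is governed only by how $F^{-n}(\K)$ meets $W^u$ (resp.\ $W^u_Y$) and the stable boundary, which is exactly the data organized by the inductive bookkeeping of Subsection~\ref{ss:stems}. Carrying that bookkeeping through to confirm the three‑fold separation of $D$ by $\alpha$ is where the work lies.
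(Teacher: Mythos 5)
Your approach differs from the paper's: rather than appealing to the stem construction of Subsection~\ref{ss:stems} and \cite[Lemma 4.1]{BS}, you reduce the claim to counting components of $D\setminus\psi(x)$ under the map $\pi$. For part~(1) this works and is arguably cleaner than the reference the paper gives: $\pi$ is continuous and surjective with $\pi^{-1}(x)=\psi(x)=\{Z\}$ a singleton on $\partial^u D$, hence $\pi(D\setminus\{Z\})=\T\setminus\{x\}$ is a continuous image of a connected set and therefore connected, so $x$ is an endpoint. No quotient-map property of $\pi$ is needed for this direction, and the caveat you add about the singleton fibers actually occurring is correctly flagged as a non-issue for the implication.

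For part~(2), however, there are two genuine gaps. First, the reduction itself is asymmetric: to pass from ``$D\setminus\alpha$ has at least three components'' to ``$\T\setminus\{x\}$ has at least three components'' you need the three saturated open pieces of $D\setminus\alpha$ to have \emph{open} images in $\T$, which requires $\pi$ to be a quotient map (equivalently, that $\A$ is an upper semicontinuous monotone decomposition with $\T$ carrying the quotient topology), not just continuous. The topology on $\T$ is defined via $\Xi_0$, so this identification is a nontrivial input that you invoke implicitly with ``saturated.'' Second, and more seriously, the geometric claim is not established and your proposed local picture at $P$ is not quite right. If $\alpha$ is an arc meeting $\partial D$ at its two endpoints $A,B$ and at the interior point $P$ (the terminal point $\mathfrak b$ of the component $K$), then $D\setminus\alpha$ does have three components, but only two of them are adjacent to $P$; the third is bounded by the whole arc $\alpha$ together with the boundary arc of $D$ from $A$ to $B$ not through $P$, and is not visible from any neighborhood of $P$. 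So ``three local sectors at $P$'' is not the right count. What actually has to be shown is precisely what the paper's proof asserts: $\mathfrak b\in\partial^u D$ and $\mathfrak b\notin\partial\alpha$, i.e., the terminal point is in the \emph{interior} of $\alpha$, and $\alpha$ still has two further boundary points on $\partial D$. This is exactly the content of the maximality property $(\mathring\alpha^n_p)\cap\partial^u s^n_j\ne\emptyset$ that the stem construction in Subsection~\ref{ss:stems} and \cite[Lemma 4.1]{BS} supply, and it does not follow from the coarse control on connector preimages you cite. You acknowledge the gap, but it is not mere bookkeeping: closing it requires re-deriving those interior-boundary-contact facts from the nesting and separation properties of $\Gamma$, which is the substance of the cited lemma the paper relies on.
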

	\begin{proof}
		The proof follows directly from the construction of stems in the proof of {\cite[Lemma 4.2]{BS}} and Subsection \ref{ss:stems}. For completeness, we briefly recall only the key idea. 
		\begin{enumerate}[(1)]
			\item 
			If there exists a stem $B$ such that $x$ is an endpoint of $B$, then $x$ is an endpoint of $\T$. Every stem contains only one endpoint, except the stem $B^0$, which contains two endpoints. If  $\bigcap_{n=1}^\infty s_n = Z$, where $Z \in \partial^u D \cap L^{-n}(\K)$, for some $n \in \N_0$, is a point, then by {\cite[Lemma 4.1]{BS}} and Subsection \ref{ss:stems}, $x$ is the endpoint of a stem $B \ne B^0$. Let us denote the endpoints of $B^0$ by $r_0$ and $r_0^1$. Then the geometric realizations of $r_0, r_0^1$ are $S_0, S_0^1$ respectively.
			\item
			If $\bigcap_{n=1}^\infty s_n = \alpha$, and $\alpha$ contains the terminal point, say $\mf b$, of a component of $F^{-n}(\K) \cap D$ for some $n \in \N_0$, then $\mf{b} \nin \partial \alpha$ and $\mf b \in \partial^u D$, implying by {\cite[Lemma 4.1]{BS}} and Subsection \ref{ss:stems} that $x$ is a branch point of $\T$.
		\end{enumerate}
	\end{proof}
	
	Below we define special classes of endpoints and branch points of trees given by the H\'enon maps, that will turn out to give rise to all turning points in the H\'enon attractors. 
	
	\begin{df}(Endpoints $E_X$ of $\mathbb{T}$)
		An endpoint of $\mathbb{T}$ is in $E_X$ if and only if it satisfies the condition of Lemma \ref{lem:ebp} (1).
	\end{df}
	
	\begin{df}(Branch points $B_X$ of $\mathbb{T}$)\label{df:BP}
		A branch point of $\mathbb{T}$ is in $B_X$ if and only if it satisfies the condition of Lemma \ref{lem:ebp} (2).
	\end{df}
	
	\begin{rem}
		Not all endpoints of $\T$ are in $E_X$.
	\end{rem}
	
	Note that the point $Z$ in Lemma \ref{lem:ebp} (1) is always an initial point of a component of $F^{-n}(\K) \cap D$, and it is the $n$th pre-image of a basic critical point for $b > 0$, and of a quasi-critical point for $b < 0$.
	
	\begin{lem}\label{lem:ep}
		For every endpoint $x \in E_X$ we have $f^{-1}(x)\in E_X$; i.e., $f^{-1}(x)$ is a singleton.
	\end{lem}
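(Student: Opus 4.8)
The plan is to read the statement through the semiconjugacy $\pi : D \to \T$, which satisfies $\pi\circ F = f\circ\pi$, together with the geometric realizations $\psi$; recall that $\psi(t)$ is the element of $\A$ realizing $t$, that $\pi$ is onto, and that $\pi^{-1}(t)=\psi(t)$ as a subset of $D$. First I would record the shape of an element of $E_X$: if $x\in E_X$ then, by Lemma~\ref{lem:ebp}(1) and the definition of $E_X$, the realization $\psi(x)$ is a single point $Z$, and either $Z\in\partial^u D\cap F^{-n}(\K)$ for some $n\in\N_0$ — in which case, by the Remark after the definition of $B_X$, $Z=F^{-n}(z)$ with $z$ a basic critical point for $b>0$ and a quasi-critical point for $b<0$ — or $Z\in\{S_0,S_0^1\}$.

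\emph{$f^{-1}(x)$ is a singleton.} The point $Z$ lies on $W^u$ for $b>0$ and on $W^u_Y$ for $b<0$; both are $F$-invariant and contained in $D$, so $F^{-1}(Z)\in D$ and $\pi(F^{-1}(Z))$ is defined. If $y\in f^{-1}(x)$, write $y=\pi(Q)$ with $Q\in D$; then $x=f(y)=\pi(F(Q))$, so $F(Q)\in\pi^{-1}(x)=\psi(x)=\{Z\}$, whence $Q=F^{-1}(Z)$ and $y=\pi(F^{-1}(Z))$. Thus $f^{-1}(x)=\{\pi(F^{-1}(Z))\}$.

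\emph{$\pi(F^{-1}(Z))\in E_X$.} I would first show that the realization of $\pi(F^{-1}(Z))$ is the single point $F^{-1}(Z)$. Let $\gamma'\in\A$ be the element containing $F^{-1}(Z)$, so $\psi(\pi(F^{-1}(Z)))=\gamma'$; since $\pi\circ F=f\circ\pi$, $F$ maps $\gamma'$ into a single element of $\A$, which contains $Z=F(F^{-1}(Z))$ and hence must be $\psi(x)=\{Z\}$, so $F(\gamma')=\{Z\}$ and, $F$ being injective, $\gamma'=\{F^{-1}(Z)\}$. It then remains to locate $F^{-1}(Z)$. If $Z=F^{-n}(z)$ with $z$ basic critical (resp.\ quasi-critical), then $F^{-1}(Z)=F^{-(n+1)}(z)\in F^{-(n+1)}(\K)$ because $z\in\C\subseteq\K$ (resp.\ $z\in\H\subseteq\K$), and $F^{-1}(Z)\in F^{-1}(\partial^u D)\subseteq\partial^u D$ since $\partial^u D$ is an arc of the unstable manifold which $F$ expands, so that $F(\partial^u D)\supseteq\partial^u D$; hence $F^{-1}(Z)\in\partial^u D\cap F^{-(n+1)}(\K)$ and Lemma~\ref{lem:ebp}(1) gives $\pi(F^{-1}(Z))\in E_X$. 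If $Z=S_0^1$ then $F^{-1}(Z)=S_0\in\{S_0,S_0^1\}$; and if $Z=S_0$ then $F^{-1}(Z)=z_0$ (resp.\ $z'_0$), which lies on $\partial^u D$ by the construction of $D$ and equals $F^{0}$ of a basic critical (resp.\ quasi-critical) point, so $F^{-1}(Z)\in\partial^u D\cap\K$. In every case $\pi(F^{-1}(Z))$ is an endpoint satisfying the defining condition of $E_X$, so $f^{-1}(x)\in E_X$.

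The main obstacle is the only genuinely geometric input: the inclusion $F^{-1}(\partial^u D)\subseteq\partial^u D$, together with $z_0\in\partial^u D$ (resp.\ $z'_0\in\partial^u D$). Both follow from the explicit description of the trapping disc in \cite{BC} and \cite{MV}: for $b>0$, $\partial^u D$ is an arc of $W^u$ containing $X$ in its interior with $[X,E^{-1}]^u\subseteq\partial^u D$ (so $z_0\in[X,E^{-1}]^u\subseteq\partial^u D$), and $F$ carries this arc onto a longer arc of $W^u$ containing it; for $b<0$ the situation is the same with $W^u_Y$ in place of $W^u$, $Y$ an endpoint of $\partial^u D$, and $z'_0\in\partial^u D$. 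Once these facts are in hand, the rest is bookkeeping with $\pi$, $\psi$, and the definitions of $E_X$ and $\K$.
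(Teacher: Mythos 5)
Your proof is correct and follows the same strategy as the paper's: pull the realization $Z$ of $x$ back by $F$ and verify that $F^{-1}(Z)$ again satisfies the defining criterion of $E_X$ from Lemma~\ref{lem:ebp}(1). You are somewhat more explicit than the paper in two spots --- you deduce that $f^{-1}(x)$ is a singleton from $\pi^{-1}(x)=\psi(x)=\{Z\}$ together with $\pi\circ F=f\circ\pi$ (the paper simply asserts $f^{-1}(x)=x'$ after constructing $x'$ by pulling back the nested surfaces $s_n$ to $F^{-1}(s_n)$), and you settle the boundary cases $Z\in\{S_0,S_0^1\}$ by directly placing $F^{-1}(Z)$ in $\{S_0,S_0^1\}$ or in $\partial^u D\cap\K$, where the paper instead appeals to a short stem-by-stem induction through $B^0$ and $B^1$.
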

	\begin{proof}
		Let $x = [(s_n)_{n=1}^{\infty}] \in B \ne B^0$ be an endpoint that belongs to $E_X$. Then, by Lemma \ref{lem:ebp} (1), $\bigcap_{n=1}^\infty s_n = Z$, where $Z \in \partial^u D \cap F^{-n}(\K)$, for some $n \in \N_0$, is a point. Since $F$ is a homeomorphism, there exists $N \in \N$ such that for every $n \ge N$, $s_n' := F^{-1}(s_n)$ is a surface in $D$ bounded by a finite number of elements in $\Gamma$, $\Cl (s'_{n+1}) \subset s'_n$, and hence, $x' := [(s'_n)_{n=N}^{\infty}]$ is a point of $\T$. Moreover, $\bigcap_{n=N}^\infty (s'_n) =\bigcap_{n=N}^\infty F^{-1}(s_n) = F^{-1}(Z)$ is a point and $F^{-1}(Z) \in \partial^u D \cap F^{-n-1}(\K)$. Therefore, by Lemma \ref{lem:ebp}, $x'$ is an endpoint in $E_X$ and $f^{-1}(x) = x'$.
		
		Recall, $r_0$ and $r_0^1$ denote the endpoints of $B^0$. Since the geometric realizations of $r_0, r_0^1$ are $S_0, S_0^1$ respectively, we have $f^{-1}(r_0^1) = r_0$ and $f^{-1}(r_0) = e_1$, where $e_1$ is the endpoint of the stem $B^1$, and the proof follows by induction.
	\end{proof}
	
	Note that $Z \in \partial^u D$ implies that the sequence $(F^{-n}(Z))_{n=0}^{\infty}$ converges to the fixed point in $\partial^u D$ (that is $X$ for $b > 0$, and $Y$ for $b < 0$). Therefore, for every endpoint $x \in E_X$, the sequence $(f^{-n}(x))_{n=0}^{\infty}$ converges to the fixed point of $f$ that is related to the fixed point in $\partial^u D$. 
	
	In the following lemma, we show that the pre-image of any branch point from $B_X$ always contains
	another branch point from $B_X$. 
	
	\begin{lem}\label{lem:bp}
		For any branch point $x \in B_X$ we have $f^{-1}(x)\cap B_X\neq\emptyset$. 
	\end{lem}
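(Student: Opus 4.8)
The plan is to mirror the proof of Lemma \ref{lem:ep}: push the defining sequence of surfaces down one step by $F^{-1}$, and then recognise the resulting point of $\T$ as an element of $B_X$ via Lemma \ref{lem:ebp}(2). The only difference with the endpoint case is that the object being realised is a continuum $\alpha$ rather than a point, so the bookkeeping at $\partial^u D$ is slightly more delicate, and $f^{-1}(x)$ need not be a singleton — but tracking one distinguished point of $\alpha$ pins down the correct preimage.

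Concretely, let $x\in B_X$, $x=[(s_n)_{n=1}^{\infty}]$. By the definition of $B_X$ and Lemma \ref{lem:ebp}(2), $\bigcap_{n=1}^{\infty}s_n=\alpha$, where $\alpha$ contains the terminal point $\mathfrak b$ of a component of $F^{-m}(\K)\cap D$ for some $m\in\N_0$, with $\mathfrak b\in\partial^u D$ and $\mathfrak b\nin\partial\alpha$. As in the proof of Lemma \ref{lem:ep}, since $F$ is a homeomorphism with $F(D)\subset D$, there is $N\in\N$ such that for every $n\ge N$ the set $s'_n:=F^{-1}(s_n)$ (or, if one prefers, the component of $F^{-1}(s_n)\cap D$ containing $F^{-1}(\mathfrak b)$) is a surface in $D$ bounded by finitely many elements of $\Gamma$ — using property $(C')$ for $\Gamma$ — and $\Cl(s'_{n+1})\subset s'_n$. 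Hence $x':=[(s'_n)_{n=N}^{\infty}]$ is a point of $\T$ with $\bigcap_{n\ge N}s'_n$ equal to $F^{-1}(\alpha)$ (resp. to the component of $F^{-1}(\alpha)\cap D$ through $F^{-1}(\mathfrak b)$), and $f(x')=x$, exactly as in Lemma \ref{lem:ep}.

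It remains to check $x'\in B_X$. Since $\mathfrak b\in\partial^u D$, the same fact used in the proof of Lemma \ref{lem:ep} — that $F^{-1}$ of a point of $\partial^u D$ lying in $D$ again lies in $\partial^u D$ — gives $F^{-1}(\mathfrak b)\in\partial^u D$; since $\mathfrak b\in F^{-m}(\K)$ we have $F^{-1}(\mathfrak b)\in F^{-m-1}(\K)$; and, because $\mathfrak b$ is the terminal point of a component of $F^{-m}(\K)\cap D$, $F^{-1}(\mathfrak b)$ is the terminal point of the corresponding component of $F^{-m-1}(\K)\cap D$ (directions being inherited from $\K$ as in Subsection \ref{ss:stems}). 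Finally $\mathfrak b\nin\partial\alpha$ yields $F^{-1}(\mathfrak b)\nin\partial\bigl(F^{-1}(\alpha)\cap D\bigr)$. Thus $x'$ satisfies the hypothesis of Lemma \ref{lem:ebp}(2), so $x'$ is a branch point of $\T$ and hence $x'\in B_X$ by definition; since $f(x')=x$, this gives $x'\in f^{-1}(x)\cap B_X\neq\emptyset$.

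The main obstacle is the same soft point-set bookkeeping that underlies Lemma \ref{lem:ep}, but carried out for a continuum rather than a point: showing that for all large $n$ the sets $F^{-1}(s_n)$ (or the relevant component thereof) are genuinely surfaces in $D$ bounded by finitely many elements of $\Gamma$, that the sequence $(s'_n)_{n\ge N}$ is admissible and minimal for the relation $\le$ so that $[(s'_n)]$ is a bona fide point of $\T$, and that its $f$-image is $x$; this rests on the behaviour of $\partial^u D$ under $F$ and $F^{-1}$ and on the structure of $F^{-m}(\K)$ relative to $D$ recorded in Subsection \ref{ss:stems}. One must also be slightly careful that, when $\bigcap s'_n$ is taken to be the component of $F^{-1}(\alpha)\cap D$ through $F^{-1}(\mathfrak b)$, this component still meets $\partial^u D$ in the interior of that continuum, so that Lemma \ref{lem:ebp}(2) genuinely applies; this is where the hypothesis $\mathfrak b\nin\partial\alpha$ together with $F(\partial^u D)\supseteq\partial^u D$ (a standard property of the trapping region $D$) is used.
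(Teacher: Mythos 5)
Your proposal is correct and follows essentially the same route as the paper's proof: both take preimages under $F^{-1}$, track the terminal point $\mathfrak{b}$ of the distinguished component of $F^{-n}(\K)\cap D$ contained in $\alpha$, observe that $F^{-1}(\mathfrak{b})$ remains in $\partial^u D$ as the terminal point of a component of $F^{-n-1}(\K)\cap D$, and identify the component of $F^{-1}(\alpha)\cap D$ through $F^{-1}(\mathfrak{b})$ as the geometric realization of a preimage of $x$ lying in $B_X$. The paper phrases this as an explicit dichotomy ($F^{-1}(K_{\mathfrak{b}})\subset D$, giving a singleton preimage, versus $F^{-1}(K_{\mathfrak{b}})\setminus D\ne\emptyset$, giving several components), whereas you treat both cases at once by always passing to the relevant component, but this is a presentational difference rather than a different argument.
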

	\begin{proof}
		Let $x = [(s_n)_{n=1}^{\infty}] \in B_X$. Then, by definition of the set $B_X$, $\bigcap_{n=1}^\infty s_n = \alpha$, and $\alpha$ contains the terminal point, say $\mf b$, of a component denoted by $K_{\mf b}$ of $F^{-n}(\K) \cap D$, for some $n \in \N_0$. Let us consider $F^{-1}(\alpha)$ and $F^{-1}(K_{\mf b})$.
		
		If $F^{-1}(K_{\mf b}) \subset D$ then $F^{-1}(\alpha) \subset D$ and hence $f^{-1}(x)$ contains only one point, denote it $x^{-1}$, and the geometric realization of $x^{-1} = f^{-1}(x)$ is $F^{-1}(\alpha)$. Also $F^{-1}(\mf{b}) \in F^{-1}(\alpha)$ and $F^{-1}(\mf{b})$ is the terminal point of $F^{-1}(K_{\mf b})$, so $f^{-1}(x)$ is a branch point.
		
		If $F^{-1}(K_{\mf b}) \setminus D \ne \emptyset$ then $F^{-1}(\alpha) \cap D$ has at least two components and one of them, say $\alpha'$, contains the point $F^{-1}(\mf{b})$ that is also the terminal point of a component of $F^{-1}(K_{\mf b}) \cap D$. Therefore, $\alpha'$ is the geometric realization of one of the preimages of $x$. Let us denote that preimage by $x^{-1}$. Then $x^{-1}$ is a branch point in $B_X$. We can inductively define $x^{-n}$, for every $n \in \N$, and the proof follows.
	\end{proof}
	
	Note that $\mf{b} \in \partial^u D$ implies that the sequence $(F^{-n}(\mf{b}))_{n=0}^{\infty}$ converges to the fixed point in $\partial^u D$. Therefore, for every branch point $x \in B_X$, the sequence $(x^{-n})_{n=0}^{\infty}$ converges to the fixed point of $f$ that is related to the fixed point in $\partial^u D$. . 
	
	\begin{lem}\label{lem:ebtoeb}
		For every $i\in\mathbb{Z}$ there exists $k \in \N_0$ such that $\pi (S_i^{-k-n})\in E_X \cup B_X$, for all $n \in\mathbb{N}_0$. Conversely, for every $r \in E_X \cup B_X$ there exist $i\in\mathbb{Z}$ and $k \in \N_0$ such that $\pi (S_{i}^{-k}) = r$.
	\end{lem}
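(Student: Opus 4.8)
The plan is to translate the combinatorial description of $E_X$ and $B_X$ provided by Lemma~\ref{lem:ebp} into a dynamical one. In the formulas below, read $z_j$, $W^u$, $X$, $\mfc$ when $b>0$ and $z'_j$, $W^u_Y$, $Y$, $\H$ when $b<0$; recall that with the chosen indexing $S_j=z_j^1$ (resp.\ $S_j={z'_j}^1$), and that by the construction of the critical locus in Section~\ref{sec:cl} one has $\K\cap W^u=\mfc$ (resp.\ $\K\cap W^u_Y=\H$). The key remark is that any point $Z$ of $F^{-n}(\K)$ which lies on $W^u$ (resp.\ $W^u_Y$) is the $n$-th preimage of a basic critical point (resp.\ a quasi-critical point): indeed $F^n(Z)\in\K\cap W^u=\mfc$, so $F^n(Z)=z_j$ for some $j$, and hence $Z=F^{-n}(z_j)=F^{-(n+1)}(S_j)=S_j^{-(n+1)}$.

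For the forward direction I would fix a turning point $S_i$ and examine its backward orbit $S_i^{-k}=F^{-(k-1)}(z_i)$, $k\ge 1$, a sequence of basic pre-critical (resp.\ quasi-pre-critical) points lying on $W^u$ (resp.\ $W^u_Y$) and, since $z_i\in\K$, on $F^{-(k-1)}(\K)$. As $W^u$ (resp.\ $W^u_Y$) is the unstable manifold of $X$ (resp.\ $Y$), the sequence $F^{-m}(z_i)$ converges to that fixed point, which lies on the arc $\partial^u D$ and has $\partial^u D$ as a relative neighborhood there --- a one-sided one, on the side carrying the quasi-critical points, when $b<0$. Hence there is $m_0$ with $F^{-m}(z_i)\in\partial^u D$ for all $m\ge m_0$; then $F^{-m}(z_i)\in\partial^u D\cap F^{-m}(\K)$ and its $\A$-class is the point $\{F^{-m}(z_i)\}$, as in the stem construction of Subsection~\ref{ss:stems}, so Lemma~\ref{lem:ebp}~(1) gives $\pi(F^{-m}(z_i))\in E_X$. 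Taking $k=m_0+1$ yields $\pi(S_i^{-k-n})\in E_X\subseteq E_X\cup B_X$ for every $n\in\N_0$.

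For the converse, write $r=\pi([(s_n)])\in E_X\cup B_X$ and $\alpha=\bigcap_n s_n$. If $r\in E_X$ is of the generic type, $\alpha=\{Z\}$ with $Z\in\partial^u D\cap F^{-n}(\K)$; since $\partial^u D\subset W^u$ (resp.\ $W^u_Y$), the key remark gives $Z=S_j^{-(n+1)}$, whence $r=\pi(S_j^{-(n+1)})$. If $r\in B_X$, then $\alpha$ contains the terminal point $\mf b$ of a component of $F^{-n}(\K)\cap D$, and by the proof of Lemma~\ref{lem:ebp}~(2) this point lies on $\partial^u D$; as before $\mf b=S_j^{-(n+1)}$, and since $\pi$ is constant on each element of $\A$, $r=\pi(\alpha)=\pi(\mf b)=\pi(S_j^{-(n+1)})$. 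The finitely many remaining special endpoints --- namely the two endpoints $r_0,r_0^1$ of the stem $B^0$, with realizations $S_0,S_0^1$ --- are checked directly: $r_0=\pi(S_0)=\pi(S_0^{-0})$, and $r_0^1=\pi(S_0^1)$ is handled via the stem construction of Subsection~\ref{ss:stems}, using $S_0^1=F(S_0)$ and $S_0^1\in\ell^0$.

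The part I expect to require real care is the assertion that the backward orbit of $z_i$ eventually remains in $\partial^u D$: this needs an honest description of how $\partial^u D$ sits inside $W^u$ (resp.\ $W^u_Y$) near the fixed point, so that the convergence of backward orbits on the unstable manifold forces $F^{-m}(z_i)\in\partial^u D$ for all large $m$; on the tree side one must quote the description of the components of $F^{-n}(\K)\cap D$ and of the stems from Subsection~\ref{ss:stems}. Once the identifications $\K\cap W^u=\mfc$, $\K\cap W^u_Y=\H$ and the index convention $S_j=z_j^1$ are in place, the remaining bookkeeping in both directions is routine.
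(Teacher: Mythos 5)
Your overall strategy is the same as the paper's: reduce both directions to the observation that $S_i^{-1}=z_i$ (resp.\ $z'_i$) lies on $\K\cap W^u=\mathfrak C$ (resp.\ $\K\cap W^u_Y=\mathcal H$), so that its backward orbit converges to the fixed point in $\partial^u D$, and points of $\partial^u D\cap F^{-m}(\K)$ are $m$-th preimages of basic (quasi-)critical points. The converse direction is fine and matches the paper.

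There is, however, a genuine gap in your forward direction. You assert that once $F^{-m}(z_i)\in\partial^u D\cap F^{-m}(\K)$, ``its $\A$-class is the point $\{F^{-m}(z_i)\}$,'' and then apply only Lemma~\ref{lem:ebp}~(1) to conclude $\pi(F^{-m}(z_i))\in E_X$. This is not correct in general: a point of $\partial^u D\cap F^{-m}(\K)$ is either the \emph{initial} point or the \emph{terminal} point of a component of $F^{-m}(\K)\cap D$ (with the orientation fixed just before Lemma~\ref{lem:ebp}). Only initial points are singleton elements of $\A$; the paper explicitly notes after the definitions of $E_X$ and $B_X$ that ``the point $Z$ in Lemma~\ref{lem:ebp}~(1) is always an initial point.'' When $F^{-m}(z_i)$ is a terminal point $\mathfrak b$, the element $\alpha\in\A$ through it is a nondegenerate continuum with $\mathfrak b\notin\partial\alpha$, which is precisely the hypothesis of Lemma~\ref{lem:ebp}~(2), so $\pi(F^{-m}(z_i))=\pi(\alpha)\in B_X$, \emph{not} $E_X$. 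Your argument covers only the initial-point case, and for those $i$ whose backward orbit lands on terminal points it leaves the claim unverified. To fix this you must either split into the two cases and invoke both parts of Lemma~\ref{lem:ebp}, or (as the paper does) establish membership in $E_X\cup B_X$ at one scale and then propagate it to all deeper preimages using Lemmas~\ref{lem:ep} and~\ref{lem:bp}; this is why those two lemmas are cited in the paper's proof and must not be omitted.
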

	\begin{proof}
		Since for $b > 0$ we have $S_i^{-1} \in W^u \cap \K$, and for $b < 0$ we have $S_i^{-1} \in W^u_Y \cap \K$, there exists $N_i \in \N_0$ such that $S_i^{-k} \in \partial^u D \cap F^{-k}(\K)$ for every $k \ge N_i$ and by Lemmas \ref{lem:ebp}, \ref{lem:ep} and \ref{lem:bp}, $\pi (S_i^{-k}) \in E_X \cup B_X$, for every $k \ge N_i$.
		
		Let $r \in E_X \cup B_X$. By definitions of the sets $E_X$ and $B_X$, there exists $k \in \N_0$ such that the geometric realization of $r$ is either the initial point of a component of $F^{-k}(\K) \cap D$, or it contains the terminal point of a component of $F^{-k}(\K) \cap D$. In any case that point, denote it by $P$, belongs to $F^{-k}(\K) \cap \partial^u D$. Hence, for $b > 0$ we have $P^k \in F^{k}(F^{-k}(\K) \cap \partial^u D) \subset W^u \cap \K$ and $W^u$ intersects $\K$ at $P^k$, and for $b < 0$ we have $P^k \in F^{k}(F^{-k}(\K) \cap \partial^u D) \subset W^u_Y \cap \K$ and $W^u_Y$ intersects $\K$ at $P^k$. Thus $P^k = S_i^{-1}$ for some $i \in \Z$ and $\pi (S^{-1-k}) = r$.
	\end{proof}

	\subsection{Classification}
	
	Let $F_i := F_{a_i, b_i} : D_i \to D_i$, $i = 1, 2$, be two H\'enon maps within the Wang-Young parameter set, and let $\Lambda_i$ denote the attractor of $F_i$. Let $X_i, Y_i$ be the fixed points of $F_i$, where $X_i \in \Lambda_i$, and for $b < 0$ we have $Y_i \in \partial^u D_i$. Let $S_{i,j}$, $j \in \Z$ ($j \in \N_0$, respectively), be the turning points od $F_i$ for $b > 0$ (the qusi-turning points of $F_i$ for $b < 0$, respectively). We analogously denote all 'items' related to the map $F_i$ by an index $i$. Let also $f_1:\T_1\to \T_1$ and $f_2:\T_2\to\T_2$ be the corresponding tree maps, whose natural extensions are conjugate to $F_1$ and $F_2$ respectively. 
	
	\begin{lem}\label{lem:gamma}
		Suppose that $H : D_1 \to D_2$ is a conjugacy between $F_1$ and $F_2$. Let $\gamma_{X_i} \in \Gamma_i$ be such that $X_i \in \gamma_{X_i}$. Then there exists $k \in \Z$ such that $F_2^k(H(\partial \gamma_{X_1})) = \partial \gamma_{X_2}$.
	\end{lem}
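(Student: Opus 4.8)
The plan is to transport $\gamma_{X_i}$ through the conjugacy using only its intrinsic dynamical content: it is the component through the distinguished fixed point of the intersection of that fixed point's stable manifold with the invariant region. The iterate $F_2^{k}$ in the statement absorbs the one non-canonical ingredient, namely how far out the bounding disc $D_i$ was cut when it was constructed. First I would record the elementary consequences of $H\circ F_1=F_2\circ H$: since $\Lambda_i=\bigcap_n F_i^n(D_i)$, we get $H(\Lambda_1)=\Lambda_2$, and since each $F_i$ restricts to a homeomorphism of $\Lambda_i$, the conjugacy also conjugates the inverse maps on the attractors.

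From this, $H$ maps fixed points to fixed points; as $X_i$ is the only fixed point of $F_i$ lying in $\Lambda_i$ (the second fixed point $Y_i$ does not lie in $\Lambda_i$), we conclude $H(X_1)=X_2$. Moreover the set $W^s_{X_i}\cap D_i=\{z\in D_i:F_i^n(z)\to X_i\}$ is defined purely by the forward dynamics on $D_i$, so $H$ maps $W^s_{X_1}\cap D_1$ onto $W^s_{X_2}\cap D_2$; being a homeomorphism it carries connected components to connected components, hence $H(\Gamma_1)=\Gamma_2$ as families of arcs. Consequently $H(\gamma_{X_1})$ is the component of $W^s_{X_2}\cap D_2$ containing $H(X_1)=X_2$, which is exactly $\gamma_{X_2}$; thus $H(\partial\gamma_{X_1})=\partial\gamma_{X_2}$ and $k=0$ already works.

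For the robust form of the statement — when one only knows the conjugacy of the attractor dynamics (so $\gamma_{X_i}\not\subset\Lambda_i$ is not literally moved by $H$), or when one must compare $\gamma$'s attached to a priori different disc choices — I would argue as follows. Both $H(\gamma_{X_1})$ and $\gamma_{X_2}$ are sub-arcs of the single stable manifold $W^s_{X_2}$ that coincide near $X_2$, hence are nested; their ``far'' endpoint(s), $H(E_1)$ and $E_2$, lie on $W^s_{X_2}$ and on the unstable arc bounding $D_2$ (namely $W^u_{X_2}$ if $b_2>0$, and $W^u_{Y_2}$ if $b_2<0$). Each of these is the outermost point in which the central stable leaf of a forward-invariant neighbourhood of $\Lambda_2$ meets that neighbourhood's boundary, and since $\bigcap_n F_2^n(D_2)=\Lambda_2$ any two such neighbourhoods are nested after finitely many iterations; therefore $H(E_1)$ and $E_2$ lie on a common $F_2$-orbit, say $H(E_1)=F_2^{k}(E_2)$. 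When $b<0$, where $\partial\gamma_{X_i}=\{E_i,E_i^1\}$, the extra relation $H(E_1^1)=F_2(H(E_1))$ then upgrades this to $F_2^{k}(H(\partial\gamma_{X_1}))=\partial\gamma_{X_2}$.

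The main obstacle is this last claim. To make it precise I would characterize $E_i^{-1}$ intrinsically as the far endpoint of the stable side $\partial^s\Omega_i$ of the region $\Omega_i$ of Section~\ref{sec:cl} — equivalently, the first return of $W^s_{X_i}$ to the relevant unstable arc past the fixed point — and then show that changing the (non-canonical) disc boundary changes this datum only by the number of iterates needed to nest one disc inside the other, which is finite and accounts for $|k|$. The remaining work — separating the cases $b>0$ and $b<0$ and checking the elementary fixed-point bookkeeping in each sign combination — is routine.
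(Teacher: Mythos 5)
Your first paragraph argues that $k=0$ always works, on the strength of $H$ being a homeomorphism $D_1\to D_2$ that maps $W^s_{X_1}\cap D_1$ onto $W^s_{X_2}\cap D_2$ and hence components to components. If the hypothesis were read that strictly, the lemma with $k$ allowed to be nonzero would be pointless; the way Proposition~\ref{prop:con} actually uses the lemma (it replaces $H$ by $F_2^k\circ H$ to normalize $\gamma_{X_1}\mapsto\gamma_{X_2}$) makes clear the authors do not assume $H$ respects the non-canonical discs, so the substance of the lemma lives exactly in the ``robust'' situation your second and third paragraphs try to handle. Those paragraphs, however, contain the real gap. You assert that since $H(D_1)$ and $D_2$ are both forward-invariant neighborhoods whose iterates nest, the far endpoints $H(E_1)$ and $E_2$ of the respective central stable leaves must lie on a single $F_2$-orbit. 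That does not follow: a forward-invariant region bounded by a different arc of $W^u_{X_2}$ can perfectly well meet $W^s_{X_2}$ first at a homoclinic point from a different orbit. Indeed the paper shows that there are \emph{two} such candidate orbits, $\{E_2^j\}$ and $\{E'^j_2\}$, characterized intrinsically by the condition $(X_2,P)^u\cap(X_2,P)^s=\emptyset$, and a separate argument (an accumulation property: homoclinic points accumulate on $E_2$ from inside $\gamma_{X_2}$ but not on $E'_2$ from inside $[X_2,E'_2]^s$) is needed to rule out the unwanted orbit. Your sketch has no mechanism to exclude $H(E_1)\in\{E'^j_2\}$; this is precisely the step your argument would fail on.

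Secondly, your proposed ``intrinsic'' characterization of $E_i^{-1}$ as ``the first return of $W^s_{X_i}$ to the relevant unstable arc past the fixed point'' is not in fact intrinsic: ``the relevant unstable arc'' is $\partial^u D_i$, a portion of $W^u_{X_i}$ (or $W^u_{Y_i}$ for $b<0$) that is part of the non-canonical disc choice. The paper's characterization via $(X_i,P)^u\cap(X_i,P)^s=\emptyset$ is stated purely in terms of the stable and unstable manifolds and is therefore conjugacy-invariant, which is what the argument needs. Finally, in the orientation-preserving case $b<0$ the endpoints of $\gamma_{X_i}$ are \emph{heteroclinic} points $E_i, E_i^1$ (intersections of $W^s_{X_i}$ with $W^u_{Y_i}$), and the paper uses a different two-orbit characterization $(P,P^1)^u\cap(P,P^1)^s=\{X_i\}$ together with the regions $A^j_i$, $A'^j_i$ to distinguish $G_i$ from $G'_i$; your note that ``the remaining work\ldots is routine'' understates this, since the relevant intrinsic property is genuinely different from the $b>0$ case.
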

	\begin{proof}
		Since $H$ is a conjugacy, $H(X_1) = X_2$, $H(W^u_1) = W^u_2$ and  $H(W^s_1) = W^s_2$. Since homoclinic points are invariant for a conjugacy, there exists $k \in \Z$ such that for every $i \ge k$, we have $F_2^i(H(\gamma_{X_1})) \subseteq \gamma_{X_2}$.
		
		Case 1. Let $b > 0$. Let $\{ E'_1 \} = \gamma_{X_1} \cap [S_{1,0}^2, S_{1,1}]^u$, $\{ E'_2 \} = \gamma_{X_2} \cap [S_{2,0}^2, S_{2,1}]^u$, and as before $E_i \in \partial \gamma_{X_i}$, see Figue \ref{fig:ee}. It is easy to see that there are only two homoclinic orbits of $F_i$, $\{ E_i^j : j \in \Z \}$ and $\{ E'^j_i : j \in \Z \}$, such that $(X_i, P_i)^u \cap (X_i, P_i)^s = \emptyset$ for $P_i \in \{ E^j_i, E'^j_i : j \in \Z \}$ (recall that $Q = Q^0$ for every point $Q$).
		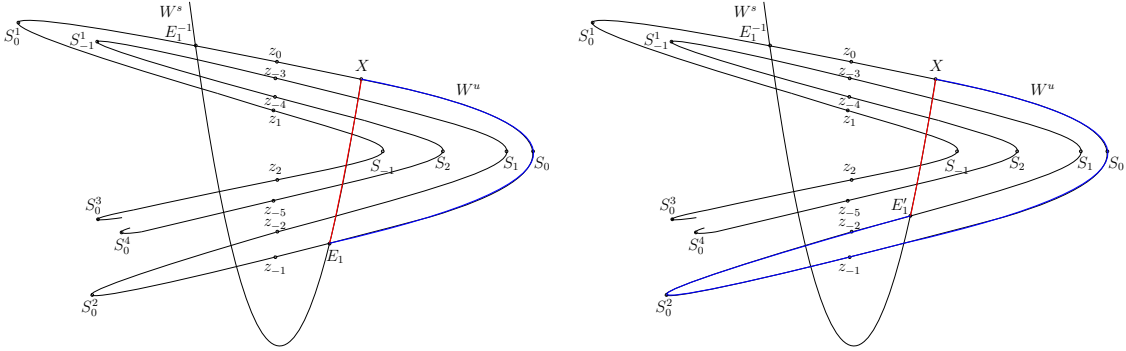
\begin{figure}[h]
			
			\begin{adjustbox}{center}
				\begin{tabular}{cc}
					\hspace{-0.5cm}
					\resizebox{0.6\textwidth}{!}{
						
						\begin{tikzpicture}
							
							\tikzstyle{every node}=[draw, circle, fill=white, minimum size=2pt, inner sep=0pt]
							\tikzstyle{dot}=[circle, fill=white, minimum size=0pt, inner sep=0pt, outer sep=-1pt]
							
							\node (n1) at (5*4/3,0) {};
							\node (n2) at (-6.65,5*2/3)  {};
							\node (n3) at (-4.74,-3.72)  {};
							\node (n4) at (5*5/9,0)  {};
							\node (n5) at (-4.59,-1.76) {};
							\node (n6) at (-3.98,-2.1)  {};
							\node (n7) at (-4.61,2.84)  {};
							\node (n8) at (5*61/51,0)  {};
							\node (n9) at (5*65/75,0)  {};
							\node (n10) at (5*4/9,1.87)  {};
							\node (n21) at (0.04,2.32)  {};
							\node (n22) at (-0.05,1.06)  {};
							\node (n23) at (0.05,-0.74)  {};
							\node (n24) at (-0.05,-1.28)  {};
							\node (n25) at (0.05,-2.08)  {};
							\node (n26) at (0,-2.74)  {};
							\node (n27) at (0,1.89)  {};
							\node (n28) at (-0.01,1.41)  {};
							\node (n11) at (-2.06,2.74)  {};
							\node (n12) at (1.4,-2.39)  {};
							
							\draw (5*4/9,1.87) .. controls (0.7,-7.7) and  (-0.7,-7.6) .. (-5*4/9,3.87);	
							
							\draw (3,1.7) .. controls (7.7,0.7) and  (7.7,-0.6).. (4,-1.7);	
							\draw (3,1.7) .. controls (-9,4.25) and (-9,3.7) .. (-1.5,1.5);
							\draw (0,-2.1) .. controls (-6.9,-4.1) and (-6.9,-4.6) .. (4,-1.7);
							\draw (2.8,1.2) .. controls (7.44,0) and (7.44,-0.1) .. (0,-2.1);
							\draw (2.8,1.2) .. controls (-7,3.7) and (-7,3.2) .. (2.3,0.8);
							\draw (2.3,0.8) .. controls (5.63,-0.2) and (5.63,0.1) .. (-5*19/24+0.5,-5*5/12);
							\draw (-1.5,1.5) .. controls (4.55,-0.4) and (4.55,0.3) .. (-5*5/6+0.5,-5*1/3+0.15);
							\draw (-5*5/6+0.5,-5*1/3+0.15) .. controls (-5*5/6-0.7,-5*1/3-0.08) and (-5*5/6-0.7,-5*1/3-0.18) .. (-5*5/6+0.2,-5*1/3-0.05);
							\draw (-5*19/24+0.2,-5*5/12+0.1) .. controls (-5*19/24-0.15,-5*5/12) and (-5*19/24-0.15,-5*5/12-0.1) .. (-5*19/24+0.5,-5*5/12);
							
							\draw[red,thick] (5*4/9,1.87) .. controls (2.2,1.55) and  (1.62,-1.55) .. (1.4,-2.38);	
							
							\draw[blue,thick] (5*4/9,1.87) .. controls (8.1,0.7) and  (8.5,-0.76).. (1.4,-2.38);
							
							\node[dot, draw=none, label=above: $S_0$] at (6.9,0) {};
							\node[dot, draw=none, label=above: $z_0$] at (0,2.8) {};
							\node[dot, draw=none, label=above: $S^1_0$] at (-6.8,5*2/3) {};
							\node[dot, draw=none, label=above: $z_1$] at (0,1.05) {};
							\node[dot, draw=none, label=above: $S_{-1}$] at (5*5/9,0) {};
							\node[dot, draw=none, label=above: $z_2$] at (0,-0.2) {};
							\node[dot, draw=none, label=above: $S^3_0$] at (-4.7,-1.05) {};
							\node[dot, draw=none, label=above: $S^4_0$] at (-5*19/24,-5*5/12) {};
							\node[dot, draw=none, label=above: $z_{-5}$] at (0,-1.2) {};
							\node[dot, draw=none, label=above: $z_{-3}$] at (0,2.4) {};
							\node[dot, draw=none, label=above: $z_{-4}$] at (0,1.6) {};
							\node[dot, draw=none, label=above: $S_2$] at (5*65/75,0) {};
							\node[dot, draw=none, label=above: $S^1_{-1}$] at (-5,3.3) {};
							\node[dot, draw=none, label=above: $S_1$] at (6.1,0) {};
							\node[dot, draw=none, label=above: $S^2_0$] at (-4.8,-3.7) {};
							\node[dot, draw=none, label=above: $z_{-1}$] at (0,-2.7) {};
							\node[dot, draw=none, label=above: $z_{-2}$] at (0,-1.53) {};
							\node[dot, draw=none, label=above: $X$] at (2.25,2.5) {};
							\node[dot, draw=none, label=above: $W^u$] at (5,2) {};
							\node[dot, draw=none, label=above: $W^s$] at (-2.7,4) {};
							\node[dot, draw=none, label=above: $E_1^{-1}$] at (-2.5,3.55)  {};
							\node[dot, draw=none, label=above: $E_1$] at (1.55,-2.39)  {};
							
						\end{tikzpicture}
						
					}
					
					&
					\hspace{-2cm}
					
					\resizebox{0.6\textwidth}{!}{
						
						\begin{tikzpicture}
							
							\tikzstyle{every node}=[draw, circle, fill=white, minimum size=2pt, inner sep=0pt]
							\tikzstyle{dot}=[circle, fill=white, minimum size=0pt, inner sep=0pt, outer sep=-1pt]
							
							\node (n1) at (5*4/3,0) {};
							\node (n2) at (-6.65,5*2/3)  {};
							\node (n3) at (-4.74,-3.72)  {};
							\node (n4) at (5*5/9,0)  {};
							\node (n5) at (-4.59,-1.76) {};
							\node (n6) at (-3.98,-2.1)  {};
							\node (n7) at (-4.61,2.84)  {};
							\node (n8) at (5*61/51,0)  {};
							\node (n9) at (5*65/75,0)  {};
							
							\node (n10) at (5*4/9,1.87)  {};
							
							\node (n21) at (0.04,2.32)  {};
							\node (n22) at (-0.05,1.06)  {};
							\node (n23) at (0.05,-0.74)  {};
							\node (n24) at (-0.05,-1.28)  {};
							\node (n25) at (0.05,-2.08)  {};
							\node (n26) at (0,-2.74)  {};
							
							\node (n27) at (0,1.89)  {};
							\node (n28) at (-0.01,1.41)  {};
							
							\node (n11) at (-2.06,2.74)  {};
							\node (n13) at (1.57,-1.67)  {};
							
							\draw (5*4/9,1.87) .. controls (0.7,-7.7) and  (-0.7,-7.6) .. (-5*4/9,3.87);	
							
							\draw (3,1.7) .. controls (7.7,0.7) and  (7.7,-0.6).. (4,-1.7);	
							\draw (3,1.7) .. controls (-9,4.25) and (-9,3.7) .. (-1.5,1.5);
							\draw (0,-2.1) .. controls (-6.9,-4.1) and (-6.9,-4.6) .. (4,-1.7);
							\draw (2.8,1.2) .. controls (7.44,0) and (7.44,-0.1) .. (0,-2.1);
							\draw (2.8,1.2) .. controls (-7,3.7) and (-7,3.2) .. (2.3,0.8);
							\draw (2.3,0.8) .. controls (5.63,-0.2) and (5.63,0.1) .. (-5*19/24+0.5,-5*5/12);
							\draw (-1.5,1.5) .. controls (4.55,-0.4) and (4.55,0.3) .. (-5*5/6+0.5,-5*1/3+0.15);
							\draw (-5*5/6+0.5,-5*1/3+0.15) .. controls (-5*5/6-0.7,-5*1/3-0.08) and (-5*5/6-0.7,-5*1/3-0.18) .. (-5*5/6+0.2,-5*1/3-0.05);
							\draw (-5*19/24+0.2,-5*5/12+0.1) .. controls (-5*19/24-0.15,-5*5/12) and (-5*19/24-0.15,-5*5/12-0.1) .. (-5*19/24+0.5,-5*5/12);
							
							\draw[red,thick] (5*4/9,1.87) .. controls (2.2,1.55) and  (1.62,-1.55) .. (1.57,-1.67);	
							
							\draw[blue,thick] (5*4/9,1.87) .. controls (8.1,0.7) and  (8.5,-0.76).. (1.4,-2.38);
							\draw[blue,thick] (1.57,-1.67) .. controls (-6.8,-4.0) and (-6.8,-4.5) .. (1.4,-2.38);
							
							\node[dot, draw=none, label=above: $S_0$] at (6.9,0) {};
							\node[dot, draw=none, label=above: $z_0$] at (0,2.8) {};
							\node[dot, draw=none, label=above: $S^1_0$] at (-6.8,5*2/3) {};
							\node[dot, draw=none, label=above: $z_1$] at (0,1.05) {};
							\node[dot, draw=none, label=above: $S_{-1}$] at (5*5/9,0) {};
							\node[dot, draw=none, label=above: $z_2$] at (0,-0.2) {};
							\node[dot, draw=none, label=above: $S^3_0$] at (-4.7,-1.05) {};
							\node[dot, draw=none, label=above: $S^4_0$] at (-5*19/24,-5*5/12) {};
							\node[dot, draw=none, label=above: $z_{-5}$] at (0,-1.2) {};
							\node[dot, draw=none, label=above: $z_{-3}$] at (0,2.4) {};
							\node[dot, draw=none, label=above: $z_{-4}$] at (0,1.6) {};
							\node[dot, draw=none, label=above: $S_2$] at (5*65/75,0) {};
							\node[dot, draw=none, label=above: $S^1_{-1}$] at (-5,3.3) {};
							\node[dot, draw=none, label=above: $S_1$] at (6.1,0) {};
							\node[dot, draw=none, label=above: $S^2_0$] at (-4.8,-3.7) {};
							\node[dot, draw=none, label=above: $z_{-1}$] at (0,-2.7) {};
							\node[dot, draw=none, label=above: $z_{-2}$] at (0,-1.53) {};
							\node[dot, draw=none, label=above: $X$] at (2.25,2.5) {};
							\node[dot, draw=none, label=above: $W^u$] at (5,2) {};
							\node[dot, draw=none, label=above: $W^s$] at (-2.7,4) {};
							\node[dot, draw=none, label=above: $E_1^{-1}$] at (-2.5,3.55)  {};
							\node[dot, draw=none, label=above: $E'_1$] at (1.3,-1.05)  {};
							
						\end{tikzpicture}
						
					}	
					
				\end{tabular}	
				
			\end{adjustbox}
			\vspace{-1cm}
			\caption{Positions of points $E_1$ and $E'_1$.}
			\label{fig:ee}
		\end{figure}
		Note that for the endpoint $E_i$ of $\gamma_{X_i}$ there exists a sequence of homoclinic points in $\gamma_{X_i}$ that converges to $E_i$. On the other hand, for the endpoint $E'_i$ of $[X_i, E'_i]^s$ there exists no sequence of homoclinic points in $[X_i, E'_i]^s$ converging to $E'_i$. Therefore, $H(\gamma_{X_1}) = [X_2, Q]^s$ implies $Q \in \{ E_2^j : j \in \Z \}$ and the claim follows. 
		
		Case 2. Let $b < 0$. Similarly as in Case 1, let $G_1 = \gamma_{X_1} \cap [z_{1,0}^3, z_{1,0}^1]^u$, $G_2 = \gamma_{X_2} \cap [z_{2,0}^3, z_{2,0}^1]^u$, $G'_1 = \gamma_{X_1} \cap [z_{1,0}^3, z_{1,1}^1]^u$ and $G'_2 = \gamma_{X_2} \cap [z_{2,0}^3, z_{2,1}^1]^u$, see Figure \ref{fig:gg}. It is easy to see that there are only two homoclinic orbits of $F_i$, $\{ G_i^j : j \in \Z \}$ and $\{ G'^j_i : j \in \Z \}$, such that $(P_i, P^1_i)^u \cap (P_i, P^1_i)^s = \{ X_i \}$, for $P_i \in \{ G^j_i, G'^j_i : j \in \Z \}$.
		\begin{figure}[h]
			
			\begin{adjustbox}{center}
				\begin{tabular}{cc}
					\hspace{-0.5cm}
					\resizebox{0.6\textwidth}{!}{
						
						\begin{tikzpicture}
							
							\tikzstyle{every node}=[draw, circle, fill=white, minimum size=2pt, inner sep=0pt]
							\tikzstyle{dot}=[circle, fill=white, minimum size=0pt, inner sep=0pt, outer sep=-1pt]
							
							\node[label=right: $z^1_{-1}$] at (9.59,-0.1) {};
							\node[label=right: $z^1_{-2}$] at (8.53,-0.15) {};
							\node[label=right: $z^1_{-3}$] at (7.2,0)  {};
							\node[label=right: $z^1_0$] at (6.08,0)  {};
							\node[label=right: $z^1_{1}$] at (4.36,0)  {};
							\node[label=left: $z^1_{-4}$] at (3.6,-0.05)  {};
							
							\node[label=left: $z_0^4$] at (-0.5,3.8)  {};
							
							\node[label=right: $z_0^3$] at (-2.46,2.51)  {};
							\node[label=left: $z_{-1}^3$] at (-3.06,2.65)  {};
							
							\node[label=left: $z_{-1}^2$] at (-5.3,-5.3)  {};	
							\node[label=left: $z_0^2$] at (-4.19,-5.05)  {};		
							\node[label=left: $z_{1}^2$] at (-3.05,-4.7) {};
							
							\node[label=above: $z_2$] at (0.05,3.81)  {};
							\node[label=below: $z_3$] at (-0.01,3.52)  {};
							\node[label=above: $z_{6}$] at (0.04,2.77)  {};
							\node[label=below left: $z_{-1}$] at (0.04,2.32)  {};
							\node[label=above right: $z_{-2}$] at (0,1.55)  {};
							\node[label=below: $z_{7}$] at (0,1.07)  {};					
							
							\node[label=above: $z_{-3}$] at (0.05,-1.9)  {};
							\node[label=above: $z_0$] at (0,-2.8)  {};
							\node[label=below: $z_5$] at (0.05,-3.34)  {};
							\node[label=above: $z_4$] at (-0.05,-4.29)  {};
							\node[label=below: $z_1$] at (0,-4.77)  {};
							\node[label=below: $z_{-4}$] at (0,-5.58)  {};						
							
							\node[label=below right: $G_1$] at (5*4/9,1.87)  {};
							\node[label=below right: $X_1$] at (1.41,-2.4)  {}; 
							\node[label=below right: $G_1^1$] at (0.96,-4.5)  {};				
							
							\draw (2.45,3.3) .. controls (-1.5,4.3) and (-1.5,3.8) .. (2.4,2.78);
							
							\draw (2.3,2.3) .. controls (-4.8,4) and (-4.8,2) .. (2,0.6);
							\draw (2.2,1.87) .. controls (-4,3.3) and (-4,2.4) .. (2.1,1.05);
							
							\draw (0.8,-5.4) .. controls (-7.5,-7.4) and (-7.5,-3.9) .. (1.63,-1.45);
							\draw[blue,thick] (1,-4.5) .. controls (-6,-6.5) and (-6,-4.4) .. (1.4,-2.4);
							\draw (1,-4) .. controls (-4.5,-5.6) and (-4.5,-4.5) .. (1.3,-3);
							
							\draw (2,0.6) .. controls (4.2,0.1) and (4.2,-0.1) .. (1.69,-1);						
							\draw (2.1,1.05) .. controls (5.2,0.3) and (5.2,-0.3) .. (1.63,-1.45);
							\draw[blue,thick] (2.2,1.87) .. controls (7.5,0.5) and  (7.5,-0.5).. (1.4,-2.4);
							\draw (2.3,2.3) .. controls (9,0.5) and  (9,-0.5).. (1.3,-3);
							\draw (2.4,2.78) .. controls (10.8,0.5) and  (10.8,-1).. (1,-4);
							\draw (2.45,3.3) .. controls (12,0.9) and  (12.4,-1.2).. (1,-4.5);
							
							\draw(1.69,-1)--(0.7,-1.3);
							\draw(0.8,-5.4)--(1.8,-5.1);								
							
							\draw (2.65,4.5) .. controls (0,-12) and  (-1.5,-12) .. (-2.15,4.5);
							\draw[red,thick] (5*4/9,1.87) .. controls (1.9,0) and  (1.35,-3) .. (0.96,-4.5);	
							
						\end{tikzpicture}
						
					}
					
					&
					\vspace{-1cm}
					\hspace{-2cm}
					
					\resizebox{0.6\textwidth}{!}{
						
						\begin{tikzpicture}
							
							\tikzstyle{every node}=[draw, circle, fill=white, minimum size=2pt, inner sep=0pt]
							\tikzstyle{dot}=[circle, fill=white, minimum size=0pt, inner sep=0pt, outer sep=-1pt]
							
							\node[label=right: $z^1_{-1}$] at (9.59,-0.1) {};
							\node[label=right: $z^1_{-2}$] at (8.53,-0.15) {};
							\node[label=right: $z^1_{-3}$] at (7.2,0)  {};
							\node[label=right: $z^1_0$] at (6.08,0)  {};
							\node[label=right: $z^1_{1}$] at (4.36,0)  {};
							\node[label=left: $z^1_{-4}$] at (3.6,-0.05)  {};
							
							\node[label=left: $z_0^4$] at (-0.5,3.8)  {};
							
							\node[label=right: $z_0^3$] at (-2.46,2.51)  {};
							\node[label=left: $z_{-1}^3$] at (-3.06,2.65)  {};
							
							\node[label=left: $z_{-1}^2$] at (-5.3,-5.3)  {};	
							\node[label=left: $z_0^2$] at (-4.19,-5.05)  {};		
							\node[label=left: $z_{1}^2$] at (-3.05,-4.7) {};
							
							\node[label=above: $z_2$] at (0.05,3.81)  {};
							\node[label=below: $z_3$] at (-0.01,3.52)  {};
							\node[label=above: $z_{6}$] at (0.04,2.77)  {};
							\node[label=below left: $z_{-1}$] at (0.04,2.32)  {};
							\node[label=above right: $z_{-2}$] at (0,1.55)  {};
							\node[label=below: $z_{7}$] at (0,1.07)  {};					
							
							\node[label=above: $z_{-3}$] at (0.05,-1.9)  {};
							\node[label=above: $z_0$] at (0,-2.8)  {};
							\node[label=below: $z_5$] at (0.05,-3.34)  {};
							\node[label=above: $z_4$] at (-0.05,-4.29)  {};
							\node[label=below: $z_1$] at (0,-4.77)  {};
							\node[label=below: $z_{-4}$] at (0,-5.58)  {};						
							
							\node[label=below right: $G'_1$] at (2.06,1.06)  {};
							\node[label=below right: $X_1$] at (1.41,-2.4)  {}; 
							\node[label=above right: $G'^1_1$] at (1.08,-3.97)  {};																	
							\draw[blue,thick] (2.45,3.3) .. controls (-1.5,4.3) and (-1.5,3.8) .. (2.4,2.78);
							
							\draw (2.3,2.3) .. controls (-4.8,4) and (-4.8,2) .. (2,0.6);
							\draw[blue,thick] (2.2,1.87) .. controls (-4,3.3) and (-4,2.4) .. (2.1,1.05);
							
							\draw (0.8,-5.4) .. controls (-7.5,-7.4) and (-7.5,-3.9) .. (1.63,-1.45);
							\draw[blue,thick] (1,-4.5) .. controls (-6,-6.5) and (-6,-4.4) .. (1.4,-2.4);
							\draw (1,-4) .. controls (-4.5,-5.6) and (-4.5,-4.5) .. (1.3,-3);
							
							\draw (2,0.6) .. controls (4.2,0.1) and (4.2,-0.1) .. (1.69,-1);
							\draw (2.1,1.05) .. controls (5.2,0.3) and (5.2,-0.3) .. (1.63,-1.45);
							\draw[blue,thick] (2.2,1.87) .. controls (7.5,0.5) and  (7.5,-0.5).. (1.4,-2.4);
							\draw (2.3,2.3) .. controls (9,0.5) and  (9,-0.5).. (1.3,-3);
							\draw[blue,thick] (2.4,2.78) .. controls (10.8,0.5) and  (10.8,-1).. (1,-4);
							\draw[blue,thick] (2.45,3.3) .. controls (12,0.9) and  (12.4,-1.2).. (1,-4.5);
							
							\draw(1.69,-1)--(0.7,-1.3);
							\draw(0.8,-5.4)--(1.8,-5.1);								
							
							\draw (2.65,4.5) .. controls (0,-12) and  (-1.5,-12) .. (-2.15,4.5);
							\draw[red,thick] (2.07,1.06) .. controls (1.84,0) and  (1.3,-3.3) .. (1.09,-3.97);	
							
						\end{tikzpicture}
						
					}	\\
					
					\hspace{-0.5cm}
					\resizebox{0.6\textwidth}{!}{
						
						\begin{tikzpicture}
							
							\tikzstyle{every node}=[draw, circle, fill=white, minimum size=2pt, inner sep=0pt]
							\tikzstyle{dot}=[circle, fill=white, minimum size=0pt, inner sep=0pt, outer sep=-1pt]
							
							\node[label=right: $z^1_{-1}$] at (9.59,-0.1) {};
							\node[label=right: $z^1_{-2}$] at (8.53,-0.15) {};
							\node[label=right: $z^1_{-3}$] at (7.2,0)  {};
							\node[label=right: $z^1_0$] at (6.08,0)  {};
							\node[label=right: $z^1_{1}$] at (4.36,0)  {};
							\node[label=left: $z^1_{-4}$] at (3.6,-0.05)  {};
							
							\node[label=left: $z_0^4$] at (-0.5,3.8)  {};
							
							\node[label=right: $z_0^3$] at (-2.46,2.51)  {};
							\node[label=left: $z_{-1}^3$] at (-3.06,2.65)  {};
							
							\node[label=left: $z_{-1}^2$] at (-5.3,-5.3)  {};	
							\node[label=left: $z_0^2$] at (-4.19,-5.05)  {};		
							\node[label=left: $z_{1}^2$] at (-3.05,-4.7) {};
							
							\node[label=above: $z_2$] at (0.05,3.81)  {};
							\node[label=below: $z_3$] at (-0.01,3.52)  {};
							\node[label=above: $z_{6}$] at (0.04,2.77)  {};
							\node[label=below left: $z_{-1}$] at (0.04,2.32)  {};
							\node[label=above right: $z_{-2}$] at (0,1.55)  {};
							\node[label=below: $z_{7}$] at (0,1.07)  {};					
							
							\node[label=above: $z_{-3}$] at (0.05,-1.9)  {};
							\node[label=above: $z_0$] at (0,-2.8)  {};
							\node[label=below: $z_5$] at (0.05,-3.34)  {};
							\node[label=above: $z_4$] at (-0.05,-4.29)  {};
							\node[label=below: $z_1$] at (0,-4.77)  {};
							\node[label=below: $z_{-4}$] at (0,-5.58)  {};						
							
							\node[label=below right: $G_1^2$] at (1.62,-1.47)  {};
							\node[label=below right: $X_1$] at (1.41,-2.4)  {}; 
							\node[label=below right: $G_1^3$] at (1.3,-3)  {};					
							
							\draw[blue,thick] (2.45,3.3) .. controls (-1.5,4.3) and (-1.5,3.8) .. (2.4,2.78);
							
							\draw (2.3,2.3) .. controls (-4.8,4) and (-4.8,2) .. (2,0.6);
							\draw[blue,thick] (2.2,1.87) .. controls (-4,3.3) and (-4,2.4) .. (2.1,1.05);
							
							\draw (2,0.6) .. controls (4.2,0.1) and (4.2,-0.1) .. (1.69,-1);
							\draw (0.8,-5.4) .. controls (-7.5,-7.4) and (-7.5,-3.9) .. (1.63,-1.45);
							\draw[blue,thick] (1,-4.5) .. controls (-6,-6.5) and (-6,-4.4) .. (1.4,-2.4);
							\draw[blue,thick] (1,-4) .. controls (-4.5,-5.6) and (-4.5,-4.5) .. (1.3,-3);
							
							\draw[blue,thick] (2.1,1.05) .. controls (5.2,0.3) and (5.2,-0.3) .. (1.63,-1.45);
							\draw[blue,thick] (2.2,1.87) .. controls (7.5,0.5) and  (7.5,-0.5).. (1.4,-2.4);
							\draw (2.3,2.3) .. controls (9,0.5) and  (9,-0.5).. (1.3,-3);
							\draw[blue,thick] (2.4,2.78) .. controls (10.8,0.5) and  (10.8,-1).. (1,-4);
							\draw[blue,thick] (2.45,3.3) .. controls (12,0.9) and  (12.4,-1.2).. (1,-4.5);
							
							\draw(1.69,-1)--(0.7,-1.3);
							\draw(0.8,-5.4)--(1.8,-5.1);								
							
							\draw (2.65,4.5) .. controls (0,-12) and  (-1.5,-12) .. (-2.15,4.5);
							\draw[red,thick] (1.62,-1.47) .. controls (1.49,-2) and  (1.4,-2.5) .. (1.3,-3);	
							
						\end{tikzpicture}
						
					}
					
					&
					\hspace{-2cm}
					
					\resizebox{0.6\textwidth}{!}{
						
						\begin{tikzpicture}
							
							\tikzstyle{every node}=[draw, circle, fill=white, minimum size=2pt, inner sep=0pt]
							\tikzstyle{dot}=[circle, fill=white, minimum size=0pt, inner sep=0pt, outer sep=-1pt]
							
							\node[label=right: $z^1_{-1}$] at (9.59,-0.1) {};
							\node[label=right: $z^1_{-2}$] at (8.53,-0.15) {};
							\node[label=right: $z^1_{-3}$] at (7.2,0)  {};
							\node[label=right: $z^1_0$] at (6.08,0)  {};
							\node[label=right: $z^1_{1}$] at (4.36,0)  {};
							\node[label=left: $z^1_{-4}$] at (3.6,-0.05)  {};
							
							\node[label=left: $z_0^4$] at (-0.5,3.8)  {};
							
							\node[label=right: $z_0^3$] at (-2.46,2.51)  {};
							\node[label=left: $z_{-1}^3$] at (-3.06,2.65)  {};
							
							\node[label=left: $z_{-1}^2$] at (-5.3,-5.3)  {};	
							\node[label=left: $z_0^2$] at (-4.19,-5.05)  {};		
							\node[label=left: $z_{1}^2$] at (-3.05,-4.7) {};
							
							\node[label=above: $z_2$] at (0.05,3.81)  {};
							\node[label=below: $z_3$] at (-0.01,3.52)  {};
							\node[label=above: $z_{6}$] at (0.04,2.77)  {};
							\node[label=below left: $z_{-1}$] at (0.04,2.32)  {};
							\node[label=above right: $z_{-2}$] at (0,1.55)  {};
							\node[label=below: $z_{7}$] at (0,1.07)  {}; 			
							
							\node[label=above: $z_{-3}$] at (0.05,-1.9)  {};
							\node[label=above: $z_0$] at (0,-2.8)  {};
							\node[label=below: $z_5$] at (0.05,-3.34)  {};
							\node[label=above: $z_4$] at (-0.05,-4.29)  {};
							\node[label=below: $z_1$] at (0,-4.77)  {};
							\node[label=below: $z_{-4}$] at (0,-5.58)  {};						
							
							\node[label=below right: $G''_1$] at (0.72,-5.4)  {};
							\node[label=below right: $X_1$] at (1.41,-2.4)  {}; 
							\node[label=above right: $G''^1_1$] at (1.69,-1)  {};				
							
							\draw[blue,thick] (2.45,3.3) .. controls (-1.5,4.3) and (-1.5,3.8) .. (2.4,2.78);
							
							\draw[blue,thick] (2.3,2.3) .. controls (-4.8,4) and (-4.8,2) .. (2,0.6);
							\draw[blue,thick] (2.2,1.87) .. controls (-4,3.3) and (-4,2.4) .. (2.1,1.05);
							
							\draw[blue,thick] (0.72,-5.4) .. controls (-7.5,-7.4) and (-7.5,-3.9) .. (1.63,-1.45);
							\draw[blue,thick] (1,-4.5) .. controls (-6,-6.5) and (-6,-4.4) .. (1.4,-2.4);
							\draw[blue,thick] (1,-4) .. controls (-4.5,-5.6) and (-4.5,-4.5) .. (1.3,-3);
							
							\draw[blue,thick] (2,0.6) .. controls (4.2,0.1) and (4.2,-0.1) .. (1.69,-1);
							\draw[blue,thick] (2.1,1.05) .. controls (5.2,0.3) and (5.2,-0.3) .. (1.63,-1.45);
							\draw[blue,thick] (2.2,1.87) .. controls (7.5,0.5) and  (7.5,-0.5).. (1.4,-2.4);
							\draw[blue,thick] (2.3,2.3) .. controls (9,0.5) and  (9,-0.5).. (1.3,-3);
							\draw[blue,thick] (2.4,2.78) .. controls (10.8,0.5) and  (10.8,-1).. (1,-4);
							\draw[blue,thick] (2.45,3.3) .. controls (12,0.9) and  (12.4,-1.2).. (1,-4.5);
							
							\draw(1.69,-1)--(0.7,-1.3);
							\draw(0.72,-5.4)--(1.8,-5.1);								
							
							\draw (2.65,4.5) .. controls (0,-12) and  (-1.5,-12) .. (-2.15,4.5);
							\draw[red,thick] (1.69,-1) .. controls (1.5,-2.3) and  (1.05,-4) .. (0.72,-5.4);	
							
						\end{tikzpicture}
						
					}	
					
				\end{tabular}	
				
			\end{adjustbox}
			\vspace{-1.5cm}
			\caption{Top: Positions of points $G_1$ and $G'_1$. Bottom left: Position of a point, $G_1^2$, from the orbit of $G_1$, and hence it satisfies the property $(G^2_1, G^{3}_1)^u \cap (G^2_1, G^{3}_1)^s = \{ X_1 \}$. Bottom right: Position of a point, $G''_1$, that is not from the orbit of $G_1$, and therefore it does not satisfy the property $(G''_1, G''^{1}_1)^u \cap (G''_1, G''^{1}_1)^s = \{ X_1 \}$.}
			\label{fig:gg}
		\end{figure}
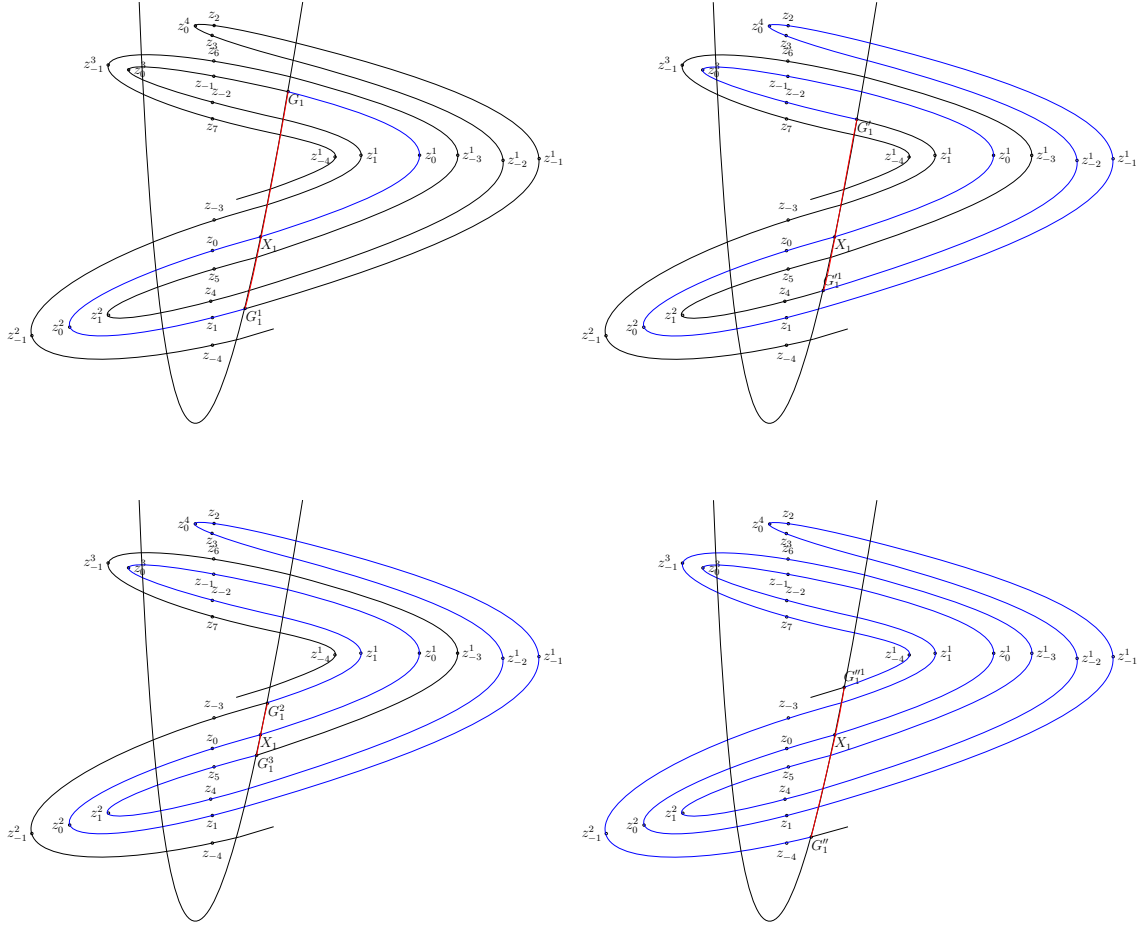	
		
		Let us denote by $A^j_i$, the region bounded by $[G^j_i, G^{j+1}_i]^u \cup [G^j_i, G^{j+1}_i]^s$, and by $A'^j_i$, the region bounded by $[G'^j_i, G'^{j+1}_i]^u \cup [G'^j_i, G'^{j+1}_i]^s$. Note that $\gamma_i \cap \Int A_i = \emptyset$, but $\gamma_i \cap \Int A'_i \neq \emptyset$ and in that intersection there exists a sequence of homoclinic points that converges to $G'_i$. Therefore, $H([G_1, G^1_1]^s) = [Q, Q^1]^s$ implies $Q \in \{ G_2^j : j \in \Z \}$ and the claim follows.
	\end{proof}
	
	\begin{prop}\label{prop:con}
		Suppose that $H : D_1 \to D_2$ is a conjugacy between $F_1$ and $F_2$. Then there exist $k \in \Z$, and a conjugacy $h : \T_1 \to \T_2$ between $f_1$ and $f_2$, such that $h\circ \pi_1 \circ F_1^{-n}(P) = \pi_2 \circ F_2^{-n+k} \circ H(P)$, for every $n \in \N$ and $P \in \Lambda_1$ for $b > 0$, and $P \in \Lambda_1 \cup W^u_{1,Y}$ for $b < 0$. Moreover, $h$ preserves stems and their levels; i.e., a stem of level $n \in \N_0$ of $\T_1$ is mapped to a stem of level $n$ of $\T_2$, and the natural order of stems is preserved.
	\end{prop}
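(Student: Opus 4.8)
The plan is to realize $h$ as the homeomorphism that $H$ induces on the quotient trees of Subsection~\ref{ss:prelim2}, after a single normalization. First I would apply Lemma~\ref{lem:gamma} to fix the integer $k$ with $F_2^k(H(\partial\gamma_{X_1}))=\partial\gamma_{X_2}$ and put $\widehat H:=F_2^k\circ H$. Since a conjugacy carries $\Lambda_1$ onto $\Lambda_2$ and $\Lambda_2$ is $F_2$-invariant (and likewise $W^u_{1,Y}$ onto $W^u_{2,Y}$), $\widehat H$ restricts to a homeomorphism of $\Lambda_1$ ($\cup\,W^u_{1,Y}$ when $b<0$) onto $\Lambda_2$ ($\cup\,W^u_{2,Y}$) that still conjugates $F_1$ to $F_2$ and now matches the seed leaves: $\widehat H(\gamma_{X_1})=\gamma_{X_2}$ together with their endpoints. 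Because the elements of $\Gamma$ are pairwise disjoint or coincide, are accumulated on both sides, and pull back to elements of $\Gamma$ (properties (A'), (B'), (C')), and $\widehat H(W^s_1)=W^s_2$, the map $\widehat H$ permutes $\Gamma_1$ onto $\Gamma_2$; hence it sends surfaces bounded by finitely many elements of $\Gamma_1$ to such surfaces over $\Gamma_2$, nested decreasing families to nested decreasing families, $\leq$-minimal sequences to $\leq$-minimal sequences, and $\equiv$-classes to $\equiv$-classes.

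Next I would set $h\big([(s_n)_n]\big):=\big[(\widehat H(s_n))_n\big]$ for $[(s_n)_n]\in\T_1$. By the previous paragraph this is well defined, and running the same construction for $\widehat H^{-1}$ shows that $h$ is a homeomorphism $\T_1\to\T_2$ with $h\circ\pi_1=\pi_2\circ\widehat H$ on $\Lambda_1$ (and on $W^u_{1,Y}$ for $b<0$), using $\pi_i(x)=\widetilde\pi_i(\gamma)$ whenever $x\in\gamma\in\A_i$. Combining $\pi_i\circ F_i=f_i\circ\pi_i$ (the defining property of $f_i$ recalled in Subsection~\ref{ss:prelim2}) with $\widehat H\circ F_1=F_2\circ\widehat H$ gives $h\circ f_1\circ\pi_1=h\circ\pi_1\circ F_1=\pi_2\circ\widehat H\circ F_1=\pi_2\circ F_2\circ\widehat H=f_2\circ h\circ\pi_1$, and surjectivity of $\pi_1$ onto $\T_1$ then yields $h\circ f_1=f_2\circ h$. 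Finally, substituting $F_1^{-n}(P)$ for $P$ and unwinding $\widehat H=F_2^k\circ H$ gives $h\circ\pi_1\circ F_1^{-n}(P)=\pi_2\circ\widehat H\circ F_1^{-n}(P)=\pi_2\circ F_2^{-n+k}\circ H(P)$ for every $n\in\N$ and every $P\in\Lambda_1$ (resp.\ $P\in\Lambda_1\cup W^u_{1,Y}$ for $b<0$), which is the displayed identity.

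Then I would prove, by induction on the level following the inductive construction in Subsection~\ref{ss:stems}, that $h$ takes stems to stems preserving levels and the left-to-right order. At level $0$ the objects $B^0_i=\pi_i(\A^0_i)$ and $b^0_i=\pi_i(\alpha^0_i)$ are built from $\gamma_{X_i}$, its first $F_i$-preimage, and the first basic critical (for $b>0$) or quasi-critical (for $b<0$) point; the normalization of the first paragraph makes $\widehat H$ match these for $i=1,2$, so $h(B^0_1)=B^0_2$ and $h(b^0_1)=b^0_2$. For the inductive step the level-$(n+1)$ stems and branch points are determined by the level-$\le n$ data together with the sets $E_{X_i},B_{X_i}$ and the map $f_i$: by Lemma~\ref{lem:ep} the full $f_i$-backward orbit of a point of $E_{X_i}$ stays in $E_{X_i}$, and by Lemma~\ref{lem:bp} that of a point of $B_{X_i}$ always meets $B_{X_i}$, while Lemma~\ref{lem:ebtoeb} identifies $E_{X_i}\cup B_{X_i}$ with the $\pi_i$-images of the backward orbits of the turning (quasi-turning) points; these facts, together with the homoclinic-point dichotomy of Lemma~\ref{lem:gamma}, are what one leverages to transfer the construction of Subsection~\ref{ss:stems} across $h$ one level at a time. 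Order preservation at each level is automatic, since $h$ is a homeomorphism of plane-embedded trees and the numbering of stems within a level is dictated by that embedding.

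The main obstacle is that a topological conjugacy need not map the critical set $\C_1$ to $\C_2$, nor the critical locus $\K_1$ to $\K_2$, nor even $D_1$ onto $D_2$ in a way respecting the auxiliary segments $l_i$, so neither the seed-leaf matching nor the identification of the combinatorial skeleton of the trees can be read off $H$ directly; both must be extracted through the dynamical characterizations of the special endpoints and branch points (Lemmas~\ref{lem:ebp}, \ref{lem:ep}, \ref{lem:bp}, \ref{lem:ebtoeb}) and the homoclinic arguments pinning down the integer $k$ (Lemma~\ref{lem:gamma}). The technical heart is verifying that $\widehat H$ matches the entire nested pattern of preimages $F_i^{-n}(\K_i)\cap\partial^u D_i$ — not merely individual orbits — and that this matching survives passage to the quotients $\T_i$, so that the finer structure of stems, their levels, and their order is genuinely preserved.
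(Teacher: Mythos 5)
Your proof plan matches the paper's: normalize with Lemma~\ref{lem:gamma} to get $\widehat H = F_2^k\circ H$, let $\widehat H$ act on $\Gamma_1$ to induce a homeomorphism $\T_1\to\T_2$, derive the displayed identity from $\pi_i\circ F_i=f_i\circ\pi_i$, and then check stem and level preservation. But there is a genuine gap in the step where you assert that ``the map $\widehat H$ permutes $\Gamma_1$ onto $\Gamma_2$'' as a consequence of properties (A'), (B'), (C') together with $\widehat H(W^s_1)=W^s_2$. That inference does not go through: $\Gamma_i$ consists of connected components of $D_i\cap W^s_i$, and $\widehat H=F_2^k\circ H$ does \emph{not} map $D_1$ onto $D_2$ (since $F_2(D_2)\subsetneq D_2$, one has $\widehat H(D_1)=F_2^k(D_2)\neq D_2$ when $k\neq 0$). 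Thus a component of $D_1\cap W^s_1$ has no a priori reason to map to a component of $D_2\cap W^s_2$; the properties (A')--(C') are internal to each $\Gamma_i$ and do not repair this. You yourself flag this obstacle in your final paragraph --- that $H$ need not respect $D_i$ or the auxiliary segments --- but the body of the argument tacitly assumes it away.

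The paper fills this gap by a push-pull argument which your proof omits. After normalizing so that $\widehat H(\gamma_{X_1})=\gamma_{X_2}$ \emph{and} the homoclinic (resp.\ heteroclinic) points of $\gamma_{X_1}$ and of the boundary arcs $F_1^m([E_1^{-1},X_1]^u)$ (resp.\ $F_1^m([E_1^{-1},E_1]^u_{Y_1})$) are carried, order-preservingly, to the corresponding points and arcs for $F_2$: given $\gamma_1\in\Gamma_1$, choose $n\in\N_0$ with $\beta_1:=F_1^n(\gamma_1)\subseteq\gamma_{X_1}$, set $\beta_2:=\widehat H(\beta_1)\subseteq\gamma_{X_2}$, and then $\widehat H(\gamma_1)=F_2^{-n}(\beta_2)=:\gamma_2\in\Gamma_2$. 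The matching of the boundary homoclinic structure is exactly what guarantees that the endpoints of $F_2^{-n}(\beta_2)$ land on $\partial^u D_2$, so $\gamma_2$ really is an element of $\Gamma_2$. Once $\tilde H:\Gamma_1\to\Gamma_2$ is in hand, extending to $h:\T_1\to\T_2$ by density and checking injectivity by separating two classes with disjoint surfaces is essentially what you do, and the stem/level preservation follows, as you say, by induction on the level along the construction of Subsection~\ref{ss:stems}. So the plan is sound; the one missing idea is the dynamical push of $\gamma_1$ into $\gamma_{X_1}$ and back, which is precisely what makes the definition of $h$ legal despite $\widehat H(D_1)\neq D_2$.
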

	\begin{proof}
		By Lemma \ref{lem:gamma}, there exists $k \in \Z$ such that $F_2^k(H(\partial \gamma_{X_1})) = \partial \gamma_{X_2}$. Since $F_2^k \circ H$ is also a conjugacy between $F_1$ and $F_2$, without loss of generality we assume that $H(\gamma_{X_1}) = \gamma_{X_2}$, and moreover, the homoclinic (heteroclinic) points of $\gamma_{X_1}$ are mapped onto the homoclinic (heteroclinic) points of $\gamma_{X_2}$ and their order is preserved. Note that for $b > 0$,  $H([E_1^{-1}, X_1]^u) = [E_2^{-1}, X_2]^u$, the homoclinic points of $[E_1^{-1}, X_1]^u$ are mapped onto the homoclinic points of $[E_2^{-1}, X_2]^u$ and their order is preserved, as well as for $F_1^n([E_1^{-1}, X_1]^u)$ for every $n \in \N$. Also, for $b < 0$,  $H([E_1^{-1}, E_1]^u_{Y_1}) = [E_2^{-1}, E_2]^u_{Y_2}$, the heteroclinic points of $[E_1^{-1}, E_1]^u_{Y_1}$ are mapped onto the heteroclinic points of $[E_2^{-1}, E_2]^u_{Y_2}$ and their order is preserved, as well as for $F_1^n([E_1^{-1}, E_1]^u_{Y_1})$ for every $n \in \N$.
		
		Let $\gamma_1 \in \Gamma_1$. Since $\gamma_1 \subset W^s_1$ and $\partial \gamma_1$ are homoclinic points, there exists $n \in \N_0$ such that $\beta_1 := F_1^n(\gamma_1) \subseteq \gamma_{X_1}$. Let $\beta_2 := H(\beta_1) \subseteq \gamma_{X_2}$. Then $H(\gamma_1) =  F_2^{-n} \circ H \circ F_1^n(\gamma_1) = F_2^{-n}(\beta_2) =: \gamma_2 \in \Gamma_2$, and hence $H$ induces a homeomorphism $\tilde H : \Gamma_1 \to \Gamma_2$, $\tilde H(\gamma_1) = \gamma_2$, where $\gamma_2$ is defined as above. 
		
		Since $\Gamma_i$ is dense in $D_i$, $\tilde H$ induces a semi-conjugacy $h : \T_1 \to \T_2$ between $f_1$ and $f_2$ such that $\psi h \psi^{-1}|_{\Gamma_1} = \tilde H$. Recall, $\psi$ assigns to each equivalence class $[(s_n)_{n=1}^{\infty}]$ in $\T$ a continuum $\bigcap_{n=1}^\infty s_n$, which is a geometric realization of that class,
		$$\psi([(s_n)_{n=1}^{\infty}]) = \bigcap_{n=1}^\infty s_n.$$ 
		Let us prove that $h$ is an injection, and therefore a conjugacy. Let $x = [(s_n)_{n=1}^{\infty}], x' = [(s'_n)_{n=1}^{\infty}] \in \T_1$ and $x \ne x'$. Then there exists $N \in \N$ such that $s_k \cap s'_m = \emptyset$ for all $k, m \ge N$. Thus, $H(s_k) \cap H(s'_m) = \emptyset$, for all $k, m \ge N$, and $h(x) = [(H(s_n))_{n=1}^{\infty}] \ne [(H(s'_n))_{n=1}^{\infty}] = h(x')$. Therefore, $h$ is a conjugacy. Moreover, it follows from construction that $h$ preserves stems, that is a stem of level $n \in \N_0$ of $\T_1$ is mapped to a stem of level $n$ of $\T_2$, and the natural order of stems is preserved.
	\end{proof}
	
	Note that the conjugacy $h : (\T_1, f_1) \to (\T_2, f_2)$ induces a conjugacy 
	$$\tilde h : (\invlim (\T_1, f_1), \sigma_1) \to (\invlim (\T_2, f_2), \sigma_2),$$ given by $\tilde h (t_1, \dots , t_n, \dots ) = (h(t_1), \dots , h(t_n), \dots )$, such that $\tilde h \circ \Pi_1 = \Pi_2 \circ H$.
	
	\begin{theorem}\label{thm:tp}
		Suppose that $H : D_1 \to D_2$ is a conjugacy between two H\'enon maps $F_1$ and $F_2$. Then there exists $k \in \Z$ such that $H(S_{1,i}^n) = S_{2,i}^{k+n}$, for every $i, n \in \Z$.
	\end{theorem}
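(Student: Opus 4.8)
The plan is to push turning points from $\Lambda_1$ to $\Lambda_2$ through the tree conjugacy of Proposition \ref{prop:con}, exploiting that the $F$-orbits of the turning (resp.\ quasi-turning) points are exactly the points of $\T$ lying in $E_X\cup B_X$ (Lemma \ref{lem:ebtoeb}), a set that any conjugacy preserving stems and their levels must respect. First I would fix, by Proposition \ref{prop:con}, an integer $k$ and a conjugacy $h:\T_1\to\T_2$ between $f_1$ and $f_2$ with
\[
h\circ\pi_1\circ F_1^{-n}=\pi_2\circ F_2^{k-n}\circ H\qquad(n\in\N)
\]
on $\Lambda_1$ (and on the relevant arc of $W^u$ of the other fixed point when $b<0$), such that $h$ preserves stems, their levels, and the natural order of stems. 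Being a homeomorphism of trees, $h$ sends endpoints to endpoints and branch points to branch points; since it moreover sends stems to stems preserving levels and order, and since by Lemmas \ref{lem:ebp}, \ref{lem:ep} and \ref{lem:bp} the endpoint (resp.\ the distinguished attaching branch point) of a stem of level $m$ is precisely a point of the form $\pi(S_i^{-m})$ — the only exception being the unique level-$0$ stem $B^0$, whose two endpoints are $\pi(S_0)$ and $\pi(S_0^1)$ — it follows that $h(E_{X_1}\cup B_{X_1})=E_{X_2}\cup B_{X_2}$, and $h$ matches these distinguished points level by level and, within each level, in order.

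The heart of the proof is to upgrade this to the exact matching
\[
h\bigl(\pi_1(S_{1,i}^{-m})\bigr)=\pi_2\bigl(S_{2,i}^{-m}\bigr)\qquad\text{for every index }i\text{ and every sufficiently large }m .
\]
I would argue by induction on the level. For the base case: the stem of level $1$ is unique in each tree, so $h$ maps the endpoint of $B^1_1$ to the endpoint of $B^1_2$, i.e.\ $h(\pi_1(S_{1,0}^{-1}))=\pi_2(S_{2,0}^{-1})$; iterating $h\circ f_1=f_2\circ h$ then gives $h(\pi_1(S_{1,0}^{m}))=\pi_2(S_{2,0}^{m})$ for all $m\in\Z$. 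For the inductive step: the finitely many stems of a given level $m$ are linearly ordered (along $\partial^u D$, through the order of the points $Z^{m-1}_p$), $h$ restricts to an order-preserving bijection from the level-$m$ stems of $\T_1$ onto those of $\T_2$, and this bijection is compatible with the $f$-action, hence with the matching already established at lower levels. Since the turning points lying on each side of the fixed point are linearly ordered by their index, and this order agrees with the boundary order of their $m$-th preimages, the permutation of the index set ($\Z$ for $b>0$, $\N_0$ for $b<0$) induced by $h$ is an order-preserving bijection fixing $0$, and is therefore the identity; this yields the displayed matching.

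Finally I would combine the two displays. For a fixed $i$ and all large $m$,
\[
\pi_2\bigl(F_2^{k-m}(H(S_{1,i}))\bigr)=h\bigl(\pi_1(S_{1,i}^{-m})\bigr)=\pi_2(S_{2,i}^{-m})=\pi_2\bigl(F_2^{k-m}(S_{2,i}^{-k})\bigr),
\]
so $\Pi_2(H(S_{1,i}))$ and $\Pi_2(S_{2,i}^{-k})$ have the same tail; since $\Pi_2$ is a bijection onto the inverse limit, equality of tails forces $F_2^{-l_0}(H(S_{1,i}))=F_2^{-l_0}(S_{2,i}^{-k})$ for $l_0$ large, hence $H(S_{1,i})=S_{2,i}^{-k}$, and applying $F_2^n$ gives $H(S_{1,i}^n)=S_{2,i}^{n-k}$ for all $n$, which is the assertion of Theorem \ref{thm:tp} with the integer $-k$. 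I expect the main obstacle to be the inductive index-matching in the middle paragraph: one must check carefully that the level-and-order data carried by the stems is a \emph{faithful} encoding of the canonical indexing of the turning points — in particular that the orientation-reversal of $F$ on $W^u$ (which swaps the two sides of the fixed point at every step) cannot produce an index reversal — so that the induced permutation is genuinely an order-preserving bijection of $\Z$ or $\N_0$. A secondary point needing attention is that the conjugacy $H$ indeed carries the ambient data ($W^u$, $\gamma_X$ together with its homoclinic/heteroclinic structure, $\partial^u D$) to its counterpart up to the iterate $F_2^{k}$, so that the sets $E_X\cup B_X$ correspond under $h$; but this is essentially contained in Lemma \ref{lem:gamma} and Proposition \ref{prop:con}.
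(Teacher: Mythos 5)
Your proof is correct and follows essentially the same route as the paper's: both invoke Proposition \ref{prop:con} to obtain the stem-, level-, and order-preserving tree conjugacy $h$, then use Lemmas \ref{lem:ebtoeb}, \ref{lem:ep}, and \ref{lem:bp} to identify the tails of $\Pi_1(S_{1,i})$ with the distinguished points of $E_{X_1}\cup B_{X_1}$ and transport them via $h$ to the corresponding tails in $\T_2$, concluding via the bijectivity of $\Pi_2$ on the inverse limit. Your middle paragraph merely makes explicit (via induction on stem level) the index-matching step that the paper compresses into a single appeal to $h$ preserving stems and their ordering, which is a reasonable amount of added detail rather than a different argument.
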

	\begin{proof}
		Let $H$ be a conjugacy between the H\'enon maps $F_1$ and $F_2$ such that  $H(\gamma_{X_1}) = \gamma_{X_2}$, as in the proof of Proposition \ref{prop:con}. Then $h$ maps every endpoint (every branch point) of $\T_1$ to the corresponding endpoint (branch point) of $\T_2$. 
		
		Recall that for $b > 0$ we have $H|_{\Lambda_1} = \Pi_2^{-1} \circ \tilde h \circ \Pi_1$, and for $b < 0$ we have $H|_{\Lambda_1 \cup W^u_{1,Y}} = \Pi_2^{-1} \circ \tilde h \circ \Pi_1$. Therefore, for every $i \in \Z$,
		$$H(S_{1,i}) = \Pi_2^{-1}(h \circ \pi_1(S_{1,i}), h \circ \pi_1 \circ F_1^{-1}(S_{1,i}), \dots , h \circ \pi_1 \circ F_1^{-n}(S_{1,i}), \dots ).$$
		By Lemma \ref{lem:ebtoeb} there exists $n \in \N$ such that $r_n := \pi_1 \circ F_1^{-n}(S_{1,i}) \in E_{X_1} \cup B_{X_1}$. If $r_n \in E_{X_1}$, by Lemma \ref{lem:ep}, $r_{n+j} := \pi_1 \circ F_1^{-n-j}(S_{1,i}) = f_1^{-j}(r_n) \in E_{X_1}$ for every $j \in \N$. If $r_n \in B_{X_1}$, by Lemma \ref{lem:bp}, $r_{n+j} := \pi_1 \circ F_1^{-n-j}(S_{1,i}) \in f_1^{-j}(r_n) \cap B_{X_1}$ for every $j \in \N$. By Proposition \ref{prop:con}, $h$ preserves stems and ordering of stems, and hence $h$ maps the endpoint of $E_{X_1}$ of a stem of $\T_1$ to the endpoint of $E_{X_2}$ of the corresponding stem of $\T_2$, and the branch points of $B_{X_1}$ of a stem of $\T_1$ to the corresponding branch points of $B_{X_2}$ of a corresponding stem of $\T_2$. Thus, $\pi_2 \circ F_2^{-n-j} \circ H(S_{1,i}) = h \circ \pi_1 \circ F_1^{-n-j}(S_{1,i}) \in E_{X_2} \cup B_{X_2}$, for every $j \in \N$. By Lemma \ref{lem:ebtoeb} and since we assumed that $H(\gamma_{X_1}) = \gamma_{X_2}$, it follows that $F_2^{-n-j} \circ H(S_{1,i}) = F_2^{-n-j}(S_{2,i})$ for every $j \in \N$. Therefore $H(S_{1,i}) = S_{2,i}$, for every $i \in \Z$. In general (without the assumption that $H(\gamma_{X_1}) = \gamma_{X_2}$) there exists $k \in \Z$ such that $H(S_{1,i}) = S_{2,i}^k$ and consequently $H(S_{1,i}^n) = S_{2,i}^{k+n}$, for every $i, n \in \Z$.
	\end{proof}
	The above result guarantees that for $b>0$ the orbits of basic critical points of $F_1$ are mapped onto the orbits of basic critical points of $F_2$. For $b<0$ this is guaranteed by the following corollary.
	\begin{cor}\label{cor:cp}
		Suppose that $H : D_1 \to D_2$ is a conjugacy between two orientation-preserving H\'enon maps $F_1$ and $F_2$. Then there exists $k \in \Z$ such that $H(z_{1,i}) = z_{2,i}^{k}$, for every $i \in \Z$. 
	\end{cor}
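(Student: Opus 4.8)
The plan is to deduce Corollary~\ref{cor:cp} from Theorem~\ref{thm:tp} together with one structural fact from the Wang--Young picture. For $b<0$ Theorem~\ref{thm:tp} speaks directly only about the \emph{quasi}-critical points, so the job is to pass from quasi-critical points to basic critical points; the bridge is the Wang--Young fact, recorded in Theorem~\ref{thm:WY}~(II)(2), that $\C$ is the accumulation set of $\H$, where for $b<0$ one has $\H\subset W^u_Y$.

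First I would apply Theorem~\ref{thm:tp}. Since for $b<0$ the points $S_{i,j}$ are the quasi-turning points $F(\H)$, the theorem yields a single integer $k$ with $H(S_{1,i}^{n})=S_{2,i}^{k+n}$ for all indices; unravelling $S_{i,j}=F_i(z'_{i,j})$, this says exactly that $H$ carries the full $F_1$-orbit of each quasi-critical point $z'_{1,i}$ onto the $F_2$-orbit of $z'_{2,i}$, with a uniform iterate offset $k$. In particular $H(\H_1)=F_2^{k}(\H_2)$. Now apply the homeomorphism $H$ to an accumulation set: since $\C_i$ is the accumulation set of $\H_i$, we get $H(\C_1)=F_2^{k}(\C_2)$; and since $H(W^u_1)=W^u_2$ while $W^u_2$ is fully $F_2$-invariant,
\[ H(\mfc_1)=H(\C_1\cap W^u_1)=H(\C_1)\cap W^u_2=F_2^{k}(\C_2)\cap W^u_2=F_2^{k}\bigl(\C_2\cap W^u_2\bigr)=F_2^{k}(\mfc_2). \]
As $H$ is injective this is a bijection $\mfc_1\to F_2^{k}(\mfc_2)$, so $H(z_{1,i})=z_{2,i'}^{k}$ for some bijection $i\mapsto i'$ of $\Z$.

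It remains to show $i'=i$. As $H$ fixes the common fixed point $X$, carries the pair of basic critical points flanking $X_1$ to the pair flanking $X_2$, and restricts to an (intrinsic) homeomorphism $W^u_1\to W^u_2$, the bijection $i\mapsto i'$ is either the identity or the reflection $i\mapsto-1-i$, according to whether $H|_{W^u}$ matches the natural order on $W^u$ with the order in which $F_2^{k}(\mfc_2)$ lies on $W^u_2$. Because $F$ folds every unstable manifold orientation-reversingly, $F_2^{k}$ preserves the order of $\mfc_2$ exactly when $k$ is even, so what has to be checked is the parity statement: $H|_{W^u}$ is orientation-preserving precisely when $k$ is even. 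I expect this to be the main obstacle. It should be extracted from the normalization already set up in Proposition~\ref{prop:con}---preservation of stems, of their levels and of their order, and preservation of the order of homoclinic/heteroclinic points on $\gamma_X$---together once more with the orientation-reversing fold of $F$ on unstable manifolds; the parity statement cannot be bypassed, since in the reflected alternative the corollary would be false (no power of $F_2$ maps $z_{2,i}$ to $z_{2,-1-i}$). Once $i'=i$ is in hand, $H(z_{1,i})=z_{2,i}^{k}$ for every $i\in\Z$, and applying $F$ gives $H(z_{1,i}^{n})=z_{2,i}^{k+n}$.
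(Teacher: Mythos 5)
Your first half reproduces the paper's idea exactly: apply Theorem~\ref{thm:tp} to the quasi-turning points $S_{i,j}=F_i(z'_{i,j})$, then pass to limits using the Wang--Young fact that $\C$ is the accumulation set of $\H$, and intersect with $W^u$ to conclude that $H$ carries $\mfc_1$ bijectively onto $F_2^k(\mfc_2)$. (The paper phrases this pointwise --- each $z_{1,j}$ is a limit of quasi-critical points $z'_{1,j_n}$, and $H(z'_{1,j_n})=z'_{2,j_n}\to H(z_{1,j})\in\C_2\cap W^u_2$ --- but the content is the same, modulo the normalization $H(\gamma_{X_1})=\gamma_{X_2}$ which lets the paper run the whole proof with $k=0$ and avoid carrying the offset.)

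The problem is the second half. You correctly observe that what remains is to rule out the reflected indexing $i\mapsto -1-i$, and you reformulate this as a parity claim (``$H|_{W^u}$ is orientation-preserving precisely when $k$ is even''), but you then explicitly state that you do not know how to prove it and only gesture at Proposition~\ref{prop:con} and the fold of $F$ as plausible sources. That is a genuine gap, and it is precisely the step the paper does prove. The paper's mechanism is concrete: after normalizing so that $H(\gamma_{X_1})=\gamma_{X_2}$, it introduces the components $\zeta_{i,j}$ of $W^u_i\cap\Omega_i$, indexed consecutively with $X_i\in\zeta_{i,0}$; since the normalized $H$ carries $\Omega_1$ onto $\Omega_2$ (its stable boundary is $F_1^{-1}(\gamma_{X_1})$ and its unstable boundary is an arc of $W^u_{Y_1}$ with endpoints $E_1,E_1^{-1}$, both of which $H$ sends to the corresponding objects for $F_2$), one gets $H(\zeta_{1,j})=\zeta_{2,j}$, hence $H(z_{1,0})=z_{2,0}$, hence the indexing is preserved. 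Your proposal would be complete if you replaced the unproven parity assertion with this $\zeta$-component argument; without it, the corollary is not established, since --- as you yourself note --- in the reflected alternative the conclusion would be false.

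Two smaller remarks. First, normalizing $H$ by composing with an iterate of $F_2$ at the outset (as the paper does, and as Lemma~\ref{lem:gamma} makes possible) is cleaner than tracking the parity of $k$ throughout. Second, your suggestion that the parity statement ``should be extracted from preservation of stems and their order'' is not obviously the shortest route: stem ordering lives on $\T$, and translating it back to an orientation statement about $W^u$ requires the same kind of geometric bookkeeping in $D$ that the paper's direct $\Omega$/$\zeta$ argument carries out in one step.
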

	\begin{proof}
		Without loss of generality we suppose that $H(\gamma_{X_1}) = \gamma_{X_2}$, and we want to prove that $H(z_{1,j}) = z_{2,j}$ for every $j \in \Z$. Recall that for $b < 0$ we have that $S_{i,j} = F_i(z'_{i,j})$ for $i = 1,2$ and $j \in \N_0$. Moreover, for every basic critical point $z_{i,j}$, $i = 1, 2$, $j \in \Z$, there exists a sequence of quasi-critical points $(z'_{i,j_n})_{n \in \N}$, $j_n \in \N_0$, that converges to $z_{i,j}$. Also, if a sequence of quasi-critical points converges, then its limit point is a critical point. Since by the proof of Theorem \ref{thm:tp} we have that $H(S_{1,j_n}) = S_{2,j_n}$, for every $n \in \N$, and $H(W^u_1) = W^u_2$, it follows that for every $j \in \Z$ there exists $k \in \Z$ such that $H(z_{1,j}) = z_{2,k}$.
		
		Let us denote by $\zeta_{i,j}$, $i = 1, 2$, $j \in \Z$, the (connected) components of $W^u_i \cap \Omega_i$, that are indexed in the following way: $X_i \in \zeta_{i,0}$, $\zeta_{i,j} \subset W^{u+}_i$, $\zeta_{i,-j} \subset W^{u-}_i$, for every $j \in \N$, and $\zeta_{i,j}, \zeta_{i,k}$ are consecutive (there are no other $\zeta_{i,m}$ between them) if and only if $|k - j| = 1$, $j, k \in \Z$. Note that $z_{i,-j} \in \zeta_{i,j}$ for $j = -1, 0, 1$, but in general if $z_{i,-j} \in \zeta_{i,k}$, then $|j| \le |k|$, since there are $\zeta_{i,m}$ that do not contain any basic critical point. Since $H$ is a conjugacy with $H(\gamma_{X_1}) = \gamma_{X_2}$, it follows that $H(\zeta_{1,j}) = \zeta_{2,j}$ for every $j \in \Z$. Therefore, $H(z_{1,0}) = z_{2,0}$ and consequently $H(z_{1,j}) = z_{2,j}$ for every $j \in \Z$.
	\end{proof}
	
	\begin{cor}\label{iff}
		Suppose that $H : D_1 \to D_2$ is a conjugacy between two H\'enon maps $F_1$ and $F_2$. Then the sets of kneading sequences of $F_1$ and $F_2$ coincide.
	\end{cor}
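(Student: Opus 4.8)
The plan is to exploit Theorem \ref{thm:tp} together with the observation that the kneading set $\mfk_F$ is an invariant of the purely combinatorial structure carried by $W^u$ (the folding pattern, or equivalently the arc-codes with their signs), and hence is preserved by any conjugacy that respects turning points. So it suffices to show that $H$ induces an identification of the folding patterns of $F_1$ and $F_2$ and then invoke Theorem \ref{equivalent}; alternatively one can chase itineraries directly.

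First I would normalize $H$. By Theorem \ref{thm:tp} there is $k\in\Z$ with $H(S_{1,i}^n)=S_{2,i}^{k+n}$ for all $i,n$, and by Corollary \ref{cor:cp} (for $b<0$; for $b>0$ this is immediate from Theorem \ref{thm:tp}) $H$ carries the basic critical orbits of $F_1$ onto those of $F_2$. Since $\mfk_{F_2}$ depends only on $F_2$, I may replace $H$ by $F_2^{-k}\circ H$, which is still a homeomorphism $\Lambda_1\to\Lambda_2$ (and $W^u_1\to W^u_2$) conjugating $F_1$ to $F_2$, and now $H(z_{1,i}^m)=z_{2,i}^m$ for all $i,m$; in particular $H$ sends the $i$th turning point of $F_1$ to the $i$th turning point of $F_2$. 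Being an orientation-consistent homeomorphism of $W^u$ that fixes $X$ and maps $z_{1,0}$ to $z_{2,0}$, $H$ preserves the linear order of basic points, maps critical points to critical points and post-critical points to post-critical points, and sends $I_\emptyset$ to $I_\emptyset$ and $F_1(I_{1,w})$ to $F_2(I_{2,w})$. Combining this with the inductive description of arc-codes and Lemma \ref{lem:arc-codes}, $H$ maps the basic arc $I_{1,w}$ onto $I_{2,w}$ for every arc-code $w$; equivalently, it identifies the folding patterns of $F_1$ and $F_2$. At this point Theorem \ref{equivalent} already gives $\mfk_{F_1}=\mfk_{F_2}$.

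For a self-contained finish I would instead track itineraries. Since $\K\cap W^u=\mfc$, the interior of each basic arc $I_w$ lies entirely in one of the open half-discs $D^l\setminus\K$, $D^r\setminus\K$; let $s(w)\in\{-,+\}$ record which, normalized by $s(\emptyset)=+$ since $X\in D^r$. The function $s(\cdot)$ is determined from the folding pattern by the rule that it is constant across a symbol $1$ (post-critical point) and flips across a symbol $0$ (critical point); hence $s_1(w)=s_2(w)$ for all $w$. Now fix $i$. By \cite[Theorem 1.1 (2) (iii)]{WY}, $z_{1,i}^j\notin\C_1$ for $j\neq0$, so the orbit of $S_{1,i}=z_{1,i}^1$ meets $\K_1$ only at $S_{1,i}^{-1}=z_{1,i}$; thus the kneading sequence $\bar k^{\,i}$ of $F_1$ (a pair of itineraries differing only in coordinate $-1$, where it reads $\pm$) has $n$th coordinate $s_1(w^{(n)})$ for $n\neq-1$, where $S_{1,i}^n\in\mathring I_{1,w^{(n)}}$. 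Since $H(S_{1,i}^n)=S_{2,i}^n$ lies in $\mathring I_{2,w^{(n)}}$ for $n\neq-1$ and equals $z_{2,i}\in\C_2$ for $n=-1$, the kneading sequence of $F_2$ associated with $S_{2,i}$ has the same coordinates $s_2(w^{(n)})=s_1(w^{(n)})$ (resp. $\pm$). Hence these kneading sequences coincide, so $\mfk_{F_1}\subseteq\mfk_{F_2}$; applying the argument to $H^{-1}$ gives the reverse inclusion.

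The argument is mostly bookkeeping once Theorem \ref{thm:tp} is available. The only delicate points are checking that the index shift $k$ can be absorbed without altering $\mfk_{F_2}$ (and while retaining a conjugacy of the invariant sets, even though $F_2^{-k}$ need not map $D_2$ into itself), and verifying that the left/right labelling convention $X\in D^r$ and the sign-flip rule transfer verbatim — that is, that equal folding patterns genuinely pin down the signed arc-codes and hence all itineraries. Neither is a real obstacle, so the substance of the classification sits in Theorem \ref{thm:tp}, which we are taking as given.
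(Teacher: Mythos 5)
Your argument is correct, but it takes a genuinely different route from the paper's. After the common normalization step (using Theorem \ref{thm:tp} and Corollary \ref{cor:cp} so that $H(z_{1,i}^n)=z_{2,i}^n$), the paper proceeds by a direct contradiction: it supposes some $z_{1,i}^n$ and $z_{2,i}^n$ lie on opposite sides of the respective critical loci, picks such a point closest to $X$, and then compares the number of turning points in the images $F_1([P,z_{1,i}^n])$ and $F_2([H(P),z_{2,i}^n])$ to reach a contradiction, establishing first $z_{1,i}^n\in D_1^r\iff z_{2,i}^n\in D_2^r$ for $n>0$ and then deducing the $n<0$ case from the $n>0$ case via basic arcs. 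You instead observe that $H|_{W^u_1}$ is an orientation-preserving homeomorphism onto $W^u_2$ that carries the $k$th basic point $\phi_1(k)$ to $\phi_2(k)$ with criticality preserved; hence the folding patterns coincide, and Theorem \ref{equivalent} immediately yields $\mfk_{F_1}=\mfk_{F_2}$. Your route is more structural and leans harder on the already-established folding-pattern machinery; the paper's is self-contained at this point (it doesn't reuse Section \ref{sec:fp}) but longer and requires the parity bookkeeping about how many times an arc crosses $\K$. Your ``self-contained finish'' essentially re-derives the forward direction of Theorem \ref{equivalent}, which is redundant given the first argument, but it is not wrong. The two delicate points you flag at the end (absorbing the shift $k$ and transferring the sign conventions) are indeed handled correctly, and the orientation-preservation of $H|_{W^u}$ — which you invoke to match parametrizations — is guaranteed by $H(X_1)=X_2$, $H(z_{1,0})=z_{2,0}$, and $H(z_{1,0}^1)=z_{2,0}^1$, which pin down the direction.
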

	\begin{proof}
		Without loss of generality, we assume that $H$ is a conjugacy such that $H(\gamma_{X_1}) = \gamma_{X_2}$. Then, by Theorem \ref{thm:tp} and Corollary \ref{cor:cp}, it follows that $H(z_{1,i}) = z_{2,i}$, $i \in \Z$. We want to prove that $z_{1,i}^n \in D_1^r$ if and only if $H(z_{1,i}^n) = z_{2,i}^n \in D_2^r$, for every $n, i \in \Z$, $n \ne 0$. 
		
		Let us first assume by contradiction that there exist $i \in \Z$ and $n \in \N$ such that $z_{1,i}^n \in D_1^l$, $z_{2,i}^n \in D_2^r$, and let $n$ and $i$ be such that $z_{1,i}^n$ is the closest point to $X_1$ with that property (that is, the arc $[X_1, z_{1,i}^n) \subset W^u_1$ does not contain any point $z_{1,j}^k$ with that property). Let $\ell_1 = [X_1, z_{1,i}^n] \subset W^u_1$ and $\ell_2 = [X_2, z_{2,i}^n] \subset W^u_2$ be arcs. Then $H(\ell_1) = \ell_2$, and all the turning and post-turning points of $\ell_1$ are mapped onto all the turning and post-turning points of $\ell_2$ and their order is preserved. Let $P \in \ell_1$ be the closest (post-)turning point to $z_{1,i}^n$, so $H(P) \in \ell_2$ is the closest (post-)turning point to $z_{2,i}^n$, and $P$ and $H(P)$ lie in the same component of $D_1$ and $D_2$, respectively. Then one of the arcs $[P, z_{1,i}^n] \subset W^u_1$, $[H(P), z_{2,i}^n] \subset W^u_2$ intersects, and the other one does not intersect the critical locus. Without loss of generality, we assume that $[P, z_{1,i}^n]$ intersects $\K_1$ (in the other case consider $H^{-1}$), and let denote by $Q$ the intersection point. Then $H \circ F_1(\ell_1) = F_2(\ell_2)$, and hence $H([F_1(P), z_{1,i}^{n+1}]) = [F_2(H(P)), z_{2,i}^{n+1}]$. Moreover, $[F_1(P), z_{1,i}^{n+1}]$ contains a turning point $F_1(Q)$, and $[F_2(H(P)), z_{2,i}^{n+1}]$ does not contain any turning point, a contradiction with $H(z_{1,i}) = z_{2,i}$, $i \in \Z$. Therefore, $z_{1,i}^n \in D_1^r$ if and only if $H(z_{1,i}^n) = z_{2,i}^n \in D_2^r$, for every $n \in \N$ and $i \in \Z$.
		
		Let us now consider $z_{1,i}^{-n}$ and $z_{2,i}^{-n}$ for every $n \in \N$ and $i \in \Z$. For every $i \in \Z$, $z_{1,i}$ and $z_{2,i}$ are basic critical points of $F_1$ and $F_2$ respectively, and lie in $\K_1$ and $\K_2$ respectively. Let $n \in \N$. Then $z_{1,i}^{-n}$ lies in the basic arc $[z_{1,j}^k, z_{1,m}^l]^u$ for some $j, m \in \Z$ and at least one of $k, l$ is positive, say $k > 0$. Since $z_{2,i}^{-n} \in [z_{2,j}^k, z_{2,m}^l]^u$ and, as we have already proved, $z_{1,j}^k \in D_1^r$ if and only if $z_{2,j}^k \in D_2^r$, it follows that $z_{1,i}^{-n} \in D_1^r$ if and only if $z_{2,i}^{-n} \in D_2^r$.
	\end{proof}
	
	The proof of Theorem \ref{thm:iff} follows now from Corollary \ref{iff} and Theorem \ref{oneway}.

	\section{The Lozi maps}\label{sec:lm}
	
	We note that all the results presented here for the H\'enon attractors with the Wang-Young parameters, hold also for the Lozi maps within the Misiurewicz parameter set $\mathcal{M} = \{ (a,b) \in \R^2 : b > 0, \ a\sqrt{2} - b > 2, \ 2a + b < 4 \}$. For the parameters in this set, Misiurewicz \cite{M} (see also \cite{MS2}) has shown the existence of strange attractors, with topological mixing. Also, the kneading theory (for the Lozi maps) has been developed by Misiurewicz and the second author in \cite{MS}, where analogues of Theorems \ref{admis} and \ref{esadmis} are proven. The analogue of Theorem \ref{thm:iff} is obtained as follows. One uses the following dictionary, which can be extracted from \cite{BS}.
	
	\begin{center}
		{\bf Dictionary}
		
		\begin{tabularx}{0.99\textwidth} { 
				| >{\raggedright\arraybackslash}X 
				|>{\raggedleft\arraybackslash}X | }
			\hline
			{\bf H\'enon maps from \cite{BC}} & {\bf Lozi maps from \cite{M} } \\
			\hline
			$D$  & $\Delta$  \\
			\hline
			$\K$ & $y$-axis$\, \cap\Delta$\\
			\hline
			$\Omega$ & $T_0$\\
			\hline
		\end{tabularx}
		
	\end{center}
	
	To prove that two Lozi maps $F_1$ and $F_2$ with the parameters in $\mathcal{M}$ are topologically conjugate if their sets of kneading sequences coincide, $\mfk_1 = \mfk_2$, small additional effort is needed, since for the Lozi map $F$ from $\mathcal{M}$, we do not know whether a point $P \in \Lambda_F \setminus W^u$ has at most two itineraries.
	
	First, using the entries from the dictionary, one can apply the proof of Proposition \ref{lem:dif1} to prove the following proposition:
	
	\begin{prop}\label{lem:dif1L}
		Let $F_1$ and $F_2$ be two Lozi maps such that $\mfk_1 = \mfk_2$. Let $\bar p, \bar q \in \Sigma_{F_1}$ be two different elements such that  $\pi_1(\bar p) = \pi_1(\bar q)$, and there exists only one $j \in \Z$ with $p_j \ne q_j$ ($p_i = q_i$ for all integers $i \ne j$). Then $\pi_2(\bar p) = \pi_2(\bar q)$.
	\end{prop}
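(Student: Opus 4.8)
The plan is to transport the proof of Proposition~\ref{lem:dif1} to the Lozi setting essentially word for word, via the dictionary recorded above (with $D$ playing the role of $\Delta$, the critical locus $\K$ that of the $y$-axis in $\Delta$, and $\Omega$ that of $T_0$), together with the Lozi analogues of Theorems~\ref{admis} and \ref{esadmis} from \cite{MS} — which give $\mfk_1=\mfk_2\Rightarrow\Sigma_{W^u_1}=\Sigma_{W^u_2}$ and $\Sigma_{F_1}=\Sigma_{F_2}$ — and the continuity of the coding maps $\pi_1,\pi_2$ (the Lozi analogue of Lemma~\ref{lem:continuous}, extracted from \cite{BS}). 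First I would dispose of the case $\bar p,\bar q\in\Sigma_{W^u_1}$. Two $W^u$-admissible itineraries disagreeing at a single coordinate $j$ are, after applying $\sigma^{-j}$, the two itineraries $\lpinf w \cdot - \ra k_{\h 0}$ and $\lpinf w \cdot + \ra k_{\h 0}$ of a single basic critical point of $F_1$; applying $\sigma$ to either one yields the kneading sequence $\lpinf w \pm \cdot \ra k_{\h 0}\in\mfk_1=\mfk_2$, and since the kneading set records which turning point carries a given kneading sequence, there is a basic critical point $z'$ of $F_2$ with exactly these two itineraries, whence $\pi_2(\bar p)=\pi_2(\bar q)=z'$.

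So assume $P:=\pi_1(\bar p)=\pi_1(\bar q)$ does not lie on $W^u_1$. Because both coding regions contain $\K_1$, a coordinate $j$ at which two itineraries of $P$ differ forces $F_1^j(P)\in\K_1$; replacing $\bar p,\bar q$ by $\sigma^j\bar p,\sigma^j\bar q$ — harmless for the conclusion, since $F_2^j$ is a homeomorphism — I may take $j=0$, $p_0=-$, $q_0=+$, so that $P\in\K_1$. As in the Hénon proof, the basic critical points (the points of $W^u_1\cap\K_1$) are dense in $\K_1\cap\Lambda_{F_1}$, so I can pick basic critical points $z_{n_i}\to P$, each lying between points $P_i,Q_i\in W^u_1$ whose itineraries $\bar p^i\to\bar p$, $\bar q^i\to\bar q$ can be arranged to satisfy $\la p^i_{\h -1}=\la q^i_{\h -1}$, $p^i_0=-$, $q^i_0=+$; here density of $W^s_1\cap\Delta$ in $\Delta$ and continuity of $\pi_1$ play the same role as in the Hénon argument.

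Each basic critical point $z_n$ has precisely the two itineraries $\lpinf w^n \cdot - \ra k^n_{\h 0}$ and $\lpinf w^n \cdot + \ra k^n_{\h 0}$, and neither is isolated in $\Sigma_{F_1}$; using $\la p_{\h -1}=\la q_{\h -1}$ and $\ra p_{\h 1}=\ra q_{\h 1}$, I would pass to a common subsequence so that $(\lpinf w^{n_i} \cdot - \ra k^{n_i}_{\h 0})_i\to\bar p$ and $(\lpinf w^{n_i} \cdot + \ra k^{n_i}_{\h 0})_i\to\bar q$. Now $\sigma(\lpinf w^{n_i} \cdot \pm \ra k^{n_i}_{\h 0})\in\mfk_1=\mfk_2$, so both sequences are itineraries of one and the same basic critical point $z'_{n_i}$ of $F_2$, i.e. $\pi_2(\lpinf w^{n_i} \cdot - \ra k^{n_i}_{\h 0})=\pi_2(\lpinf w^{n_i} \cdot + \ra k^{n_i}_{\h 0})=z'_{n_i}$. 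By continuity of $\pi_2$ the sequence $(z'_{n_i})_i$ converges simultaneously to $\pi_2(\bar p)$ and to $\pi_2(\bar q)$, so $\pi_2(\bar p)=\pi_2(\bar q)$.

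The hard part is not a new idea but the bookkeeping: verifying that every Hénon-specific ingredient above has a genuine Lozi counterpart under the dictionary — the $W^u$-admissibility characterisation of \cite{MS}, continuity and surjectivity of the coding map, density of $W^s\cap\Delta$ in $\Delta$, and the fact that a basic critical point carries exactly two itineraries differing in one coordinate. The one real difference from the Hénon case is that for a Lozi map we do not know that a point of $\Lambda_F\setminus W^u$ has at most two itineraries; this is precisely why Proposition~\ref{lem:dif1L} carries the extra hypothesis that $\bar p$ and $\bar q$ differ at a single coordinate, and it is exactly that hypothesis which legitimises the reduction ``$P\in\K_1$'' in the second paragraph. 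With the dictionary in hand no further subtlety appears, and the argument transfers with no new estimates.
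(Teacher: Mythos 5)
Your argument is exactly what the paper intends: the paper's proof of this proposition is literally the instruction to apply the dictionary to the proof of Proposition~\ref{lem:dif1}, and you have carried out that transport faithfully, including the correct observation that the single-coordinate hypothesis is what replaces the Hénon-specific fact that every point has at most two itineraries. (Minor slip: in your first paragraph $\sigma^{-j}$ should read $\sigma^{j}$, as you have it correctly in the second.)
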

	
	Now we consider the possibility that a point $P$ has more than two itineraries.
	
	\begin{prop}\label{lem:difnL}
		Let $F_1$ and $F_2$ be two Lozi maps such that $\mfk_1 = \mfk_2$. Let $n \in \N$. Suppose that $\bar p, \bar q \in \Sigma_{F_1}$ are such that $\pi_1(\bar p) = \pi_1(\bar q)$, and there exist $j_1, \dots , j_n \in \Z$ such that $p_{j_i} \ne q_{j_i}$ and $p_k = q_k$ for all integers $k \ne j_i$ and for every $i = 1, \dots , n$. Then $\pi_2(\bar p) = \pi_2(\bar q)$.
	\end{prop}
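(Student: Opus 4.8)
The plan is to prove Proposition~\ref{lem:difnL} by induction on $n$, the number of coordinates at which $\bar p$ and $\bar q$ differ. The base case $n=1$ is exactly Proposition~\ref{lem:dif1L}, so only the inductive step requires work, and it proceeds by ``peeling off'' one disagreement coordinate at a time.

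Assume the statement holds for $n-1$, and let $\bar p,\bar q\in\Sigma_{F_1}$ satisfy $\pi_1(\bar p)=\pi_1(\bar q)=:P$, with $p_{j_i}\ne q_{j_i}$ for $i=1,\dots,n$ and $p_k=q_k$ for all $k\notin\{j_1,\dots,j_n\}$. First I would record the elementary but essential observation that, since both $\bar p$ and $\bar q$ are itineraries of the same point $P$, for each $i$ we have $P^{j_i}\in D_1^l\cap D_1^r=\K_1$: the point $P^{j_i}$ must lie in the closed region prescribed by $p_{j_i}$ and also in the closed region prescribed by $q_{j_i}$, and these two opposite closed regions meet exactly along the critical locus (via the dictionary, the $y$-axis segment). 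Consequently, flipping the symbol at any one of the coordinates $j_1,\dots,j_n$ produces another itinerary of the \emph{same} point $P$.

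In particular, define $\bar r=(r_k)_{k\in\Z}$ by $r_{j_n}=p_{j_n}$ and $r_k=q_k$ for $k\ne j_n$. By the previous paragraph $\bar r$ is again an itinerary of $P$, so $\bar r\in\Sigma_{F_1}$ and $\pi_1(\bar r)=P=\pi_1(\bar p)=\pi_1(\bar q)$. Now $\bar r$ and $\bar q$ differ at the single coordinate $j_n$ and agree elsewhere, so Proposition~\ref{lem:dif1L} gives $\pi_2(\bar r)=\pi_2(\bar q)$; while $\bar r$ and $\bar p$ agree at $j_n$ and differ exactly at the $n-1$ coordinates $j_1,\dots,j_{n-1}$, so the inductive hypothesis gives $\pi_2(\bar p)=\pi_2(\bar r)$. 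Combining the two equalities yields $\pi_2(\bar p)=\pi_2(\bar q)$, completing the induction.

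The one point that genuinely requires care — and which I regard as the main obstacle — is the admissibility of the intermediate sequence $\bar r$: we must know that switching a coordinate back still yields an element of $\Sigma_{F_1}$. For the H\'enon maps this is automatic from $z^i\notin\C$ for $i\ne 0$, so that a point has at most two itineraries; for Lozi maps an orbit may visit the $y$-axis several times, which is precisely why the statement must be phrased for arbitrary finite $n$. The observation that every disagreement coordinate forces the orbit onto $\K_1$, together with the fact that $\K_1$ lies in both coding regions, is exactly what makes the peeling-off argument legitimate, and no geometric input beyond Proposition~\ref{lem:dif1L} and the structure of the coding is needed.
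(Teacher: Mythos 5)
Your proof is correct and takes essentially the same approach as the paper: observe that each disagreement coordinate $j_i$ forces $P^{j_i}$ onto the critical locus (the $y$-axis for Lozi maps), so flipping symbols at those coordinates still yields itineraries of $P$, and then conclude by induction on $n$ together with Proposition~\ref{lem:dif1L}. The paper states the induction in one line; you have spelled out the inductive step (introducing the intermediate sequence $\bar r$), which is a welcome expansion but not a different argument.
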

	\begin{proof}
		Let $P = \pi_1(\bar p)$. By the assumptions of the lemma, $F_1^{j_i}(P) \in y$-axis for $i = 1, \dots , n$. Therefore, the set $\pi^{-1}_1(P)$ contains all itineraries of the form 
		$$\la p_{\h j_1-1} \pm p_{j_1+1} \dots p_{j_2-1} \pm  p_{j_2+1}  \dots p_{j_n-1} \pm \ra p_{\h j_n+1},$$ where $p_k$, $k \ne j_i$, are coordinates of the itinerary $\bar p$, and for the $j_i$-th coordinates we can choose $+$ or $-$ in an arbitrary way.
		
		Now, the proof follows by induction and Lemma \ref{lem:dif1L}.
	\end{proof}
	
	\begin{prop}\label{prop:difL}
		Let $F_1$ and $F_2$ be two Lozi maps such that $\mfk_1 = \mfk_2$. Suppose that $\bar p, \bar q \in \Sigma_{F_1}$ are such that $\pi_1(\bar p) = \pi_1(\bar q)$, and there is a sequence $(j_i)_{i \in \Z} \subset \Z$, such that $p_{j_i} \ne q_{j_i}$ and $p_k = q_k$ for all integers $k \notin (j_i)_{i \in \Z}$, then $\pi_2(\bar p) = \pi_2(\bar q)$.
	\end{prop}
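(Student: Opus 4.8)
The plan is to deduce this from the finite-disagreement case, Proposition~\ref{lem:difnL}, by a truncation-and-limit argument, exploiting the continuity of the coding map $\pi_2$. I first note that $\mfk_1 = \mfk_2$ forces $\Sigma_{F_1} = \Sigma_{F_2}$ — the Lozi analogue of the implication coming from Theorems~\ref{admis} and~\ref{esadmis}, available in \cite{MS} and transferred through the dictionary — so that $\bar p, \bar q \in \Sigma_{F_2}$ and both $\pi_2(\bar p)$ and $\pi_2(\bar q)$ are defined. Write $P = \pi_1(\bar p) = \pi_1(\bar q)$. Since $\bar p$ and $\bar q$ are both itineraries of $P$ and disagree precisely at the coordinates $j_i$, each iterate $F_1^{j_i}(P)$ lies on the critical locus (the part of the $y$-axis inside $\Delta$); and because in this symbolic coding both closed coding regions contain the critical locus, flipping the sign of an itinerary of $P$ at any finite set of the coordinates $j_i$ again yields an itinerary of $P$. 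This is exactly the mechanism already used inside the proof of Proposition~\ref{lem:difnL}.

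For each $N \in \N$ I would define $\bar p^N$ by $p^N_k = p_k$ for $|k| \le N$ and $p^N_k = q_k$ for $|k| > N$. Then $\bar p^N$ is obtained from $\bar q$ by flipping its signs at the finitely many coordinates among the $j_i$ lying in $\{-N,\dots,N\}$, so by the preceding remark $\bar p^N \in \pi_1^{-1}(P) \subset \Sigma_{F_1} = \Sigma_{F_2}$. If no $j_i$ lies in $\{-N,\dots,N\}$ then $\bar p^N = \bar q$; otherwise Proposition~\ref{lem:difnL} applies to the pair $\bar p^N, \bar q$, and in either case $\pi_2(\bar p^N) = \pi_2(\bar q)$. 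By construction $\bar p^N$ and $\bar p$ agree on $\{-N,\dots,N\}$, hence $\bar p^N \to \bar p$ in the product topology, and the Lozi analogue of the continuity Lemma~\ref{lem:continuous} then gives $\pi_2(\bar p) = \lim_{N \to \infty} \pi_2(\bar p^N) = \pi_2(\bar q)$, as required.

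The argument is short once Proposition~\ref{lem:difnL} is available, and I do not anticipate a genuine obstacle. The two points that need a word of justification — the admissibility of the truncations $\bar p^N$, and the continuity of $\pi_2$ on $\Sigma_{F_2}$ — both rely only on structural features that hold verbatim for the Lozi family: the critical locus sits in both closed coding regions, and the analogues of Lemmas~\ref{lem:unique} and~\ref{lem:continuous} go through via the dictionary, as recorded in \cite{BS}. In effect, the proposition is just the continuity-and-density closure of the finite case.
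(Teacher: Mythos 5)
Your proposal is correct and takes essentially the same route as the paper: both approximate one of $\bar p$, $\bar q$ by a sequence of itineraries of $P$ differing from the other in only finitely many coordinates, invoke Proposition~\ref{lem:difnL} at each finite stage, and conclude by continuity of $\pi_2$. You approximate $\bar p$ by window-truncations $\bar p^N$ (agreeing with $\bar q$ outside $\{-N,\dots,N\}$), whereas the paper approximates $\bar q$ by flipping $\bar p$ at one coordinate $j_i$ at a time; these are mirror images of the same argument, and your version is if anything slightly cleaner in its indexing.
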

	\begin{proof}
		Let us assume that there exist $\bar p, \bar q \in \Sigma_{F_1}$ that satisfy the assumptions of the lemma. Let $P = \pi_1(\bar p)$. By the assumptions, $F_1^{j_i}(P) \in y$-axis for $i \in \Z$. Therefore, the set $\pi^{-1}_1(P)$ contains all itineraries of the form 
		$$\dots p_{j_{-1}-1} \pm p_{j_{-1}+1} \dots p_{j_0-1} \pm  p_{j_0+1}  \dots p_{j_1-1} \pm p_{j_1+1} \dots,$$ where $p_k$, $k \ne j_i$, are coordinates of the itinerary $\bar p$, and for the $j_i$-th coordinates we can choose $+$ or $-$ in an arbitrary way.
		
		Let $j_i$ be chosen such that the coordinate $j_0$ is the closest one to the decimal point (if there are two such indices take any of them), and $j_{-k-1} < j_{-k} < j_0 < j_k < j_{k+1}$ for every $k \in \N$ (as it is already shown in the sequence above). Let us consider a sequence $(\bar q^n)_{n \in \N_0}$, $\bar q^n \in \pi_1^{-1}(P)$, such that $\bar p$ and $\bar q^0$ disagree only at $j_0$ coordinate, and for every $n \in \N_0$, $\bar q^n$ and $\bar q^{n+1}$ disagree only at $j_{|n|}$ coordinates.
		By Proposition \ref{lem:difnL}, $\pi_2(\bar p) = \pi_2(\bar q^n)$ for every $n \in \N_0$. Note that the sequence $(\bar q^n)_n$ converges to $\bar q$. Since $\pi_2$ is continuous, $\pi_2(\bar p) = \pi_2(\bar q)$, as claimed in the proposition.
	\end{proof}
	
	Now again, using the entries from the dictionary, one introduces the equivalence relation on the space of symbolic sequences, to get the following theorem. 
	\begin{theorem}\label{Lozioneway}
		Two Lozi maps $F_1$ and $F_2$ with the parameters in $\mathcal{M}$ are topologically conjugate on their strange attractors if their sets of kneading sequences coincide, $\mfk_1 = \mfk_2$.
	\end{theorem}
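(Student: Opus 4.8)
The plan is to deduce the statement from Proposition~\ref{prop:difL} by exactly the quotient argument used to pass from Proposition~\ref{lem:dif1} to Theorem~\ref{oneway} in the H\'enon case. First I would observe that, through the dictionary $D\leftrightarrow\Delta$, $\K\leftrightarrow(y\text{-axis})\cap\Delta$, $\Omega\leftrightarrow T_0$, the symbolic apparatus of Sections~\ref{sec:cl}--\ref{sec:coa} transfers to the Lozi maps $F_i$ ($i=1,2$): one gets an itinerary map $\pi_i:\Sigma_{F_i}\to\Lambda_{F_i}$ which, by the Lozi analogues of Lemmas~\ref{lem:unique} and~\ref{lem:continuous} (available via \cite{BS}), is a well-defined continuous surjection intertwining $\sigma$ and $F_i$, with $\Sigma_{F_i}$ compact. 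Moreover, the analogues of Theorems~\ref{admis} and~\ref{esadmis} hold (they are the kneading theory of \cite{MS}), so admissibility is characterized purely by the kneading set; hence $\mfk_1=\mfk_2$ forces $\Sigma_{F_1}=\Sigma_{F_2}=:\Sigma$ (and likewise $\Sigma_{W^u_1}=\Sigma_{W^u_2}$).

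Next, on the common space $\Sigma$ I would define, for each $i$, the equivalence relation $\sim_i$ by $\bar p\sim_i\bar q$ iff $\pi_i(\bar p)=\pi_i(\bar q)$, and claim $\sim_1=\sim_2$. Indeed, if $\bar p\sim_1\bar q$, the coordinates $j$ at which $\bar p$ and $\bar q$ may differ are precisely those for which $F_1^{j}(\pi_1(\bar p))$ lies on the $y$-axis, so Proposition~\ref{prop:difL}—together with its special cases Propositions~\ref{lem:dif1L} and~\ref{lem:difnL} for one and for finitely many such coordinates—yields $\pi_2(\bar p)=\pi_2(\bar q)$, i.e. $\bar p\sim_2\bar q$. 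Interchanging $F_1$ and $F_2$, which is legitimate since $\mfk_1=\mfk_2$ is symmetric, gives the reverse inclusion. Thus the quotient $(\Sigma/\!\sim,\,\widetilde\sigma)$, with $\widetilde\sigma([\bar p])=[\sigma\bar p]$, is literally the same dynamical system for $i=1$ and $i=2$.

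Finally, $\widetilde\pi_i:\Sigma/\!\sim\ \to\Lambda_{F_i}$, $\widetilde\pi_i([\bar p])=\pi_i(\bar p)$, is a well-defined continuous bijection from the compact space $\Sigma/\!\sim$ onto the Hausdorff space $\Lambda_{F_i}$, hence a homeomorphism, and it conjugates $\widetilde\sigma$ with $F_i|_{\Lambda_{F_i}}$ because $F_i\circ\pi_i=\pi_i\circ\sigma$. Therefore $\widetilde\pi_2\circ\widetilde\pi_1^{-1}:\Lambda_{F_1}\to\Lambda_{F_2}$ is the desired topological conjugacy on the strange attractors. I expect the genuine difficulty to lie entirely in Proposition~\ref{prop:difL} and in the (routine but not entirely trivial) verification that the coding constructions of Sections~\ref{sec:cl}--\ref{sec:coa} carry over verbatim under the dictionary; once these are granted, the quotient argument is soft, just as for Theorem~\ref{oneway}. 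The Lozi-specific obstacle—absent in the H\'enon case, where Theorem~\ref{thm:WY}(II)(2) forces at most two itineraries per point—is exactly that a point of $\Lambda_{F_1}$ may have infinitely many itineraries, which is why the single Proposition~\ref{lem:dif1} must be replaced by the hierarchy~\ref{lem:dif1L}--\ref{prop:difL}.
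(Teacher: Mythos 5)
Your proof proposal is correct and matches the paper's own route essentially step for step: transfer the coding apparatus via the dictionary, note that the kneading theory of \cite{MS} gives the Lozi analogues of Theorems~\ref{admis}--\ref{esadmis} so that $\mfk_1=\mfk_2$ forces $\Sigma_{F_1}=\Sigma_{F_2}$, invoke the chain Propositions~\ref{lem:dif1L}--\ref{prop:difL} (in place of the single Proposition~\ref{lem:dif1}, precisely because a Lozi point may admit more than two itineraries), and conclude by the same quotient-and-compactness argument that proves Theorem~\ref{oneway}. Your added remark that one must swap the roles of $F_1$ and $F_2$ to get $\sim_1=\sim_2$ in both directions is a correct, if tacit, part of the paper's argument as well.
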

	The results of Section \ref{sec:fp} for Lozi maps are proven in \cite{MS}. Finally, for the results of Section \ref{sec:cla} one uses the dictionary again, and the results in \cite{BS}, and almost verbatim copies the proofs of all the results in that section to get the following.
	\begin{theorem}\label{Lozithm:tp}
		Suppose that $H : \Delta_1 \to \Delta_2$ is a conjugacy between two Lozi maps $F_1$ and $F_2$ with parameters in $\mathcal{M}$, and with turning points $\{S_{1,i} : i \in \Z\}$ and $\{S_{2,i} : i \in \Z\}$ respectively. Then there exists $k \in \Z$ such that $H(S_{1,i}^n) = S_{2,i}^{k+n}$, for every $i, n \in \Z$.
	\end{theorem}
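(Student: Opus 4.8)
The plan is to transport the proof of Theorem~\ref{thm:tp} to the Lozi setting by means of the dictionary above. In the dictionary the H\'enon maps play the role of those from \cite{BC}, i.e.\ the $b>0$ case, so for Lozi maps in $\mathcal{M}$ only the turning-orbit arguments of Section~\ref{sec:cla} are needed and the quasi-critical machinery (used there only for $b<0$) can be ignored. Concretely, by \cite{BS} and the discussion in Subsection~\ref{ss:prelim2} there is, for each $i=1,2$, a densely branching tree $\T_i$, a continuous tree map $f_i:\T_i\to\T_i$, and a conjugacy $\Pi_i:\Lambda_i\to\invlim(\T_i,f_i)$ built from $\Gamma_i=\{\gamma:\gamma\text{ a component of }\Delta_i\cap W^s_i\}$ exactly as in the H\'enon case, where the critical locus $\K$ is replaced by $y\text{-axis}\cap\Delta_i$ and the region $\Omega$ by $T_0$. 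With these substitutions the stem construction of Subsection~\ref{ss:stems}, Lemma~\ref{lem:ebp}, and the definitions of $E_X$ and $B_X$ carry over verbatim; one only checks the geometric inputs in the piecewise-affine setting, namely that $\Gamma_i$ is dense in $\Delta_i$, that $F_i^{-n}(y\text{-axis})\cap\Delta_i$ is a finite union of arcs with endpoints on the unstable boundary of $\Delta_i$, and that $y\text{-axis}\cap W^u_i$ is countable (all standard for the Misiurewicz Lozi attractor, cf.\ \cite{M}, \cite{MS}). Note that, in contrast to Theorem~\ref{Lozioneway}, this argument does not interact with the possible non-uniqueness of itineraries for Lozi maps, since it is conducted entirely through the tree conjugacies $\Pi_i$.

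The argument then proceeds in the same four steps as Section~\ref{sec:cla}. First I would establish the Lozi analogue of Lemma~\ref{lem:gamma}: letting $\gamma_{X_i}$ be the component of $\Delta_i\cap W^s_i$ through $X_i$, a conjugacy $H$ sends $W^u_i,W^s_i,X_i$ to their counterparts, and the two endpoints of $\gamma_{X_i}$ are distinguished dynamically by whether they are accumulated by homoclinic points lying on $\gamma_{X_i}$; since there are only two homoclinic orbits with the relevant ``no self-crossing'' property, $H(\partial\gamma_{X_1})$ must coincide with $\partial\gamma_{X_2}$ up to a power $F_2^k$. Absorbing $F_2^k$ into $H$, I would run the Lozi analogue of Proposition~\ref{prop:con}: $H$ induces a homeomorphism $\tilde H:\Gamma_1\to\Gamma_2$, hence a conjugacy $h:(\T_1,f_1)\to(\T_2,f_2)$ with $\psi h\psi^{-1}|_{\Gamma_1}=\tilde H$ and $\tilde h\circ\Pi_1=\Pi_2\circ H$, and $h$ preserves stems, their levels, and their ordering. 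Then the Lozi versions of Lemmas~\ref{lem:ep}, \ref{lem:bp} and \ref{lem:ebtoeb} follow by the same local analysis of $F_i^{-1}$ acting on preimages of $y\text{-axis}\cap\Delta_i$: every $x\in E_{X_i}$ has $f_i^{-1}(x)$ a single point of $E_{X_i}$, every $x\in B_{X_i}$ has $f_i^{-1}(x)\cap B_{X_i}\ne\emptyset$, and for each $i$ the backward orbit of each turning point $S_{i,j}$ eventually lands in $E_{X_i}\cup B_{X_i}$, these sets being exhausted in this way.

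With all this in place the conclusion is immediate, exactly as in the proof of Theorem~\ref{thm:tp}. Since $h$ preserves stems with their levels and order, it maps the endpoint in $E_{X_1}$ (resp.\ the branch points in $B_{X_1}$) of a given stem of $\T_1$ to the endpoint in $E_{X_2}$ (resp.\ the branch points in $B_{X_2}$) of the corresponding stem of $\T_2$. Fixing $i$ and choosing $n$ with $\pi_1\circ F_1^{-n}(S_{1,i})\in E_{X_1}\cup B_{X_1}$, the identity $\pi_2\circ F_2^{-n-j}\circ H(S_{1,i})=h\circ\pi_1\circ F_1^{-n-j}(S_{1,i})$ keeps the left side in $E_{X_2}\cup B_{X_2}$ for all $j\ge 0$, and by the Lozi Lemma~\ref{lem:ebtoeb} together with the normalization $H(\gamma_{X_1})=\gamma_{X_2}$ this forces $F_2^{-n-j}\circ H(S_{1,i})=F_2^{-n-j}(S_{2,i})$, i.e.\ $H(S_{1,i})=S_{2,i}$. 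Undoing the normalization gives $H(S_{1,i})=S_{2,i}^k$ for a single $k\in\Z$ independent of $i$, hence $H(S_{1,i}^n)=S_{2,i}^{k+n}$ for all $i,n\in\Z$.

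I expect the combinatorial heart of the argument to be a line-by-line translation; the main obstacle is instead checking that the dictionary entries genuinely license all the \emph{geometric} inputs of Section~\ref{sec:cla} in the piecewise-affine setting: density of $\Gamma_i$ in $\Delta_i$, the fact that $F_i^{-n}(y\text{-axis})\cap\Delta_i$ behaves structurally like $F_i^{-n}(\K)$ (finitely many arcs with endpoints on the unstable boundary), and especially the homoclinic-point characterization separating the two endpoints of $\gamma_{X_i}$ that underlies Lemma~\ref{lem:gamma}. A secondary point requiring care is the rigidity of the turning-point indexing --- that the only freedom in matching $\{S_{1,i}\}$ with $\{S_{2,i}\}$ is the single global shift $k$ --- which is precisely what the level- and order-preservation of $h$ on stems, inherited from Proposition~\ref{prop:con}, provides.
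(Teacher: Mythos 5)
Your proposal is correct and takes essentially the same approach as the paper, which treats the theorem as a dictionary-guided verbatim translation of the $b>0$ H\'enon argument from Section~\ref{sec:cla} (Lemma~\ref{lem:gamma}, Proposition~\ref{prop:con}, Lemmas~\ref{lem:ep}, \ref{lem:bp}, \ref{lem:ebtoeb}, and Theorem~\ref{thm:tp}) to the Lozi setting. You correctly observe that the Misiurewicz set $\mathcal{M}$ has $b>0$, so the quasi-critical machinery for $b<0$ is not needed, and you identify the right geometric inputs (density of $\Gamma_i$, structure of $F_i^{-n}(y\text{-axis})\cap\Delta_i$, and the homoclinic characterization of $\partial\gamma_{X_i}$) that must be verified in the piecewise-affine case.
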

	That gives the following result for the Lozi maps.
	\begin{cor}\label{Loziiff}
		Suppose that $H : \Delta_1 \to \Delta_2$ is a conjugacy between two Lozi maps $F_1$ and $F_2$ with parameters in $\M$. Then the sets of kneading sequences of $F_1$ and $F_2$ coincide.
	\end{cor}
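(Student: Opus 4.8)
The plan is to adapt, almost verbatim, the proof of Corollary \ref{iff} to the Lozi setting, translating $D$, $\K$ and $\Omega$ via the dictionary above (so the critical locus $\K$ becomes the part of the $y$-axis lying in $\Delta$), and using Theorem \ref{Lozithm:tp} in the role of Theorem \ref{thm:tp}. Since every parameter pair in $\M$ has $b>0$, both maps reverse orientation, so no analogue of Corollary \ref{cor:cp} is needed. First I would normalize $H$: by Theorem \ref{Lozithm:tp} there is $k\in\Z$ with $H(S_{1,i}^{\,n})=S_{2,i}^{\,k+n}$ for all $i,n\in\Z$, and replacing $H$ by $F_2^{-k}\circ H$ (still a conjugacy between $F_1$ and $F_2$) we may assume $H(S_{1,i})=S_{2,i}$ for every $i\in\Z$.

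Next I would deduce that $H$ matches the critical points. The turning points are the first images of the points of $\K_i\cap W^u_i$, i.e. $S_{i,j}=F_i(z_{i,j})$; since $H$ conjugates $F_1$ to $F_2$ and $F_2$ is a bijection, $H(S_{1,j})=S_{2,j}$ forces $H(z_{1,j})=z_{2,j}$ for every $j\in\Z$. Because $H(W^u_1)=W^u_2$, it follows that $H$ carries the orbit of each $z_{1,j}$ onto the orbit of $z_{2,j}$ with the time index preserved, and carries turning and post-turning points to turning and post-turning points preserving their order along any subarc of $W^u$.

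The core step is to show that $H$ preserves the left/right coding along these orbits: for every $i\in\Z$ and $n\in\Z\setminus\{0\}$, $z_{1,i}^{\,n}$ lies on the right of the $y$-axis in $\Delta_1$ if and only if $z_{2,i}^{\,n}=H(z_{1,i}^{\,n})$ lies on the right of the $y$-axis in $\Delta_2$. For $n>0$ I would argue exactly as in the proof of Corollary \ref{iff}: suppose not, and choose a violation $z_{1,i}^{\,n}$ closest to $X_1$ along $W^u_1$, so that every post-critical point $z_{1,j}^{\,k}$ ($k\ge 1$) of $[X_1,z_{1,i}^{\,n})^u$ is coded consistently; let $P$ be the turning- or post-turning point of $[X_1,z_{1,i}^{\,n}]^u$ nearest to $z_{1,i}^{\,n}$, so that $P$ and $H(P)$ lie on the same side; then exactly one of the arcs $[P,z_{1,i}^{\,n}]^u$, $[H(P),z_{2,i}^{\,n}]^u$ meets the critical locus, and pushing forward by $F_1$, resp.\ $F_2$, produces a turning point of $[F_1(P),z_{1,i}^{\,n+1}]^u$ with no counterpart in $[F_2(H(P)),z_{2,i}^{\,n+1}]^u$, contradicting $H(S_{1,j})=S_{2,j}$ for all $j$. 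For $n<0$, $z_{1,i}^{\,n}$ lies in a basic arc of $W^u_1$ one of whose endpoints is a post-critical point $z_{1,j}^{\,k}$ with $k>0$ (already handled); since $W^u$ meets the critical locus only at critical points, a basic arc lies entirely on one side of it, and as $H$ sends this basic arc onto the corresponding basic arc of $W^u_2$ the side of $z_{1,i}^{\,n}$ is forced to agree with that of $z_{2,i}^{\,n}$. All of this takes place on $W^u$, where a point has at most two itineraries, so the multivalued-itinerary phenomenon that distinguishes Lozi from H\'enon off $W^u$ plays no role here. Having established this, the itinerary of $S_{1,i}$ records precisely the sides visited by the orbit of $z_{1,i}$, together with the left tail determined by the arc-code of the basic arc carrying $z_{1,i}^{-1}$ (also preserved by $H$), and likewise for $S_{2,i}$; these data coincide, so the $i$th kneading sequence of $F_1$ equals that of $F_2$ for every $i$, whence $\mfk_1=\mfk_2$.

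The step I expect to be the main obstacle is verifying that the combinatorial structure of $W^u$ used in the H\'enon argument --- its partition into basic arcs and leaves, the compatibility of this partition with the $y$-axis, and the fact that a leaf can contain only post-turning points of strictly lower level --- is available for the Lozi maps in exactly the form needed; this is precisely what the kneading theory of \cite{MS} supplies, so the translation should be routine once the dictionary is in place. A secondary point to check with care is that after the normalization one really obtains $H(z_{1,i})=z_{2,i}$ on the nose (not merely up to a further shift), which uses $H(W^u_1)=W^u_2$ and the fact that the indexing of the turning points is inherited from that of the critical points.
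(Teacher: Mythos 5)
Your proposal is correct and takes essentially the same approach as the paper, which explicitly says that the results of Section \ref{sec:cla} carry over to the Lozi case by applying the dictionary and copying the proofs almost verbatim. Your normalization via Theorem \ref{Lozithm:tp}, the deduction $H(z_{1,j})=z_{2,j}$ from injectivity of $F_2$ (noting correctly that no analogue of Corollary \ref{cor:cp} is needed since $\mathcal{M}$ has $b>0$), and the two-case argument for positive and negative iterates reproduce the proof of Corollary \ref{iff} translated through the dictionary exactly as intended.
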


\appendix

\section{Infinitely many nonconjugate H\'enon maps in terms of the Wang-Young parameters.}

In this section we apply the classification results in terms of the kneading sets to show that there are infinitely many Wang-Young's parameters such that the corresponding H\'enon maps are not conjugate.

Let $q_a : [-1, 1] \to [-1, 1]$, $q_a(x):=1-ax^2$ for $x \in [-1, 1]$ and $a\in (0,2]$ be the quadratic family. Barge and Holte \cite{BH} showed that if $q_a$ has an attracting cycle, then there exists a $b_0 > 0$ such that for all $b \in [-b_0, b_0]$ the H\'enon maps $F_{a,b}$ are conjugate to the natural extension of $q_a$. Consequently, if $q_a$ and $q_{a'}$ have attracting periodic cycles and are nonconjugate, then there exists a $b_0 > 0$ such that for all $b, b' \in [-b_0, b_0]$ the H\'enon maps $F_{a,b}$ and $F_{a',b'}$ are nonconjugate, resulting in a countable collection of nonconjugate H\'enon maps. Below we extend this result to the case when $q_a$ and $q_{a'}$ are Misiurewicz maps. 

Recall that a map $q_a$ is called a \emph{Misiurewicz map} if:
\begin{enumerate}
	\item There is no $x \in [-1, 1]$ with $f^n(x) = x$ and $|(f^n)'(x)| \leq 1$,
	\item $\inf_{n>0}d(f^n(0), 0) > 0$.
\end{enumerate}
Note that for two quadratic Misiurewicz maps $q_{a}$ and $q_{a'}$ with parameters $a, a'\in [1.5,2]$, $a\neq a'$ implies that $q_{a}$ and $q_{a'}$ are not conjugate, and there are uncountable many nonconjugate quadratic Misiurewicz maps.

\begin{theorem}
	Let $a', a'' \in [1.5, 2]$ be distinct and such that $q_{a'}, q_{a''}$ are  Misiurewicz maps. There exist a $\xi>0$, and positive measure sets $\Delta_1\subset ([a' - \xi, a' + \xi]\times [-\xi,\xi])\cap \WY$ and $\Delta_2 \subset ([a'' - \xi, a'' + \xi]\times [-\xi,\xi])\cap \WY$, such that for any $(a_1, b_1)  \in \Delta_1$ and $(a_2, b_2) \in \Delta_2$ the H\'enon maps $F_{a_1,b_1}$ and $F_{a_2,b_2}$ are not conjugate. 
\end{theorem}
\begin{proof}
	Note that 
	\begin{enumerate}[(1)]
		\item $F_{a,b}$ converges uniformly to $F_{a_0,0}$ as $a \to a_0$ and $b \to 0$, and $F_{a_0,0}$ is conjugate to the quadratic map $q_{a_0}$ on its nonwandering set contained in $\{y=0\}$.
		\item For every  $\eps > 0$ and $k_0 \in \N$, there exists $\delta > 0$ such that, for every $(a,b)\in [1.5, 2] \times ([-1,1] \setminus \{ 0 \})$ and $S \subset \mathbb{R}^2$, if $\diam(S) < \delta$ then $\diam(F_{a,b}^k(S)) < \eps$, for every $k \le k_0$.
		\item By \cite[Theorem 1.1 (2)]{WY}, for every $\eps > 0$, there exists $m_\eps \in \N$ such that $d_\C (z^k) > \eps$ for every $(a, b) \in \WY$, $z \in \C$ and $k \le m_\eps$. Also,  $\eps \to 0$ implies $m_{\eps} \to \infty$. (Recall that $d_\C(p)$ denotes the distance between a point $p$ and the critical set $\C$ defined below Theorem \ref{thm:WY}.)
	\end{enumerate}
	
	Let $a', a'' \in [1.5, 2]$ be distinct and such that the quadratic maps $q_{a'}, q_{a''}$ are  Misiurewicz maps. Let $\eps > 0$ be arbitrary. Let $m_\eps \in \N$ be as in (3). Let $\delta > 0$ be as in (2) for $k_0 = m_\eps$. By (1), there exists $n_0 \in \N$ such that $[a' - \frac{1}{n_0}, a' + \frac{1}{n_0}]$ and $[a'' - \frac{1}{n_0}, a'' + \frac{1}{n_0}]$ are disjoint, and
	$$(a, b) \in \WY \cap ([a' - \frac{1}{n_0}, a' + \frac{1}{n_0}] \cup [a'' - \frac{1}{n_0}, a'' + \frac{1}{n_0}])  \times [-\frac{1}{n_0}, \frac{1}{n_0}]$$ implies $\diam(\C) < \delta$. 
	
	Suppose by contradiction that for each $n > n_0$ there exist $$(a_1^n, b_1^n) \in \WY \cap [a' - \frac1n, a' + \frac1n] \times [-\frac1n, \frac1n]$$ and $$(a_2^n, b_2^n) \in \WY \cap [a'' - \frac1n, a'' + \frac1n] \times [-\frac1n, \frac1n],$$ such that the H\'enon maps $F_{a^n_1,b^n_1}$ and $F_{a^n_2,b^n_2}$ are conjugate. Let $\C^n_i$ denote the set of critical points of $F_{a^n_i,b^n_i}$ for $i = 1, 2$ and $n \in \mathbb{N}$, $n > n_0$.
	
	By the choice of $n_0$, we have that $\diam(\C^n_i) < \delta$ for $i = 1, 2$ and $n \in \mathbb{N}$, $n > n_0$, and by (2), $\diam(F_{a^n_i,b^n_i}^k(\C^n_i)) < \eps$, for every $k \le m_\eps$. In addition by (3), $d_{\C^n_i} (z^k) > \eps$ for every $z \in \C^n_i$ and $k \le m_\eps$, implying that, for $i \in \{ 1, 2 \}$ and $n \in \mathbb{N}$, $n > n_0$, all points of $F_{a^n_i,b^n_i}^k(\C^n_i)$ lie on the same side of the critical locus, and consequently, every kneading sequence $\bar k^n_i$ of $F_{a^n_i,b^n_i}$ has its right tail $\ra k_{\h 0,i}^n$ agreeing with the kneading sequence of $q_{a_i}$ on the first $m_n$ coordinates. Note that $m_n \to \infty$ as $n \to \infty$. But this leads to a contradiction since the kneading sequences of $q_{a_1}$ and $q_{a_2}$ are distinct, and $\mathfrak{K}_{F_{a^n_1,b^n_1}} = \mathfrak{K}_{F_{a^n_2,b^n_2}}$. Therefore, there exist positive measure sets $$\Delta_1 \subset \WY \cap [a' - \frac{1}{n_0}, a' + \frac{1}{n_0}] \times [-\frac{1}{n_0}, \frac{1}{n_0}],$$ $$\Delta_2 \subset \WY \cap [a'' - \frac{1}{n_0}, a'' + \frac{1}{n_0}] \times [-\frac{1}{n_0}, \frac{1}{n_0}]$$ such that for any $(a_1, b_1)  \in \Delta_1$, $(a_2, b_2) \in \Delta_2$ the H\'enon maps $F_{a_1,b_1}$ and $F_{a_2,b_2}$ are not conjugate. 
\end{proof}

\section{All branch points of $\mathbb{T}$ come from tangencies and have order three.}

In this section we prove that every branch point of $\T$ has order three and represents a tangential intersection of the boundary of $D$ and a stable manifold. As a corollary, we obtain that $\T$ is a unique topological object, the same for all parameters in $\WY$, namely the universal dendrite of order three. 

Let $\d$ be a dendrite which is not a finite tree. Let us denote by $E_\d$ the set of all end points of $\d$, and by $B_\d$ the set of all branch points of $\d$. In Definition \ref{df:BP} we have defined a set of branch points $B_X$ of our dendrite $\T$ (assigned to the H\'enon map $F$) as the set of all those branch points that lie in stems. We want to prove that $B_X = B_\T$, and that every branch point has order three.

\begin{prop}\label{prop:order3}
	Every branch point in $B_X$ has order three.
\end{prop}
It is noteworthy that the above result is an analogy with the result of Williams for hyperbolic attractors \cite{Williams}, where the order of (finite) trees in the inverse limit representation is also three.
\begin{prop}\label{prop:AllBP}
	The set of branch points $B_X$ coincides with the set of all branch points of $\mathbb{T}$.
\end{prop}

Let us first recall some results from literature that we use in our proof of  Propositions \ref{prop:order3} and \ref{prop:AllBP}.	

\begin{df}\cite[bottom of p.\ 379 and a comment between Propositions 2.3 and 2.4]{BenedicksViana}
	A point $P$ is called {\bf expanding} if there exists $\lambda > e^{-20}$ such that $\| DF^j(P)(1,0) \| \ge \lambda^j$ for all $j \in \N$.
\end{df}

\begin{df}\cite[below Proposition 2.4]{BenedicksViana}\label{df:ll}
	Let $z$ be a point. A curve $\zeta = \zeta(z) = \{ (x(y), y) : |y| \le 1/10 \}$ is called a {\bf long stable leaf} if the following holds:
	\begin{enumerate}[(1)]
		\item $z \in \zeta$,
		\item $|x'| \le C\sqrt{|b|}$ and $|x''| \le C \sqrt{|b|}$, where $C > 1$ is a constant that does not depend on $b$,
		\item $d(F^n(P), F^n(Q)) \le (Cb)^n d(P, Q)$, for every $P, Q \in \zeta$ and $n \in \N$.
	\end{enumerate}
\end{df}

\begin{rem}\label{rem:ll}
	By \cite[Propositions 2.4 and 3.3]{BenedicksViana}, if $z$ is an expanding point then its stable manifold $W^s_z$ contains a long stable leaf $\zeta(z)$. Moreover, if $z_1, z_2$ are expanding points then $\operatorname{angle} (t(P_1), t(P_2)) \le C\sqrt{|b|} \, d(P_1, P_2)$, for every $P_1 \in \zeta(z_1), P_2 \in \zeta(z_2)$, where $t(P_i)$ denotes any norm 1 vector tangent to $\zeta(z_i)$ at $P_i$, $i = 1, 2$. Also, given any $z \in F(\mfc)$, there exists a sequence of long stable leaves $(\zeta_j)_{j \in \N}$ accumulating $W^s_z$ exponentially fast from the left.
\end{rem}

\begin{proof}[Proof of Proposition \ref{prop:order3}]
	Let $b > 0$. By Remark \ref{rem:ll}, every basic critical point $z \in \mfc$ lies in its stable manifold $W^s_z$. Moreover, for a long stable leaf $\zeta = \zeta(z^1)$ we have $z \in F^{-1}(\zeta) \subset W^s_z$. Since every  $\zeta \ne \zeta(z_0^1)$ intersects $\partial^u D$ transversely at two points that lie on the opposite sides of $z_0^1$, $F^{-1}(\zeta)$ intersects $\partial^u \Omega$ transversely at two points that lie on the opposite sides of $z_0$. Therefore, for $\alpha \in \A$ such that $z \in \alpha$, we have $\alpha \subset F^{-1}(\zeta(z^1))$ and $\alpha$ also intersects $\partial^u \Omega$ `transversely' at two points that lie on the opposite sides of $z_0$. Moreover, $\alpha \subset W^s_z$ intersects $W^u$ tangentially at $z$, and $W^s_{z^{-j}}$ and $W^u$ intersect tangentially at $z^{-j}$ for every $j \in \N_0$. 
	
	Let us now consider a special kind of basic critical points. Let $z \in \mfc$ and $\alpha \in \A$ be such that $z \in \alpha$ and for every small $\eps > 0$, 
	\begin{equation}\label{eqn:eps}
		(\alpha \setminus \{ z \}) \cap B_\eps(z) \cap \Lambda \ne \emptyset.
	\end{equation}  
	We want to prove now that no long stable leaf $\zeta$ contains any pre-critical point $z^{-j}$ that satisfies (\ref{eqn:eps}). Let us assume by contradiction that $z^{-j} \in \zeta$ for some $\zeta$, $z$ and $j$ as above. Let us consider $F^j(\zeta)$. Let $\alpha \in \A$ be such that $z \in \alpha$. The intersection of $\alpha$ and $F^j(\zeta)$ is an arc that contains $z$.  By \cite[Theorem 1.1]{WY}, arbitrarily close to $z$, $W^u$ intersects $\alpha$ in at least two points $Q_1, Q_2$ on the opposite sides of $z$, $z \in [Q_1, Q_2]^s_z \subset \alpha \subset W^s_z$, and such that $[Q_1, Q_2]^u \cap F^n(\C) = \emptyset$ for every $n \in \N$ (see \cite[Theorem 1.1.(1)(ii)-(iii)]{WY}). Therefore, arbitrarily close to $z^{-j}$, there exists an arc $[Q'_1, Q'_2]^u \subset W^u$ that intersects $\zeta \subset W^s_{z^{-j}}$ at points $Q'_1, Q'_2$ on the opposite sides of $z^{-j}$ and such that $[Q'_1, Q'_2]^u \cap F^n(\C) = \emptyset$ for every $n \in \N$. This contradicts Definition \ref{df:ll} (2), and so, $z^{-n} \nin \zeta$ for every long stable leaf $\zeta$ and $n \in \N_0$ and hence $z^{-n} \nin F^{-1}(\zeta)$ for every long stable leaf $\zeta$ and $n \in \N$.
	
	Note that, by definition of $B_X$, if $z^{-k} \in \alpha \cap \partial^u D$ for some non-degenerate $\alpha \in \A$, $z \in \mfc$ and $k \in \N_0$, then $\pi(\alpha) \in B_X$. Moreover, $z^{-j}$ satisfies (\ref{eqn:eps}) for every $j \in \N_0$. If $z^{-k}$ is the only point of tangential intersection of $\alpha$ and $\partial^u D$, then order of $\pi(\alpha)$ is three. For $\pi(\alpha)$ to have order greater than three, it is necessary that $\alpha$ intersects $\partial^u D$ tangentially at more than one point. An immediate consequence of everything above is that if $\alpha \in \A$ contains $z_{-1}$, then $\pi(\alpha) \in B_X$ is a branch point of order three (recall that $z_{-1} \in \partial^u D$). 
	
	Let $t \in B_X$ and $\alpha_t := \psi(t)$. We want to prove that $t$ has order three. Let us assume by contradiction that the order of $t$ is greater than three. Then $\alpha_t$ intersects $\partial^u D$ tangentially at two different points $z_i^{-k}$ and $z_j^{-d}$. If $k = d$, then $i \ne j$ and $z_i, z_j \in F^k(\alpha_t)$, a contradiction since, by the first paragraph of this proof, there exist $\alpha', \alpha'' \in \A$, $z_i \in \alpha'$, $z_j \in \alpha''$ and $\alpha' \cap \alpha'' = \emptyset$. If $d < k$ then $z_i, z_j^{d-k} \in F^k(\alpha_t) \subset \alpha'$ and $d - k < 0$, a contradiction, since $\alpha'$ does not contain a basic pre-critical point that satisfies (\ref{eqn:eps}). Therefore, $\alpha_t$ intersects tangentially $\partial^u D$ at only one basic (pre-)critical point and $t$ has order three.
	
	Let $b < 0$. Note that the results in \cite{BenedicksViana} mentioned in Remark \ref{rem:ll} apply to this case as well when we replace basic (pre-)critical points with quasi (pre-)critical points, and the first paragraph of the proof also holds for the quasi (pre-)critical points. The only difference is that $W^u_Y$ does not accumulate on itself, and hence (1) does not hold for the quasi-critical points. Nevertheless, we will prove that also in this case, no long stable leaf $\zeta$ contains any quasi pre-critical point $z'^{-j}$. We again assume by contradiction that $z'^{-j} \in \zeta$ for some $\zeta$, $z'$ and $j$. Let $\alpha \in \A$ be such that $z' \in \alpha$. Since $z' \in F^j(\zeta)$, intersection of $\alpha$ and $F^j(\zeta)$ is an arc that contains $z'$. Let $\Omega_\alpha \subset \Omega$ be the region bounded by $\alpha$ and $\partial^u \Omega$. In this case, also by \cite[Theorem 1.1.(1)(ii)-(iii)]{WY}, there exist arcs in $W^u_Y \cap \Omega_\alpha$ that have boundary points in $\alpha$ on the opposite sides of $z'$, and among them there exists one arc that is the closest to the quasi-critical point $z'$. If we denote the boundary points of that arc by $Q_1, Q_2$, and let $Q'_1 = F^{-j}(Q_1)$, $Q'_2 = F^{-j}(Q_2)$, then the rest of the proof for the case $b < 0$ is analogous to the case $b > 0$, with replacing basic (pre-)critical points with quasi (pre-)critical points.
\end{proof}

\begin{theorem}\cite[Lemma 4.2 and Theorem 1.2]{BS}\label{BS}
	The set of branch points $B_X$ is a dense subset of $\T$.
\end{theorem}

If an arc $A$ has the end points $a, b$, we write $A = ab$. A variant of the following result was proved in \cite[Proposition 3.2 and Corollary 3.5]{BMS}. For completeness, we include a proof. 

\begin{prop}\label{prop:Ai}
	Let $\d$ be a dendrite, $E_\d$ be the set of its end points, and $S$ be a dense subset of $\d$. Suppose that there exists a sequence of arcs $(A_n)_{n \in \N}$ such that 
	\begin{itemize}
		\item[(i)] $S \subset \bigcup_{n=0}^\infty A_n$, and
		\item[(ii)] $\bigcup_{n=0}^k A_n$ is connected for any $k \in \mathbb{N}$.
	\end{itemize}
	Then 
	$\d = E_\d \cup \bigcup_{n=0}^\infty A_n$.
\end{prop}	
\begin{proof}
	Suppose there exists an $a \in \d \setminus E_\d$ such that $a \notin \bigcup_{n=0}^\infty A_n$. Let $b \in \bigcup_{n=0}^\infty A_n \setminus E_\d$. 
	Since $a, b \notin E_\d$ the arc $ab$ can be extended from both ends to an arc $a'b' \subset \d \setminus E_\d$
	so that $a'a \cap ab = \{a\}$ and $ab\cap bb' = \{b\}$. Let $\d_a$ and $\d_b$ be dendrites contained in
	$\d \setminus ab$ such that $a' \in \Int(\d_a)$ and $b' \in \Int(\d_b)$. Note that each point of $ab$ separates
	$\d$ between $\d_a$ and $\d_b$. Since $S$ is dense in $\d$, there are points $s_a \in \d_a$ and $s_b \in \d_b$. By condition (i) there exists an $m$ such that $s_a, s_b \in \bigcup_{n=0}^m A_n$. By condition (ii) $\bigcup_{n=0}^m A_n$ is connected, hence a tree, and so $s_as_b \subset \bigcup_{n=0}^m A_n$. Since $ab \subset s_as_b$ it follows that $a \in \bigcup_{n=0}^m A_n$, leading to a contradiction. 
\end{proof}

\begin{df}
	\begin{enumerate}
		\item Let $k \in \N_0$. We say that $\alpha$ has {\bf separation type} $k$ if $\alpha \in \A^k$. 
		\item Let $K$ be a component of $F^{-n}(\K) \cap D$. We say that $K$ has the {\bf separation type} $k$ if $K$ separates two different elements of $\A^k_i$ for some $i \in \{ 1, \dots , m_k \}$, where $m_k$ is the number of stems of level $k$ (see Subsection \ref{ss:stems}).
	\end{enumerate}
\end{df}

\begin{rem}
	Note that every $\alpha$ such that $\pi(\alpha)$ is a branch point has two separation types, since every branch point lies in the intersection of two stems. If $\pi(\alpha)$ is not a branch point then $\alpha$ has a unique separation type. This also holds for any component of $F^{-n}(\K) \cap D$.
\end{rem}

\begin{proof}[Proof of Proposition \ref{prop:AllBP}]
	Since $B_X$ is dense in $\T$, and each element of $B_X$ belongs to a stem, it is enough to show that the union of all stems of the level at most $n$ is connected and apply Proposition \ref{prop:Ai}. 
	
	Recall that $\pi(\A^0) = B^0$, $\pi(\A^1) = B^1$. Let $\alpha \in \A$ be such that $z_{-1} \in \alpha$. Then by the proof of Proposition \ref{prop:order3} $\{ \alpha \} = \A^0 \cap \A^1$, so $\pi(\A^0 \cup \A^1) = B^0 \cup B^1$ is connected. Let us suppose that $\pi(\bigcup_{i=0}^{n-1} \A^i)$ is connected. We want to prove that $\pi(\bigcup_{i=0}^n \A^i)$ is connected.
	
	Recall that $\A^n = \bigcup_{j=1}^{m_n} \A^n_j$, where $m_n$ is the number of stems of level $n$. Also, each stem $B^n_j = \pi(\A^n_j)$ has two end points, one of them is an end point of $\T$ and the other one is a branch point of $\T$. Denote that branch point as $b^n_j = \pi(\alpha_{b^n_j})$. Note that $F^{-n}(\A^0) \cap D = \bigcup_{i=0}^n \A^i$. Also, $(\A^n \setminus \bigcup_{j=1}^{m_n} \alpha_{b^n_j}) \cap \bigcup_{i=0}^{n-1} \A^i = \emptyset$, so $\pi(\bigcup_{i=0}^n \A^i)$ is connected if $\bigcup_{j=1}^{m_n} \alpha_{b^n_j} \subset \bigcup_{i=0}^{n-1} \A^i$.
	
	Note also that $\K$ has the separation type zero, as well as each component of $F^{-1}(\K) \cap D$. Therefore, if $K$ is a component of $F^{-n}(\K) \cap D$, then the separation type of $K$ is less than $n$.
	
	To simplify notation, let $t := b^n_j$, for some $j \in \{ 1, \dots , m_n \}$, and $\alpha_t := \psi(t)$. By Proposition \ref{prop:order3}, $t$ has order three. For $b>0$, we have that $\alpha_t \cap W^u$ contains a unique basic pre-critical point. Since $\alpha_t \in \A^n$, we have that $\alpha_t \cap W^u = \{ z_k^{-n} \}$, for some $k \in \Z$, and there exists a component $K$ of $\K^{-n} \cap D$ such that $z_k^{-n}$ is the terminal point of $K$ (see the paragraph above Lemma \ref{lem:ebp}). This implies that one separation type of $\alpha_t$ is the same as the separation type of $K$, which is less than $n$. Therefore, $\alpha_t$ lies in a stem with level smaller than $n$, which completes the proof for $b>0$. For $b<0$ the proof is analogous, since then $\alpha_t \cap W^u_Y$ contains a unique quasi pre-critical point.
\end{proof}	

\begin{cor}(All branch points come from tangencies)
	Suppose $t \in \T$ is such that $\alpha_t = \psi(t)$ is nondegenerate. Then $t$ is a branch point if and only if there exist $z \in \mfc$ (for $b>0$), or $z\in \H$ (for $b<0$), and $k \in \N_0$ such that $z^{-k} \in \alpha_t \cap \partial^uD$. 
	In that case $\alpha_t \subset W^s_{z^{-k}}$ is an arc, and $z^{-k}$ is the only (pre-)critical point in $\alpha_t$ (or quasi (pre-)critical point).
\end{cor}	
\begin{cor}(No letter $Y$)
	Let $\alpha \in \A$. There is no point $P \in \alpha$ such that $\alpha \setminus P$ consists of three or more components, each intersecting $\partial D$.
\end{cor}
Finally we note the following corollary.
\begin{cor}
	The dendrite $\T$ is homeomorphic to the universal dendrite of order three, for any $(a,b)\in\WY$.
\end{cor}
\begin{proof}
	This follows from  Theorem \ref{BS}, Proposition \ref{prop:order3} and \cite[Theorem 6.2]{ChD}.
\end{proof}
Note that, in particular, $\T$ is homeomorphic to the continuum self-similar tree, whose geometric properties were studied by Bonk and Meyer in \cite{BoMe}.

\noindent
	Jan P. Boro\'nski\\	
	Faculty of Mathematics and Computer Science\\
	Jagiellonian University in Krak\'ow\\
	ul. Łojasiewicza 6, 30-348 Kraków, Poland\\
	-- and --\\
	National Supercomputing Centre IT4Innovations\\ 
	IRAFM, University of Ostrava\\
	30. dubna 22, 70103 Ostrava, Czech Republic
	\\
	\href{mailto:jan.boronski@uj.edu.pl}{jan.boronski@uj.edu.pl} \\
	\url{https://matinf.uj.edu.pl/en_GB/pracownicy/wizytowka?person_id=Jan_Boronski}\\

\noindent
	Sonja \v Stimac\\
	Department of Mathematics\\
	Faculty of Science, University of Zagreb\\
	Bijeni\v cka 30, 10\,000 Zagreb, Croatia
	\\
	\href{mailto:sonja@math.hr}{sonja@math.hr}\\
	\url{https://web.math.pmf.unizg.hr/~sonja/}
	

\begin{thebibliography}{99}
		
		\bibitem{BH} 
		M.\ Barge, S. \ Holte 
		\emph{Nearly one-dimensional Hénon attractors and inverse limits},  
		Nonlinearity {\bf 8} (1995), 29--42. 
		
		\bibitem{BBS} 
		M.\ Barge, H.\ Bruin, S.\ \v Stimac, 
		\emph{The Ingram Conjecture},  Geometry and Topology {\bf 16} (2012), 2481--2516.
		
		\bibitem{BC1} 
		M.\ Benedicks, L.A.E.\ Carleson, 
		\emph{On iterations of $1 - ax^{2}$ on $(-1,1)$}, 
		Annals of Mathematics {\bf 122} (1985), 1--25.
		
		\bibitem{BC} 
		M.\ Benedicks, L.A.E.\ Carleson,  
		\emph{The dynamics of the H\'enon map}, 
		Annals of Mathematics {\bf 133} (1991), 73--169.
		
		\bibitem{BenedicksYoung93} 
		M.\ Benedicks, L.-S.\ Young, 
		\emph{Sinai-Bowen-Ruelle measure for certain Hénon maps}, 
		Inventiones Mathematicae {\bf 112} (1993), 541--576. 
		
		\bibitem{BenedicksYoung00} 
		M.\ Benedicks,  L.-S.\ Young, 
		\emph{Markov extensions and decay of correlations for certain Hénon maps}, 
		Ast\'erisque {\bf 261-xi} (2000), 13--56. 
		
		\bibitem{BenedicksViana} 
		M.\ Benedicks, M.\ Viana, 
		\emph{Solutions of the basin problem for certain non-uniformly hyperbolic attractors},  
		Inventiones Mathematicae {\bf 143} (2001) 375--434.
		
		\bibitem{BenedicksViana2} 
		M.\ Benedicks, M.\ Viana, 
		\emph{Random perturbations and statistical properties of Hénon-like maps}, 
		Annales de l'Institut Henri Poincar\'e C Analyse non lin\'eaire {\bf 23} (2006), 713--752.
		
		\bibitem{BMP} 
		M.\ Benedicks, M.\ Martens, L.\ Palmisano, 
		\emph{Newhouse Laminations}, arXiv:1811.00617 math.DS.
		
		\bibitem{BP}
		M.\ Benedicks, L.\ Palmisano, 
		\emph{Coexistence phenomena in the H\'enon family}, arXiv:1811.00517 math.DS.
		
		\bibitem{BergerJEMS19}
		P.\ Berger, 
		\emph{Properties of the maximal entropy measure and geometry of Hénon attractors}, 
		Journal of the European Mathematical Society {\bf 21} (2019), 2233--2299.
		
		\bibitem{BergerA19}
		P.\ Berger, 
		\emph{Abundance of non-uniformly hyperbolic H\'enon-like endomorphisms. Strong regularity},  
		Ast\'erisque {\bf 410} (2019), 53--177.
		
		\bibitem{BoMe}
		M.\ Bonk, D.\ Meyer, 
		\emph{Uniformly branching trees}, 
		Transactions of the American Mathematical Society {\bf 375} (2022), 3841--3897.
		
		\bibitem{BMS} J.\ Boroński, P.\ Minc, S.\ \v Stimac, 
		\emph{On conjugacy between natural extensions of 1-dimensional maps}, 
		Ergodic Theory and Dynamical Systems, {\bf 43} (2023), 2915--2937.
		
		\bibitem{BS} 
		J.\ Boro\'nski, S.\ \v Stimac,		
		\emph{Densely branching trees as models for H\'enon-like and Lozi-like attractors}, 
		Advances in Mathematics {\bf 429} (2023) 1--27.
		
		\bibitem{CP} 
		S.\ Crovisier, E.\ Pujals, 
		\emph{Strongly dissipative surface diffeomorphisms}, 
		Commentarii Mathematici Helvetici {\bf 93} (2018), 377–400. 
		
		\bibitem{CPNotices} S.\ Crovisier, E.\ Pujals, 
		\emph{From zero to positive entropy}, 
		Notices of the American Mathematical Society {\bf 69} (2022), 748--761.
		
		\bibitem{CroPujTre}
		S.\ Crovisier, E.\ Pujals, Ch.\ Tresser, 
		\emph{Mild dissipative diffeomorphisms of the disk with zero entropy}, to appear in Acta Mathematica; arXiv:2005.14278 [math.DS]
		
		\bibitem{dC} 
		A.\ de Carvalho,
		\emph{Pruning fronts and the formation of horseshoes}, 
		Ergodic Theory and Dynamical Systems {\bf 19} (1999), 851--894.
		
		\bibitem{dCH2} 
		A.\ de Carvalho, T.\ Hall,	
		\emph{Pruning theory and Thurston's classification of surface homeomorphisms}, 
		Journal of the European Mathematical Society {\bf 3} (2001), 287--333.
		
		\bibitem{dCH} 
		A.\ de Carvalho, T.\ Hall, 		
		\emph{How to prune a horseshoe}, 
		Nonlinearity {\bf 15} (2002), 19--68.
		
		\bibitem{dCLM}
		A.\ de Carvalho, M.\ Lyubich, M.\ Martens, 
		\emph{Renormalization in the Hénon family. I. Universality but non-rigidity}, 
		Journal of Statistical Physics {\bf 121} (2005), 611--669.
		
		\bibitem{ChD}
		W.\ J.\ Charatonik, A.\ Dilks, 
		\emph{On self-homeomorphic spaces}, 
		Topology and its Applications {\bf 55} (1994) 215--238.
		
		\bibitem{CGP} 
		P.\ Cvitanovi\'c, G.H.\ Gunaratne, I.\ Procaccia, 
		\emph{Topological and metric properties of H\'enon-type strange attractors}, 
		Physical Review A, {\bf 38} (1988), 1503--1520.
		
		\bibitem{DN} 
		R.\ Devaney, Z.\ Nitecki, 
		\emph{Shift automorphisms in the Hénon mapping},
		Communications in Mathematical Physics {\bf 67} (1979), 137--146.
		
		\bibitem{HLM}
		P.\ Hazard, M.\ Lyubich, M.\ Martens, 
		\emph{Renormalizable Hénon-like maps and unbounded geometry}, 
		Nonlinearity {\bf 25} (2012), 397--420.
		
		\bibitem{HMT} 
		P.\ Hazard, M.\ Martens, C.\ Tresser, 
		\emph{Infinitely many moduli of stability at the dissipative boundary of chaos}, 
		Transactions of the American Mathematical Society {\bf 370} (2018), 27--51.
		
		\bibitem{H} 
		M.\ H\'enon, 		
		\emph{A two-dimensional mapping with a strange attractor}, 
		Communications in Mathematical Physics {\bf 50} (1976), 69--77.
		
		\bibitem{HO}
		L.C.\ Hoehn, L.G.\ Oversteegen, 
		\emph{A complete classification of homogeneous plane continua},
		Acta Mathematica {\bf 216} (2016), 177--216. 
		
		\bibitem{I} 
		Y.\ Ishii, 		
		\emph{Towards a kneading theory for Lozi mappings. I. A solution of the pruning front conjecture and the first tangency problem}, 
		Nonlinearity {\bf 10} (1997), 731--747.
		
		\bibitem{KirikiSoma} 
		S.\ Kiriki, T.\ Soma, 
		\emph{Takens' last problem and existence of non-trivial wandering domains}, 
		Advances in Mathematics {\bf 306} (2017), 524--588.
		
		\bibitem{LM} M.Yu.\ Lyubich, M.\ Martens, 
		\emph{Renormalization in the Hénon family, II: the heteroclinic web}, 
		Inventiones Mathematicae {\bf 186} (2011), 115--189.
		
		\bibitem{Me} 
		V.\ Mendoza,
		\emph{Proof of the pruning front conjecture for certain Hénon parameters}, 
		Nonlinearity {\bf 26} (2013), 679--690.
		
		\bibitem{MT} 
		J.\ Milnor, W.\ Thurston, 		
		\emph{On iterated maps of the interval dynamical systems}, 
		Lecture Notes in Mathematics 1342 (1988), Springer New York (ed.\ J.C.\ Alexander), 465--563. (preprint Princeton University 1977)
		
		\bibitem{M}
		M.\ Misiurewicz, 		
		\emph{Strange attractor for the Lozi mappings}, 
		Annals of the New York Academy of Sciences {\bf 357} (1980) (Nonlinear Dynamics), 348--358.
		
		\bibitem{MS} 
		M.\ Misiurewicz, S.\ \v Stimac,	
		\emph{Symbolic dynamics for Lozi maps}, 
		Nonlinearity {\bf 29} (2016), 3031--3046.
		
		\bibitem{MS2}
		M.\ Misiurewicz, S.\ \v Stimac, 
		\emph{Lozi-like maps}, 
		Discrete and Continuous Dynamical Systems - Series A {\bf 38} (2018), 2965--2985. 
		
		\bibitem{MV}
		L.\ Mora, M.\ Viana, 		
		\emph{Abundance of strange attractors}, 
		Acta Mathematica {\bf 171} (1993), 1--71.  
		
		\bibitem{Ou} 
		D.-S. Ou, 
		\emph{Nonexistence of wandering domains for strongly dissipative infinitely renormalizable Hénon maps at the boundary of chaos}, 
		Inventiones Mathematicae {\bf 219} (2020), 219--280.
		
		\bibitem{S}
		S.\ \v Stimac, 		
		\emph{A classification of inverse limit spaces of tent maps with finite critical orbit}, 
		Topology and its Applications {\bf 154} (2007), 2265--2281.
		
		\bibitem{vS}
		S.\ van Strien, 
		\emph{One-dimensional dynamics in the new millennium}, 
		Discrete and Continuous Dynamical Systems {\bf 27} (2010), no.\ 2, 557--588.
		
		\bibitem{WY} 
		Q.\ Wang, L.-S.\ Young, 
		\emph{Strange attractors with one direction of instability}, 
		Communications in Mathematical Physics  {\bf 218}  no.\ 1 (2001), 1--97.
		
		\bibitem{WYAnn}
		Q.\ Wang, L.-S.\ Young,  
		\emph{Toward a theory of rank one attractors}, 
		Annals of Mathematics {\bf 167} (2008), 349--480.
		
		\bibitem{Williams}
		R.F.\ Williams, 
		\emph{One-dimensional non-wandering sets}, 
		Topology {\bf 6} (1967), 473--487.  
	\end{thebibliography}
\end{document}